\definecolor{dred}{rgb}{.8,0,0}
\definecolor{ddmagenta}{rgb}{0.7,0,0.9}
\definecolor{ddcyan}{rgb}{0,0.2,1.0}
\definecolor{dblue}{rgb}{0,0,0.7}
\definecolor{ddorange}{rgb}{1,0.5,0}
\definecolor{ddgreen}{rgb}{0,0.4,0.4}
\definecolor{Turk}{rgb}{0,0.7,0.4}
\newtheorem{theorem}{Theorem}[section]
\newtheorem{proposition}[theorem]{Proposition}
\newtheorem{lemma}[theorem]{Lemma}
\theoremstyle{definition}
\newtheorem{definition}[theorem]{Definition}
\newtheorem{example}[theorem]{Example}
\newtheorem{problem}[theorem]{Problem}
\theoremstyle{remark}
\newtheorem{remark}[theorem]{\bf Remark}
\numberwithin{equation}{section}
\def\trait #1 #2 #3 {\vrule width #1pt height #2pt depth #3pt}
\def\fin{
    \trait .3 5 0
    \trait 5 .3 0
    \kern-5pt
    \trait 5 5 -4.7
    \trait 0.3 5 0
\medskip}
\newcommand{\QED}{\hfill $\square$}
 \def\bbN{{\mathbb N}} 
  \def\bbR{{\mathbb R}}
\newcommand{\R}{\bbR}
\newcommand{\N}{\bbN}
\def\BS{\boldsymbol} 
\def\bfsigma{{\BS\sigma}}
\newcommand{\bele}{\begin{lemm}\begin{sl}}
\newcommand{\enle}{\end{sl}\end{lemm}}
\newcommand{\bedef}{\begin{defi}\begin{sl}}
\newcommand{\eddef}{\end{sl}\end{defi}}
\newcommand{\bete}{\begin{teor}\begin{sl}}
\newcommand{\ente}{\end{sl}\end{teor}}
\newcommand{\beos}{\begin{osse}\begin{rm}}
\newcommand{\eddos}{\end{rm}\end{osse}}
\newcommand{\bepr}{\begin{prop}\begin{sl}}
\newcommand{\empr}{\end{sl}\end{prop}}
\newcommand{\bepro}{\begin{prob}\begin{rm}}
\newcommand{\empro}{\end{rm}\end{prob}}
\newcommand{\bede}{\begin{defin}\begin{sl}}
\newcommand{\edde}{\end{sl}\end{defin}}
\newcommand{\beco}{\begin{coro}\begin{sl}}
\newcommand{\enco}{\end{sl}\end{coro}}
\newcommand{\behy}{\begin{hypo}\begin{sl}}
\newcommand{\enhy}{\end{sl}\end{hypo}}
\newcommand{\thspace}{\hspace{3mm}}
\newcommand{\RR}{\mathbb{R}}
\newcommand{\beeq}[1]{\begin{equation}\label{#1}}
\newcommand{\eddeq}{\end{equation}}
\newcommand{\beeqa}[1]{\begin{eqnarray}\label{#1}}
\newcommand{\eddeqa}{\end{eqnarray}}
\newcommand{\beal}[1]{\begin{align}\label{#1}}
\newcommand{\eddal}{\end{align}}
\newcommand{\bespl}[1]{\begin{split}\label{#1}}
\newcommand{\edspl}{\end{split}}
\newcommand{\bega}[1]{\begin{gather}\label{#1}}
\newcommand{\edga}{\end{gather}}
\newcommand{\beeqax}{\begin{eqnarray*}}
\newcommand{\eddeqax}{\end{eqnarray*}}
\newcommand{\weakto}{\rightharpoonup}
\newcommand{\duav}[1]{\langle{#1}\rangle}
\newcommand{\itt}{\int_0^t}
\newcommand{\io}{\int_\Omega}
\newcommand{\e}{\varepsilon}
\newcommand{\bx}{\boldsymbol{x}}
 \DeclareMathOperator{\dive}{div}
\let\TeXchi\chi
\def\chi{{\setbox0 \hbox{\mathsurround0pt
$\TeXchi$}\hbox{\raise\dp0 \copy0 }}}
\newcommand{\ub}{\mathbf{u}}
\newcommand{\uu}{\mathbf{u}}
\newcommand{\vv}{\mathbf{v}}
\newcommand{\vb}{\mathbf{v}}
\newcommand{\teta}{\vartheta}
\newcommand{\fb}{\mathbf{f}}
\newcommand{\aein}{\text{a.e. in\;}}
\newcommand{\tensore}{\varepsilon({\bf u})}
\newcommand{\tensoret}{\varepsilon({\mathbf{u}_t})}
\newcommand{\forae}{\text{for a.a.}}
\newcommand{\foraa}{\text{for a.a.}}
\def\fine{\hfill\kern4pt \vrule height4pt depth0pt width4pt }
\numberwithin{equation}{section}
\numberwithin{equation}{section}
\newcommand{\Ha}{L^2 (\Omega;\R^d)}
\newcommand{\boZ}{H_{0}^1(\Omega;\R^d)}
\newcommand{\boY}{H_{\mathrm{Dir}}^2(\Omega;\R^d)}
\newcommand{\dd}{\, \mathrm{d}}
\newcommand{\pairing}[4]{ \sideset{_{#1 }}{_{ #2}}  {\mathop{\langle #3 , #4  \rangle}}}
\newcommand{\eps}{\varepsilon}
\newcommand{\condu}{\mathsf{K}}
\newcommand{\ftau}[1]{\mathbf{f}_{\tau}^{#1}}
\newcommand{\gtau}[1]{g_{\tau}^{#1}}
\newcommand{\htau}[1]{h_{\tau}^{#1}}
\newcommand{\pwc}[2]{\overline{#1}_{#2}}
\newcommand{\pwl}[2]{{#1}_{#2}}
\newcommand{\upwc}[2]{\underline{#1}_{#2}}
\newcommand{\DDD}[3]{\begin{array}[t]{c}#1\vspace*{-1em}\\_{#2}\vspace*{-.5em}\\_{#3}\end{array}}
\newcommand{\ddd}[3]{\DDD{\begin{array}[t]{c}\underbrace{#1}\vspace*{.6em}\end{array}}{\text{\footnotesize #2}}{\text{\footnotesize #3}}}
\newcommand{\down}{\downarrow}
\newcommand{\BV}{\mathrm{BV}}
\newcommand{\rmC}{\mathrm{C}}
\newcommand{\vism}{\mathbb{V}}
\newcommand{\sigmab}{{\boldsymbol{\sigma}}}
\newcommand{\db}{\mathbf{d}}
\newcommand{\ttau}{\ol{\mathsf t}_\tau}
\newcommand{\1}{\mathbf{1}}
\newcommand{\C}{\mathcal}
\newcommand{\ol}{\overline}
\newcommand{\ul}{\underline}
\newcommand{\dS}{\,\mathrm dS}
\newcommand{\dx}{\,\mathrm dx}
\newcommand{\ds}{\,\mathrm ds}
\newcommand{\dr}{\,\mathrm dr}
\newcommand{\dxs}{\,\mathrm dx\,\mathrm ds}
\newcommand{\weaklim}{\rightharpoonup}
\newcommand{\weakstarlim}{\stackrel{\star}{\rightharpoonup}}
\newcommand{\CC}{\mathbb{C}}
\newcommand{\VV}{\mathbb{V}}
\newcommand{\dk}{\mathbf{d}_\tau^k}
\newcommand{\uk}{\ub_\tau^k}
\newcommand{\ukk}{\ub_\tau^{k-1}}
\newcommand{\ck}{c_\tau^{k}}
\newcommand{\xk}{\xi_\tau^{k}}
\newcommand{\ellk}{\ell_\tau^{k}}
\newcommand{\zek}{\zeta_\tau^{k}}
\newcommand{\ckk}{c_\tau^{k-1}}
\newcommand{\muk}{\mu_\tau^{k}}
\newcommand{\vk}{\mathbf v_\tau^k}
\newcommand{\zk}{z_\tau^k}
\newcommand{\zkk}{z_\tau^{k-1}}
\newcommand{\fk}{\bold f_\tau^k}
\newcommand{\gk}{g_\tau^k}
\newcommand{\hk}{h_\tau^k}
\newcommand{\tk}{\vartheta_\tau^k}
\newcommand{\tkk}{\vartheta_\tau^{k-1}}
\newcommand{\Dt}{D_{\tau,k}}
\newcommand{\zM}{z_M^{k}}
\newcommand{\tM}{\vartheta_M^{k}}
\newcommand{\tE}[5]{\mathscr{E}(#1,#2,#3,#4,#5)}
\newcommand{\tF}[4]{\mathscr{F}(#1,#2,#3,#4)}
\newcommand{\tU}[4]{\mathscr{U}(#1,#2,#3,#4)}
\newcommand{\tP}[4]{\mathscr{P}(#1,#2,#3,#4)}
\DeclareSymbolFont{tipa}{T3}{cmr}{m}{n}
\DeclareMathAccent{\invbreve}{\mathalpha}{tipa}{16}
\newcommand{\pd}[1]{W_{,#1}}
\newcommand{\convWc}{{\breve{W}_{1}^\omega}}
\newcommand{\concWc}{{\invbreve{W}_{1}^\omega}}
\newcommand{\convWz}{{\breve{W}_{3}^\omega}}
\newcommand{\concWz}{{\invbreve{W}_{3}^\omega}}
\newcommand{\convWcp}{\breve{W}_{1,c}^\omega}
\newcommand{\concWcp}{\invbreve{W}_{1,c}^\omega}
\newcommand{\convWzp}{\breve{W}_{3,z}^\omega}
\newcommand{\concWzp}{\invbreve{W}_{3,z}^\omega}
\newcommand{\conv}[1]{\breve{\text{$#1$}}}
\newcommand{\conc}[1]{\invbreve{#1}}
\newcommand{\bA}{\mathbf A}
\newcommand{\Hn}{H_N^2}
\newcommand{\mean}{\mathfrak{m}}
\def\Xint#1{\mathchoice{\XXint\displaystyle\textstyle{#1}}{\XXint\textstyle\scriptstyle{#1}}
{\XXint\scriptstyle\scriptscriptstyle{#1}}{\XXint\scriptstyle\scriptscriptstyle{#1}}\!\int}
\def\XXint#1#2#3{{\setbox0=\hbox{$#1{#2#3}{\int}$}
\vcenter{\hbox{$#2#3$}}\kern-.5\wd0}}
\def\dashint{\Xint-}
\newcommand{\EEE}{\color{black}}
\newcommand{\RRR}{\color{black}}
\begin{document}

\title[Phase separation and damage in  thermoviscoelasticity]
{A temperature-dependent phase-field model\\
for phase separation and damage}

\author{Christian Heinemann}
\address{Christian Heinemann\\ Weierstrass Institute for Applied
Analysis and Stochastics\\ Mohrenstr.~39 \\ D-10117 Berlin \\
Germany}
\email{heineman@wias-berlin.de}

\author{Christiane Kraus}
\address{Christiane Kraus\\ Weierstrass Institute for Applied
Analysis and Stochastics\\ Mohrenstr.~39 \\ D-10117 Berlin \\
Germany}
\email{kraus@wias-berlin.de}

\author{Elisabetta Rocca}
\address{Elisabetta Rocca\\ Weierstrass Institute for Applied
Analysis and Stochastics\\ Mohrenstr.~39 \\ D-10117 Berlin \\
Germany\\ and \\
Dipartimento di Matematica \\ Universit\`a di Milano\\ Via Saldini 50 \\ I-20133 Milano\\ Italy}
\email{rocca@wias-berlin.de and elisabetta.rocca@unimi.it}

\author{Riccarda Rossi}
\address{Riccarda Rossi\\  DIMI \\ Universit\`{a} di Brescia\\ Via Valotti 9\\ I-25133 Brescia\\ Italy}
\email{riccarda.rossi@unibs.it}

\date{October 13, 2015}

\begin{abstract}
In this paper we study a model for phase separation and damage in
thermoviscoelastic materials.  The main novelty of the paper consists in the
fact that, in contrast with  previous works in the literature (cf., e.g.,
\cite{hk1,hk2}), we  encompass in the model  thermal processes,  \EEE
nonlinearly coupled with the damage, concentration and displacement
evolutions. More in particular, we prove the existence of   ``entropic
weak  solutions'',  resorting to a solvability concept \EEE first introduced
in \cite{fei} in the framework of Fourier-Navier-Stokes systems and then
recently employed in  \cite{fpr09, RocRos14} \EEE for the study of PDE
systems for phase   transition \EEE and damage.   Our global-in-time existence result is obtained by passing to the limit in a carefully devised time-discretization scheme. \EEE
\end{abstract}

\maketitle


\noindent {\bf Key words:}\thspace damage, phase separation,  thermoviscoelasticity, global-in-time entropic weak solutions,  existence,  time discretization.

  \vspace{4mm}

\noindent {\bf AMS (MOS) subject clas\-si\-fi\-ca\-tion:}\thspace
35D30,  74G25, 93C55, 82B26, 74A45.

\section{\bf Introduction and modeling \EEE}
In this paper we
propose and analyze a model for phase separation and damage in a thermoviscoelastic body, occupying a spatial domain
$\Omega \subset \R^d$, where $d\in \{2,3\}$. 
We  shall \EEE consider here a suitable weak formulation of the following PDE system

\begin{subequations}
\label{eqn:PDEsystem-expli}
\begin{align}
  &c_t=\dive(m(c,z)\nabla\mu),
  	\label{e:c-expl-intro}\\
  &\begin{aligned}
    \mu ={}&-\Delta_p(c)+\phi'(c) +\frac12\big(b(c,z) \mathbb{C}(\e(\ub)-\e^*(c)):(\e(\ub)-\e^*(c))\big)_{,c}-\vartheta+c_t,
	\end{aligned}
  	\label{e:mu-expl-intro}\\
  &z_t+\partial I_{(-\infty,0]}(z_t) -\Delta_p(z)+\partial I_{[0,\infty)}(z) +\sigma'(z)\ni
    -\frac12 b_{,z}(c,z) \mathbb{C} (\e(\ub)-\e^*(c)):(\e(\ub)-\e^*(c))  +\vartheta,\label{e:z-expl-intro}\\
  &\vartheta_t+c_t\vartheta+z_t\vartheta+\rho\vartheta\dive(\ub_t)-\dive(\condu(\vartheta)\nabla\vartheta) =g+|c_t|^2+|z_t|^2+a(c,z)\e(\ub_t):\vism\e(\ub_t)+m(c,z)|\nabla\mu|^2,
  	\label{e:teta-expl-intro}\\
  &\ub_{tt}-\dive\big( a(c,z)\vism\e(\ub_t) + b(c,z) \mathbb{C} (\e(\ub)-\e^*(c)) -\rho\vartheta\mathds 1\big)=\fb
  	\label{e:u-expl-intro}
\end{align}
\end{subequations}
posed in $\Omega \times (0,T)$. 
The system couples 
\begin{itemize}
\item[-] the viscous Cahn-Hilliard equation \eqref{e:c-expl-intro}--\eqref{e:mu-expl-intro} ruling the evolution of the concentration $c$;
\item[-] the damage  flow rule \EEE \eqref{e:z-expl-intro} for the local proportion of the damage $z$;
\item[-] the internal energy balance \eqref{e:teta-expl-intro} for the absolute temperaure $\teta$ ;
\item[-] the momentum balance \eqref{e:u-expl-intro} describing the dynamics for the displacement $\ub$. 
\end{itemize}
The symbol $(\cdot)_t$ denotes the partial derivative with respect to time.  In the Cahn-Hilliard equation \eqref{e:c-expl-intro} $m$ denotes the mobility of the system
and
 $\mu$ the chemical potential, whose expression is \EEE given in
\eqref{e:mu-expl-intro}.  There, $\Delta_p(\cdot):=  \dive(|\nabla\cdot|^{p-2}\nabla\cdot)$ denotes the $p$-Laplacian, $\phi$ is a mixing potential, $b$ is an elastic coefficient function depending possibly on both $c$ and $z$, $\mathbb{C}$ represents the elasticity tensor, $\e^*$ a residual strain tensor, and 
$(\cdot)_{,c}$ the partial derivate with respect to the variable $c$
(with an analogous notation for  the \EEE other variables). 
In the damage  flow rule \EEE \eqref{e:z-expl-intro} $\partial I_{(-\infty,0]}: \R \rightrightarrows \R$ denotes the subdifferential of the indicator function of the set $(-\infty,0]$, 
 given by 
\[
\partial I_{(-\infty,0]}(v) = \begin{cases}
\{ 0\} & \text{for } v <0,
\\
[0,+\infty) & \text{for } v=0
\end{cases}
\]
while  $\partial  I_{[0,\infty)}: \R \rightrightarrows \R$  is  \EEE   the subdifferential of the indicator function of the set $[0,\infty)$,  i.e.
\[
\partial I_{[0,\infty)}(z) = \begin{cases}
(-\infty, 0]& \text{for } z=0,
\\
\{ 0\} & \text{for } z >0\,.
\end{cases}
\]
\EEE
 The presence of these  two \EEE  maximal monotone \EEE  graphs,  enforcing in particular \EEE the irreversibility of the damage phenomenon,
entails the constraint $z(t)\in [0,1]$ for $t\in (0,T)$ as soon as $z(0)\in
[0,1]$. This is physically meaningful because $z$ denotes the damage parameter
which is set to be equal to $0$ in case the material is completely damaged and
it is \EEE equal to $1$ in the completely safe case, while $z\in (0,1)$
indicates partial damage. The function $\sigma$ in \eqref{e:z-expl-intro}
represents a smooth function, possibly non-convex, of the damage 
 variable \EEE $z$. 
In the temperature equation  \eqref{e:teta-expl-intro}, $\rho$ denotes a positive thermal expansion coefficient, $\condu$ the heat conductivity of the system, $g$ a given heat source and $a$ a viscosity coefficient possibly depending on $c$ and $z$, while $\mathbb{V}$ is the viscosity tensor.  Finally, in the momentum balance \eqref{e:u-expl-intro} $\fb$ denotes a given  volume force.

We will supplement system \eqref{eqn:PDEsystem-expli}  with \EEE the
initial-boundary conditions  
\begin{subequations}
\label{init-bdry-conditions}
\begin{align}
  &c(0)=c^0,
  &&z(0)=z^0,
  &&\vartheta(0)=\vartheta^0,
    &&\ub(0)=\ub^0,
  &&\ub_t(0)=\mathbf v^0
  &&\text{a.e.\ in }\Omega,
  	\label{init-conditions}\\
  &\nabla c\cdot  { \bf  n \EEE}=0,
   &&m(c,z)\nabla\mu\cdot { \bf  n \EEE}=0,
  &&\nabla z\cdot  { \bf  n \EEE}=0,
  &&\condu(\vartheta)\nabla\vartheta\cdot { \bf  n \EEE  }=h,
    &&\ub=\db
  &&\text{a.e. on }\partial\Omega\times(0,T),
  	\label{bdry-conditions}
\end{align}
\end{subequations}
where ${ \bf n \EEE  }$ indicates the outer unit normal to $\partial\Omega$,  while $h$ and $\db$ denote, respectively, a given boundary heat source and displacement.  

The PDE system \eqref{eqn:PDEsystem-expli} may be written in the more compact form
\begin{subequations}
\label{eqn:PDEsystem}
\begin{align}
  &c_t=\dive(m(c,z)\nabla\mu),
  	\label{e:c}\\
  &\mu = -\Delta_p(c)+\phi'(c)+W_{,c}(c,\e(\ub),z)-\vartheta+c_t,
  	\label{e:mu}\\
  &z_t +\partial I_{(-\infty,0]}(z_t)  -\Delta_p(z)+\partial I_{[0,\infty)}(z) +\sigma'(z)\ni -W_{,z}(c,\e(\ub),z)+\vartheta,
  	\label{e:z}\\
  &\vartheta_t+c_t\vartheta+z_t\vartheta+\rho\vartheta\dive(\ub_t)-\dive(\condu(\vartheta)\nabla\vartheta)=g+|c_t|^2+|z_t|^2+a(c,z)\e(\ub_t):\vism\e(\ub_t)+m(c,z)|\nabla\mu|^2,
  	\label{e:teta}\\
  &\ub_{tt}-\dive\big(a(c,z)\vism\e(\ub_t)+W_{,\e}(c,\e(\ub),z)-\rho\vartheta\mathds{1}\big)=\fb,
  	\label{e:u}
\end{align}
\end{subequations}
with the following choice of the elastic energy density
\begin{equation}
\label{elastic-energy}
    W(c,\e,z)=\frac 12b(c,z)\mathbb C(\e-\e^*(c)):(\e-\e^*(c)).
\end{equation}
The  expression \EEE of $W$ is typically quadratic
 as a function of the strain tensor $\e(\ub)$, whereas  \EEE
 the coefficient $b$ can depend on $c$ and $z$.   This accounts for possible inhomogeneity of elasticity on the one hand, 
 and is characteristic for damage on the other hand. Indeed, the natural choice would be 
 that $b$ vanishes \EEE for $z=0$, i.e.\ when the material is completely damaged.
\paragraph{\bf Derivation of the model}
Let us briefly discuss the thermodynamically consistent derivation of the
PDE-system \eqref{eqn:PDEsystem-expli}.

The {\it state variables} that determine the local thermodynamic state of the
material and the {\it dissipative variables} whose evolution describes the way along which
the system tends to dissipate energy are as follows: \\
{\it State variables} 
$$ \teta, \, c, \, \nabla c, \, \e(\ub), \,  z, \, \nabla z $$
{\it Dissipation variables}
$$ \nabla \teta, \, c_t, \, \e(\ub_t), \, z_t $$

By classical principles of thermodynamics, the evolution  of the \EEE system is based on
the free energy $\mathscr{F}$ and the pseudopotential of dissipation $\mathscr{P}$, for which we assume 
the following general form: 
\begin{align}
 \tF{c}{z}{\teta}{ \eps(\ub)} = \int_\Omega F(c,\nabla c,z,\nabla z,\teta,  \eps(\ub) )\dx
 \qquad \text{and} \qquad 
\tP{\nabla \teta}{c_t}{\e(\ub_t)}{z_t} = \int_\Omega P(\nabla \teta, c_t,
\e(\ub_t), z_t) \dx .
\end{align}
Our  evolutionary \EEE system has been obtained by the principle of virtual power
and by balance equations of  micro-forces,  \EEE
 a generalization of the approaches by Fremond \cite{fremond} and Gurtin \cite{gur96}.
In addition, we also include  temperature-dependent \EEE effects by means of the 
 balance equation of energy. \\ 
The system relies altogether on the balance equations of mass, forces, 
micro-forces and energy:\\
{\it Evolution system}
\begin{subequations}
\label{eqn:PDEsystem-derivation}
\begin{align}
 \text{Mass balance} \hspace{0.98cm}& \notag\\
 c_t  +  \dive {\BS J} &=0,
  	\label{e:c-deri}\\
 \text{Force balance}\hspace{0.9cm}& \notag\\
\ub_{tt}  -  \dive { \bfsigma} &= {\mathbf f}, 
          \label{e:u-deri} \\
  \text{Micro-force balance} & \notag\\
  B  - \dive{\bf H}& =  0 ,\\
  \Pi  - \dive { \BS \xi} &= 0 ,\\
  \text{Energy balance} \hspace{0.6cm}& \notag \\
  U_t  +   \dive { \BS q} \;&= g + \bfsigma: \e(\ub_t) + 
    \dive ({\bf H}) z_t + {\bf H} \cdot \nabla z_t
    + \dive ( {\BS \xi}) c_t
     + { \BS \xi}  {\cdot} \nabla c_t -  \dive ({\BS J}) \, \mu -{\BS J} \cdot \nabla \mu
 \end{align}
\end{subequations}
where 
the internal energy density is given by $U= F - \teta \partial_\teta F$.\\
Note that the system is not closed for the variables. Therefore, constitutive laws have to be
imposed for the mass flux ${ \bf J}$, the stress tensor $\bfsigma$, the internal
microforce $B$ for $z$, the microstress ${\bf H}$ for $z$, the internal
microforce $\Pi$ for $c$, the microstress ${ \BS \xi}$ for $c$
and the heat flux ${\bf q}$.\\
{\it Constitutive relations}\\ 
Following Fr{\'e}mond's perspective, we assume that the stress tensor $\bfsigma$, the microforce $B$ and the
microstress ${\bf H}$, may be additively decomposed into their
non-dissipative and dissipative components, i.e.
\begin{align}
&& &  {\BS \sigma }= {\BS \sigma }^{nd} + {\BS \sigma }^d & \text{with}&
  \qquad {\BS \sigma }^{nd}=  \partial_{\e(\ub)} F , & & {\BS \sigma }^d =
  \partial_{\e(\ub_t)} P, 
& & && &&\\
&& &B= B^{nd} + B^d & \text{with}& \qquad B^{nd} \in  \partial_z F , & & B^d \in
  \partial_{z_t} P, 
& & && &&\\
&& & {\bf H} = {\bf H}^{nd} + {\bf H}^d &\text{with}  &  \qquad
{\bf H}^{nd}= \partial_{\nabla z} F  , & & {\bf H}^d=  \partial_{\nabla
  z_t} P=0 .& & && & &
\end{align}
In a similar way, by choosing Gurtin's approach, cf.~\cite{gur96} equations (3.19)-(3.23), we get the constitutive relations:
\begin{align}
{ \bf J}= - m(c,z) \nabla \mu,\qquad\qquad  \Pi= \partial_c F + \partial_{c_t} P
- \mu, \qquad \qquad
{\BS \xi} = \partial_{\nabla c} F.
\end{align}
The heat source is given by the standard constitutive relation:
$$  { \BS q}= - \frac{\partial P}{ \partial \nabla \teta}. $$
In the framework of the formulation of the damage
and phase separation theory \cite{fremond, gur96}, we choose for our system 
the following free energy and dissipation potential:
\begin{align}
    \label{free-energy}
    \tF{c}{z}{\teta}{ \eps(\ub)}:={}&\int_\Omega \frac 1p|\nabla c|^p+\frac 1p|\nabla z|^p+W(c,\e(\ub),z)+\phi(c)+\sigma(z)+ I_{[0,+\infty)}(z)\dx\notag\\
        &+\int_\Omega-\teta\log\teta-\teta\big(c+z+\rho\dive(\ub)\big)\dx,\\
\label{dissipation}
 \tP{\nabla \teta}{c_t}{\e(\ub_t)}{z_t} :={} & \int_\Omega  \frac{1}{2} 
\condu(\vartheta)|\nabla\vartheta|^2 + \frac{1}{2} z_t^2 + \frac{1}{2} c_t^2 + \frac{1}{2}
a(c,z)\e(\ub_t):\vism\e(\ub_t) 
    + I_{(-\infty,0]}(z_t)    \dx \, .
\end{align}
\par
 The first two gradient terms in \eqref{free-energy} represent the 
 nonlocal interactions in phase separation and damage processes. 
 The 
analytical study of gradient \RRR  theories \EEE goes back to \cite{LM89,Mod98}, where phase 
separation processes were investigated. \EEE 
A typical choice for $W$ has
 been introduced in \eqref{elastic-energy}. The functions 
 $\phi$ and $\sigma$ represent the mixing potentials. 
 The term $\teta(c+z+\rho\dive \ub)$ models the phase and thermal expansion
 processes in the system. It may also be regarded as linear approximation near
 to the thermodynamical equilibrium. In the following lines we will get
further insight into the choices of these functionals.
Exploiting \eqref{eqn:PDEsystem-derivation}-\eqref{dissipation} results in system \eqref{eqn:PDEsystem-expli}, for which the
Clausius-Duhem inequality is satisfied.

As discussed, our approach is based on a gradient theory of phase separation and damage
processes due to \cite{fremond,gur96,CH58}. 
For a non-gradient approach \RRR to \EEE damage models we refer to \cite{FG06, GL09,
  Bab11}. 
There, the damage variable
$z$ takes \RRR  only \EEE two distinct values, i.e.
$\{0,1\}$, in contrast to phase-field models where intermediate values
$z \in [0,1]$ are also allowed.
In addition, the mechanical properties of damage phenomena are described in \cite{FG06, GL09, Bab11}
differently. They choose a $z$-mixture of a linearly elastic strong and weak material with two different
elasticity tensors. We also refer to \cite{FKS11}, where a non-gradient damage
model was studied by \RRR means of \EEE Young measures.
\EEE
\paragraph{\bf Mathematical difficulties.} The main mathematical difficulties  attached with \EEE the proof of existence of solutions to such a PDE system are related to the presence of the quadratic dissipative terms on the  right-hand  \EEE side in the internal energy balance \eqref{e:teta},  as well as the doubly nonlinear and possibly nonsmooth carachter of the damage relation \eqref{e:z}. 
This is the reason why we  shall \EEE resort here to a weak solution notion for \eqref{eqn:PDEsystem} coupled with \eqref{init-bdry-conditions}.  In this solution concept, partially drawn from \cite{RocRos14},  the Cahn-Hilliard system (\ref{e:c}--\ref{e:mu}) and the balance of forces \eqref{e:u} (read a.e. in $\Omega\times(0,T)$)  
\EEE
are coupled with an  {\sl ``entropic'' formulation} of the heat equation \eqref{e:teta} and  a weak formulation of the damage  flow rule \eqref{e:z} \EEE taken from \cite{hk1,hk2}. 
 Let us briefly illustrate them. \EEE
\paragraph{\bf The ``entropic'' formulation  of the heat equation.} It consists of  \EEE a {\sl weak entropy inequality}
	    \begin{equation}
	        \label{entropy-ineq-intro}
	        \begin{aligned}
	          &\int_s^t \int_\Omega (\log(\teta) + c+z) \varphi_t  \dd x \dd r  -
	          \rho \int_s^t \int_\Omega \dive(\uu_t) \varphi  \dd x \dd r
	          -\int_s^t \int_\Omega  \condu(\teta) \nabla \log(\teta) \cdot \nabla \varphi  \dd x \dd r\\
	          &\begin{aligned}
	  	        \leq
	   	       	\int_\Omega (\log(\teta(t))+c(t)+z(t)){\varphi(t)} \dd x
	           	&-\int_\Omega (\log(\teta(s))+c(s)+z(s)){\varphi(s)} \dd x\\
	            &-\int_s^t \int_\Omega \condu(\teta)|\nabla\log(\teta)|^2\varphi\dd x \dd r
	          \end{aligned}\\
	          &\quad-\int_s^t  \int_\Omega \left( g +|c_t|^2+ |z_t|^2  + a(c,z) \eps(\uu_t):\vism \eps(\uu_t) + m(c,z)|\nabla \mu|^2\right)
	          \frac{\varphi}{\teta} \dd x \dd r
	          -\int_s^t \int_{\partial\Omega} h \frac\varphi\teta  \dd S \dd r
	        \end{aligned}
	      \end{equation}
	       required to be valid \EEE for almost all $0\leq s \leq t \leq T$ and for $s=0$, and for all sufficiently regular and positive test functions $\varphi$, coupled with a 
	    {\sl total energy inequality}:
	    \begin{equation}
	    \label{total-enid-intro}
	    \begin{aligned}
	      \tE{c(t)}{z(t)}{\teta(t)}{\ub(t)}{\ub_t(t)}
	      	\leq{}&\tE{c(s)}{z(s)}{\teta(s)}{\ub(s)}{\ub_t(s)}\\
			     &+ \int_s^t\int_\Omega g \dd x \dd r
			     + \int_s^t\int_{\partial\Omega} h \dd S \dd r\\
	 		     &+ \int_s^t \int_\Omega \mathbf{f} \cdot \mathbf{u}_t \dd x \dd r
	    		 + \int_s^t \int_{\partial\Omega}\big(\sigmab { \bf
                            n \EEE} \big)\cdot \db_t \dd S \dd r,
	    \end{aligned}
	    \end{equation}
	     valid \EEE for almost all $0\leq s \leq t \leq T$, and for $s=0$, where 
	     the 
total energy $\mathscr E$  is the sum of the internal energy and
the kinetic energy, i.e.
\begin{align}
    \tE{c}{z}{\teta}{\ub}{\ub_t}:={}&\tU{c}{z}{\teta}{\RRR \eps(\ub)\EEE}+\int_\Omega\frac12|
    \ub_t \EEE|^2\dx,
    \label{total-energy}
\end{align}
being 
the internal energy $\mathscr{U}$ specified by (cf.~also \eqref{free-energy}):
\begin{align}
    \tU{c}{z}{\teta}{\RRR \eps(\ub)\EEE}:={}&\tF{c}{z}{\teta}{\eps(\ub)}-\teta\cdot\partial_\teta\mathscr{F}(c,z,\teta,\RRR \eps(\ub)\EEE)\notag\\
        ={}&\int_\Omega \frac 1p|\nabla c|^p+\frac 1p|\nabla z|^p+W(c,\e(\ub),z)+\phi(c)+\sigma(z)+ I_{[0,+\infty)}(z)+\teta\dx.
\end{align}

From an analytical viewpoint, observe that the entropy
inequality \eqref{entropy-ineq-intro} has the advantage that all the
quadratic terms on the right-hand side of \eqref{e:teta} are multiplied  by a negative test  function, which, together with the fact that we are only requiring an {\sl inequality} and not an equation,  will allow \EEE us to 
apply upper semicontinuity arguments  for \EEE the limit passage in  the time-discrete approximation 
 of system \eqref{eqn:PDEsystem} set up in \EEE
 Section~\ref{s:5}.

 The \EEE {\sl ``entropic'' formulation},  first introduced in \cite{fei} in the framework
 of  heat  conduction  in fluids, and then applied to a  phase separation
 \EEE   model \EEE derived according to \textsc{Fr\'emond}'s approach
  \cite{fremond} in \cite{fpr09},  has been successively
    used  also in  models for different kinds of special materials.  Besides the aforementioned
    work on damage \cite{RocRos14}, we may mention  the papers \cite{ffrs}, \cite{frsz1},  and \cite{frsz2} on liquid crystals,  \EEE
 and more recently    the analysis of  a  model for the evolution of non-isothermal  binary incompressible immiscible fluids (cf.\ \cite{ERS}).

 Let us also mention that other approaches to treat PDE systems with an  $L^1$-right-hand side
 are available
 in the literature: among others, we refer to \cite{zimmer}, 
 resorting to the notion of {\em renormalized solution},
   \EEE
 and \cite{roubiSIAM10} where
 the coupling of rate-independent and thermal processes is considered. The heat equation therein, with an  $L^1$-right-hand side, is tackled
by means of Boccardo-Gallo\"uet type techniques. \EEE

\paragraph{\bf The weak formulation of the damage flow rule.} Following the lines of \cite{hk1, hk2}, we replace the damage inclusion \eqref{e:z} by 
		the {\sl damage energy-dissipation
	    inequality} 
	    \begin{align}
	    &\label{energ-ineq-z-intro}
	    \begin{aligned}
	      \int_s^t   \int_{\Omega} |z_t|^2 \dd x \dd r  & +\io\left(
	      \frac1p  |\nabla z(t)|^p +  \sigma(z(t))\right)\dd x\\ & \leq\io\left(
	      \frac1p |\nabla z(s)|^p+ \sigma(z(s))\right)\dd x
	      +\int_s^t  \int_\Omega z_t \left(-
	      \pd{z}(c,\e(\ub), z)
	      +\teta\right)\dd x \dd r,
	    \end{aligned}
	    \end{align}    
	   imposed \EEE for all $t \in (0,T]$, for $s=0$, and for almost all $0< s\leq t$ and the {\sl one-sided variational inequality for the damage process}
	    \begin{align}
	    \label{var-ineq-z-intro}
	      &\begin{aligned}
	      \int_\Omega  \Big( z_t  \zeta +|\nabla z|^{p-2} \nabla z \cdot \nabla \zeta  + \xi \zeta +
	      \sigma'(z(t)) \zeta & + \pd{z}(c,\e(\ub), z) \zeta -\teta \zeta \Big)\,\mathrm{d}x 
	      \geq 0  \quad \aein\, (0,T),
	    \end{aligned}
	    \end{align}
	    required to be valid \EEE for all sufficiently regular test functions $\zeta$, where $\xi \in \partial I_{[0,+\infty)}(z)$ $\aein\, Q$, and $z(x,t) \in [0,1]$, $z_t(x,t)\in(-\infty,0]$ $ \aein Q.$
 \paragraph{\bf Entropic weak solutions.} In what follows, we shall refer to the formulation consisting of (\ref{e:c}--\ref{e:mu}), \eqref{e:u}, \eqref{entropy-ineq-intro}, \eqref{total-enid-intro}, \eqref{energ-ineq-z-intro}, \eqref{var-ineq-z-intro}, supplemented with  the initial and boundary conditions \eqref{init-bdry-conditions}, 
as the \emph{entropic weak formulation} of (the initial-boundary value problem for) system \eqref{eqn:PDEsystem}.  
Let us point out  that, in case of regular solutions, it can be seen that the {\sl ``entropic'' formulation} is equivalent to the internal energy balance \eqref{e:teta} (cf.\  
Remark
\ref{rmk:weak-sol} 
as well as
\cite[Rmk. 2.6]{RocRos14} for more details). Likewise,   the {\sl weak formulation of the damage flow rule} would give rise to the damage inclusion \eqref{e:z}  for sufficiently regular solutions. \EEE  In this sense, we can observe that our formulation is  consistent \EEE with the PDE system \eqref{eqn:PDEsystem-expli}.

\paragraph{\bf  Our results and  related literature.} In this paper we prove existence of global-in-time   entropic weak  \EEE solutions
under the following assumptions on the  data: 
\begin{itemize}
\item[-] the mixing potential $\phi$ is the sum of a convex possibly non-smooth part and a regular $\lambda$-concave part (cf.~Hyp.~(I)). Hence, both the sum of a logarithmic potential (e.g.~ $(1+c)\log(1+c)+(1-c)\log(1-c)$) or an indicator function (e.g.~$I_{[-1,1]}(c)$) and a smooth concave perturbation (e.g.~$-c^2$) are allowed as choices of $\phi$,  cf.\ Remark   \ref{rmk:l-convex-splitting} ahead); \EEE
\item[-] the mobility $m$ is a smooth function bounded from below by a positive constant;
\item[-] the function $\sigma$ is regular;
\item[-] the heat conductivity $\condu$ is a continuous function growing like a power of $\teta$. This choice is motivated by mathematics, indeed it is needed  in order to get  suitable estimates \EEE on the temperature $\teta$, but it is also justified  by \EEE the physical behavior of certain materials (cf.~\cite{klein,zr}); 
\item[-] the function $a$ is bounded away from zero and bounded from above as well as its partial derivatives with respect to both $c$ and $z$. These assumptions are mainly made in order to prevent the full degeneracy of the momentum balance \eqref{e:u} and in order to obtain from it   the \EEE sufficient regularity on $\ub$ needed to handle the nonlinear coupling with the temperature and damage relations.  Instead, the coefficient $b$ in the elastic energy 
 density \EEE
\eqref{elastic-energy} can possibly vanish, and both $b$ and the eigenstrain $\e^*$  are required to \EEE be sufficiently regular functions;
\item[-] the thermal expansion coefficient $\rho$ is assumed to be a positive constant. For more general behavior of $\rho$ possibly  depending  \EEE on the damage parameter $z$ the reader can refer to \cite{hr}, while the fact that $\rho$ is chosen to be independent of $\teta$ is justified by the fact that we  assume  \EEE to have a constant specific heat $c_v$ (equal to 1 in \eqref{e:teta} for simplicity): indeed they are related (by thermodynamical laws) by the relation $\partial_\teta c_v=\teta\partial_\teta\rho$;
\item[-] the initial  data are taken in the energy space, except  for
  \EEE the initial displacement and velocity which,  jointly \EEE with the boundary Dirichlet datum for $\ub$, must  enjoy \EEE the regularity needed in order to perform  
  \EEE elliptic regularity estimates on the momentum balance \eqref{e:u}. 
\end{itemize}
 Furthermore, we consider a \emph{gradient theory} for damage.   From the physical viewpoint,  the term $\frac{1}{p}|\nabla z|^p$  contributing to 
\eqref{free-energy} models \EEE
nonlocality of  the damage process, since the gradient of  $z$
accounts for the influence of damage at a material point, undamaged in its neighborhood.   The mathematical advantages attached to the presence  of this term, and of the analogous contribution
$\frac{1}{p}|\nabla c|^p$, 
 are rather obvious. Let us mention that, in fact, 
 throughout  the paper we  shall assume that the exponent $p$ in \eqref{e:mu} and \eqref{e:z} fulfills $p>d$. This assumption is mainly mathematically motivated by the fact that it ensures that $c$  and $z$ are 
  estimated in $W^{1,p}(\Omega) \subset \mathrm{C}^0
  (\overline\Omega)$, and has been 
 adopted for the analysis of  other damage models (cf., e.g., \cite{bmr,MieRou06,mrz,krz2}).
 \par
Regarding the previous results on this type of problems in the literature,
 let us point out that, by now,  \EEE several contributions on systems coupling rate-dependent damage and   thermal processes  \EEE (cf.,   e.g.~\cite{BoBo, RocRos12, RocRos14, hr}) \EEE as well as rate-dependent damage and phase separation (cf., e.g., \cite{hk1,hk2})  are available in the literature. 
Up to our knowledge, this is  one of  the  first contributions on the
analysis of  a model encompassing 
all of  the three processes  (temperature evolution, damage, phase separation) in
  a thermoviscoelastic \EEE material.
 Recently, \EEE
 a thermodynamically consistent,  quite general \EEE   model describing
 diffusion of a solute or a fluid in a solid undergoing possible phase
 transformations and rate-independent damage, beside possible visco-inelastic processes, has been  studied   in \cite{TomRou}.  Let us highlight the main difference to our own model: 
the evolution of the damage process is therein considered \emph{rate-independent}, which clearly affects the 
  weak solution concept adopted in \cite{TomRou}. In particular, we may point out   
  that dealing with a \emph{rate-dependent} flow rule for the damage variable is one of the challenges of our own analysis,  \EEE due to the presence of the quadratic nonlinearity in $\e(\ub)$   on the right-hand side of \EEE \eqref{e:z}.
   \par
     Let us conclude by  mentioning \EEE some open problems which are currently under study, such  as  uniqueness of solutions, at least for the isothermal case, and 
    the global-in-time existence analysis  for \EEE the complete damage (degenerating) case, in which  the coefficient $a$ in the momentum balance \eqref{e:u} is allowed to vanish in some parts of the domain (cf.~\cite{RocRos12} for the case without phase separation and \cite{hk3} for the isothermal case).

\paragraph{\bf Plan of the paper.} In \underline{Section~\ref{s:3}},   after   listing all  the assumptions on the data of the problem, we   rigorously state  the \emph{entropic weak} formulation of the problem and   give \EEE the main result of the paper,  i.e.\ \EEE Theorem  \ref{thm:1}  ensuring the  global-in-time existence of  entropic weak solutions. 
\par
In
\underline{Section~\ref{s:4}} we (formally) derive all  the  a priori estimates on system \eqref{eqn:PDEsystem}   which will be  at the core of our existence analysis. \EEE  
\par
 As previously mentioned, Thm.\   \ref{thm:1}  is proved by passing to the limit in a carefully devised time-discretization scheme,  also coupled with regularization procedures, which could also be of interest in view of possible numerical simulations on the model. 
   To its analysis, the whole \underline{Section 
\ref{s:5}}  is devoted. While postponing more detailed comments on its
   features, let us mention here that our time-discrete scheme will be   {\sl
      thermodynamically \EEE consistent}, in that it will ensure 
the validity of  the  discrete versions of the entropy  and energy inequalities  \eqref{entropy-ineq-intro} and \eqref{total-enid-intro}. This will play a crucial role in the limit passage, developed in \underline{Section \ref{s:6}}, where the proof of Theorem  \ref{thm:1}  will be carried out.  \EEE

\section{\bf Weak formulation and statement of the main result}
\label{s:3}
In this section, first of all we recall some notation and preliminary results that will be used throughout the paper. Next,
we list all of the conditions on the nonlinearities featuring in system \eqref{eqn:PDEsystem}, as well as on the
data $f,\, g,\, h$ and on the initial data. We are thus in the position to
give our notion of weak solution to the initial-boundary value problem for system \eqref{eqn:PDEsystem} and
state our main existence result, Theorem \ref{thm:1}.
\subsection{\bf Preliminaries}
\label{ss:3.1}
In what follows, we will suppose that
\begin{equation}
\label{smoothness-omega}
\Omega\subset\RR^d, \quad d\in \{2,3\}, \EEE \ \ \text{is
 a bounded  domain   with \EEE $\mathrm{C}^2$-boundary $\partial\Omega$.}
\end{equation}
This smoothness requirement will allow us to apply regularity  results for elliptic systems, at the basis of a
regularity estimate that we shall perform on the momentum equation and that will have a key role in the proof of our existence result for
system \eqref{eqn:PDEsystem}.
\paragraph{\bf Notation for function spaces, norms, operators}
Given a Banach space $X$,
we will  use the symbol $\pairing{}{X}{\cdot}{\cdot}$ for the
duality pairing between $X'$ and $X$.
 Moreover,  we shall denote by
  ${\rm BV}([0,T];X)$ (by $\mathrm{C}^0_{\mathrm{weak}}([0,T];X)$, respectively),
 the space
of functions from $[0,T]$ with values in $ X$ that are defined at
every  $t \in [0,T]$ and  have  bounded variation on  $[0,T]$  (and
are \emph{weakly} continuous   on  $[0,T]$, resp.).

Let $\Omega \subset \R^d$ be a bounded domain, $d \in \{2,3\}$.
We set $Q:= \Omega \times (0,T)$ and $\Sigma:=\partial\Omega\times (0,T)$.
We identify both $L^2 (\Omega)$ and $\Ha$ with their dual spaces, and denote by
$(\cdot,\cdot)$ the scalar product in $\R^d$, by $(\cdot,\cdot)_{L^2(\Omega)}$
both the scalar product in $L^2(\Omega)$  and in \EEE $\Ha$, and by
$\boZ$, $\boY$ and $ \Hn(\Omega)$ \EEE the spaces
\begin{align*}
  &\boZ:=\big\{\vv \in H^1(\Omega;\R^d) \,:\ \vv= 0 \ \hbox{ on
    }\partial\Omega \,\big\},
    \text{ endowed with the  norm } \|
    \vv\|_{H_0^1(\Omega;\R^d)}^2: = \int_{\Omega} \e(\vv) \colon \e(\vv)\,\dd x,
  \\
  &\boY:=  \boZ \cap H^2(\Omega; \R^d) = \EEE  \big\{\vv \in H^2(\Omega;\R^d)\,:\ \vv ={0} \ \hbox{ on }\partial\Omega \,\big\},\\
  &\Hn(\Omega):=\big\{v\in H^2(\Omega)\,:\ \partial_n v=0\text{ on }\partial\Omega\big\}.
\end{align*}
Note that by Korn's inequality  $\|\cdot\|_{H_0^1(\Omega;\R^d)}$ is a  norm  equivalent to the standard one on $H^1(\Omega;\R^d)$.
We denote by $\mathcal{D} (\overline Q)$ the space of
the $\rmC^\infty$-functions with compact support on $Q$.
For $q\geq 1$ we will adopt the notation
\begin{equation}
\label{label-added}
	W_+^{1,q}(\Omega):= \left\{\zeta \in
	W^{1,q}(\Omega)\, : \ \zeta(x) \geq 0  \quad \foraa\, x \in
	\Omega \right\}, \quad \text{ and analogously for }
	W_-^{1,q}(\Omega).
\end{equation}

Finally, throughout the paper we shall denote by the symbols
$c,\,c',\, C,\,C'$  various positive constants depending only on
known quantities. Furthermore, the symbols $I_i$,  $i = 0, 1,... $,
will be used as place-holders for several integral terms popping in
the various estimates: we warn the reader that we will not be
self-consistent with the numbering, so that, for instance, the
symbol $I_1$ will occur several times with different meanings.
\paragraph{\bf Preliminaries of mathematical elasticity}
We postpone to Sec.\ \ref{ss:3.2} the precise statement of all assumptions on the \emph{elastic} contribution
 $\pd{\eps}(c,\e(\ub),z)$ \EEE to the elliptic operator in \eqref{e:u}. Concerning the
stiffness tensor $\CC$ (we will take the viscosity tensor to be a multiple of $\CC$, cf.\ \eqref{eqn:assbV} ahead), \EEE we suppose that
\begin{equation}
\label{ass-elas}
   \CC=(c_{ijkh})
  \in \mathrm{C}^{1}(\Omega;\R^{d \times d \times d \times d})\,
\end{equation}
with coefficients satisfying the classical symmetry and ellipticity
conditions (with the usual summation convention)
\begin{equation}
\label{ellipticity}
\begin{aligned}
c_{ijkh}=c_{jikh}=c_{khij},\qquad \qquad
\exists\,  \nu_0>0 \,:  \quad c_{ijkh} \xi_{ij}\xi_{kh}\geq
\nu_0\xi_{ij}\xi_{ij}  \ \    \forall\, \xi_{ij}\colon \xi_{ij}=
\xi_{ji}.
\end{aligned}
\end{equation}
Observe that with \eqref{ellipticity},
we also encompass in our analysis the case of
an anisotropic and inhomogeneous material.
Thanks to \eqref{ellipticity} and to the $\mathrm{C}^2$-regularity of $\Omega$
we have the following elliptic regularity result (cf.\ e.g.\ \cite[Lemma~3.2,  p.\
260]{necas}) or \cite[Chap.\ 6, p.\ 318]{Hughes}):
\begin{align}
\label{cigamma}
\begin{aligned}
	\exists \, c_1,\, c_2>0 \quad \forall\,  \uu \in
	\boY\, : \qquad
	c_{1} \| \uu \|_{H^2(\Omega;\R^d)}
		\leq \|\dive (\CC\eps(\uu))\|_{L^2(\Omega;\R^d)}
		\leq c_{2} \| \uu \|_{H^2(\Omega;\R^d)}\,.
\end{aligned}
\end{align}
Under the assumption that $\uu$ has   prescribed  \EEE boundary values
$\db\in H^2(\Omega;\R^d)$, i.e. $\uu=\db$ a.e. on $\partial\Omega$,
we obtain by  applying  \eqref{cigamma} \EEE to   $\uu-\db$
\begin{align}
\label{H2reg}
\begin{aligned}
	&\exists \,   \widetilde c_1,\, \widetilde c_2>0  \EEE \quad \forall\,  \uu \in H^2(\Omega;\R^d)
		\text{ with }\uu=\db\text{ a.e. on }\partial\Omega\, :\\
	&\qquad\qquad\widetilde c_{1} \| \uu \|_{H^2(\Omega;\R^d)}
		\leq \|\dive (\CC\eps (\uu))\|_{L^2(\Omega;\R^d)}+\|\db\|_{H^2(\Omega;\R^d)}
		\leq \widetilde c_{2}\big(\|\uu\|_{H^2(\Omega;\R^d)} + \|\db\|_{H^2(\Omega;\R^d)}\big)\,.
\end{aligned}
\end{align}

\paragraph{\bf Useful inequalities}
For later reference, we recall here the Gagliardo-Nirenberg inequality
in a particular case: for
all $r,\,q\in [1,+\infty],$ and for all $v\in L^q(\Omega)$ such that
$\nabla v \in L^r(\Omega)$, there holds
\begin{equation}
\label{gn-ineq}
	\|v\|_{L^s(\Omega)}\leq C_{\mathrm{GN}}
	\|v\|_{W^{1,r}(\Omega)}^{\theta} \|v\|_{L^q(\Omega)}^{1-\theta} \qquad
	\text{ with } \frac{1}{s}=\theta
	\left(\frac{1}{r}-\frac{1}{d}\right)+(1-\theta)\frac{1}{q}, \ \  0
	\leq \theta \leq 1,
\end{equation}
the positive constant $C_{\mathrm{GN}}$ depending only on $d,\,r,\,q,\,\theta$.

We will also make use of the following interpolation inequality from \cite[Thm.\ 16.4, p.\ 102]{LM}
\begin{align}
\label{interpolationIneq}
	\forall\varrho>0\quad\exists\,C_\varrho>0\quad\forall u\in X:\qquad\|u\|_Y\leq \varrho\|u\|_X+C_\varrho\|u\|_Z,
\end{align}
where
$X\subseteq Y\subseteq Z$ are Banach spaces with compact embedding $X\Subset Y$.

Combining  this with  the compact embedding
\begin{equation}
\label{dstar}
     \boY \Subset W^{1,d^\star{-}\eta}(\Omega;\R^d),
    \quad \text{with } d^{\star}=
    \begin{cases} \infty & \text{if }d=2,
    \\
    6 & \text{if }d=3,
    \end{cases}
 \quad \text{for all $\eta >0$},
\end{equation}
(where for $d=2$ we mean that $\boY \Subset W^{1,q}(\Omega;\R^d)$ for all $1 \leq q <\infty$),
%
%
we have
\begin{equation}
\label{interp} \forall\, \varrho>0 \ \ \exists\, C_\varrho>0 \ \ \forall\, \eta>0 \ \
	\forall\, \uu \in \boY\,: \ \
	\|\e(\uu)\|_{L^{d^\star{-}\eta}(\Omega; \R^{d\times d}\EEE)}\leq \varrho
	\|\uu\|_{H^2(\Omega; \R^{d}\EEE)}+C_\varrho\|\uu\|_{L^2(\Omega; \R^{d}\EEE)}.
\end{equation}
We also obtain by interpolation
\begin{equation}
\label{interp2}
	\forall\, \varrho>0 \ \ \exists\, C_\varrho>0 \ \ \forall\, \eta>0 \ \
	\forall\, \uu \in H^1(\Omega;\R^d)\,: \ \
		\|u\|_{L^{d^\star{-}\eta}(\Omega;\R^d)}
		\leq \varrho\|\uu\|_{H^1(\Omega;\R^d)}+C_\varrho\|\uu\|_{L^2(\Omega;\R^d)}.
\end{equation}

We will also resort to  the following \emph{nonlinear}  Poincar\'{e}-type inequality
 (proved in,  e.g.,  \cite[Lemma 2.2]{gmrs}), with  $\mean(w)$ the mean value of $w$:
 \begin{equation}
 \label{poincare-type}
 \forall\, q>0 \quad \exists\, C_q >0 \quad \forall\, w \in H^1(\Omega)\, : \qquad
 \| |w|^{q} w \|_{H^1(\Omega)} \leq C_q (\| \nabla (|w|^{q} w )\|_{L^2(\Omega)} + |\mean(w)|^{q+1})\,.
\end{equation}
\subsection{Assumptions}
\label{ss:3.2}
We now collect  all the conditions on the functions $\phi,\,m,\,\sigma,\,\condu,\,a,\,W,\,\VV$ in system \eqref{eqn:PDEsystem}.
\par
\noindent \textbf{Hypothesis (I).}
Concerning the potential $\phi$ for the concentration variable $c$, we require that
\begin{equation}
\label{potential-phi}
\begin{gathered}
\phi = \widehat{\beta} + \gamma \quad \text{with } \widehat{\beta}: \R \to [0,+\infty] \text{ proper, convex, and l.s.c., with } \widehat{\beta}(0)=0,
\text{ and }
\\
 \gamma \in \rmC^1(\R), \qquad \gamma \text{ $\lambda_{\gamma}$-concave for some $\lambda_{\gamma}\geq0$, and }
\\
\text{such that } \exists\, C_\phi \in \R\, \ \forall\, c \in \mathrm{dom}(\phi) : \ \ \phi(c) \geq C_\phi\,.
\end{gathered}
\end{equation}
In what follows, we will denote the convex-analysis subdifferential $\partial\widehat{\beta}:\R \rightrightarrows \R$ by $\beta$, and by $\mathrm{dom}(\beta)$ the set $\{ c \in \R\, : \ \beta(c)\neq \emptyset\}$.
 From $0\in \mathrm{Argmin}_{r\in \R} \widehat{\beta}(r)$, it follows that $0\in \beta(0)$. \EEE
\begin{remark}[Consequences of Hypothesis (I)]
\upshape
\label{rmk:l-convex-splitting}
For later use we observe that, since  the map $c \mapsto \gamma(c) - \lambda_\gamma\tfrac{c^2}{2}$ is concave,
we have the following convex-concave decomposition for $\phi$:
 \begin{equation}
\label{decomposition} \phi(c)= \ddd{\widehat{\beta}(c) +
\lambda_\gamma \frac{c^2}{2}}{convex}{} + \ddd{\gamma(c) -
\lambda_\gamma\frac{c^2}{2}}{concave}{}\,.
\end{equation}
\end{remark}

\begin{example}
\label{ex:phi}
\upshape
 Admissible  choices  for   $\widehat\beta$ are both the physically meaningful  potentials    $\widehat\beta(c)=(1+c)\log(1+c)+(1-c)\log(1-c)$ and $\widehat\beta(c)=I_{[-1,1]}(c)$,  while $\gamma$ can be a general smooth concave perturbation, e.g.~$\gamma(c)=-\lambda_\gamma c^2$. \EEE
\end{example}
\par\noindent
\textbf{Hypothesis (II).}
As for the nonlinear functions $m$ and $\sigma$, we 
suppose that
\begin{align}
	&m \in \mathrm{C}^1 (\R\times\R) \ \text{ and  } \ \exists\, m_0>0 \ \forall\, (c,z) \in \R\times \R \, : \ m(c,z) \geq m_0,
		\label{hyp-m}\\
	&\sigma \in \mathrm{C}^2 (\R).
		\label{hyp-sigma}
\end{align}
\par
\noindent
\textbf{Hypothesis (III)}
The heat conductivity function
\begin{align}
\label{hyp-K}
	\begin{gathered}
		\condu:[0,+\infty)\to(0,+\infty)  \  \text{	is continuous and}\\
	\exists \, c_0, \, c_1>0 \quad\exists\kappa>1   \ \
	\forall\teta\in[0,+\infty)\, :\quad	c_0 (1+ \teta^{\kappa})
		\leq \condu(\teta) \leq c_1 (1+\teta^{\kappa})\,.
\end{gathered}
\end{align}
We will  denote by $\widehat{\condu}$ the primitive $\widehat{\condu} (x):= \int_0^x \condu(r) \dd r $ of $\condu$.
\par
\noindent \textbf{Hypothesis (IV).}
We require
\begin{align}
\label{data-a}
\begin{aligned}
	a \in \mathrm{C}^1(\R\times\R) \quad\text{ and }\quad
	&\exists\, a_0,a_1>0 \quad\forall c, z\in \R\, : \quad 
		&&a_0\leq a(c,z) \leq a_1,\\
	&\exists\, a_2>0 \quad\forall c, z\in \R\, : \quad
		&&|a_{,c}(c,z)|+|a_{,z}(c,z)|\leq a_2.
\end{aligned}
\end{align}
\noindent \textbf{Hypothesis (V).}
We suppose that
\begin{align}
\label{eqn:assumptionW}
    W(x,c,\e,z)=\frac12  b(c,z)\CC(x)(\e-\e^*(c)):(\e-\e^*(c)),
\end{align}
where we recall that  $b(c,z)$ models the influence of the concentration and damage on the stiffness tensor $\CC$
and $\e^*$ models the eigenstrain. We assume 
\begin{align}\label{eqn:assbV}
    &\e^*\in \mathrm{C}^2(\R),\qquad b\in \mathrm{C}^2(\R\times\R)
			\quad\text{ and }\quad\exists\, b_0>0\quad\forall c, z\in \R\, : \quad 0\leq b(c,z)\leq b_0,\quad  \VV=\omega\CC, \quad\omega>0. \EEE
\end{align}
The tensor function $\CC$ should satisfy  conditions \eqref{ass-elas} and \eqref{ellipticity}.
 Let us mention in advance that the last condition on $\VV$ will play a crucial role in the proof of $H^2(\Omega;\R^d)$-regularity for the discrete displacements,
cf.\ Lemma \ref{lemma:4.16} ahead. \EEE
\par
For notational convenience, from now on  we  shall  neglect the $x$-dependence of $W$.
 For later reference, we observe that 
\begin{equation}
\label{later-ref}
\begin{aligned}
&
W_{,c}(c,\eps,z) = \frac12 b_{,c}(c,z) \CC(\e-\e^*(c)):(\e-\e^*(c))  - \EEE b(c,z) (\e^*)'(c) \CC :(\e-\e^*(c)),
\\
& 
\begin{aligned}
W_{,cc}(c,\eps,z) =  & \frac12 b_{,cc}(c,z) \CC(\e-\e^*(c)):(\e-\e^*(c)) - b_{,c}(c,z)  (\e^*)'(c) \CC :(\e-\e^*(c)) 
\\
& 
-b(c,z) (\e^*){''}(c) \CC :(\e-\e^*(c))
+ b(c,z) (\e^*)'(c) \CC :(\e^*)'(c),
\end{aligned}
\\
& 
W_{,z}(c,\eps,z) = \frac12 b_{,z}(c,z) \CC(\e-\e^*(c)):(\e-\e^*(c)),
\\
& 
W_{,zz}(c,\eps,z) = \frac12 b_{,zz}(c,z) \CC(\e-\e^*(c)):(\e-\e^*(c)),
\\
& W_{,\eps}(c,\eps,z) = b(c,z) \CC(\e-\e^*(c)),
\\
& 
W_{,\eps c }(c,\eps,z) = b_{,c}(c,z) \CC(\e-\e^*(c)) - b(c,z) (\eps^*)'(c)\CC,
\\
&
W_{,\eps z }(c,\eps,z) = b_{,z}(c,z) \CC(\e-\e^*(c))\,. 
\end{aligned}
\end{equation}
\EEE

Finally, we will suppose throughout the work that $p>d$ and that the data $\db$, $\mathbf{f}$, $g$, and $h$
comply with
\begin{subequations}
\label{hyp:data}
\begin{align}
	&\db\in H^1(0,T;H^2(\Omega;\R^d))\cap W^{1,\infty}(0,T;W^{1,\infty}(\Omega;\R^d))\cap H^2(0,T;H^1(\Omega;\R^d)),
		\label{dirichlet-data}\\
	&\mathbf{f}\in L^2(0,T;\Ha),
		\label{bulk-force}\\
	&g \in L^1(0,T;L^1(\Omega)) \cap L^2 (0,T; H^1(\Omega)'),\quad g\geq 0 \quad\hbox{a.e.  in }Q\,,
		\label{heat-source}\\
  &h \in L^1 (0,T; L^2(\partial \Omega)), \quad h \geq 0 \quad\hbox{a.e.  in }\Sigma\,,
  	\label{dato-h}
\end{align}
\end{subequations}
and that the initial data fulfill
\begin{subequations}
\label{h:initial}
\begin{align}
  &c^0\in   W^{1,p}(\Omega),\quad  \widehat{\beta}( c^0\EEE) \in L^1(\Omega), \quad
  	\mean_0:= \mean( c^0\EEE) \text{ belongs to the interior of } \mathrm{dom}(\beta), 
  	\label{data_c}\\
  &z^0\in   W^{1,p}(\Omega),\quad 0 \leq  z^0 \leq 1 \text{ in }\Omega,
  	\label{data_z}\\
  &\teta^0 \in L^{1}(\Omega), \quad \log\teta^0\in L^1(\Omega),\quad\exists\,
    \teta_*>0\,: \;\teta^0\geq\teta_*>0\;\aein\Omega, 
 	 \label{data_teta}\\
  &\ub^0\in  H^2(\Omega;\R^d)\text{ with }\ub^0=\db(0)\;\text{ a.e. on }\partial\Omega,
 		\label{data_u}\\
  &\vb^0\in  H^1(\Omega;\RR^d).
 		\label{data_v}
\end{align}
\end{subequations}
\begin{remark}
	\upshape
	\label{rmk:on-init-data}
	Let us point out  explicitly that,
	 if we choose \EEE $\phi$ as the logarithmic potential from Example \ref{ex:phi}, or  with $\phi$ given by the sum
	$I_{[0,1]} + \gamma$,  we   enforce \EEE the
	(physically meaningful) property that $c \in (0,1)$ ($c\in [0,1]$, respectively) in $\Omega$.
	From \eqref{data_c} we read that this constraint has to be enforced on the initial datum $ c^0$ \EEE as well, in the same was as we require $ z^0\in [0,1]$ \EEE with \eqref{data_z}.
	 
	The latter condition,
	combined with the information that $z(\cdot,x)$ is nonincreasing for almost all $x\in\Omega$ thanks to the term $\partial I_{(-\infty,0]}(z_t)$ in \eqref{e:z},
	will yield that the solution component  $z$ is in $[0,1]$ a.e.\ in $Q$. This property,  albeit \EEE not needed for the analysis of
	\eqref{eqn:PDEsystem},  is in accordance with the physical meaning of the damage parameter.

	Clearly,  in the case
	the concentration variable $c$ is forced to vary between two fixed values,
	and $z$ is forced to be in $[0,1]$, values of the functions $m$, $\sigma$, $a$ and $b$ outside
	 these \EEE ranges  do not affect the PDE  system. 
\end{remark}
\subsection{Entropic solutions and main result}
\label{ss:3.3}
Prior to the precise statement of our weak solution notion for
the initial-boundary value problem for system
\eqref{eqn:PDEsystem}, we shortly introduce and motivate its main ingredients, namely a suitable
weak formulation of the flow rule \eqref{e:z} for the damage variable
and the ``entropic'' formulation of the heat equation \eqref{e:teta}. To them, the standard weak formulation of the Cahn-Hilliard equation, and the pointwise (a.e.\ in $Q$) momentum equation will be coupled.
\\
\paragraph{\bf Entropy and total energy inequalities for the heat equation}
Along the footsteps
of
\cite{fei, fpr09}, cf.\ also \cite{RocRos14} in the case of a PDE system in thermoviscoelasticity,  we will weakly formulate
\eqref{e:teta}  by means of an ``entropy inequality'', and of a   ``total energy (in)equality''.  The
former is obtained by testing  \eqref{e:teta} by $\varphi/\teta$, with  $\varphi$ a   \emph{positive} \EEE smooth test function.
Integrating over space and time leads to
\begin{equation}
\label{later-4-comparison}
\begin{aligned}
  &
  \begin{aligned}
      \int_0^T \int_\Omega \big(\partial_t \log(\teta) + c_t + z_t  & + \rho \mathrm{div}(\uu_t) \big) \varphi \dd x \dd t
      +\int_0^T \int_\Omega \mathsf{K}(\teta) \nabla \log(\teta)\cdot\nabla \varphi  \dd x \dd t
	  \\ & -  \int_0^T \int_\Omega  \mathsf{K}(\teta) \frac{\varphi}{\teta}  \nabla \log(\teta) \cdot \nabla \teta  \dd x \dd t
  \end{aligned}
  \\
  &
  = \int_0^T \int_\Omega  \big(g + |c_t|^2+ |z_t|^2 + a(c,z) \eps(\uu_t):\vism \eps(\uu_t) + m(c,z)|\nabla \mu|^2\big) \frac\varphi\teta  \dd x \dd t
  + \int_0^T \int_{\partial\Omega} h \frac\varphi\teta  \dd S \dd t
\end{aligned}
\end{equation}
for all $\varphi \in \mathcal{D}(\overline Q)$. Then, the entropy
inequality \eqref{entropy-ineq} below follows.

The total energy inequality   (cf.\ the forthcoming \eqref{total-enid})  associated with system
\eqref{eqn:PDEsystem} corresponds to its
standard \emph{energy} estimate. Formally, it is indeed  obtained by
 testing \eqref{e:c} by $\mu$, \eqref{e:mu}
by $c_t$, \eqref{e:z} by $z_t$, \eqref{e:teta} by $1$, and \eqref{e:u} by $\uu_t$, and it features the total energy \eqref{total-energy} of the system.\\
\paragraph{\bf Weak flow rule for the damage parameter}
We will adopt the solution notion from \cite{hk1,hk2},
which can be motivated by
observing that, due to the convexity of $I_{(-\infty,0]}$, the  flow rule \EEE \eqref{e:z}
reformulates as  $z_t\leq 0$ a.e.\ in $Q$ and
\begin{subequations}
\label{ineq-system}
\begin{align}
\label{ineq-system2}
	\Big( z_t-\Delta_p(z)+\xi + \sigma'(z)+\pd{z}(c,\e(\ub),z)-\teta\Big) \zeta \geq{}&
     0\quad  && \qquad \aein Q,\text{ for all } \zeta \leq 0,
	\\
	\label{ineq-system3}
	\Big( z_t-\Delta_p(z)+
	\xi + \sigma'(z)    +\pd{z}(c,\e(\ub),z) -\teta\Big) z_t \leq{}& 0 &&   \qquad
	\aein Q,
\end{align}
\end{subequations}
with $\xi \in \partial I_{[0,+\infty)}(z)$ in $\Omega
\times (0,T)$.
Our weak formulation of \eqref{e:z} in fact consists of
the condition $z_t \leq 0$,
of the integrated version of \eqref{ineq-system2}, with negative test functions from
$W^{1,p}(\Omega)$, and of the \emph{damage energy-dissipation} inequality obtained by integrating \eqref{ineq-system3}.

We are now in the position to give the following notion of weak solution:
\begin{definition}[Entropic weak formulation]
\label{def-entropic}
	Given  data $(\db, \fb, g, h)$ fulfilling \eqref{hyp:data} 
	and initial  values  \linebreak
	$ (c^0,z^0,\teta^0,\ub^0,\vb^0)  $ \EEE fulfilling \eqref{h:initial}, we call a quintuple
	$(c, \mu, z,\vartheta,\ub)$  an \emph{entropic weak solution} to the PDE system
	\eqref{eqn:PDEsystem}, supplemented with the initial and boundary conditions \eqref{init-bdry-conditions},
	if
	\begin{align}
  	&c\in L^\infty(0,T;W^{1,p}(\Omega))\cap H^1(0,T;L^2(\Omega)),\,
			\Delta_p(c)\in L^2(0,T;L^2(\Omega)),\label{reg-c}\\
  	&\mu\in L^2(0,T;\Hn(\Omega)),\label{reg-mu}\\
  	&z\in L^\infty(0,T;W^{1,p}(\Omega))\cap H^1(0,T;L^2(\Omega)),\label{reg-z}\\
  	&\teta\in L^2(0,T;H^1(\Omega))\cap L^\infty(0,T;L^1(\Omega)),\,
	 	\teta^{\frac{\kappa+\alpha}{2}}\in L^2(0,T;H^1(\Omega))\text{ for all }\alpha\in(0,1),
	  	\label{reg-teta}\\
	  &\ub\in H^1(0,T; H^2(\Omega;\R^d))\cap W^{1,\infty}(0,T; H^1(\Omega;\R^d))\cap H^2(0,T;L^2(\Omega;\R^d)),\label{reg-u}
	\end{align}
	and subgradients (specified in \eqref{eta-beta} and \eqref{xi-def} below)
	\begin{align}
	&\eta \in L^2(0,T;L^2(\Omega)),\\
	&\xi \in L^2(0,T;L^2(\Omega)),
	\end{align}
	
	where $(c,z,\teta,\uu)$ comply
	the initial conditions
	(note that the initial condition for $\teta$ is implicitly formulated in \eqref{total-enid}
	below)
	\begin{align}
	\label{better-init}
	    &&&c(0)=c^0,
	    &&z(0)=z^0,
	      &&\ub(0)=\ub^0,
	    &&\ub_t(0)=\vb^0
	    &&\text{a.e. in }\Omega,&&
	\end{align}
	the Dirichlet condition
	\begin{align}
	\label{boundary-cond}
		\uu=\db\quad\text{ a.e. on }\partial\Omega\times(0,T)
	\end{align}
	
	and  the following  relations: \EEE
	\begin{itemize}
	  \item[(i)] Cahn-Hilliard system: 
	    \begin{align}
	      c_t={}&\dive(m(c,z)\nabla\mu)
	      	&&\aein\,  Q,\label{ch-1}\\
	    \mu ={}&-\Delta_p(c)+\eta + \gamma'(c)+W_{,c}(c,\e(\ub),z)-\vartheta+c_t
	    	&&\aein\,  Q,\label{ch-2}\\
	    	\eta \in{}& \partial  \hat{\beta}(c) \EEE &&\aein\, Q;
	    \label{eta-beta}
	    \end{align} 
	  \item[(ii)]
	    balance of forces:
	    \begin{align}
	      &\ub_{tt}-\dive\sigmab=\mathbf{f}
	      	&&\qquad\qquad\aein\, Q,
	      	\label{momentum-a.e.}\\
	      &\sigmab=a(c,z)\vism\e(\ub_t)+W_{,\e}(c,\e(\ub),z)-\rho\vartheta\mathds 1
	      	&&\qquad\qquad\aein\, Q;
	      	\label{stress-tensor}
	    \end{align}
	  \item[(iii)]
	    weak formulation of the damage flow rule:\\
		{\sl damage energy-dissipation
	    inequality} for all $t \in (0,T]$, for $s=0$, and for almost all $0< s\leq t$
	    \begin{align}
	    &\label{energ-ineq-z}
	    \begin{aligned}
	      \int_s^t   \int_{\Omega} |z_t|^2 \dd x \dd r  & +\io\left(
	      \frac1p  |\nabla z(t)|^p +  \sigma(z(t))\right)\dd x\\ & \leq\io\left(
	      \frac1p |\nabla z(s)|^p+ \sigma(z(s))\right)\dd x
	      +\int_s^t  \int_\Omega z_t \left(-
	      \pd{z}(c,\e(\ub), z)
	      +\teta\right)\dd x \dd r
	    \end{aligned}
	    \end{align}    
	    and the {\sl one-sided variational inequality for the damage process}
	    \begin{align}
	    \label{var-ineq-z}
	      &\begin{aligned}
	      \int_\Omega  \Big( z_t  \zeta +|\nabla z|^{p-2} \nabla z \cdot \nabla \zeta  + \xi \zeta +
	      \sigma'(z(t)) \zeta & + \pd{z}(c,\e(\ub), z) \zeta -\teta \zeta \Big)\,\mathrm{d}x 
	      \geq 0 \\ &  \text{for all }  \zeta\in W_-^{1,p}(\Omega), \quad \aein\, (0,T),
	    \end{aligned}
	    \end{align}
	    where
	    \begin{align}
	    \xi \in \partial I_{[0,+\infty)}(z)\qquad\aein\, Q,
	    \label{xi-def}
	    \end{align}
	    as well as the  constraints
	    \begin{align}
	    \label{constraint-chit}
	      &z \in [0,1],\qquad
	      z_t\in(-\infty,0] \qquad \aein Q;
	    \end{align}
	  \item[(iv)]
	    strict positivity and  entropy inequality: 
	    \begin{equation}
			\label{strict-pos-teta}
	    \exists\,\underline{\teta}>0  \    \forae\, (x,t) \in Q\, : \ \
	      \teta(x,t)\geq  \underline{\teta}>0
	    \end{equation}
			and	for almost all $0\leq s \leq t \leq T$, and for $s=0$ the entropy inequality holds: 
	    \begin{equation}
	        \label{entropy-ineq}
	        \begin{aligned}
	          &\int_s^t \int_\Omega (\log(\teta) + c+z) \varphi_t  \dd x \dd r  -
	          \rho \int_s^t \int_\Omega \dive(\uu_t) \varphi  \dd x \dd r
	          -\int_s^t \int_\Omega  \condu(\teta) \nabla \log(\teta) \cdot \nabla \varphi  \dd x \dd r\\
	          &\begin{aligned}
	  	        \leq
	   	       	\int_\Omega (\log(\teta(t))+c(t)+z(t)){\varphi(t)} \dd x
	           	&-\int_\Omega (\log(\teta(s))+c(s)+z(s)){\varphi(s)} \dd x\\
	            &-\int_s^t \int_\Omega \condu(\teta)|\nabla\log(\teta)|^2\varphi\dd x \dd r
	          \end{aligned}\\
	          &\quad-\int_s^t  \int_\Omega \left( g +|c_t|^2+ |z_t|^2  + a(c,z) \eps(\uu_t):\vism \eps(\uu_t) + m(c,z)|\nabla \mu|^2\right)
	          \frac{\varphi}{\teta} \dd x \dd r
	          -\int_s^t \int_{\partial\Omega} h \frac\varphi\teta  \dd S \dd r
	        \end{aligned}
	      \end{equation}
	      for all $\varphi \in \mathrm{C}^0 ([0,T]; W^{1,d+\epsilon}(\Omega))  \cap H^1 (0,T; L^{({d^\star})'}(\Omega))$
	      for some $\epsilon>0$,  with $\varphi \geq 0$;
	  \item[(v)]
	    total energy inequality for almost all $0\leq s \leq t \leq T$, and for $s=0$:
	    \begin{equation}
	    \label{total-enid}
	    \begin{aligned}
	      \tE{c(t)}{z(t)}{\teta(t)}{\ub(t)}{\ub_t(t)}
	      	\leq{}&\tE{c(s)}{z(s)}{\teta(s)}{\ub(s)}{\ub_t(s)}\\
			     &+ \int_s^t\int_\Omega g \dd x \dd r
			     + \int_s^t\int_{\partial\Omega} h \dd S \dd r\\
	 		     &+ \int_s^t \int_\Omega \mathbf{f} \cdot \mathbf{u}_t \dd x \dd r
	    		 + \int_s^t \int_{\partial\Omega}\big(\sigmab { \bf  n \EEE} \big)\cdot \db_t \dd S \dd r,
	    \end{aligned}
	    \end{equation}
	    where for $s=0$ we read $\teta(0)= \teta^0$,  and $\mathscr{E}$ is given by \eqref{total-energy}. \EEE
	\end{itemize}
\end{definition}

\noindent
\begin{remark}
\label{rmk:weak-sol}
	A few comments on Definition \ref{def-entropic} are in order:
	\begin{itemize}
	  \item[--]
	    First of all, observe that inequalities \eqref{var-ineq-z} and \eqref{energ-ineq-z}
	    yield  the \emph{damage variational inequality} (with $\xi$ fulfilling \eqref{xi-def})
	    \begin{equation}
	    \label{dam-var-ineq}
	    \begin{aligned}
	      \int_s^t\int_\Omega|\nabla z|^{p-2}\nabla z\cdot\nabla\zeta \dd x \dd r  & -\int_\Omega\frac 1p|\nabla z(t)|^p \dd x   +\int_\Omega\frac 1p|\nabla z(s)|^p \dd x\\
	      &+\int_s^t\int_\Omega\Big(z_t(\zeta-z_t)+\sigma'(z)(\zeta-z_t)+\xi(\zeta-z_t)\Big) \dd x \dd r \\
	      &\geq \int_s^t\int_\Omega\Big(-W_{,z}(c,\e(\ub),z)(\zeta-z_t)+\vartheta(\zeta-z_t)\Big) \dd x \dd r
	    \end{aligned}
	    \end{equation}
	    for all $t \in (0,T]$, for $s=0$,  and for almost all $0< s\leq t$ and
	    for all test functions $\zeta \in L^p (0,T; W_-^{1,p}(\Omega)) \cap L^\infty (0,T; L^\infty(\Omega))$.
		\item[--]
	    Concerning the \emph{entropic} formulation (=entropy+total energy inequalities) of the heat
	    equation, we point out that it is consistent
	    with the classical  one.  Namely, \EEE if  \EEE the functions $\teta,\, c,\, z$ are sufficiently smooth,
	    then inequalities \eqref{entropy-ineq} and \eqref{total-enid}, combined with
	    \eqref{e:c}--\eqref{e:z} and \eqref{e:u} yield the pointwise formulation of
	    \eqref{e:teta}, cf.\ \cite[Rmk.\ 2.6]{RocRos14} for all details.
	  \item[--]
	    Observe that the \emph{damage energy-dissipation} inequality \eqref{energ-ineq-z} is
	    required to hold for all $t\in (0,T]$ and for almost all $0 \leq s<t$, and $s=0$.
	    Indeed we will not be able to improve it to an equality, or to an inequality holding
	    on \emph{every} subinterval $[s,t]\subset[0,T]$. This is due to the fact that we will
	    obtain \eqref{energ-ineq-z} by passing to the limit in its time-discrete version
	    (cf.\ Lemma \ref{l:energy-est}), exploiting lower semicontinuity arguments to take the
	    limit of the left-hand side, and pointwise, almost everywhere in $(0,T)$, convergences
	    to take the limit of the right-hand side.
	    Analogous considerations apply to the \emph{entropy} and \emph{total energy} inequalities
	    \eqref{entropy-ineq} and \eqref{total-enid}.
	  \item[--]
	    We remark that the  \emph{damage energy-dissipation} and the \emph{total energy}
	    inequalities are  obtained independently one of another: while this will be clear from the
	    proof of Theorem \ref{thm:1} below, we refer to \cite[Rmk.\ 2.8]{RocRos14}
	    and \cite[Sec. 2.4]{RocRos12} for further comments.    
	  \item[--]
	  	The quasi-linear $p$-Laplacian operator $\Delta_p:W^{1,p}(\Omega)\to W^{1,p}(\Omega)'$
	  	with homogeneous Neumann conditions occurring in \eqref{ch-2} is defined in the distributional
	  	sense as
	  	$$
	  		\langle - \EEE\Delta_p(v),w\rangle_{W^{1,p}(\Omega)}=\int_\Omega|\nabla v|^{p-2}\nabla v\cdot\nabla w\dx.
	  	$$
	  	However, since $\Delta_p(c)\in L^2(0,T;L^2(\Omega))$ due to \eqref{reg-c},
	  	the Cahn-Hilliard system can be interpreted in a pointwise formulation.
	  	In view of the regularity result \cite[Thm.\ 2, Rmk.\ 3.5]{savare98},
	  	we infer the enhanced regularity
		  \begin{align*}
	      c \in L^2 (0,T; W^{1+\sigma,p}(\Omega)) \qquad \text{for all } 1 \leq \sigma< \frac1p.
	  	\end{align*}
	  \item[--]
	  	All the terms in the total energy inequality \eqref{total-enid}
	  	 have \EEE a physical interpretation:
	  	The second and the third term on the  right-hand \EEE side of \eqref{total-enid}
	  	describe energy changes due to external heat sources.
	  	The integrand $\fb\cdot\uu_t$   in \EEE the fourth term on the right-hand side of \eqref{total-enid}
	  	specifies the power expended  by \EEE the external volume force $\fb$,
	  	whereas the integrand $\big(\sigmab n\big)\cdot \db_t$ of the fifth term
	  	indicates the power expended  by \EEE the time-dependent Dirichlet data $\db$
	  	on the boundary $\partial\Omega$
	  	(remember that $\sigmab$ is the stress tensor given in \eqref{stress-tensor}).
	  	
	\end{itemize}
\end{remark}

We  can now state our existence result for the entropic formulation of system
\eqref{eqn:PDEsystem}. Observe that, while the basic time-regularity for $\teta$  (in fact for $\log(\teta)$) is  poor in the general case,
under an additional restriction on the exponent $\kappa$ from Hypothesis (III)
we will be able to obtain $\BV$-time regularity for $\teta$.
\begin{theorem}
\label{thm:1}
  Assume \textbf{Hypotheses (I)--(V)}, and let the data $(\db,\mathbf{f}, g, h)$
  comply with \eqref{hyp:data}. Then, for any quintuple
  $(c^0,z^0,\teta^0, \ub^0,\vb^0)$ fulfilling
  \eqref{h:initial} there exists an entropic weak solution
  $(c,\mu,z,\teta,\uu)$ 
   to the PDE system
	\eqref{eqn:PDEsystem}, supplemented with the initial and boundary conditions \eqref{init-bdry-conditions},  such that \EEE
  \begin{align}
    &\label{BV-log}
      \log(\teta) \in L^\infty(0,T;W^{1,d+\epsilon}(\Omega)') \qquad \text{for all } \epsilon >0.
  \end{align}
	Furthermore, if in addition the exponent $\kappa$ in \eqref{hyp-K} satisfies
	\begin{equation}
	\label{range-k-admissible}
	  \kappa \in (1, 5/3) \quad\hbox{if $d=3$ and } \kappa \in (1, 2) \quad\hbox{if $d=2$ },
	\end{equation}
	then we have
	\begin{equation}
	\label{furth-reg-teta} \teta\in \BV([0,T];
	  W^{2,d+\epsilon}(\Omega)') \qquad \text{for every } \epsilon>0,
	\end{equation}
	 and the total energy inequality \eqref{total-enid} holds  \underline{for all}  $t \in [0,T]$, for $s=0$, and for almost all $s \in (0,t)$.
\end{theorem}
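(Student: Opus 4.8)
The plan is to construct entropic weak solutions as limits of solutions to an approximate problem obtained by combining an implicit \emph{time-discretization} of step $\tau$ with regularizations --- most notably a truncation $\condu_M$ of the heat conductivity, to be removed afterwards by a second passage to the limit $M\to\infty$, and, if needed for solvability of the discrete steps, a Yosida/viscous regularization. At each node $t_\tau^k$ we would solve a coupled elliptic system consisting of a discrete Cahn--Hilliard system, a discrete momentum balance, a discrete damage variational inequality and a discrete heat equation, proving solvability step-by-step by a fixed-point argument (or by minimizing an incremental functional for the gradient-flow blocks, coupled with the linear momentum equation). The crucial design requirement is \emph{thermodynamic consistency}: testing the discrete relations by the discrete counterparts of $\mu,\,c_t,\,z_t,\,1,\,\uu_t$ should produce a \emph{discrete total energy inequality}, and testing the discrete heat equation by the discrete counterpart of $-1/\teta$ a \emph{discrete entropy inequality}; these, together with the discrete damage energy-dissipation inequality, are exactly what will survive the limit.

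\textbf{A priori estimates.} We would then derive, uniformly in $\tau$ and in the regularization parameters, the estimates (formally) listed in Section~\ref{s:4}. The discrete total energy inequality will give bounds for $c_\tau$ in $L^\infty(0,T;W^{1,p}(\Omega))\cap H^1(0,T;L^2(\Omega))$, for $z_\tau$ in the same space, for $\uu_\tau$ in $W^{1,\infty}(0,T;L^2(\Omega;\R^d))$ with $\e(\uu_\tau)\in L^\infty(0,T;L^2)$, for $\teta_\tau$ in $L^\infty(0,T;L^1(\Omega))$, and for the dissipation terms $|c_{\tau,t}|^2,\,|z_{\tau,t}|^2,\,a(c_\tau,z_\tau)\e(\uu_{\tau,t}){:}\vism\e(\uu_{\tau,t}),\,m(c_\tau,z_\tau)|\nabla\mu_\tau|^2$ in $L^1(Q)$; by Hypothesis~(IV) and $m\ge m_0$ the last two upgrade to $L^2(0,T;L^2)$--bounds for $\e(\uu_{\tau,t})$ and $\nabla\mu_\tau$. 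The discrete entropy inequality, with the $L^\infty(L^1)$--bound on $\teta_\tau$ and the growth \eqref{hyp-K} of $\condu$, will yield bounds for $\log\teta_\tau$ and for $\teta_\tau^{(\kappa+\alpha)/2}$ ($\alpha\in(0,1)$) in $L^2(0,T;H^1(\Omega))$, hence, via \eqref{poincare-type} and \eqref{gn-ineq}, a bound for $\teta_\tau$ in $L^2(0,T;H^1(\Omega))$; a discrete maximum principle will give $\teta_\tau\ge\underline\teta>0$. Using the hypothesis $\vism=\omega\CC$, testing the discrete momentum balance by the discrete increment of $-\dive(\CC\e(\uu))$ and invoking the elliptic estimates \eqref{cigamma}--\eqref{H2reg} should yield the bound for $\uu_\tau$ in $H^1(0,T;H^2(\Omega;\R^d))$ (cf.\ Lemma~\ref{lemma:4.16}). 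Finally, the chemical-potential equation and the damage inequality, together with these bounds and with $p>d$ (so that $W^{1,p}(\Omega)\Subset\mathrm C^0(\overline\Omega)$), will control $\Delta_p(c_\tau),\,\mu_\tau,\,\eta_\tau,\,\xi_\tau$ in $L^2(0,T;L^2(\Omega))$ and $\mu_\tau$ in $L^2(0,T;\Hn(\Omega))$ by Neumann elliptic regularity.

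\textbf{Passage to the limit.} By Aubin--Lions--Simon and a Helly-type selection principle we would extract a subsequence with $c_\tau\to c$, $z_\tau\to z$ strongly in $\mathrm C^0(\overline Q)$ and $\uu_\tau\to\uu$ strongly in $\mathrm C^0([0,T];H^{2-\delta}(\Omega;\R^d))$ --- hence $\e(\uu_\tau)\to\e(\uu)$ and $\e(\uu_{\tau,t})\to\e(\uu_t)$ strongly in $L^2(0,T;L^2)$, which for $d\le3$ is enough to pass to the limit in the quadratic expressions $W_{,c},\,W_{,z},\,W_{,\eps}$ and $a\,\e(\uu_t){:}\vism\e(\uu_t)$ --- while $\teta_\tau\rightharpoonup\teta$, $\log\teta_\tau\rightharpoonup\log\teta$, $\nabla\log\teta_\tau\rightharpoonup\nabla\log\teta$, $\nabla\mu_\tau\rightharpoonup\nabla\mu$, $c_{\tau,t}\rightharpoonup c_t$, $z_{\tau,t}\rightharpoonup z_t$, $\uu_{\tau,tt}\rightharpoonup\uu_{tt}$, $\eta_\tau\rightharpoonup\eta$, $\xi_\tau\rightharpoonup\xi$, $\Delta_p(c_\tau)\rightharpoonup\overline\Delta$. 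Minty's trick will identify $\overline\Delta=\Delta_p(c)$, $\eta\in\partial\widehat\beta(c)$, $\xi\in\partial I_{[0,+\infty)}(z)$, and $z_t\le0$ follows by weak closedness; \eqref{ch-1}--\eqref{eta-beta} and \eqref{momentum-a.e.}--\eqref{stress-tensor} will then hold a.e.\ in $Q$. For the damage relations we would use weak lower semicontinuity of $z\mapsto\int_\Omega\frac1p|\nabla z|^p$ and of $\int_s^t\!\int_\Omega|z_t|^2$ on the left-hand side of \eqref{energ-ineq-z} and strong/pointwise convergence on the right-hand side; the subtle point will be to deduce from the discrete energy-dissipation inequality a $\limsup$-bound on $\int_0^T\!\int_\Omega|\nabla z_\tau|^{p-2}\nabla z_\tau\cdot\nabla z_{\tau,t}$ that upgrades $\nabla z_\tau\rightharpoonup\nabla z$ to strong convergence in $L^p(Q;\R^d)$, needed to pass to the limit in $|\nabla z|^{p-2}\nabla z$ in \eqref{var-ineq-z}. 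For the entropy inequality \eqref{entropy-ineq} the key structural point --- the reason for adopting the entropic formulation --- is that the quadratic terms $|c_t|^2+|z_t|^2+a\,\e(\uu_t){:}\vism\e(\uu_t)+m|\nabla\mu|^2$ appear there multiplied by the nonpositive factor $-\varphi/\teta$, so that the strong convergence $\teta_\tau\to\teta$ a.e.\ and weak lower semicontinuity of the convex functionals $v\mapsto\int_s^t\!\int_\Omega v^2\varphi/\teta$ and $w\mapsto\int_s^t\!\int_\Omega m|w|^2\varphi/\teta$ permit the limit passage via a $\liminf$/$\limsup$ argument; \eqref{total-enid} follows similarly from weak lower semicontinuity of $\mathscr E$. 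A second, analogous limit passage with $M$-uniform estimates then removes the regularizations, and the initial conditions are recovered from the time-continuity of the components. \emph{The hardest ingredient} will be the discrete $H^2$-estimate on $\uu$: it is the sole source of the strong $L^2(0,T;L^2)$--compactness of $\e(\uu)$ and $\e(\uu_t)$ without which neither the heat equation nor the damage flow rule can be passed to the limit, and it is precisely what forces $\vism=\omega\CC$; coupled to it is the simultaneous handling of the quadratic right-hand sides and of the doubly nonlinear, nonsmooth damage relation.

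\textbf{Enhanced regularity of $\teta$ and the all-$t$ energy inequality.} For \eqref{BV-log} we would exploit the discrete heat equation as a genuine equality: testing it by time-independent $W^{1,d+\epsilon}(\Omega)$--functions, writing $\condu(\teta_\tau)\nabla\log\teta_\tau=\nabla\widehat\condu(\teta_\tau)/\teta_\tau$ and using the $L^1(Q)$--bounds on $\condu(\teta_\tau)|\nabla\log\teta_\tau|^2$, on the dissipation terms divided by $\teta_\tau$, on $g$ and $h$, the embedding $L^1(\Omega)\hookrightarrow W^{1,d+\epsilon}(\Omega)'$ (valid because $W^{1,d+\epsilon}(\Omega)\hookrightarrow\mathrm C^0(\overline\Omega)$ for $d+\epsilon>d$) and H\"older's inequality, to bound the discrete time-increments of $\log\teta_\tau$ uniformly in $W^{1,1}(0,T;W^{1,d+\epsilon}(\Omega)')$; passing to the limit gives $\log\teta\in L^\infty(0,T;W^{1,d+\epsilon}(\Omega)')$. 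If moreover $\kappa$ satisfies \eqref{range-k-admissible}, the bound on $\teta_\tau^{(\kappa+\alpha)/2}$ in $L^2(0,T;H^1(\Omega))$, interpolated with $\teta_\tau\in L^\infty(0,T;L^1(\Omega))$, will improve to a bound on $\nabla\teta_\tau$ in $L^r(Q)$ for some $r>1$ and on $\widehat\condu(\teta_\tau)$ in $L^1(0,T;W^{1,1}(\Omega))$; inserting this into the \emph{non}-logarithmic heat equation \eqref{e:teta} tested by $W^{2,d+\epsilon}(\Omega)$--functions will bound the time-increments of $\teta_\tau$ uniformly in $W^{2,d+\epsilon}(\Omega)'$, hence $\teta\in\BV([0,T];W^{2,d+\epsilon}(\Omega)')$. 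This $\BV$-regularity together with $\teta\in L^\infty(0,T;L^1(\Omega))$ will force $\teta$ to possess a representative defined at every $t\in[0,T]$ (as a weak-$*$ limit, in fact in the sense of measures because of the $L^1$--bound), so that $\mathscr E(\cdot)$ is well defined and sequentially weakly lower semicontinuous at each $t$; the limit passage in the discrete total energy inequality can then be carried out on $[0,t]$ for \emph{every} $t\in[0,T]$, which yields the final assertion.
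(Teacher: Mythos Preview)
Your sketch is essentially the paper's proof: a thermodynamically consistent implicit time-discretization, the a priori estimates of Section~\ref{s:4} made rigorous at the discrete level, and compactness/limit passages structured around the discrete entropy and total energy inequalities. Two points of divergence are worth flagging. First, the regularization architecture in the paper is more layered than you outline: besides an $M$-truncation of $\condu$ (and of $\teta$ in all coupling terms) it carries an $\omega$-regularization --- Yosida of $\beta$ \emph{plus} a smooth truncation $\mathcal R_\omega$ of the $c$-dependence in $W$, needed so that the convex--concave splittings \eqref{eqn:convConcSplittingWc} are globally well-defined --- and, for solvability of each discrete step via pseudomonotone theory, additional $\nu$-higher-order terms. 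The removal order is $M\to\infty$ and $\nu\to 0$ \emph{at fixed $k$}, then $\tau\to 0$, and only at the very end $\omega\to 0$ (using $c_\omega\in L^\infty(Q)$ so that $W^\omega=W$ for small $\omega$); this is not the ``$\tau$ first, then $M$'' scheme you propose, and doing it your way would drag the $\nu$-terms into the continuous entropy/energy inequalities. Second, at the $\tau$-level the paper does \emph{not} establish strong $L^p$-convergence of $\nabla z_\tau$ (Lemma~\ref{lemma:discr-conv} gives only weak-$*$ in $L^\infty(0,T;W^{1,p})$); the limit in the one-sided inequality \eqref{var-ineq-z} is obtained instead via recovery-sequence test functions and lower semicontinuity in the style of \cite{hk1,RocRos14}, with $\xi$ identified explicitly as $-\mathbf 1_{\{z=0\}}(\sigma'(z)+W_{,z}-\teta)^+$. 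Your $\limsup$ route to strong $\nabla z$-convergence is a legitimate alternative, but it is not the one taken here and is not needed.
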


We will prove Theorem \ref{thm:1} 
 throughout  Sections
\ref{s:5} \& \ref{s:6}
by passing to the limit in a carefully devised time discretization scheme
 and several regularizations.
  Namely,   in Section \ref{s:5} we  are going to set up our
time discretization scheme for system
\eqref{eqn:PDEsystem} and perform on it  all the a priori estimates allowing us to prove, in Sec.\ \ref{s:6}, that
(along
a suitable subsequence) the approximate solutions converge to an entropic weak solution to \eqref{eqn:PDEsystem}.
 However, to enhance the readability of the paper in Section \ref{s:4} we will
(formally) perform all estimates on the time-continuous level, i.e.\ on system  \eqref{eqn:PDEsystem} itself. \EEE
\section{\bf Formal a priori estimates}
\label{s:4}
 Let us briefly outline all the estimates that will be formally developed on the time-continuous system  \eqref{eqn:PDEsystem}: \EEE
\begin{itemize}
	\item[--]
		in the \underline{\bf First estimate},
		from the (formally written) \emph{total energy identity} (cf.\
		\eqref{calc1} below)
		we will derive a series of bounds on the \emph{non-dissipative} variables $c,\, z,\, \teta,\, \uu $, as well as on $\|\uu_t\|_{L^\infty (0,T; L^2(\Omega;\R^d))}$.
	\item[--]
		Then, with the \underline{\bf Second estimate},
		we shall adapt some calculations first developed in \cite{fpr09} (see also \cite{RocRos14}) to derive a bound for
		$\|\teta\|_{ L^2 (0,T; H^1(\Omega))\EEE}$ via a clever test of the heat equation \eqref{e:teta}.
	\item[--]
		Exploiting the previously obtained estimates,
		in the  \underline{\bf Third estimate} we will obtain bounds for the \emph{dissipative} variables $c_t,\, z_t,\, \e(\uu_t)$, as well as for $\nabla \mu$.
	\item[--]
		The \underline{\bf Fourth estimate} is an elliptic regularity estimate on the momentum equation, along the footsteps of \cite{bss} where it was developed in the case of  a
		\emph{scalar} displacement variable. With this, in particular  we  gain a (uniform in time) bound on $\|\uu\|_{H^2(\Omega;\R^d)}$ which translates into an   (uniform in time)  $L^2(\Omega)$-bound for the term $\pd{c}(c,\e(\ub),z)$ in \eqref{e:mu}.
	\item[--]
		Using this, in the  \underline{\bf Fifth estimate} we obtain a bound on the
		$L^2(0,T;H^1(\Omega))$-norm of $\mu$ from a bound on its mean value $\dashint_\Omega \mu \dd x$, combined  with the previously obtained
		bound for
		$\nabla\mu$ via the Poincar\'e inequality. 
		To develop the related calculations, we will momentarily suppose that
		\begin{equation}
		\label{mir-zelik}
		\begin{gathered}
		\widehat\beta \in \rmC^1(\R) \text{ and satisfies the following property:}
		\\
		\forall\, \mathfrak{m} \in\R\  \exists\, C_{\mathfrak{m}},\, C_{\mathfrak{m}}'>0  
		 \quad |\beta(c+\mean)|\leq C_{\mean} \beta(c+\mean)c +C_{\mean}'\,.
		 \end{gathered}
		\end{equation} 
	\item[--]
		We are then in the position to obtain a $L^2(0,T; L^2(\Omega;\R^d))$-estimate for each single  term  in \eqref{e:mu} in the \underline{\bf Sixth estimate}.
	\item[--]
		With the \underline{\bf Seventh}  and \underline{\bf Eighth} estimates  we gain some information on the ($\mathrm{BV}$-)time regularity of $ \log(\teta)$
		and $\teta$, respectively (in the latter case, under the  further condition  \eqref{range-k-admissible}  on the growth exponent $\kappa$ of $\condu$).	
	\item[--]
		Finally, in the \underline{\bf Ninth estimate} we  resort to \EEE higher elliptic regularity results to gain
		a uniform bound on $\|\mu\|_{L^2(0,T;H^2(\Omega))}$.
		
\end{itemize}

In the proof of 
 the forthcoming \EEE
Proposition \ref{prop:aprio-discr} we will discuss how to make
all of the following calculations rigorous in
the context of the time-discretization scheme from Definition \ref{def:time-discrete} (let us mention in advance that, for the
\emph{Fifth estimate} we will need the analogue of \eqref{mir-zelik} on the level of the Yosida regularization of $\beta$), 
 with the exception of the computations related to the ensuing
\textbf{Seventh a priori estimate}. Indeed, while in the present time-continuous context this
formal estimate will provide a $\BV$-in-time bound for $\log(\teta)$, on the time-discrete level it will be possible to render it
only 
in a \emph{weaker} form, albeit still
 useful for the compactness arguments developed in Section
 \ref{s:6}.
\par
In the following calculations,  at several spots we will follow the footsteps of \cite{RocRos14},
hence we will give the main ideas, skipping some details and  and referring to the latter paper.
 In comparison to \cite{RocRos14}, the additional coupling with the Cahn-Hilliard
system \eqref{e:c}--\eqref{e:mu} requires new a priori estimates (see the \textbf{Fifth},
\textbf{Sixth} and \textbf{Ninth estimates} below).
Beyond this the remaining system \eqref{e:z}--\eqref{e:u} also depends on the
phase field variable $c$ and the estimation techniques used in \cite{RocRos14} need to
be adapted to this situation.
And, finally, the time-dependent Dirichlet boundary conditions for $\uu$ requires
substantial modifications especially in the \textbf{First}, but also in the \textbf{Third} and
\textbf{Fourth estimates} below.
\paragraph{\bf Strict positivity of $\teta$}
Along the lines of \cite{fpr09}, we rearrange terms in
\eqref{e:teta}  and (formally,  disregarding the -positive- boundary datum $h$)
we obtain
\begin{equation}
\label{formal-positivity}
\begin{aligned}
	\teta_t-\dive(\condu (\teta)\nabla\teta)
	 & = g  +|c_t|^2+|z_t|^2  + a( c, z)\eps(\uu_t):\vism \eps(\uu_t) + m(c,z) |\nabla \mu|^2 - c_t\teta-z_t\teta
	- \rho \teta \mathrm{div}(\uu_t)
	\\ & \geq
	g +\frac12|c_t|^2+\frac12|z_t|^2  + c |\eps(\uu_t)|^2 +  m(c,z) |\nabla \mu|^2 -C
	\teta^2
	\geq -C\teta^2 \quad \aein \, Q.
\end{aligned}
\end{equation}
Here, for the  first inequality we have used
that $\vism$ is positive definite by  \eqref{eqn:assbV} and   \EEE 
\eqref{ellipticity}, that $a$ is strictly positive thanks to \eqref{data-a}, and that 
\begin{equation}
\label{eps-estim}
	| \dive(\uu_t)  | \leq c(d)
	|\tensoret|  \quad \text{a.e.\ in $Q$}
\end{equation}
 with $c(d)$ a positive
constant only depending on the space dimension $d$. The second  inequality in \eqref{formal-positivity} \EEE  also relies on the fact that $g \geq 0$ a.e.\ in $Q$.
 Therefore we conclude that
  $v$  solving  the Cauchy problem
\[
	v_t=-\frac12 v^2, \quad v(0)=\teta_*>0
\]
is a subsolution of \eqref{e:teta}, and  a comparison argument yields that there exists $\ul\teta>0$ such that
\begin{equation}\label{teta-pos}
  \teta(\cdot,t)\geq v(t)>\ul\teta>0\quad \hbox{for all }t\in [0,T]\,.
\end{equation} 
\paragraph{\bf First estimate:}
We test \eqref{e:c} by $\mu$, \eqref{e:mu} by $c_t$, \eqref{e:z} by $z_t$,
\eqref{e:teta} by 1, \eqref{e:u} by $\uu_t$,
add the resulting relations and integrate
over the time interval  $(0,t)$, $t\in (0,T]$.
Here the second term in the force balance equation is treated
by integration by parts in space as follows
(notice that $\uu_t=\db_t$ a.e. on $\partial\Omega\times(0,T)$):
\begin{align}
\label{sigmaInt}
\begin{aligned}
	&\itt\io-\dive\big(a(c,z)\vism\e(\ub_t)+W_{,\e}(c,\e(\ub),z)-\rho\vartheta\mathds{1}\big)\cdot\uu_t\dxs\\
		&\qquad=\itt\io a(c,z)\vism\e(\ub_t):\e(\ub_t)+W_{,\e}(c,\e(\ub),z):\e(\ub_t)-\rho\vartheta\dive(\uu_t)\dxs
		-\itt\int_{\partial\Omega}(\sigmab{ \bf  n \EEE} )\cdot\db_t\dd S\ds.
\end{aligned}
\end{align}
Furthermore,
we  use that, by the chain rule,
\begin{align*}
\begin{aligned}
	& \text{(i) } 
		 && \begin{aligned}
	 		&\int_0^t \int_\Omega \pd{c}(c,\e(\ub),z) c_t +  \pd{z}(c,\e(\ub),z) z_t +  \pd{\eps}(c,\e(\ub),z)\colon \e(\ub_t) \dd x \dd s\\
			&= \int_\Omega W(c(t),z(t),\e(\ub(t))) \dd x - \int_\Omega W(c(0),z(0),\e(\ub(0))) \dd x,
	  \end{aligned}\\
	& \text{(ii) } &&  \int_0^t \int_\Omega \left( \eta + \gamma'(c)  \right )  c_t \dd x \dd s = \int_\Omega \phi(c(t)) \dd x - \int_\Omega \phi(c(0))  \dd x,\\
	& \text{(iii) }  &&  \int_0^t \int_\Omega\left(  \partial I_{[0,+\infty)}(z) +\sigma'(z) \right)   z_t \dd x \dd s = \int_\Omega I_{[0,+\infty)}(z(t)) + \sigma(z(t)) \dd x - \int_\Omega  I_{[0,+\infty)}(z(0))  + \sigma(z(0)) \dd x,
\end{aligned}
\end{align*}
as well as the identity  $\int_0^t \int_\Omega  \partial
I_{(-\infty,0]}(z_t) z_t \dd x \dd s =  \int_0^t \int_\Omega
      I_{(-\infty,0]}(z_t) \dd x \dd s=  0 $ due to the 
 positive \EEE $1$-homogeneity of
$ \partial I_{(-\infty,0]}$.
Also taking into account the cancellation of a series of terms,
we arrive at the \emph{total energy identity}
\begin{equation}\label{calc1}
\begin{aligned}
	\tE{c(t)}{z(t)}{\teta(t)}{\ub(t)}{\ub_t(t)} ={}& \tE{c_0}{z_0}{\teta_0}{\ub_0}{\vb_0}  +  \int_0^t \int_\Omega g \dd x\ds+
		\int_0^t\int_{\partial\Omega} h \dd S \dd s\\
	 	&+\int_0^t \int_\Omega \mathbf{f} \cdot \mathbf{u}_t \dd x \dd s
	 	+\itt\int_{\partial\Omega}(\sigmab { \bf  n \EEE})\cdot\db_t\dd S\ds\,,
\end{aligned}
\end{equation}
which incorporates the initial conditions \eqref{better-init}.


We estimate the  second, third and fourth  terms on the right-hand side of \eqref{calc1}
via \eqref{hyp:data} and obtain  
\begin{align*}
\begin{aligned}
	&  \left|  \int_0^t \int_\Omega g \dd x \dd s \right|
	\stackrel{\eqref{heat-source}}{\leq} C,
	\qquad
	\left|  \int_0^t \int_{\partial\Omega} h \dd S \dd s  \right|  \stackrel{\eqref{dato-h}}{\leq} C,\\
	 & \left|   \int_0^t \int_\Omega \mathbf{f} \cdot \mathbf{u}_t  \dd x \dd s  \right|
	 	\stackrel{\eqref{bulk-force}}{\leq}   C
	 		+\|\uu_t\|_{L^2(0,T;L^2(\Omega;\R^d))}^2.
\end{aligned}
\end{align*}
 We now carefully handle the last term on the right-hand side of \eqref{calc1}.  Since \EEE
 no viscous term of the type $\e(\uu_t)$
occurs on its left-hand side, 
to absorb the last term on the right-hand side 
 and close the estimate \EEE
we will extensively
make use of integration by parts
in space,   as well as of \EEE the force balance equation 
\eqref{e:u} of \EEE
integration by parts in time, and  of Young's inequality ($\delta>0$ will be chosen later):
\begin{align*}
	\itt\int_{\partial\Omega}(\sigmab{ \bf n \EEE})\cdot\db_t\dd S\ds
	={}&\itt\io\dive(\sigmab)\cdot\db_t\dxs
		+\itt\io \sigmab:\e(\db_t)\dxs\\
	={}&\itt\io(-\fb+\uu_{tt})\cdot\db_t\dxs
		+\itt\io\sigmab:\e(\db_t)\dxs\\
	\leq{}& \|\fb\|_{L^2(0,T;L^2(\Omega;\R^d))}\|\db_t\|_{L^2(0,T;L^2(\Omega;\R^d))}
		+\itt\|\ub_t\|_{L^2(\Omega;\R^d)}\|\db_{tt}\|_{L^2(\Omega;\R^d)}\ds\\
		&+\delta\|\uu_t(t)\|_{L^2(\Omega;\R^d)}^2+C_\delta\|\db_t(t)\|_{L^2(\Omega;\R^d)}^2
		+\|\vb^0\EEE \|_{L^2(\Omega;\R^d)}\|\db_t(0)\|_{L^2(\Omega;\R^d)}\\
		&+\underbrace{\itt\io a(c,z)\VV\e(\uu_t):\e(\db_t)\dxs}_{\doteq I_1}
		+\underbrace{\itt\io b(c,z)\CC(\e(\uu)-\e^*(c)):\e(\db_t)\dxs}_{\doteq I_2}\\
		&+\rho\|\dive(\db_t)\|_{L^\infty(Q)}\itt\io|\teta|\dxs.
\end{align*}
Moreover, by using integration by   parts \EEE in space again,
the properties of the coefficient functions
$a$ and $b$ stated in Hypothesis (IV) and (V),
and by using \eqref{dirichlet-data}  on $\db$, \EEE  $\uu_t=\db_t$ a.e. on $\partial\Omega\times(0,T)$
and the trace theorem  we obtain \EEE 
\begin{align*}
	I_1={}&-\itt\io\uu_t\cdot\dive\big(a(c,z)\VV\e(\db_t)\big)\dxs
			+\itt\int_{\partial\Omega}\uu_t\cdot\big(a(c,z)\VV\e(\db_t)
{ \bf  n \EEE} \big)\dd S\ds\\
		={}&-\itt\io\uu_t\cdot\Big(\big(a_{,c}(c,z)\nabla c+a_{,z}(c,z)\nabla z\big)\cdot\VV\e(\db_t)\Big)\dxs -\itt\io \uu_t \cdot\left(a(c,z)\VV\dive(\e(\db_t))\right)\dxs \EEE\\
		&+\itt\int_{\partial\Omega}\db_t\cdot\big(a(c,z)\VV\e(\db_t)
{ \bf  n \EEE}
\big)\dd S\ds\\
		\leq{}&C\|\e(\db_t)\|_{L^\infty(Q; \R^{d\times d} \EEE)}\Big(\itt\|\uu_t\|_{L^2(\Omega;\R^d)}^2\ds
			+\|a_{,c}(c,z)\|_{L^\infty(0,T;L^\infty(\Omega))}^2\itt\|\nabla c\|_{L^2(\Omega;\R^d)}^2\ds\\
			&\qquad\qquad\qquad\quad+\|a_{,z}(c,z)\|_{L^\infty(0,T;L^\infty(\Omega))}^2\itt\|\nabla z\|_{L^2(\Omega;\R^d)}^2\ds\Big)\\
                           &+C\itt\|\uu_t\|_{L^2(\Omega;\RR^d)}^2\ds+C\|a(c,z)\|^2_{L^\infty(0,T;L^\infty(
                          \Omega))\EEE
}\|\e(\db_t)\|_{L^2(0,T;H^1(\Omega;  
                           \R^{d\times d} ))}^2 \EEE\\
			&+C\|\db_t\|_{L^2(0,T;H^1(\Omega;\R^d))}\|\e(\db_t)\|_{L^2(0,T;H^1(\Omega;\R^{d\times d}))}
				\|a(c,z)\|_{L^\infty(0,T;L^\infty(\partial\Omega))}\\
		\leq{}&C\itt\Big(\|\uu_t\|_{L^2(\Omega;\R^d)}^2+\|\nabla c\|_{L^2(\Omega;\R^d)}^2
			+\|\nabla z\|_{L^2(\Omega;\R^d)}^2\Big)\ds+C,\\
	I_2\leq{}&C\itt\io b(c,z)^2\CC(\e(\uu)-\e^*(c)):(\e(\uu)-\e^*(c))+\CC\e(\db_t):\e(\db_t)\dxs\\
		\leq{}&C\|b(c,z)\|_{L^\infty(0,T;L^\infty(\Omega))}\itt\io W(c,\e(\ub),z)\dxs
			+C\|\e(\db_t)\|_{L^2(0,T;L^2(\Omega;\R^{d\times d}))}^2.
\end{align*}
All in all,  again taking into account \eqref{hyp:data}, \EEE  we gain the estimate
\begin{align*}
	&\tE{c(t)}{z(t)}{\teta(t)}{\ub(t)}{\ub_t(t)}\\
	&\qquad\leq C_\delta+\delta\|\uu_t(t)\|_{L^2(\Omega;\R^d)}^2
		+\itt C\big(\|\db_{tt}\|_{L^2(\Omega;\R^d)}^2+1\big)\times \EEE \\
				&\qquad\qquad\times  \itt \EEE\Big(  \int_\Omega \EEE W(c,\e(\ub),z)\dx
		 		+\|\nabla c\|_{L^2(\Omega;\R^d)}^2+\|\nabla z\|_{L^2(\Omega;\R^d)}^2
		 		+\int_\Omega|\teta|\dx
		 		+\|\ub_t\|_{L^2(\Omega;\R^d)}^2\Big)\ds\\
	&\qquad\leq C_\delta+\delta\|\uu_t(t)\|_{L^2(\Omega;\R^d)}^2
		+\itt C\big(\|\db_{tt}\|^2_{L^2(\Omega;\R^d)}+1\big)\tE{c(s)}{z(s)}{\teta(s)}{\ub(s)}{\ub_t(s)}\ds.\EEE
\end{align*}
Choosing $\delta=1/4$, using Gronwall Lemma together with \eqref{dirichlet-data}
and taking the positivity of $\teta$ into account, we conclude

\begin{equation}
\label{est1}
	\| \teta \|_{L^\infty (0,T;L^1(\Omega))}
	+\| \uu\|_{W^{1,\infty}(0,T;L^2(\Omega;\R^d))}
	+\| c  \|_{L^\infty(0,T;W^{1,p}(\Omega))}
	+\| \nabla z  \|_{L^\infty(0,T;L^{p}(\Omega;\R^d))}
	\leq C.
\end{equation}
Note that we have also used the Poincar\'e inequality to obtain the boundedness
for $c$ in $L^\infty(0,T;W^{1,p}(\Omega))$ because
it holds $\int_\Omega c(t)\dx\equiv const$ for all $t\in[0,T]$
(this follows from \eqref{e:c} and the no-flux condition for  $\mu$  in \eqref{bdry-conditions}). \EEE

\paragraph{\bf Second estimate:}
Let $F(\teta) = \teta^\alpha/\alpha$, with $\alpha \in (0,1)$.
 We test \eqref{e:teta} by $F'(\teta):= \teta^{\alpha-1}$ ,
and integrate on $(0,t)$ with $t \in (0,T]$, thus obtaining
\[
	\begin{aligned}
	  &\int_\Omega F(\teta_0)\dd x+
		\int_0^t \int_\Omega  g F'(\teta) \dd x \dd s
		+\int_0^t \int_{\partial \Omega} h F'(\teta) \dd S \dd s
		+\int_0^t \int_\Omega (|c_t|^2+ |z_t|^2)   F'(\teta) \dd x \dd s\\
		&+\int_0^t \int_\Omega
		 a(c,z) \tensoret: \vism \tensoret F'(\teta) \dd x \dd s + \int_0^t \int_\Omega
		m(c,z) |\nabla \mu|^2 F'(\teta) \dd x \dd s \\
		&\quad=\int_\Omega F(\teta(t))\dd x + \int_0^t \int_\Omega (c_t + z_t)  \teta F'(\teta) \dd x \dd s +\rho \int_0^t \int_\Omega \teta \dive(\uu_t) F'(\teta)  \dd x \dd  s
	  \int_0^t \int_\Omega \condu(\teta) \nabla \teta\cdot\nabla (F'(\teta)) \dd x \dd s.
  \end{aligned}
\]
By the positivity of $g$ and $h$ we can neglect the second and third terms on the left-hand side, whereas, taking into account  the ellipticity condition \EEE
\eqref{ellipticity} and   the positivity 
\eqref{hyp-m} and 
\eqref{data-a} of $m$ and $a$, \EEE   we infer
\begin{align}
\label{eqn:secondEstPre}
	\begin{aligned}
		&\frac{4(1-\alpha)}{\alpha^2} \int_0^t \int_\Omega
		\condu(\teta) |\nabla (\teta^{\alpha/2})|^2 \dd x \dd s
		+   \bar{c} \EEE\int_0^t\int_\Omega(|\tensoret|^2+|\nabla\mu|^2)F'(\teta) \dd x \dd s
		\\
		& \qquad 
		+ \int_0^t \int_\Omega (|c_t|^2 + |z_t|^2)  F'(\teta) \dd x \dd s 
		\leq \int_\Omega |F(\teta_0)|\dd x +I_1 +I_2+I_3,
	\end{aligned}
\end{align}
with
$\bar{c}>0$ depending on $\nu_0$, $m_0$, and 
 $a_0$, where $I_3 \doteq |\rho| \int_0^t \int_\Omega |\teta \dive(\uu_t) F'(\teta) | \dd x \dd  s$.  \EEE 
We estimate
\[
	\begin{aligned}
		I_1= \int_\Omega |F(\teta(t))|\dd x \leq \frac1{\alpha}\int_\Omega \max\{
		\teta(t), 1\}^\alpha \dd x \leq  \frac1{\alpha}\int_\Omega \max\{  \teta(t), 1\}
		\dd x \leq C
	\end{aligned}
\]
since $\alpha <1$ and taking into account the previously obtained
inequality \EEE
\eqref{est1}. Analogously we can estimate $\int_\Omega
|F(\teta_0)|\dd x$ thanks to \eqref{data_teta}; moreover,
\[
	I_2 = \int_0^t \int_\Omega |(c_t +z_t) \teta F'(\teta)| \dd x \dd s
	\leq \frac14 \int_0^t \int_\Omega \left( |c_t|^2 + |z_t|^2\right) F'(\teta) \dd x \dd s +
	2 \int_0^t \int_\Omega F'(\teta)\teta^2 \dd x \dd s.
\]
Using inequality \eqref{eps-estim} \EEE and Young's inequality, we have that
\[
	\begin{aligned}
		I_3 =|\rho|  \int_0^t \int_\Omega | \teta \dive(\uu_t) F'(\teta)|  \dd
		x \dd  s
		\leq   \frac {  \bar{c}\EEE} 4 \int_0^t \int_\Omega |\tensoret|^2 F'(\teta) \dd x \dd s +
		C\int_0^t \int_\Omega F'(\teta)\teta^2 \dd x \dd s\,.
	\end{aligned}
\]
All in all, we conclude
\begin{equation}
\label{all-in-all}
	\begin{aligned}
		&\frac{4(1-\alpha)}{\alpha^2} \int_0^t  \int_\Omega  \condu(\teta)  |\nabla (\teta^{\alpha/2})|^2 \dd x \dd s
		+ \frac{ 3  \bar{c} \EEE}{4}\int_0^t \int_\Omega(|\tensoret|^2 +|\nabla\mu|^2)F'(\teta) \dd x \dd s\\
		&+ \frac34 \int_0^t \int_\Omega (|c_t|^2 + |z_t|^2)  F'(\teta) \dd x \dd s \leq C + C  \int_0^t \int_\Omega \teta^{\alpha+1} \dd x \dd s.
	\end{aligned}
\end{equation}

Observe that
\[
\int_0^t  \int_\Omega  \condu(\teta)  |\nabla (\teta^{\alpha/2})|^2 \dd x \dd s
 \geq
  c_1 \int_0^t \int_\Omega \teta^\kappa  |\nabla (\teta^{\alpha/2})|^2 \dd x \dd s
 =  \tilde c_1
 \int_0^t \int_\Omega  |\nabla (\teta^{(\kappa+\alpha)/2} )|^2   \dd x \dd s\,.
\]
Hence, from \eqref{all-in-all} we infer the estimate
\begin{equation}
	\label{calc2.1}
	 \tilde c_1 \int_0^t \int_\Omega  |\nabla (\teta^{(\kappa+\alpha)/2} )|^2   \dd x \dd s
  \leq C_{ 0} + C_{ 0}\int_0^t\int_\Omega \teta^{\alpha+1} \dd x \dd s.
\end{equation}
We now
 repeat the very same calculations as in the
\emph{Second} and \emph{Third} estimates in \cite[Sec.\ 3]{RocRos14}, to which we refer for all details.
Namely, we introduce the auxiliary quantity
$w : = \max\{ \teta^{(\kappa+\alpha)/2}, 1 \}$ and observe that
\begin{align}
	&\int_0^t \int_\Omega  |\nabla (\teta^{(\kappa+\alpha)/2} )|^2   \dd x
  \dd s \geq \int_0^t \EEE\int_{\{  \teta(s) \EEE \geq 1\}}
  |\nabla (\teta^{(\kappa+\alpha)/2} )|^2\dd x \dd s =  \int_0^t \int_\Omega |\nabla w |^2 \dd x \dd s,\\
  &\teta^{\alpha+1} =\left(  \teta^{(\alpha+1)/q} \right)^q \leq w^q \qquad \aein\, Q,
  	\label{eqn:thetaWEst} 
\end{align}
for all $q \geq 1 $ such that
\begin{equation}
\label{1st-restr-alpha}
	\frac{\kappa+\alpha}2 \geq \frac{\alpha +1}{q}
	\Leftrightarrow \ q \geq 2- 2\frac{\kappa-1}{\kappa+\alpha}.
\end{equation}
Therefore from \eqref{calc2.1} we infer that
\begin{equation}\label{calc2.2}\begin{aligned}
 &  \tilde c_1\int_0^t \int_\Omega |\nabla w|^2 \dd x \dd s
	 \leq C_{ 0} + C_{ 0}  \int_0^t \| w \|_{L^q(\Omega)}^q\dd s. \end{aligned}
\end{equation}
We now apply the Gagliardo-Nirenberg inequality  for $d=3$, yielding
\begin{equation}\label{gagliardo}
\| w \|_{L^q(\Omega)} \leq c_1  \| \nabla w\|_{L^2(\Omega;\R^d)}^\theta
\| w \|_{L^r(\Omega)}^{1-\theta} + c_2   \| w \|_{L^r(\Omega)}
\end{equation}
with $ 1 \leq  r  \leq  q $ and $\theta $ satisfying $1/q= \theta/6
+ (1-\theta)/r$. Hence $\theta= 6 (q-r)/q (6-r)$.
Observe that  $\theta \in (0,1)$ if $q<6$.
Applying the Young inequality with exponents $2/(\theta q)$
and $2/(2-\theta q)$ we infer
\begin{equation}
\label{added-label}
 C_{ 0}\int_0^t \| w \|_{L^q(\Omega)}^q\dd s
 \leq   \frac{\tilde c_1}2\int_0^t \int_\Omega |\nabla w|^2 \dd x \dd s
 +C\int_0^t \| w\|_{L^r(\Omega)}^{2q(1-\theta)/(2-q\theta)} \dd s + C'  \int_0^t \| w\|_{L^r(\Omega)}^{q} \dd s.
\end{equation}

We then plug \eqref{added-label}\, into \eqref{calc2.2},
and obtain
\begin{equation}
\label{added-label2}
 \frac{\tilde c_1}2\int_0^t \int_\Omega |\nabla w|^2 \dd x \dd s
 \leq C_0+C\int_0^t \| w\|_{L^r(\Omega)}^{2q(1-\theta)/(2-q\theta)} \dd s + C'  \int_0^t \| w\|_{L^r(\Omega)}^{q} \dd s.
\end{equation}

Hence, we choose $1\leq r \leq 2/(\kappa+\alpha)$ so that for almost all $t\in (0,T)$
\begin{equation}
\label{r-estimate}
	\| w(t) \|_{L^r(\Omega)} = \left( \int_\Omega \max\{ \teta(t)^{r(\kappa+\alpha)/2} ,1\} \dd x \right)^{1/r}  \leq
	C\left( \|\teta(t)\|_{L^1(\Omega)} + |\Omega|\right) \leq C'
\end{equation}
where we have used the bound for $   \| \teta\|_{L^\infty (0,T;L^1(\Omega))}$
from  estimate \eqref{est1}. Observe that the inequalities
\[
	\begin{cases}
		\theta q \leq 2 \ \Leftrightarrow \ 6\frac{q-r}{6-r} \leq 2 \
		\Leftrightarrow \ q \leq 2 +\frac23 r,\\
		r \leq \frac2{\kappa+\alpha}
	\end{cases}
\]
lead to $q\leq 2 +\frac{4}{3(\kappa+\alpha)}$ which is still
compatible with \eqref{1st-restr-alpha}, since
$\frac{\kappa-1}{\kappa+\alpha}<1$. Inserting \eqref{r-estimate}
into \eqref{added-label2}\, we ultimately deduce $ \int_0^t \int_\Omega
|\nabla w|^2 \dd x \dd s \leq C$.
 Taking also \eqref{added-label} and \eqref{r-estimate} into account we 
  then conclude \EEE
  $\int_0^t\|w\|_{L^q(\Omega)}^q\ds \leq C$.
By using this as well as  estimates \eqref{calc2.1} and \eqref{eqn:thetaWEst}
we see that

\begin{equation}
\label{additional-info}
\begin{aligned}
c  \int_0^t \int_\Omega \teta^{\kappa+\alpha  - 2}  |\nabla \teta|^2   \dd x \dd s = \int_0^t \int_\Omega  |\nabla   (\teta^{(\kappa+\alpha)/2})|^2   \dd x \dd s\leq C.
 \end{aligned}
  \end{equation}

From \eqref{additional-info} and the strict positivity of $\teta$  (see 
\eqref{teta-pos})\,
it follows that
$
\int_0^t \int_\Omega |\nabla \teta|^2 \dd x \dd s \leq C,
$
provided that $\kappa +\alpha-2 \geq 0$. Observe that, since $\kappa>1$ we can choose
$\alpha \in (0, 1)$  such that this inequality holds.
  Hence,  in view  of \EEE  estimate \eqref{est1} and
 applying Poincar\'e inequality,
  we gather
 \begin{equation}
\label{crucial-est3.2} \| \teta   \|_{L^{2}  (0,T; H^1(\Omega))}
\leq C .
\end{equation}

With the very same calculations as
in \cite[Sec.\ 3]{RocRos14}
we also obtain
 \begin{equation}\label{estetainterp}
\|\teta\|_{L^q(Q)}\leq C\quad\hbox{with }q=8/3
\quad \hbox{if } d=3, \quad q=3 \quad \hbox{if } d=2\,
\end{equation}
\EEE interpolating between estimate \eqref{crucial-est3.2} and
estimate \eqref{est1} for $\|\teta\|_{L^\infty (0,T; L^1(\Omega))}$ and
using the Gagliardo-Nirenberg inequality \eqref{gn-ineq}.
Furthermore,
we observe
\begin{equation}
\label{quoted-later-ohyes}
\int_\Omega |\nabla \teta^{(\kappa -\alpha)/2}|^2 \dd x =
c \int_\Omega \teta^{\kappa-\alpha-2} |\nabla \teta|^2 \dd x \leq
\frac c{{\ul\teta}^{2\alpha}}\int_{\Omega} \teta^{\kappa+\alpha-2}
|\nabla \teta|^2 \dd x \leq C,
\end{equation}
thanks to the positivity property \eqref{teta-pos} and estimate
\eqref{additional-info}. Resorting to a nonlinear version of the Poincar\'e inequality (cf.\
e.g.\ \eqref{poincare-type}), we  then infer
\begin{equation}
\label{necessary-added}
\| \teta^{(\kappa -\alpha)/2} \|_{L^2 (0,T; H^1(\Omega))}, \, \| \teta^{(\kappa +\alpha)/2} \|_{L^2 (0,T; H^1(\Omega))}  \leq C.
\end{equation}
\paragraph{\bf Third estimate:}
We test \eqref{e:teta} by $1$, integrate in time, and subtract the resulting relation from
the  total energy balance \eqref{calc1}. We thus obtain
\begin{equation}
\label{mech-energ-est}
\begin{aligned}
	&
	\itt\io |c_t|^2  \dd x \dd s  +\io  \tfrac1p |\nabla c(t)|^p + \phi(c(t))\dd x
	+ \itt\io m(c,z)|\nabla \mu|^2 \dd x \dd s+
	\\
	& \quad
	+\itt\io  |z_t|^2 \dd x \dd s  +\io \tfrac1p |\nabla z(t)|^p + I_{[0,+\infty)}(z(t)) + \sigma(z(t))  \dd x
	\\
	&
	\quad +
	 \frac 12\io |\uu_t(t)|^2\dd x
	+\int_0^t\int_\Omega   a(c,z) \vism \e(\ub_t)\colon \e(\ub_t) \dd x \dd s
	+ \int_\Omega W(c(t),\e(\ub(t)),z(t)) \dd x
	\\
	& = \io  \tfrac1p |\nabla c_0|^p + \phi(c_0) \dd x
	 +\io \tfrac1p |\nabla z_0|^p + I_{[0,+\infty)}(z_0) + \sigma(z_0)  \dd x  +
	\frac12 \io|\vb_0|^2\dd x
	\\
	& \quad + \int_\Omega W(c_0,\e(\ub_0),z_0) \dd x
	+
	 \itt\io\teta \left(\rho \dive \uu_t+c_t+z_t\right)\dd x \dd
	s+\itt \io {\bf f}\cdot \uu_t\dd x \dd s\\
	&\quad+\itt\int_{\partial\Omega}(\sigmab { \bf  n \EEE})\cdot\db_t\dd S\ds.
\end{aligned}
\end{equation}
Observe that  the \EEE first, second, third, and fourth integral terms on the right-hand side are bounded thanks to conditions
\eqref{h:initial} on $(c_0,z_0,\uu_0,\vv_0)$.
 As in the First estimate we deduce boundedness of the last and the  last but one \EEE  integral terms
on the right-hand side. 
Since $\phi$,  $I_{[0,+\infty)} +\sigma$, and $W$ are bounded from below,
  exploiting 
\eqref{ellipticity},  \eqref{data-a}, and   \eqref{eqn:assbV} \EEE to deduce that
$\int_0^t\int_\Omega  a( c, z) \vism \e(\ub_t)\colon \e(\ub_t) \dd x \dd x  \geq c  \itt\io |\e(\ub_t)|^2 \dd x \dd s$,
and  using \EEE  \eqref{hyp-m} to deduce that $\int_0^t\int_\Omega m(c,z)|\nabla\mu|^2\dxs\geq m_0\int_0^t\int_\Omega|\nabla\mu|^2\dxs$,
we find that
\begin{equation}
\label{dissip-est}
\begin{aligned}
	&\itt\io(|c_t|^2+|z_t|^2+|\nabla\mu|^2+|\e(\ub_t)|^2)\dd x \dd s \leq C+\itt\io\teta \left(\rho \dive \uu_t+c_t+z_t\right)\dd x \dd s.
\end{aligned}
\end{equation}

Then, we can estimate  the integral term \EEE on the  right-hand side by
\[
	\varrho\itt \io \left( |\e(\ub_t)|^2 + |c_t|^2 + |z_t|^2 \right) \dd x \dd s + C_\varrho \itt \io |\teta|^2 \dd x \dd s,
\]
for a sufficiently small constant $\varrho>0$, in such a way as to absorb the first integral term into the left-hand side
 of \eqref{dissip-est}.
Exploiting \eqref{crucial-est3.2} on $\teta$, we thus conclude, also with the aid of Korn's inequality
 and  condition \eqref{dirichlet-data} on the boundary value $\db$, \EEE 
\begin{equation}\label{est5}
	\|c_t\|_{L^2(Q)} + \| \nabla \mu \|_{L^2(Q;\R^d)} + \|z_t\|_{L^2(Q)}+\|\uu_t\|_{L^2(0,T; H^1(\Omega;\RR^d))}
	\leq
	C\,.
\end{equation}
Furthermore, taking into account the previously proved bound \eqref{est1}, we also gather
\begin{equation}
 \label{est5-added}
	\| z \|_{L^\infty (0,T;W^{1,p}(\Omega))} 
	+\| \ub \|_{H^1(0,T;H^1(\Omega;\R^d))}\leq C.
\end{equation}
\paragraph{\bf  Fourth estimate:}
We test \eqref{e:u} by $-\mathrm{div}(\vism\eps(\uu_t))$ and integrate in  time.
This leads to
\begin{equation}
\label{added-4-clarity}
\begin{aligned}
	& -\int_0^t  \uu_{tt}\,\cdot\, \mathbf{\dive} (\vism\eps(\uu_t)) \dd
	x \dd s  + \int_0^t \int_{\Omega} \dive( a(c,z)\vism\eps(\uu_t))
	\,\cdot\, \mathbf{\dive} (\vism\eps(\uu_t)) \dd x \dd s
	\\
	&
	 =-
	 \int_0^t \int_{\Omega}
	\dive(\pd{\eps}(c,\eps(\ub),z))  \,\cdot\, \mathbf{\dive}
	(\vism\varepsilon(\uu_t)) \dd x \dd s + \rho \int_0^t \int_\Omega
	\nabla \teta \cdot \mathbf{\dive} (\vism\eps(\uu_t)) \dd x \dd s
	\\
	& \quad
	-
	\int_0^t \int_\Omega \mathbf{f}\,\cdot \mathbf{\dive}
	(\vism\eps(\uu_t)) \dd x \dd s\,.
\end{aligned}
\end{equation}

The following  calculations  \EEE  are based on
\cite[Sec.\ 5]{RocRos14}, to which we refer for details.
However in the present case we have to take care of the
non-homogeneous Dirichlet boundary condition for $\ub$. 
The first term  on the left-hand side of \eqref{added-4-clarity} gives
\begin{align}
\label{est-added-0}
\begin{aligned}
	&-\int_0^t \io  \uu_{tt}\cdot \mathbf{\dive} (\vism\eps(\uu_t)) \dd x \dd s\\
	&\qquad=-\itt\int_{\partial\Omega}\uu_{tt}\cdot\big(\VV\e(\uu_t) 
{ \bf  n \EEE}
\big)\dd S\ds
		+\io \frac12 \eps(\uu_t (t)):\vism \eps(\uu_t (t)) \dd x
		-\io \frac12 \eps(\uu_t (0)):\vism \eps(\uu_t (0)) \dd x.
\end{aligned}
\end{align}
On the boundary cylinder $\partial\Omega\times(0,T)$ we find
$\uu_{tt}=\db_{tt}$  a.e. \EEE
(note that not necessarily $\e(\uu_{t})=\e(\db_{t})$ a.e. on $\partial\Omega\times(0,T)$)
which yields by using the trace theorem and Young's inequality ($\delta>0$ will
be chosen later)
\begin{align*}
	\Big|\itt\int_{\partial\Omega}\uu_{tt}\cdot\big(\VV\e(\uu_t)
{ \bf  n \EEE}
\big)\dd S\ds\Big|
	={}&\Big|\itt\int_{\partial\Omega}\db_{tt}\cdot\big(\VV\e(\uu_t)
{ \bf  n \EEE}\big)\dd S\ds\Big|\\
	\leq{}& \delta\|\uu_t\|_{L^2(0,T;H^2(\Omega;\R^d))}^2+C_\delta\|\db_{tt}\|_{L^2(0,T;H^{1}(\Omega;\R^d))}^2.
\end{align*}
The last term on the right-hand side can be estimated by using \eqref{dirichlet-data}.

For the second  term on the left-hand side of \eqref{added-4-clarity}  we find
\begin{equation}
\label{est-to-fill-2}
\begin{aligned}
	&\int_0^t \int_{\Omega} \mathbf{\dive} ( a(c,z)\vism\eps(\uu_t))
		\,\cdot\, \mathbf{\dive} (\vism\eps(\uu_t)) \dd x \dd	s\\
	&=\int_0^t\int_\Omega  a(c,z) \mathbf{\dive} (\vism
		\varepsilon(\uu_t))\,\cdot\,\mathbf{\dive} (\vism\varepsilon(\uu_t))\dd x \dd s
	 	+\int_0^t\int_\Omega (\nabla  a(c,z)\cdot \vism
		\varepsilon(\uu_t))\,\cdot\,\mathbf{\dive} (\vism\varepsilon(\uu_t))\dd x \dd s\\
	&\geq c\int_0^t \| \uu_t \|_{H^2(\Omega;\RR^d)}^2 \dd s-\|\db_t\|_{L^2(0,T;H^2(\Omega;\R^d))}^2+I_1,
\end{aligned}
\end{equation}
where the second inequality follows from \eqref{H2reg}.
 The second term on the right-hand side is bounded due to \eqref{dirichlet-data}. 
We move $I_1$ to the right-hand side of \eqref{added-4-clarity} and estimate
\begin{equation}
\label{est-to-fill-bis}
\begin{aligned}
	|I_1|&=\left| \int_0^t\int_\Omega \left(\EEE\nabla  a(c,z)\vism \tensoret\right)\EEE\,\cdot\,\mathbf{\dive} (\vism\eps(\uu_t)) \right| \dd x \dd s \\
	&\leq C\int_0^t\|\nabla  a(c,z)\|_{L^{d+\zeta}(\Omega;\R^d)}\|\tensoret\|_{L^{d^{\star}-\eta}(\Omega;\RR^{d\times d})}\|{\mathbf{\dive} (\vism\varepsilon(\uu_t))}\|_{L^2(\Omega; \RR^d)}\dd s\\
	&\leq \delta \int_0^t\|\uu_t\|_{H^2(\Omega;\RR^d)}^2 \dd s
		+C_\delta\int_0^t \|\nabla  a(c,z)\|_{L^{d+\zeta}(\Omega;\R^d)}^2  \|\tensoret\|_{L^{d^{\star}-\zeta}(\Omega;\RR^{d\times d})}^2 \dd s \\
	&\leq \delta \int_0^t\|\uu_t\|_{H^2(\Omega;\RR^d)}^2 \dd s
		+C_\delta\varrho^2\int_0^t \big(\|c\|_{W^{1,p}(\Omega)}^2+ \|z\|_{W^{1,p}(\Omega)}^2\big)\|\uu_t\|_{H^2(\Omega;\RR^d)}^2 \dd s\\
		&\quad+C_\delta C_\varrho\int_0^t \big(\|c\|_{W^{1,p}(\Omega)}^2+ \|z\|_{W^{1,p}(\Omega)}^2\big)\|\uu_t\|_{L^2(\Omega;\RR^d)}^2 \dd s.
\end{aligned}
\end{equation}
In the first line, we have chosen $\zeta>0$ fulfilling $p\geq d+\zeta$, and $\eta>0$ such
that $\tfrac1{d+\zeta} + \tfrac{1}{d^\star -\eta} + \tfrac12 \leq 1$, with $d^{\star}$
from  \eqref{dstar},  in order to apply the H\"older inequality. Moreover, we
have exploited \eqref{data-a},
giving $\|\nabla a( c, z)\|_{L^{d+\zeta}(\Omega;\R^d)} \leq C(\|c\|_{W^{1,p}(\Omega)}+\|z\|_{W^{1,p}(\Omega)})$,
as well as \eqref{interp} to estimate
$\|\tensoret\|_{L^{d^{\star}-\zeta}(\Omega;\RR^{d\times d})}$.
Finally, $\delta $ and $\varrho$
are positive constants that we will choose later, accordingly determining
$C_\delta, \, C_\varrho>0$ via the Young inequality. For  the right-hand side of \eqref{added-4-clarity}
we proceed as follows
\begin{equation}
\label{est-to-fill-1}
\begin{aligned}
	& -\int_0^t \int_{\Omega}
		\dive(\pd{\eps}(c,\eps(\ub), z))  \,\cdot\, \mathbf{\dive}
		(\vism\varepsilon(\uu_t)) \dd x \dd s\\ &
	= -\int_0^t\int_\Omega \pd{\eps c}(c,\e(\ub),z) \nabla c   \,\cdot\, \mathbf{\dive}(\vism\varepsilon(\uu_t))\dd x \dd s
		-\int_0^t\int_\Omega \pd{\eps z}(c,\e(\ub),z) \nabla z \,\cdot\, \mathbf{\dive}(\vism\varepsilon(\uu_t))\dd x \dd s\\
		&\quad -\int_0^t\int_\Omega  \big(\pd{\eps \eps}(c,\e(\ub),z) :\hspace*{-0.2em}\cdot\,\nabla (\e(\ub))\big)\,\cdot\,
		\mathbf{\dive}(\vism\varepsilon(\uu_t))\dd x \dd s	\\
	&\leq C_4 \int_0^t\int_\Omega\big(|\nabla c|+|\nabla z|\big)|\big(|\e(\ub)|+1\big)|\dive(\VV\e(\ub_t))|\dxs
		+C_4\int_0^t\int_\Omega|\nabla(\e(\ub))||\dive(\VV\e(\ub_t))|\dxs\\
	&\leq C_4 \int_0^t \left( \|\nabla c\|_{L^{d+\zeta}(\Omega;\R^d)}+\|\nabla z\|_{L^{d+\zeta}(\Omega;\R^d)}\right)
		\big(\|\tensore\|_{L^{d^{\star}-\eta} (\Omega;\RR^{d\times d})}+1\big)\|{\mathbf{\dive} (\vism\varepsilon(\uu_t))}\|_{L^2(\Omega;\RR^d)}\dd s\\
		&\quad+ C_4'\int_0^t\|\ub\|_{H^2(\Omega;\RR^d)}\|\ub_t\|_{H^2(\Omega;\RR^d)} \dd s\\
	&\leq  \sigma\int_0^t\|\ub_t\|_{H^2(\Omega;\RR^d)}^2 \dd s
	  +C_\sigma \int_0^t  \left(
	\left( \|c\|_{W^{1,p}(\Omega)}^2 + \|z\|_{W^{1,p}(\Omega)}^2+1\right)\left( \|\ub\|_{H^2(\Omega; \RR^d)}^2+1 \right) \|\ub\|_{H^2(\Omega;\RR^d)}^2\right)  \dd s\,.
\end{aligned}
\end{equation}
Here, the positive constants $\zeta$ and $\eta$ again fulfill
$p\geq d+\zeta$ and $\tfrac1{d+\zeta} + \tfrac{1}{d^\star -\eta} + \tfrac12 \leq 1$, and
we have exploited inequality \eqref{interp}
with a constant $\sigma$ that we will choose later, and some $C_\sigma>0$.
Moreover, we have used  the structural assumption \eqref{eqn:assumptionW} on $W$  (cf.\ also \eqref{later-ref}), \EEE  and 
estimates \eqref{est1} and \eqref{est5-added}, yielding
$\| c \|_{L^\infty(Q)} + \| z \|_{L^\infty(Q)}  \leq C$, whence
$$
	\|b(c,z) \|_{L^{\infty}(Q)}+\|b_{,c}(c,z) \|_{L^{\infty}(Q)}+\|b_{,z}(c,z) \|_{L^{\infty}(Q)}
	+ \|\eps^*(c)\|_{L^{\infty}(Q)}+\|(\eps^*)'(c)\|_{L^{\infty}(Q)}\leq C.
$$

Finally, we estimate
\begin{equation}
\label{est-to-fill-3} \left| \rho \int_0^t \int_\Omega \nabla \teta
	\cdot \mathbf{\dive} (\vism\eps(\uu_t)) \dd x \dd s \right|
	\leq \eta \int_0^t\|\uu_t\|_{H^2(\Omega;\RR^d)}^2 \dd s+C_\eta\int_0^t
	\|\nabla\teta\|_{L^2(\Omega;\R^d)}^2 \dd s
\end{equation}
for some positive constant $\eta$   to be fixed  later and for some $C_\eta>0$.
Combining estimates
\eqref{est-added-0}--\eqref{est-to-fill-3} with \eqref{added-4-clarity}
taking into account the previously proved  estimates \eqref{est1},
\eqref{crucial-est3.2},  and exploiting \eqref{bulk-force}  on
$\mathbf{f}$ to estimate the last term on the right-hand side of
\eqref{added-4-clarity}, we obtain
\begin{align*}
	&\frac{\nu_0\EEE}{2} \io |\eps(\uu_t (t) ) |^2  \dd x
		+c\int_0^t \| \uu_t \|_{H^2(\Omega;\RR^d)}^2 \dd s\\
	&\qquad\leq C\io|\eps( \vv^0 \EEE)|^2 \dd x + C\| \mathbf{f}\|_{L^2 (0,T;\Ha)}^2
		+C\| \mathbf{d}\|_{H^1(0,T;H^2(\Omega;\R^d))\cap H^2(0,T;H^1(\Omega;\R^d))}^2\\
		&\qquad\quad+\frac{c}2 \int_0^t \| \uu_t \|_{H^2(\Omega;\RR^d)}^2 \dd s
		+C\left(1+\|\uu^0\EEE\|_{H^2(\Omega;\RR^d)}^2+\int_0^t\int_0^s\|\uu_t\|_{H^2(\Omega;\RR^d)}^2\, \dd r \dd s\right)\,,
\end{align*}
with $\beta_0$ from \eqref{ellipticity}  (cf.~also \eqref{eqn:assbV}), \EEE
where we have used the fact that
$\int_0^t\|\uu\|_{H^2(\Omega;\RR^d)}^2 \dd s \leq \|\uu_0\|_{H^2(\Omega;\RR^d)}^2+\itt\int_0^s \|\uu_t\|_{H^2(\Omega;\RR^d)}^2 \dd r \dd s $
and chosen $\delta$, $\varrho$,  $\sigma$,  and $\eta$ sufficiently small.
Therefore, using the standard Gronwall lemma and conditions \eqref{data_u}--\eqref{data_v} 
on the initial data  $\uu^0$ and $\vv^0$, \EEE we conclude
\begin{equation}
\label{palla}
	\| \uu_t \|_{ L^{2}(0,T;  H^2(\Omega;\R^d))\cap L^{\infty}(0, T;  H^1(\Omega;\R^d)) } \leq C \quad \text{whence} \quad  \| \uu \|_{ L^{\infty}(0,T;  H^2(\Omega;\R^d))}
	\leq C.
\end{equation}

 By comparison in \eqref{e:u} 
we also get
\begin{equation}
	\| \uu_{tt} \|_{L^{2}(0,T; \Ha)}
	\leq C.
	\label{utt-comparison}
\end{equation}

In the end, taking into account the form \eqref{eqn:assumptionW} of $W$, 
we infer from \eqref{est5-added} and \eqref{palla}, taking into account the
continuous embedding $H^2(\Omega;\R^d)\subset W^{1,d^\star}(\Omega;\R^d)$, that
\begin{equation}
	\label{est-for-Ws}
	\| \pd{c}(c,\e(\ub),z) \|_{L^\infty (0,T; L^2(\Omega))}
	+\| \pd{z}(c,\e(\ub),z) \|_{L^\infty (0,T; L^2(\Omega))}\leq C.
\end{equation}
\paragraph{\bf Fifth estimate:} 
Recall that, for the time being we suppose $\widehat\beta \in
\rmC^1(\R)$, and we will use the notation $\phi'= \beta +\gamma'$.
It follows from \eqref{e:c} and the no-flux boundary conditions
on $c$ that  $\dashint_\Omega c_t \dd x = 0$ a.e.\ in $(0,T)$, hence
there exists $\mathfrak{m}_0 \in \R$ with
\begin{equation}
\label{constant-mean}
	\dashint_\Omega  c(t) \dd x  =\mathfrak{m}_0 \quad \text{for all } t \in [0,T].
\end{equation}
Now, from \eqref{e:mu} we deduce that
\begin{equation}
\label{mean-mu}
	\dashint_\Omega \mu \dd x =   \dashint_\Omega \phi'(c) \dd x +  \dashint_\Omega W_{,c}(c,\e(\ub),z) \dd x -  \dashint_\Omega \vartheta \dd x \quad \aein \, (0,T).
\end{equation}
Thanks to  estimates \eqref{crucial-est3.2}  and   \eqref{est-for-Ws}, we have that
\begin{equation}
	\label{used-here}
	\|  \textstyle{\dashint_\Omega \vartheta \dd x} \|_{L^2(0,T)} +
	\left\|   \dashint_\Omega W_{,c}(c,\e(\ub),z) \dd x \right\|_{L^\infty (0,T)} \leq C.
\end{equation}
Therefore, in order to estimate $\dashint_\Omega \mu \dd x$ it is  sufficient to gain a
bound for $\dashint_\Omega \phi'(c) \dd x$. We shall do so by testing \eqref{e:mu} by
$c- \dashint_\Omega c \dd x  = c - \mathfrak{m}_0 $.
This gives for  a.a.\  $t\in (0,T)$
\begin{equation}
\label{clever-c}
\begin{aligned}
	&\int_\Omega |\nabla c(t)|^p \dd x + \int_\Omega  \beta(c (t)) (c(t)-\mathfrak{m}_0) +  \gamma'(c (t)) (c(t)-\mathfrak{m}_0)   \dd x\\
	&=\int_\Omega  \big(\teta(t) - W_{,c}(c(t),\e(\ub(t)),z(t))\big) (c(t)-\mathfrak{m}_0) \dd x
		+\int_\Omega  \left(\mu(t) -\dashint_\Omega \mu(t)  \dd x \right) ( c(t)- \mathfrak{m}_0 )\dd x\\
		&\quad-\io c_t(t)c(t)\dx\\
	&\leq C \left( \|\teta(t)\|_{L^2(\Omega)}+ \| W_{,c}(c(t),\e(\ub(t)),z(t)) \|_{L^2(\Omega )} \right)   \| c(t)\|_{L^2(\Omega)} + \| \nabla \mu(t)\|_{L^2(\Omega)}  \| \nabla c(t)\|_{L^2(\Omega)}\\
		&\quad+\|c_t(t)\|_{L^1(\Omega)}\|c(t)\|_{L^\infty(\Omega)}
\end{aligned}
\end{equation}
where for the first equality we have used that $ (\dashint_\Omega \mu(t)  \dd x )( \int_\Omega  (c(t)- \mathfrak{m}_0 )\dd x ) =0$
 and $\mathfrak{m}_0\io c_t(t)\dx=0$, and for the second one
the Poincar\'e inequality for the second integral.
Now, observe that
\begin{equation}
\label{added-4-gamma}
  \int_\Omega \gamma'(c (t)) (c(t)-\mathfrak{m}_0) \dd x \geq -C
\end{equation}
since, by the $L^\infty (0,T;W^{1,p}(\Omega))$-estimate for
$c$ and the fact that $p>d$, we have
\begin{equation}
\label{gamma'-bounded}
	\| \gamma'(c) \|_{L^\infty (Q)} \leq C\,.
\end{equation}
   Combining \eqref{clever-c} and \eqref{added-4-gamma}  with   \eqref{mir-zelik}, yielding
\begin{equation}
\label{danke_zelik}
	\exists\, C_{\mathfrak{m}_0},\,C_{\mathfrak{m}_0}'>0  \ \ \text{for a.a. }
          t \in (0,T)\, : \quad
	\int_\Omega | \beta(c(t))| \dd x \leq C_{\mathfrak{m}_0}  \int_\Omega \beta(c (t)) (c(t)-\mathfrak{m}_0) \dd x  + C_{\mathfrak{m}_0}',
\end{equation}
and taking into account estimates \eqref{crucial-est3.2}, \eqref{est5}, \eqref{est5-added},    and  \eqref{est-for-Ws}, we conclude that
$\left\|\beta(c) \right\|_{L^2 (0,T; L^1(\Omega))} \leq C$,
whence $\left\|\phi'(c) \right\|_{L^2 (0,T; L^1(\Omega))} \leq C.$ 
Then, arguing by comparison in \eqref{mean-mu} and taking into account \eqref{used-here} we ultimately conclude
$\| \dashint_\Omega \mu\dd x \|_{L^2(0,T)} \leq C$.
Combining this with \eqref{est5-added} and using the Poincar\'e inequality we infer that
\begin{equation}
\label{est-for-mu}
\| \mu\|_{L^2(0,T;H^1(\Omega))} \leq C.
\end{equation}
\paragraph{\bf  Sixth estimate:}
We now argue by comparison in  \eqref{e:mu}
and take into account estimates  \eqref{crucial-est3.2},  \eqref{est5}, \EEE \eqref{est-for-Ws}, and \eqref{est-for-mu}, as well as
\eqref{gamma'-bounded}. 
 Then we conclude that
\[
\| \Delta_p(c) +\eta  \|_{L^2(0,T; L^2(\Omega))} \leq C.
\]
Now, in view of  the monotonicity of the function $\beta$,   it is not
difficult to deduce
from the above estimate  that 
\begin{equation}
	\| \Delta_p(c)\|_{L^2(0,T; L^2(\Omega))}+ \| \eta  \|_{L^2(0,T; L^2(\Omega))} \leq C.
\end{equation}
\paragraph{\bf Seventh  estimate:}
We test  \eqref{e:teta} by
$\frac{w}{\teta}$, with $w$ a test function in
$ W^{1,d+\epsilon}(\Omega)$ with $\epsilon>0 $, which then ensures $w\in  L^\infty(\Omega)$.
Thus, using  the place-holders
\begin{align*}
	&H :=   - c_t - z_t -\rho\mathrm{div}(\uu_t),\\
	&J:= \frac1\teta (g+  a(c,z) \eps(\uu_t):\vism \eps(\uu_t) + |c_t|^2 + |z_t|^2 + m(c,z)|\nabla \mu|^2),
\end{align*}
we obtain that
\[
\begin{aligned}
&
\left|  \int_\Omega \partial_t \log(\teta) w \dd x \right|
\\
&
=  \left|   \int_\Omega \left( H w -  \frac{\mathsf{K}(\teta)}\teta \nabla \teta
\cdot \nabla w   - \frac{\mathsf{K}(\teta)}{\teta^2}
|\nabla\teta|^2 w + Jw \right)   \dd x
 +\int_{\partial\Omega} h \frac{w}{\teta}    \dd S
\right|
\\
&
 \leq \left|\int_\Omega H w \dd x \right|
+ \left| \int_\Omega \frac{\mathsf{K}(\teta)}\teta \nabla \teta
\cdot \nabla w \dd x  \right| +
 \left| \int_\Omega  \frac{\mathsf{K}(\teta)}{\teta^2}
|\nabla\teta|^2 w   \dd x  \right| + \left|\int_\Omega J w \dd x \right|
+ \left|\int_{\partial\Omega} h\frac{w}\teta  \dd S\right|
\\
&
 \doteq I_1+I_2+I_3+I_4 +I_5.
\end{aligned}
\]
From estimate \eqref{est5} we deduce  that $\|H\|_{L^2(0,T; L^2(\Omega))} \leq C$,
therefore
\[
|I_1| \leq \mathcal{H}(t) \|w \|_{L^2(\Omega)} \quad \text{ with }
\mathcal{H}(t)= \| H(\cdot,t)\|_{L^2(\Omega)} \in L^2(0,T).
\]
Analogously, also in view of \eqref{heat-source}, of
\eqref{teta-pos}
and of estimate \eqref{est5},  we infer
 that
\[
 |I_4| \leq \frac{1}{\ul\teta}\mathcal{J}(t)
\|w\|_{L^\infty(\Omega)} \qquad \text{with } \mathcal{J}(t) := \|
J(\cdot,t)\|_{L^1(\Omega)} \in L^1(0,T).
\]
Moreover, $
|I_5| \leq  \frac{1}{\ul\teta}  \|h(t)\|_{L^2(\partial \Omega)} \| w\|_{L^2(\partial \Omega)} $, with $  \|h(t)\|_{L^2(\partial \Omega)}    \in L^1(0,T)$ thanks to \eqref{dato-h}.
In order to estimate $I_2$ and $I_3$ we develop the very same calculations as in the proof of
\cite[Sec.\ 3, \emph{Sixth estimate}]{RocRos14}. Referring to the latter paper for all details,  we mention here that,
exploiting the growth condition \eqref{hyp-K} on $\condu$, the positivity  of $\teta$ \eqref{teta-pos}, and the H\"older inequality, we have
\[
\begin{aligned}
&
|I_2| \leq C  \frac C{\ul\teta} \mathcal{O}(t)  \| \nabla
w\|_{L^2(\Omega;\RR^d)}
+  C \widetilde{\mathcal{O}}(t) \| \nabla w\|_{L^{{d+\epsilon}}(\Omega;\RR^d)}
\\
&
\quad \text{with }
\begin{cases}
\mathcal{O}(t) := \| \nabla
\teta(t) \|_{L^2(\Omega;\RR^d)} \in L^2(0,T) & \text{by \eqref{crucial-est3.2},}
\\
 \widetilde{ \mathcal{O}}(t) \EEE:=  \| \teta(t)^{(\kappa
+\alpha-2)/2} \nabla \teta (t) \|_{L^2(\Omega;\RR^d)}
\|\teta(t)^{(\kappa -\alpha)/2} \|_{L^{d^\star-\eta}(\Omega)}  \in
L^1(0,T) & \text {by  \eqref{additional-info},
\eqref{crucial-est3.2}, \eqref{necessary-added},}
\end{cases}
\end{aligned}
\]
with $\tfrac1{d+\epsilon} + \tfrac{1}{d^\star -\eta} +\tfrac12 \leq 1 $.
With analogous arguments, we find
\[
\begin{aligned}
&
 |I_3| \leq  \frac C{{\ul \teta }^2}   \mathcal{O}(t)^2 \| w\|_{L^\infty (\Omega)}
 + C   \overline{\mathcal{O}}(t) \|w \|_{L^\infty(\Omega)}
 \\
 &
 \text{with }  \overline{\mathcal{O}}(t)=  \int_\Omega \teta(t)^{\kappa+\alpha-2} |\nabla \teta(t)|^2 \dd x
 + \int_\Omega  |\nabla \teta(t)|^2 \dd x \in L^1(0,T)\  \text{ by  \eqref{additional-info} and  \eqref{crucial-est3.2}.}
 \end{aligned}
\]
All in all, we  infer that there exists a positive function  $\mathcal{C} \in L^1(0,T)$ such that
$
|  \int_\Omega \partial_t \log(\teta(t)) w \dd x | \leq \mathcal{C} (t)
\|w\|_{W^{1,d+\epsilon}(\Omega)} $  for 
 a.a.\ \EEE $ t \in (0,T).
$
Hence,
\begin{equation}\label{est6}
\|\partial_t\log(\teta)\|_{L^1(0,T; (W^{1,d+\epsilon}(\Omega)'))} \leq C.
\end{equation}
\paragraph{  \bf  Eighth  estimate [$ {\boldsymbol \kappa}  {\boldsymbol \in} 
{ \bf  (1,5/3)}$ if ${\bf d=3}$ and $\boldsymbol{\kappa} {\boldsymbol \in}
{\bf (1,2)}$ if ${ \bf d=2}$]: \EEE}
We multiply
 \eqref{e:teta} by a test function $w \in  W^{1,\infty}(\Omega)$ (which e.g.\ holds if $w \in W^{2,d+\epsilon}(\Omega)$ for
 $\epsilon>0$) and find
 \[
\begin{aligned}
\left|  \int_\Omega \teta_t w \dd x \right|
 \leq \left|\int_\Omega L w \dd x \right|
+ \left| \int_\Omega \mathsf{K}(\teta) \nabla \teta
\cdot \nabla w \dd x  \right| + \left|\int_{\partial\Omega} hw \dd S\right| \doteq I_1+I_2+I_3,
\end{aligned}
\]
where we have set
$$
	L= -c_t\teta - z_t \teta -\rho\teta \mathrm{div}(\uu_t)+g+ a(c,z)\eps(\uu_t):\vism \eps(\uu_t) +|c_t|^2 + |z_t|^2 + m(c,z)|\nabla \mu|^2.
$$
Therefore,
\[
|I_1| \leq \mathcal{L}(t) \|w\|_{L^\infty (\Omega)} \quad \text{with } \mathcal{L}(t):=\|L(t)\|_{L^1(\Omega)} \in L^1(0,T), \quad
|I_3| \leq \| h(t) \|_{L^2(\partial
\Omega)} \| w\|_{L^2(\partial
\Omega)}  \text{ with } h\in L^1(0,T)
\]
thanks to \eqref{heat-source},   \eqref{crucial-est3.2}, and   \eqref{est5} for $I_1$,  and \eqref{dato-h} for $I_3$.
We estimate  $I_2$
by proceeding exactly in the same way as for \cite[Sec.\ 3, \emph{Seventh estimate}]{RocRos14}. Namely, taking into account once again the growth condition
 \eqref{hyp-K} on $\condu$, we find
 \begin{equation}
\label{citata-dopo-ehsi}
|I_2|\leq   C\| \teta^{(\kappa-\alpha+2)/2} \|_{L^2(\Omega)} \|\teta^{(\kappa+\alpha-2)/2} \nabla \teta\|_{L^2(\Omega;\RR^d)} \|\nabla w\|_{L^\infty (\Omega;\RR^d)}
+   C \| \nabla \teta\|_{L^2(\Omega;\RR^d)}  \|\nabla w\|_{L^2 (\Omega;\RR^d)}.
 \end{equation}
 Observe that, since $\kappa <\frac53$ if $d=3$, and $\kappa <2$ if $d=2$, and  $\alpha$ can be chosen arbitrarily close to $1$, from estimate
\eqref{estetainterp}
we have that $\teta^{(\kappa-\alpha+2)/2}$ is bounded in $L^2(0,T; L^2(\Omega))$.
 Thus, also taking into account \eqref{crucial-est3.2}, we conclude that
  $|I_2|\leq C  \mathcal{L}^*(t) \|\nabla w \|_{L^\infty (\Omega)} $ for some
$\mathcal{L}^* \in L^1(0,T)$.
Hence,
\begin{equation}
\label{bv-esti-temp}
\|\teta_t\|_{L^1(0,T; W^{1,\infty}(\Omega)')} \leq C.
\end{equation}
\paragraph{\bf  Ninth estimate:}
We test \eqref{e:c} by $\Delta\mu$ and integrate in time. It follows
\begin{align}
\label{eqn:est9}
	\itt\io\dive\big(m(c,z)\nabla\mu\big)\Delta\mu\dxs
	=\itt\io c_t\Delta\mu\dxs. \EEE
\end{align}
The left-hand side  is estimated \EEE  below by exploiting Hypotheses (II)
and the boundedness $\|c\|_{L^\infty(Q)}+\|z\|_{L^\infty(Q)}\leq C$, viz. 
\begin{align*}
	\itt\io\dive\big(m(c,z)\nabla\mu\big)\Delta\mu\dxs
	&\geq\itt\io\big(\nabla m(c,z)\cdot\nabla\mu\big)\Delta\mu\dxs
		+m_0\itt\io|\Delta\mu|^2\dxs\\
	&\geq-C\itt\io(|\nabla c|+|\nabla z|)|\nabla\mu||\Delta\mu|\dxs
		+m_0\itt\io|\Delta\mu|^2\dxs.
\end{align*}
By using the interpolation inequality \eqref{interp2}
and by using analogous calculations as in the \textit{Fourth estimate}, we find
by Young's inequality
\begin{align*}
	&\itt\io(|\nabla c|+|\nabla z|)|\nabla\mu||\Delta\mu|\dxs\\
	&\qquad\leq C\itt\big(\|\nabla c\|_{L^{d+\zeta}(\Omega;\R^d)}+\|\nabla z\|_{L^{d+\zeta}(\Omega;\R^d)}\big)
		\|\nabla\mu\|_{L^{d^*-\eta}(\Omega;\R^d)}\|\Delta\mu\|_{L^2(\Omega)}\ds\\
	&\qquad\leq C\big(\|\nabla c\|_{L^\infty(0,T;L^{p}(\Omega;\R^d))}+\|\nabla z\|_{L^\infty(0,T;L^{p}(\Omega;\R^d))}\big)\itt\|\nabla\mu\|_{L^{d^*-\eta}(\Omega;\R^d)}\|\Delta\mu\|_{L^2(\Omega)}\ds\\
	&\qquad\leq C'\itt\big(\varrho\|\nabla\mu\|_{H^{1}(\Omega;\R^d)}+C_\varrho\|\nabla\mu\|_{L^2(\Omega;\R^d)}\big)\|\Delta\mu\|_{L^2(\Omega;\R^d)}\ds\\
	&\qquad\leq \varrho C'C_\delta\itt\|\nabla\mu\|_{H^{1}(\Omega;\R^d)}^2\ds
		+C' C_\varrho C_\delta\itt\|\nabla\mu\|_{L^{2}(\Omega;\R^d)}^2\ds
		+\delta C'\itt\|\Delta\mu\|_{L^2(\Omega)}^2\ds.
\end{align*}
By choosing suitable $\delta>0$ and $\varrho>0$, we see that
\begin{align*}
	&\itt\io\big(|\nabla c|+|\nabla z|\big)|\nabla\mu||\Delta\mu|\dxs
		\leq \epsilon\itt\|\mu\|_{H^{2}(\Omega)}^2\ds
		+C_\epsilon\itt\|\nabla\mu\|_{L^{2}(\Omega;\R^d)}^2\ds.
\end{align*}
All in all, we find from the above estimates
\begin{align*}
	\itt\|\Delta\mu\|_{L^2(\Omega)}^2 \ds \EEE
	\leq
        \epsilon\itt\|\mu\|_{H^2(\Omega)}^2\ds+C_\epsilon\itt\|c_t\|_{L^2(\Omega)}^2
         \ds \EEE
		+C_\epsilon\itt\|\nabla\mu\|_{L^{2}(\Omega;\R^d)}^2\ds,
\end{align*}
where the second and the third term on the right-hand side are bounded by \eqref{est5}
for fixed $\epsilon>0$.
By the $H^2$-elliptic regularity estimate for homogeneous Neumann problems, i.e.
\begin{align*}
	\|\mu\|_{H^2(\Omega)}^2\leq C\big(\|\Delta \mu\|_{L^2(\Omega)}^2+\|\mu\|_{H^1(\Omega)}^2\big),
\end{align*}
we conclude by choosing $\epsilon>0$ sufficiently small and by using the boundedness
of $\|\mu\|_{L^2(0,T;H^1(\Omega))}$ in \eqref{est5} that
\begin{align}
	\|\mu\|_{L^2(0,T;H^2(\Omega))}\leq C.
\end{align}

\QED

\section{\bf Time discretization  and regularizations}
\label{s:5} In this section we will introduce and motivate a
\textit{thermodynamically consistent time-discretization scheme} for system \eqref{eqn:PDEsystem} and
devote a large part of Sec. \ref{ss:5.2} to the proof  that it
admits solutions. Next, in Sec.\ \ref{ss:5.3} we will derive the
energy and entropy inequalities fulfilled by the discrete solutions,
and, starting from  them, we will obtain a series of a priori
estimates on the approximate solutions.
\subsection{Setup of the time-discrete system}
\label{ss:5.1}
We consider an equidistant partition of $[0,T]$, with time-step $\tau>0$ and
nodes
\begin{align}
	t_\tau^k:=k\tau,
\label{time-nodes}
\end{align}
$k=0,\ldots,K_\tau$, and we  approximate
the data $\mathbf{f}$, $g$, and $h$
by local means, i.e.
setting for all $k=1,\ldots,K_{\tau}$
\begin{equation}
\label{local-means} \ftau{k}:=
  \frac{1}{\tau}\int_{t_\tau^{k-1}}^{t_\tau^k} \mathbf{f}(s)\dd s\,,
  \qquad  \gtau{k}:= \frac{1}{\tau}\int_{t_\tau^{k-1}}^{t_\tau^k} g(s)
  \dd s\,, \qquad \htau{k}:=
  \frac{1}{\tau}\int_{t_\tau^{k-1}}^{t_\tau^k} h(s) \dd s\,.
\end{equation}
In what follows, for a given $K_\tau$-tuple $(v_{\tau}^k)_{k=1}^{K_\tau}$
the time-discrete derivative is denoted by
\[
  \Dt(v) =\frac{v_{\tau}^k-v_{\tau}^{k-1}}{\tau} \quad \text{so that} \quad
  \Dt(\Dt(v)) = \frac{v_{\tau}^k-2v_{\tau}^{k-1} + v_{\tau}^{k-2}}{\tau^2}.
\]
Before stating the complete time-discrete scheme in Problem \ref{def:time-discrete}, 
we are going to introduce its main ingredients in   what follows.  \EEE

\paragraph{\bf Regularization of the coefficient functions depending on 
  $\bf c$ \EEE}
In the following we will analyze a specially chosen
time-discretization scheme for system \eqref{eqn:PDEsystem}.
To ensure suitable coercivity properties in the time-discrete system
needed for existence of solutions
we utilize the following $\omega$-regularizations which will eventually vanish as
$\omega\downarrow 0$:
\begin{itemize}
	\item[--]
		
    First of all, we will replace the maximally monotone operator $\beta$
       (the derivative of the convex part of the potential $\phi$ (see Hypothesis (I)) 
    by its Yosida regularization
    $\beta_\omega\in \rmC^0(\R)$
			with Yosida index $\omega\in(0,\infty)$.
    This will be crucial to render rigorously the \emph{Fifth a priori estimate}
    on the time-discrete level, cf.
    the calculations in Sec.\ \ref{ss:5.4}.  Observe that the Yosida approximation 
    $\widehat{\beta}_\omega\in C^1(\R)$ of $\widehat\beta$, fulfilling
    $\widehat{\beta}_\omega' = \beta_\omega$, is still convex,  and that $\beta_\omega(0)=0$. \EEE 
     For notational consistency we set $\phi_\omega:=\widehat{\beta}_\omega+\gamma$.
  \item[--]
  	Let $\{\C R_\omega\}_{ \omega>0\EEE}\subseteq C^2(\R)\cap W^{2,\infty}(\R)$ be a family of
  	functions (we can think of ``smoothed truncations'') such that:
  	\begin{align}
  		\forall M>0\quad\exists \omega_0>0\qquad\forall \omega\in(0,\omega_0),\;c\in(-M,M):
  			\qquad \C R_\omega(c)=c.
  	\label{Rtrunc}
  	\end{align}
	 They have the role to somehow provide for the  information that $c$ is bounded, which is not supplied by the  concentration potential $\phi$, defined on all
	of $\R$. In turn, this information is crucial in order to make some of the following calculations rigorous.
	The limit passage as $\omega \down 0$ will be possible thanks to an a priori bound for $c$ in $L^\infty (Q)$, cf.\ Sec.\ \ref{s:6} ahead. \EEE
	\par
  	We define the following regularizations for the elastic energy density:
  	\begin{align*}
  		&W^\omega(c,\e,z):=W(\C R_\omega(c),\e,z).
  	\end{align*}
  	and observe that for fixed $\omega>0$ and fixed $\e\in\R_{sym}^{d\times d}$ and $z\in\R$  (cf.\ also \eqref{later-ref}): \EEE
  	\begin{align}
  	\label{eqn:WtauEst}
	  	|W^\omega(c,\e,z)|+|W_{,c}^\omega(c,\e,z)|+|W_{,cc}^\omega(c,\e,z)|\leq C
	  	\qquad\text{ uniformly in }c\in\R.
  	\end{align}
\end{itemize}
Throughout \underline{this section} we neglect the subscript $\omega$ on
the solutions $c$, $\mu$, $z$, $\teta$ and $\uu$ for the sake of readability.

\paragraph{\bf  Convex-concave \EEE splitting of the coefficient functions}
Let us mention in advance how all the various nonlinear terms in
\eqref{eqn:PDEsystem} will be coped with in the discrete system
\eqref{PDE-discrete}, which is in fact carefully designed in such a
way as to ensure the validity of the \emph{discrete total energy
inequality}, cf.\ the forthcoming Lemma \ref{l:energy-est}.
To this aim, it will be crucial to employ the   \emph{convex-concave
splitting} of the functions $c\mapsto  W^\omega(c,\e,z)$,
$z \mapsto  W^\omega(c,\e,z)$,
$z\mapsto\sigma(z)$, as well as the specific splitting
\eqref{specific-splitting-phi} below (cf.\ also \eqref{decomposition}) for $\phi_\omega$.
Recall that, a convex-concave decomposition of some real-valued
$\mathrm{C}^2(I)$-function $\psi$
 with bounded second derivative on an interval $I$ 
may be canonically  given by $\psi = \conv{\psi} + \conc{\psi}$, with
\begin{align}
\label{eqn:splitting}
    &\conv{\psi}(x):=\psi(x)+\frac12\Big(\max_{y\in I}|\psi''(y)|\Big)x^2,
    &&\conc{\psi}(x):=-\frac12\Big(\max_{y\in I}|\psi''(y)|\Big)x^2.
\end{align}
Therefore, we will  proceed as follows:
\begin{itemize}
  \item[--]
    The nonlinear contribution $\sigma'(z)$ in \eqref{e:z} will be discretized via the
    convex-concave splitting \eqref{eqn:splitting}
     on $I=[0,1]$:
    \begin{align*}
      \sigma'(z)\;\text{ via }\;(\conv{\sigma})'(\zk)+(\conc{\sigma})'(\zkk).
    \end{align*}
  \item[--]
    For the time-discrete version of the term $\pd{c}(c,\e(\ub),z)$ in \eqref{e:mu} and
    $\pd{z}(c,\e(\ub),z)$ in \eqref{e:z} we will resort to partial convex-concave splittings
    of $ W^\omega$.
    To denote them, we will use the symbols \eqref{eqn:splitting}, combined with   subscripts \EEE
    to denote the variable with respect to which the splitting is computed.
    Therefore, we set
    \begin{subequations}
    \label{eqn:convConcSplittingWc}
    \begin{align}
      &\convWc(c,\e,z):= W^\omega(c,\e(\ub),z)+\frac12\Big (
	      \sup_{\widetilde c \in \R}| W_{,cc}^\omega(\widetilde c,\e,z)|\Big)c^2,\\
      &\concWc(c,\e,z):=-\frac12\Big(\sup_{\widetilde c \in \R}| W_{,cc}^\omega(\widetilde c,\e,z)|\Big)c^2,\\
      &\convWz(c,\e,z):=W^\omega(c,\e(\ub),z)+\frac12\Big (
	      \sup_{\widetilde z \in [0,1]}| W_{, zz\EEE}^\omega(c,\e,\widetilde z)|\Big)z^2,\\
      &\concWz(c,\e,z):=-\frac12\Big(\sup_{\widetilde z \in [0,1]}|W_{, zz}\EEE^\omega(c,\e,\widetilde z)|\Big)z^2.
    \end{align}
    \end{subequations}
     Note that these functions are well-defined for fixed $\omega>0$ due to \eqref{eqn:WtauEst}. 
    The splitting of $ W^\omega$ with respect to $\e(\ub)$ is not needed due to the 
    convexity of $ W^\omega$ with respect to $\e(\ub)$  by  the structural assumption \eqref{eqn:assumptionW}
     and the non-negativity of $b$  in Hypothesis (V).
    We easily see that
    $$
         W^\omega=\convWc+\concWc=\convWz+\concWz
    $$
    and that
    \begin{align*}
        &&&&&\convWc(\cdot,\e,z)\text{ is convex on $\R $},
        &&\concWc(\cdot,\e,z)\text{ is concave on } \R \hspace*{2.2em}\bigg\}\text{ for all fixed }\e,z,\\
        &&&&& W^\omega(c,\cdot,z)\text{ is convex on $\R_{sym}^{n\times n}$}
        &&\hspace*{14.0em}\bigg\}\text{ for all fixed }c,z,\\
        &&&&&\convWz(c,\e,\cdot)\text{ is convex on $[0,1]$},
        &&\hspace*{0.07em}\concWz(c,\e,\cdot)\text{ is concave on }[0,1]\quad\bigg\}\text{ for all fixed }c,\e.
    \end{align*}
    We will
     replace the terms $W_{,c}$, 
    $W_{,\eps}$, and 
    $W_{,z}$ in system \eqref{eqn:PDEsystem}  by  their \EEE time-discretized  and regularized versions: \EEE
     \begin{align*}
        &&&&&\pd{c}(c,\e(\ub),z)&&\text{ via }&&\convWcp(\ck,\e(\ukk),\zkk)+\concWcp(\ckk,\e(\ukk),\zkk),&&&&&&\\
        &&&&&\pd{\eps}(c,\cdot,z)&&\text{ via }&&\pd{\eps}^\omega(\ck,\e(\uk),\zk),\\
        &&&&&\pd{z}(c,\e(\ub),z)&&\text{ via }&&\convWzp(\ck,\e(\ukk),\zk)+\concWzp(\ck,\e(\ukk),\zkk).
    \end{align*}
    By exploiting convexity and concavity estimates this time-discretization scheme leads to the crucial estimate
    \begin{align}
    \begin{aligned}
        &\Big(\convWcp(\ck,\e(\ukk),\zkk)+\concWcp(\ckk,\e(\ukk),\zkk)\Big)(\ck-\ckk)\\
        &+\pd{\eps}^\omega(\ck,\e(\uk),\zk):\e(\uk-\ukk)\\
        &+\Big(\convWzp(\ck,\e(\ukk),\zk)+\concWzp(\ck,\e(\ukk),\zkk)\Big)(\zk-\zkk)\\
        &\qquad\geq  W^\omega(\ck,\e(\ukk),\zkk)- W^\omega(\ckk,\e(\ukk),\zkk)\\
          &\qquad\quad+ W^\omega(\ck,\e(\uk),\zk)- W^\omega(\ck,\e(\ukk),\zk)\\
          &\qquad\quad+ W^\omega(\ck,\e(\ukk),\zk)- W^\omega(\ck,\e(\ukk),\zkk)\\
        &\qquad\geq  W^\omega(\ck,\e(\uk),\zk)- W^\omega(\ckk,\e(\ukk),\zkk),
    \end{aligned}
    \label{eqn:convConcWall}
    \end{align}
    which will be used later in the proof of the discrete total energy inequality.

  \item[--]
    We will discretize the (formally written) term $\phi'(c) = \beta(c) +\gamma'(c)$
    in \eqref{e:mu} in the following way:	
		As mentioned above
    the maximally monotone operator $\beta$
		is replaced
    by its Yosida regularization $\beta_\omega\in \rmC^0(\R)$.
    Hence, in view of  the $\lambda_\gamma$-convexity of
    $\gamma$ (cf. Remark \ref{rmk:l-convex-splitting}),  the functions \EEE
    \begin{equation}
      \label{specific-splitting-phi}
      \conv{\phi}_\omega(c): = \widehat{\beta}_\omega(c) + \lambda_\gamma \frac{c^2}2 \quad \text{and} \quad
      \conc{\phi}(c):=\gamma(c) - \lambda_\gamma \frac{c^2}2
    \end{equation}
    provide a convex-concave decomposition of
    $\phi_\omega:= \widehat{\beta}_\omega + \gamma$.
    Thus, we will approximate
    \begin{align*}
      \phi'(c)\;\text{ via }\;(\conv{\phi}_\omega)'(\ck)+(\conc{\phi})'(\ckk) \qquad
      \text{with } \conv{\phi}_\omega,\, \conc{\phi} \text{ given by \eqref{specific-splitting-phi}.}
    \end{align*} 
\end{itemize}
\paragraph{\bf Statement of the time-discrete problem and existence result}
In the following we are going to describe the time-discrete problem formally.
Later on the precise spaces and a weak notion  of solution \EEE will be fixed.
The time-discrete problem (formally) reads as follows:
\begin{problem}
  \upshape \label{def:time-discrete}
	 Let $\omega>0$ and $\tau>0$ be given.   
  Find functions
  $\{(\ck, \muk,\zk,\tk)\}_{k=0}^{K_\tau}$ and $\{\uk\}_{k=-1}^{K_\tau}$
  which satisfy for all $k\in\{1,\ldots,K_\tau\}$
  the following time-discrete version of \eqref{eqn:PDEsystem}:
  \begin{subequations}
  \label{PDE-discrete}
  \begin{itemize}
    \item[(i)] Cahn-Hilliard system:
	    \begin{align}
	    \label{eqn:discr1}
	      \Dt(c)={}&\dive\big(m(\ckk,\zkk)\nabla\muk\big),\\
        \notag
        \muk={}&-\Delta_p(\ck)+(\conv{\phi}_\omega)'(\ck) +(\conc{\phi})'(\ckk)
	        +\convWcp(\ck,\e(\ukk),\zkk)\\
	        &+\concWcp(\ckk,\e(\ukk),\zkk)-\tk+\Dt(c),
      \label{eqn:discr2}
      \end{align}
    \item[(ii)] damage equation:
      \begin{equation}
      \begin{aligned}
        \label{eqn:discr3}
          &\Dt(z)-\Delta_p(\zk)+\ellk +\zek  + (\conv{\sigma})'(\zk)+ (\conc{\sigma})'(\zkk)
        \\ & =-\convWzp(\ck,\e(\ukk),\zk) - \concWzp(\ck,\e(\ukk),\zkk)+\tk
      \end{aligned}
      \end{equation}
      with 
      \begin{align*}
        &\ellk\in \partial I_{[0,\infty)}\big(\zk\big),\qquad
        \zek\in \partial I_{(-\infty,0]}\big(\Dt(z)\big),
      \end{align*} 
    \item[(iii)] temperature equation:
      \begin{equation}
      \label{eqn:discr4}
      \begin{aligned}
        &\Dt(\vartheta)  - \mathrm{div}(\condu(\tk)\nabla \tk) \EEE +\Dt(c)\tk+\Dt(z)\tk+\rho\tk\dive(\Dt(\ub))\\
        &=\gk+|\Dt(c)|^2+|\Dt(z)|^2+m(\ckk,\zkk)|\nabla\muk|^2
        \\
        & \qquad \qquad \qquad +  a(\ckk,\zkk)\e(\Dt(\ub)):\vism\e(\Dt(\ub)),
      \end{aligned}
      \end{equation}
  	\item[(iv)] balance of forces:
      \begin{align}
        &\Dt(\Dt(\ub)) -\dive\Big( a(\ckk,\zkk)\vism\e(\Dt(\ub))
        +  W_{,\e}^\omega(\ck,\e(\uk),\zk) -\rho\tk\mathds 1\Big)=\fk,
      \label{eqn:discr5}
      \end{align}
  \end{itemize}
  \end{subequations}
    supplemented with the initial data
    \begin{align}
        &\hspace*{9.3em}\left.
        \begin{matrix}
            c_\tau^0=c^0,\qquad&
            z_\tau^0=z^0,\qquad&
            \vartheta_\tau^0=\vartheta^0,\qquad\\
            \ub_\tau^0=\ub^0,\qquad&
            \ub_\tau^{-1}=\ub^0-\tau\mathbf v^0\qquad
        \end{matrix}
        \right\}
        &&\text{a.e. in }\Omega\label{discre-initial-cond}
    \end{align}
    and the boundary data
    \begin{align}
        &\left.
        \begin{matrix}
            \nabla\ck\cdot { \bf  n \EEE}=0,\qquad&
            m(\ckk,\zkk)\nabla\muk\cdot { \bf  n \EEE}=0,\qquad&
            \nabla \zk\cdot { \bf  n \EEE}=0,\qquad\\
            \condu(\tk)\nabla\tk\cdot{ \bf  n \EEE}=\hk,\qquad&
            \uk=\dk
        \end{matrix}
        \right\}
        &&\text{a.e. on }\partial\Omega.\label{discre-boundary-cond}
    \end{align}
\end{problem}
\begin{remark}
\label{remark:discProbl}
  \upshape
   A few comments on Problem \ref{def:time-discrete} are in order: \EEE
  \begin{itemize}
    \item[(i)]
      It will turn out that a solution of the time-discrete problem always satisfies
      the constraints:
      \begin{align}
      \label{discre-constraints}
        &&&\zk\in[0,1],&&\Dt(z)\leq 0,
        &&\vartheta_\tau^k\geq\ul\teta\quad(\text{for some }\ul\teta>0)
        &&\text{a.e. in }\Omega 
      \end{align}
      as long as the initial data satisfy \eqref{h:initial}.
	  \item[(ii)]
      Observe that the scheme is fully implicit and, in particular, the discrete temperature equation
      \eqref{eqn:discr4} is coupled with \eqref{eqn:discr2}, \eqref{eqn:discr3}, and \eqref{eqn:discr5}
      via the implicit term $\tk$ featuring in $\Dt(c)\tk$, $\Dt(z)\tk$, and
      $ \rho \EEE \, \tk\dive(\Dt(\ub))$. Indeed, having $\tk$ implicit in these terms is crucial for the  argument we will develop later on 
      for proving the positivity of $\tk$,  cf.\ the proof of Lemma \ref{l:positivityThetaDiscr}. \EEE
    \item[(iii)]
      The subgradients $\ellk$ and $\zek$  account for non-negativity as well as irreversibility constraints
      for $z$.  In the pointwise formulation we obtain by the sum rule for
      $\zkk \not= 0$ and by direct calculations for $\zkk=0$ \EEE
      $$
        \partial I_{[0,\infty)}\big(\zk\big)
        +\partial I_{(-\infty,0]}\big(\Dt(z)\big)
        =  \partial I_{[0,\infty)}\big(\zk\big)
        +\partial I_{(-\infty,\zkk]}\big(\zk\big)
        =\partial I_{[0,\zkk]}(\zk) 
      $$
      and, consequently, the double inclusion in (ii) may be replaced by the single inclusion
      \begin{align*}
    		&\Dt(z)-\Delta_p(\zk)+\xk + (\conv{\sigma})'(\zk)+ (\conc{\sigma})'(\zkk)
    		\\ & =-\convWzp(\ck,\e(\ukk),\zk) - \concWzp(\ck,\e(\ukk),\zkk)+\tk
			\end{align*}
		  with
      \begin{align*}
        &\xk\in \partial I_{[0,\zkk]}(\zk).
    	\end{align*}
    \item[(iv)]
      By assuming the additional growth assumptions
      \begin{align*}
        &\sigma(0)\leq \sigma(z),\qquad b(c,0)\leq b(c,z)\text{ for all }c\in\R,z\in\R
        \text{ with }z<0,
      \end{align*}
      it is possible the prove a maximum principle for equation \eqref{eqn:discr3}
      which ensures $\zk\geq 0$ as long as $z^0\geq 0$.
      In this case the subdifferential term
       $ \partial I_{[0,\infty)}(\zk) $ \EEE
    	in equation \eqref{eqn:discr3}
    	may be dropped. For details we refer to \cite[Proposition 5.5]{KRZ}.
  \end{itemize}
\end{remark}

We can now state our existence result for Problem \ref{def:time-discrete},
 where we also fix the concept of weak solution to system 
\eqref{PDE-discrete}. With this aim,
let us also \EEE introduce the nonlinear operator $\C A^k:X\to H^1(\Omega)'$, with
\begin{align}
&
  X:=\Big\{\theta\in H^1(\Omega)\;:\;\int_\Omega\condu(\theta)\nabla\theta\cdot\nabla v\dx
     \text{ is well-defined for all }v\in H^1(\Omega)\Big\}, \notag
\\
& 
  \big\langle\C A^k(\theta),v\big\rangle_{H^1}:=\int_\Omega \condu(\theta)\nabla\theta\cdot\nabla v\dx
     -\int_{\partial\Omega}\hk v\dx. \label{A-operator}
\end{align}
\begin{proposition}
\label{prop:exist-discr}
  Assume \textbf{Hypotheses (I)--(V)}, as well as \eqref{hyp:data}  on $(\mathbf{f},g,h)$ and \eqref{h:initial} on  $(c^0,z^0,\teta^0,\uu^0,\vv^0)$. \EEE

  Then, for every  $\omega>0$ and  $\tau>0$ Problem  \ref{def:time-discrete} admits a weak solution
  \begin{align}
  \label{eqn:regDiscSol}
    \{(\ck, \muk,\zk,\tk,\uk)\}_{k=1}^{K_\tau}\subseteq W^{1,p}(\Omega)\times \Hn(\Omega)\times W^{1,p}(\Omega)\times H^1(\Omega)\times   H^2(\Omega;\RR^d)\EEE
  \end{align}
  in the following sense:
  \begin{itemize}
    \item[--]
      \eqref{eqn:discr1} and \eqref{eqn:discr5} are fulfilled a.e. in $\Omega$,  with the boundary conditions $ \nabla\ck\cdot n=0$ and
           $ \uk=\dk$ a.e.\ in $\partial\Omega$, \EEE
    \item[--]
      \eqref{eqn:discr2} is fulfilled in $W^{1,p}(\Omega)'$,
    \item[--]
      \eqref{eqn:discr4} is fulfilled in $H^1(\Omega)'$,  in the form
      \[
       \begin{aligned}
        &\Dt(\vartheta)+\C A^k(\tk)+\Dt(c)\tk+\Dt(z)\tk+\rho\tk\dive(\Dt(\ub))\\
        &=\gk+|\Dt(c)|^2+|\Dt(z)|^2+m(\ckk,\zkk)|\nabla\muk|^2
        \\
        & \qquad \qquad \qquad +  a(\ckk,\zkk)\e(\Dt(\ub)):\vism\e(\Dt(\ub)),
      \end{aligned}
      \] \EEE
    \item[--]
      \eqref{eqn:discr3} is reformulated as (cf. Remark \ref{remark:discProbl} (ii))
      \begin{equation}
      \begin{aligned}
        &\Dt(z)-\Delta_p(\zk)+\xk + (\conv{\sigma})'(\zk)+ (\conc{\sigma})'(\zkk)\\
	        \label{eqn:discr3b}
        &=-\convWzp(\ck,\e(\ukk),\zk) - \concWzp(\ck,\e(\ukk),\zkk)+\tk
      \end{aligned}
      \end{equation}
  	  and fullfilled in $W^{1,p}(\Omega)'$ with
      $\xk\in \partial I_{Z_\tau^{k-1}}(\zk)$ where
      \begin{equation} \label{eqn:set_z} 
      Z_\tau^{k-1}:=\{z\in W^{1,p}(\Omega)\,|\,0\leq z\leq \zkk\},\EEE
      \end{equation} 
    \item[--]
      the initial conditions \eqref{discre-initial-cond}  and the boundary conditions \eqref{discre-boundary-cond} \EEE  are satisfied,
    \item[--]
      the  constraints \eqref{discre-constraints} are satisfied.
  \end{itemize}
\end{proposition}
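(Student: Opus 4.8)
The plan is to prove existence for the time-discrete Problem \ref{def:time-discrete} by induction over the time nodes $k=1,\dots,K_\tau$, solving at each step a coupled elliptic system in $(\ck,\muk,\zk,\tk,\uk)$, with $(\ckk,\zkk,\ukk,\ukkk,\tkk)$ known from the previous step and the initial conditions \eqref{discre-initial-cond} providing the base case. The key point that makes the argument tractable is the \textbf{decoupling structure} of the scheme: the equations \eqref{eqn:discr1}--\eqref{eqn:discr2} for $(\ck,\muk)$ and \eqref{eqn:discr3b} for $\zk$ and \eqref{eqn:discr5} for $\uk$ involve $\tk$ only \emph{linearly} (and through lower-order terms), while the temperature equation \eqref{eqn:discr4} contains the genuinely implicit couplings $\Dt(c)\tk$, $\Dt(z)\tk$, $\rho\tk\dive(\Dt(\ub))$. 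I would therefore split the step into two layers: first, for a \emph{given} temperature-like datum $\bar\theta\in L^2(\Omega)$ (or better, given in a bounded ball of $H^1(\Omega)$), solve the ``mechanical--diffusive'' subsystem \eqref{eqn:discr1}--\eqref{eqn:discr2}, \eqref{eqn:discr3b}, \eqref{eqn:discr5} for $(\ck,\muk,\zk,\uk)$; then plug the result into \eqref{eqn:discr4} and solve for $\tk$; finally close the loop by a Schauder fixed-point argument.

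For the mechanical--diffusive subsystem I would argue variationally. The pair $(\ck,\muk)$ solves a regularized viscous Cahn--Hilliard step: eliminating $\muk$ via $\Dt(c)=\dive(m(\ckk,\zkk)\nabla\muk)$ and using the no-flux conditions, one obtains, for the zero-mean part of $c$, a strictly monotone, coercive operator on $W^{1,p}(\Omega)$ (the $p$-Laplacian $-\Delta_p$ plus the Yosida-regularized $\conv{\phi}_\omega{}'$ plus the $c_t$-viscosity $\frac1\tau(\cdot)$ plus the smoothed elastic term $\convWc{}'$ which is monotone in $c$ by the convex-splitting construction, while the concave/explicit pieces $\concWc{}'(\ckk,\dots)$, $(\conc{\phi})'(\ckk)$, $\bar\theta$ are fixed right-hand sides); the mean of $c$ is pinned by conservation of mass so the inverse-Laplacian pairing is well defined. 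Similarly $\zk$ solves, via \eqref{eqn:discr3b}, the Euler--Lagrange inclusion of a strictly convex coercive functional on the closed convex set $Z_\tau^{k-1}$ from \eqref{eqn:set_z} (the $p$-Laplacian term, the $\frac1\tau z^2$-viscosity, $\conv\sigma$, $\convWz$ all convex in $z$), whose minimizer exists and is unique, yielding $\xk\in\partial I_{Z_\tau^{k-1}}(\zk)$; this automatically delivers $\zk\in[0,1]$ and $\Dt(z)\le0$, part of \eqref{discre-constraints}. Finally $\uk$ solves the linear elliptic system \eqref{eqn:discr5}: with the datum $\uk=\dk$ on $\partial\Omega$, the bilinear form associated with $\dive(a(\ckk,\zkk)\vism\e(\cdot))+\frac1{\tau^2}(\cdot)$ is coercive on $\boY$ by Korn's inequality and the ellipticity \eqref{ellipticity}, \eqref{eqn:assbV} and positivity $a\ge a_0$ from \eqref{data-a}, while $W_{,\e}^\omega(\ck,\e(\uk),\zk)=b(\C R_\omega(\ck),\zk)\CC(\e(\uk)-\e^*(\C R_\omega(\ck)))$ is affine in $\e(\uk)$ and contributes a monotone operator; elliptic regularity \eqref{cigamma}--\eqref{H2reg} then upgrades $\uk$ to $H^2(\Omega;\R^d)$. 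These three sub-problems are themselves weakly coupled ($\ck$ enters the $\uk$- and $\zk$-equations through lower-order, continuous and bounded — by the $\C R_\omega$ truncation — nonlinearities, and $\zk$, $\uk$ enter the $c$-equation likewise), so I would run an \emph{inner} fixed-point (Schauder or Leray--Schauder) on, say, $\ck$ in $L^2(\Omega)$, using compactness of $W^{1,p}(\Omega)\Subset C^0(\overline\Omega)$ (recall $p>d$) and the a priori bounds coming from testing each equation by its natural test function.

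For the temperature equation \eqref{eqn:discr4}, rewritten in the form $\Dt(\vartheta)+\C A^k(\tk)+(\Dt(c)+\Dt(z)+\rho\dive(\Dt(\ub)))\tk=(\text{nonnegative data})$, the operator $\theta\mapsto\frac1\tau\theta+\C A^k(\theta)$ is, thanks to the growth \eqref{hyp-K} of $\condu$, pseudomonotone and coercive on $H^1(\Omega)$ (after the usual truncation argument of Boccardo--Gallou\"et type to handle the $L^1$-data $\gk$, $\hk$), and the zeroth-order multiplier $\Dt(c)+\Dt(z)+\rho\dive(\Dt(\ub))$ is a fixed $L^2$-function once the mechanical subsystem is solved; adding $\frac1\tau\cdot$ absorbs its negative part for $\tau$ small relative to nothing — actually one does not need smallness here because the positivity is obtained separately. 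I would invoke a standard existence theorem for pseudomonotone operators (or the Browder--Minty/Schauder combination) to get $\tk\in H^1(\Omega)$ solving \eqref{eqn:discr4} in $H^1(\Omega)'$, and verify $\tk\in X$. The strict positivity $\tk\ge\ul\teta$ in \eqref{discre-constraints} is proved by a discrete comparison/maximum-principle argument: testing \eqref{eqn:discr4} by $(\tk-\ul\teta)^-$ (with $\ul\teta$ the value at node $k$ of the subsolution of $v_t=-\frac12v^2$, $v(0)=\teta_*$, as in \eqref{teta-pos}), using $g,h\ge0$, the nonnegativity of the quadratic production terms $|\Dt(c)|^2+|\Dt(z)|^2+m|\nabla\muk|^2+a\e(\Dt\ub):\vism\e(\Dt\ub)$, and crucially the \emph{implicitness} of $\tk$ in $\Dt(c)\tk$, $\Dt(z)\tk$, $\rho\tk\dive(\Dt\ub)$ (noting $\Dt(z)\le0$ already helps), exactly as flagged in Remark \ref{remark:discProbl}(ii).

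Finally I would close the outer loop with a Schauder fixed-point argument: define the map $\bar\theta\mapsto$ (solve mechanical subsystem) $\mapsto$ (solve \eqref{eqn:discr4}) $=:\tk$, posed on a closed ball of $L^2(\Omega)$ (or on $\{\theta\in H^1(\Omega):\|\theta\|_{H^1}\le R, \ \theta\ge\ul\teta\}$), continuous and compact by the $H^1\Subset L^2$ embedding plus continuous dependence of the Cahn--Hilliard/damage/momentum solves on the datum $\bar\theta$ (continuity here uses the monotonicity/uniqueness of those sub-problems). A uniform a priori bound on $\tk$ — obtained by testing \eqref{eqn:discr4} by $1$ and by $\tk^{\alpha-1}$ in the spirit of the \emph{Second a priori estimate} of Section \ref{s:4} — confines the map to the ball, so Schauder applies. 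I expect the \textbf{main obstacle} to be the rigorous handling of the temperature equation with its $L^1$-data and the quadratic $|\nabla\muk|^2$ right-hand side \emph{simultaneously} with the implicit zeroth-order couplings: one must choose the order of the fixed points and the truncation levels so that the $L^1$-terms are controlled, the positivity argument goes through, and the map stays compact; a secondary delicate point is checking that the minimization defining $\zk$ on the \emph{moving} convex set $Z_\tau^{k-1}$ is compatible with the subdifferential reformulation \eqref{eqn:discr3b} and with the simultaneous presence of the $z_t$-constraint — but this is exactly the content of Remark \ref{remark:discProbl}(iii) and is handled by the sum rule there.
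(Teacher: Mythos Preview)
Your strategy is plausible but differs substantially from the paper's. Rather than decoupling and running nested Schauder fixed points, the paper introduces \emph{two extra approximation layers} and then solves the whole five-tuple at once: it truncates the heat conductivity to a bounded $\condu_M$ and replaces $\tk$ by $\mathcal T_M(\tk)$ in all coupling terms (so the couplings become bounded and the heat operator maps $H^1(\Omega)\to H^1(\Omega)'$), and it adds $\nu$-regularizing terms $\nu\dive(|\nabla\muk|^{\varrho-2}\nabla\muk)-\nu\muk$, $\nu|\ck|^{\varrho-2}\ck$, $\nu|\zk|^{\varrho-2}\zk$, $-\nu\dive(|\e(\uk-\dk)|^{\varrho-2}\e(\uk-\dk))$ with $\varrho>4$. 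The $(\nu,M)$-system is then recast as a single inclusion $0\in\mathbf A(\mathbf x)+\partial I_{Z^{k-1}}(z)$ on the product space $W^{1,p}\times W^{1,\varrho}\times W^{1,p}\times H^1\times W_0^{1,\varrho}$, with $\mathbf A$ pseudomonotone and coercive (the $\varrho$-growth is precisely what absorbs the quadratic production terms in the coercivity estimate), and solved in one shot by an abstract existence result for pseudomonotone operators perturbed by a convex potential. Afterwards the paper passes $M\to\infty$, then $\nu\to0$ (via strong compactness for $c,\mu,z,\ub$ and weak $H^1$-compactness for $\teta$), and finally bootstraps $\muk$ and $\uk$ to $H^2$ by iterated elliptic regularity.

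Two points that the paper's regularization handles transparently are genuine obstacles in your scheme. First, since $\condu(\teta)\sim\teta^\kappa$ with $\kappa>1$, the map $\theta\mapsto-\dive(\condu(\theta)\nabla\theta)$ is \emph{not} bounded from $H^1(\Omega)$ to $H^1(\Omega)'$; your ``pseudomonotone on $H^1$'' step for the heat equation is not well-posed without truncating $\condu$, which is exactly why the paper introduces $\condu_M$ and recovers \eqref{eqn:discr4} in $H^1(\Omega)'$ only at the very end, after proving $\uk,\muk\in H^2$. Second, the self-mapping condition for your outer Schauder map is more delicate than you indicate: for $\bar\theta$ in a ball of radius $R$ the mechanical solves produce $\Dt(c),\nabla\muk,\e(\Dt(\ub))$ with norms of order $R$, so the quadratic right-hand side of \eqref{eqn:discr4} scales like $R^2$, and closing $\|\tk\|\le R$ is not automatic; the paper sidesteps this entirely via the truncation $\mathcal T_M(\tk)$ in the coupling terms and the $\nu$-coercivity. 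Your outline would therefore likely need to reintroduce something like the $(\nu,M)$-layer to become rigorous. As a minor remark, the inner fixed point you propose is unnecessary: since \eqref{eqn:discr2} involves $\e(\ukk),\zkk$ and \eqref{eqn:discr3b} involves $\e(\ukk)$, for fixed $\bar\theta$ the sub-problems can be solved \emph{sequentially} in the order $(\ck,\muk)\to\zk\to\uk$.
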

We will prove Proposition \ref{prop:exist-discr} in the ensuing section by performing a double passage to the limit in
a carefully devised  approximation of system \eqref{PDE-discrete}, depending on two additional parameters $\nu$ and $\varrho$. 

\subsection{Proof of Proposition \ref{prop:exist-discr}}
\label{ss:5.2}
\noindent
We will split the proof of Prop.\  \ref{prop:exist-discr}  in several steps and obtain a series of intermediate results.
Our argument is based on  a double approximation procedure and two
consecutive limit passages.  More precisely, we approximate system \eqref{PDE-discrete} by
\begin{enumerate}
  \item adding the higher order terms
    \begin{align*}
      &&&&&&&+\nu\dive\big(|\nabla \muk|^{\varrho-2}\nabla\muk\big)-\nu\muk
        &&\text{to the right-hand sides of the discrete Cahn-Hilliard equation \eqref{eqn:discr1}},\\
      &&&&&&&+\nu|\ck|^{\varrho-2}\ck
        &&\text{to the right-hand sides of the discrete Cahn-Hilliard equation \eqref{eqn:discr2}},\\
      &&&&&&&+\nu|\zk|^{\varrho-2}\ck
        &&\text{to the left-hand sides of the discrete damage equation \eqref{eqn:discr3}},\\
      &&&&&&&-\nu\dive\big(|\e(\uk-\dk)|^{\varrho-2}\e(\uk-\dk)  \big) \EEE
        &&\text{to the left-hand side of the discrete momentum equation \eqref{eqn:discr5}}
    \end{align*}
    with $\nu>0$ and $\varrho>4$.
    In this way, the quadratic growth of the terms on the right-hand side of the temperature equation will be compensated
    and coercivity properties 
     of the elliptic operators involved in the time-discrete scheme \EEE
    ensured.
  \item Truncating the heat conduction function $\condu$ and replacing it with a bounded
    $\condu_M$ with $M\in\N$. In this way the elliptic operator in the discrete heat equation will be defined on $H^1(\Omega)$,
    with values in $H^1(\Omega)'$, but we will of course  loose \EEE
    the enhanced estimates on the temperature variable provided by the coercivity properties of $\condu$. That is why,
    we will have to  accordingly \EEE  truncate all occurrences of $\teta$ in the quadratic terms.
\end{enumerate}
Let us mention in advance that this  double approximation,
leading to system \eqref{discr-syst-appr} later on,  shall be
devised in such a way as to allow us  to prove the existence of
solutions to \eqref{discr-syst-appr},  by resorting to a result
from the theory of elliptic systems featuring pseudomonotone
operators,   cf.\ \cite{Rou05}.  \EEE

\textbf{A caveat on notation:} the solutions to the approximate discrete
system \eqref{discr-syst-appr} at the  $k$-th time step, with
\underline{given} $S_{\tau}^{k-1}:=(c_{\tau}^{k-1}, z_{\tau}^{k-1},
\uu_{\tau}^{k-1}, \teta_{\tau}^{k-1})$
and $\uu_\tau^{k-2}$ , will depend on the parameters $\tau$, $\nu$ and $M$
 (and on $\omega$ which we omit at the moment).
Therefore, we should
denote them by $ S_{\tau,\nu,M}^k:= (c_{\tau,\nu,M}^k,\mu_{\tau,\nu,M}^k,
z_{\tau,\nu,M}^k, \teta_{\tau,\nu,M}^k, \ub_{\tau,\nu,M}^k)$.
However, to increase readability, we will simply write $c^k$,
$\mu^k$, $z^k$, $\teta^k$ and $\ub^k$ and use the notation $c^k_M,
\ldots, \ub_M^k$ ($c^k_\nu, \ldots, \ub_\nu^k$, respectively), only
upon addressing the limit passage as $M\to\infty$ (as $\nu \down 0$,
respectively).

 \paragraph{\bf Outline of the proof of Proposition  \ref{prop:exist-discr}:} \EEE
For given $\tau>0$, the construction of 
 the solution quintuples \EEE
$S_{\tau,\nu,M}^k$ and the limit passages  as $M\to\infty$ and as $\nu \down 0$ \EEE are
performed recursively over $k=1,\ldots,K_\tau$ in the following order:
\begin{align*}
	&\qquad\vdots
		&&\qquad\vdots
		&&\quad\vdots
		&&\qquad\vdots
		&&\quad\vdots
		&&\quad\vdots
		&&\quad\vdots\\
	&(S_{\tau}^{k-2},\ub_{\tau}^{k-3})
		&&\xmapsto[\text{Step 1}]{\text{pseudo-mon. op. theory}}
		&&S_{\tau,\nu,M}^{k-1}
		&&\xrightarrow[\text{Step 2}]{\;M\to\infty\;}
		&&S_{\tau,\nu}^{k-1}
		&&\xrightarrow[\text{Step 3}]{\;\nu\downarrow0\;}
		&&S_{\tau}^{k-1}\\
	&(S_{\tau}^{k-1},\ub_{\tau}^{k-2})
		&&\xmapsto[\text{Step 1}]{\text{pseudo-mon. op. theory}}
		&&S_{\tau,\nu,M}^{k}
		&&\xrightarrow[\text{Step 2}]{\;M\to\infty\;}
		&&S_{\tau,\nu}^{k}
		&&\xrightarrow[\text{Step 3}]{\;\nu\downarrow0\;}
		&&S_{\tau}^{k}\\
	&(S_{\tau}^{k},\ub_{\tau}^{k-1})
		&&\xmapsto[\text{Step 1}]{\text{pseudo-mon. op. theory}}
		&&S_{\tau,\nu,M}^{k+1}
		&&\xrightarrow[\text{Step 2}]{\;M\to\infty\;}
		&&S_{\tau,\nu}^{k+1}
		&&\xrightarrow[\text{Step 3}]{\;\nu\downarrow0\;}
		&&S_{\tau}^{k+1}\\
	&\qquad\vdots
		&&\qquad\vdots
		&&\quad\vdots
		&&\qquad\vdots
		&&\quad\vdots
		&&\quad\vdots
		&&\quad\vdots
\end{align*}
The construction of $S_{\tau,\nu,M}^{k}$  will be tackled in Subsection \ref{sss:4.2.1},   the limit passage as $M\to\infty$
to $S_{\tau,\nu}^k$  in Subsection \ref{sss:4.2.2.}, and the one as $\nu \down 0$
  to $S_\tau^k$ in Subsection \ref{sss:4.2.3.}. 
  Throughout all of them, we will work under the assumptions of Proposition  \ref{prop:exist-discr}, and omit to explicitly invoke them in the 
  following statements. \EEE

\subsubsection{\textbf{Step 1: Existence and uniform estimates of the
    time-discrete system with 
 ${\boldsymbol \nu}$- and ${\bf M}$-regularization.}}
\label{sss:4.2.1}
\noindent
From now on let $\nu>0$, $\varrho>4$ and $M\in\N$. Let
\begin{equation}
\label{def-k-m} \condu_M(r):=
	\left\{ \begin{array}{ll} \condu(0) & \text{if } r <0, \\
    \condu(r)   & \text{if } 0\leq r \leq M, \\
    \condu(M) & \text{if } r >M
    \end{array}
  \right.
\end{equation}
and accordingly  we \EEE introduce the quasilinear 
operator  $\mathcal{A}_M^k$  in analogy to \eqref{A-operator}\EEE:
\begin{equation}
\label{M-operator}
    \mathcal{A}_M^k: H^1(\Omega)  \to H^1(\Omega)'
    \  \text{  defined by }
    \pairing{}{H^1(\Omega)}{\mathcal{A}_M^k(\theta)}{v}:= \int_\Omega \condu_M(\theta) \nabla \theta \cdot \nabla v \dd x -
    \int_{\partial \Omega} \htau{k}v \dd S
\end{equation}
Observe that, thanks to \eqref{hyp-K}  there still holds $\condu_M(r) \geq c_{0} $ for all $r \in \R$, and therefore
by the trace theorem
\begin{equation}
\label{ellipticity-retained}
    \pairing{}{H^1(\Omega)}{ \mathcal{A}_M^{k}  (\theta)}{\theta} \geq   \tilde{c}_0 \|\nabla \theta\|_{L^2(\Omega)}^2-c_1\|\theta\|_{L^2(\Omega)}^2  -c_1\|\hk\|_{L^2(\partial\Omega)}^2 \qquad \text{for all } \theta \in H^1(\Omega).
\end{equation}
We also introduce the truncation operator $\mathcal{T}_M : \R \to \R$
\begin{equation}
\label{def-truncation-m} \mathcal{T}_M(r):=
	\left\{ \begin{array}{ll} 0 & \text{if } r <0,\\
    r  & \text{if } 0\leq r \leq M, \\
    M  & \text{if } r >M.
   \end{array}
  \right.
\end{equation}

The $(\nu,M)$-regularized time-discrete system  at time step $k$  reads as follows:
\begin{subequations}
\label{discr-syst-appr}
\begin{align}
  &D_k(c)=\dive\Big(m(c^{k-1},z^{k-1})\nabla\mu^k\Big)+\nu\dive\Big(|\nabla\mu^k|^{\varrho-2}\nabla\mu^k\Big)-\nu\mu^k,
    \label{discr-syst-appr-c}\\
  &\mu^k=-\Delta_p(c^k)+(\conv{\phi}_\omega)'(c^k)+(\conc{\phi})'(c^{k-1})+\convWcp(c^k,\e(\ub^{k-1}), z^{k-1})+\concWcp(c^{k-1},\e(\ub^{k-1}), z^{k-1})\notag\\
    &\qquad -\C T_M(\teta^k)+D_k(c)+\nu|c^k|^{\varrho-2}c^k,
    \label{discr-syst-appr-mu}\\
  &D_k(z)-\Delta_p(z^k)+\xi^k+(\conv{\sigma})'(z^k) + (\conc{\sigma})'(z^{k-1})+\nu|z^k|^{\varrho-2}z^k\notag\\
    &\quad=-\convWzp( c^{k},\e(\ub^{k-1}),z^k)-\concWzp( c^{k},\e(\ub^{k-1}),z^{k-1})+\C T_M(\teta^k)
    \quad\text{with }\xi^k\in \partial I_{[0,z^{k-1}]}(z^k),
    \label{discr-syst-appr-z}\\
  &D_k(\teta) + \mathcal{A}_M^k(\teta^k)+D_k(c)\C T_M(\teta^k)+D_k(z)\C T_M(\teta^k)+\rho\C T_M(\teta^k)\dive(D_k(\ub))\notag\\
    &\quad=g^k+|D_k(c)|^2+|D_k(z)|^2+  a(c^{k-1},z^{k-1})\e(D_k(\ub)):\vism\e(D_k(\ub))
    +m(c^{k-1},z^{k-1})|\nabla\mu^k|^2,
    \label{discr-syst-appr-teta}\\
  &D_k(D_k(\ub))-\dive\Big(a(c^{k-1},z^{k-1})\vism\e(D_k(\ub))
    + W_{,\e}^\omega(c^{k},\e(\ub^k),z^k) \EEE-\rho\C T_M(\teta^k)\mathds 1 \Big)\notag\\
    &\qquad-\nu\dive\Big(|\e(\ub^k-\db^k)|^{\varrho-2}\e(\ub^k-\db^k)\Big)=\bold f^k,
    \label{discr-syst-appr-u}
\end{align}
\end{subequations}
 supplemented with the previously given boundary conditions. \EEE
Please note that the functions $c^{k}$, $\mu^k,z^k$, $\teta^k$ and $\ub^k$
depend on $M$, $\nu$, $\tau$  and $\omega$  whereas the functions from the previous time steps
$c^{k-1}$, $\mu^{k-1},z^{k-1}$, $\teta^{k-1}$, $\ub^{k-1}$ and $\ub^{k-2}$
only depend on $\tau$  and $\omega$  and do \textbf{not} depend on $M$ and $\nu$.

We are now in the position to prove existence of weak solutions for system
\eqref{discr-syst-appr} by resorting to an existence result for pseudomonotone operators
from \cite{Rou05}, which is in turn  based on a fixed point  argument. \EEE

\begin{lemma}[Existence of the time-discrete system for $\nu>0$ and $M\in\N$]
\label{l:exist-approx-discr}
   Let  $\omega>0$,  $\tau>0$, $k\in\{1,\ldots,K_\tau\}$, $\nu>0$ and $M\in\N$
   be given.
  We assume that
  \begin{align*}
      (c^{k-1},\mu^{k-1},z^{k-1},\teta^{k-1},\ub^{k-1},\ub^{k-2})\in W^{1,p}(\Omega)\times H^{2}(\Omega)\times W^{1,p}(\Omega)\times H^1(\Omega)\times H^2(\Omega;\R^d)\times H^2(\Omega;\R^d).
  \end{align*}
  
   Then\, there exists a weak solution
    \begin{align*}
      (c^k, \mu^k,z^k,\teta^k,\ub^k)\in W^{1,p}(\Omega)\times W^{1,\varrho}(\Omega)\times W^{1,p}(\Omega)\times H^1(\Omega)\times  W^{1,\varrho}(\Omega;\R^d)
    \end{align*}
    to system \eqref{discr-syst-appr}  at time step $k$\, in the following sense:
    \begin{itemize}
      \item[--]
        \eqref{discr-syst-appr-c} is fulfilled in $W^{1,\varrho}(\Omega)'$,
      \item[--]
        \eqref{discr-syst-appr-mu} is fulfilled in $W^{1,p}(\Omega)'$,
      \item[--]
        \eqref{discr-syst-appr-z} is fulfilled in $W^{1,p}(\Omega)'$
        with $\xi^k\in \partial I_{Z^{k-1}}(z^k)$,
      \item[--]
        \eqref{discr-syst-appr-teta} is fulfilled in $H^{1}(\Omega)'$,
      \item[--]
        \eqref{discr-syst-appr-u} is fulfilled in $W_0^{1,\varrho}(\Omega;\R^d)'$,
      \item[--]
        the initial conditions \eqref{discre-initial-cond}
				 and the boundary condition $\uu^k=\db^k$ a.e. on $\partial\Omega$ 
				are satisfied,
      \item[--]
   			the constraints \eqref{discre-constraints} are satisfied.
    \end{itemize}
\end{lemma}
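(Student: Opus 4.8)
The plan is to prove Lemma \ref{l:exist-approx-discr} by casting the $(\nu,M)$-regularized time-discrete system \eqref{discr-syst-appr} as a single stationary nonlinear equation $\mathcal{G}(u)=0$ on a suitable product Banach space, and then to apply a standard existence result for pseudomonotone, coercive operators (e.g.\ \cite[Cor.\ 2.12, Thm.\ 2.6]{Rou05}). First I would fix the functional setting: with $S_\tau^{k-1}$ and $\ub_\tau^{k-2}$ given data, the unknown is the quintuple $U=(c^k,\mu^k,z^k,\teta^k,\ub^k)$, and the natural space is
\[
\mathbb{X}:= W^{1,p}(\Omega)\times W^{1,\varrho}(\Omega)\times W^{1,p}(\Omega)\times H^1(\Omega)\times W^{1,\varrho}(\Omega;\R^d),
\]
the affine constraint $\ub^k=\db^k$ on $\partial\Omega$ being incorporated by working with $\ub^k-\db^k\in W_0^{1,\varrho}(\Omega;\R^d)$, and the unilateral constraint $\xi^k\in\partial I_{[0,z^{k-1}]}(z^k)$ being handled by adding to the $z$-equation the (maximal monotone, hence pseudomonotone after a bounded perturbation) subdifferential of the indicator of the closed convex set $Z_\tau^{k-1}\subset W^{1,p}(\Omega)$ from \eqref{eqn:set_z}. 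The $p$- and $\varrho$-Laplacian terms $-\Delta_p$, $-\nu\dive(|\nabla\mu^k|^{\varrho-2}\nabla\mu^k)$, $-\nu\dive(|\e(\ub^k-\db^k)|^{\varrho-2}\e(\ub^k-\db^k))$, together with $\nu|c^k|^{\varrho-2}c^k$, $\nu|z^k|^{\varrho-2}z^k$ and $-\nu\mu^k$, are monotone (or bounded Lipschitz perturbations of monotone) operators; the lower-order reaction terms built from $(\conv{\phi}_\omega)'$, $(\conc{\phi})'$, $\convWcp$, $\concWcp$, $(\conv\sigma)'$, $(\conc\sigma)'$, the truncated $W_{,\e}^\omega$, the truncations $\mathcal{T}_M$, and the quasilinear heat operator $\mathcal{A}_M^k$ with the bounded coefficient $\condu_M\geq c_0$ are all continuous and bounded from $\mathbb{X}$ into $\mathbb{X}'$ thanks to the regularizations — here I would invoke \eqref{eqn:WtauEst}, \eqref{Rtrunc}, \eqref{def-truncation-m}, \eqref{def-k-m}, \eqref{ellipticity-retained} — and, being of lower order with respect to the $p$-/$\varrho$-Laplacians and compactly embedded-data-dependent, they do not destroy pseudomonotonicity.

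\textbf{Key steps.} (1) Rewrite \eqref{discr-syst-appr-c}--\eqref{discr-syst-appr-u} as $\mathcal{G}(U)\ni 0$ in $\mathbb{X}'$, where $\mathcal{G}$ is the sum of the principal monotone part, the lower-order continuous bounded part, and the maximal monotone graph $\partial I_{Z_\tau^{k-1}}$. Observe that the terms from previous time steps ($c^{k-1},z^{k-1},\ub^{k-1},\ub^{k-2},\teta^{k-1}$) and $\fk,\gk,\hk,\db^k$ contribute only fixed right-hand sides in $\mathbb{X}'$. (2) Check pseudomonotonicity of $\mathcal{G}$: the $(p,\varrho)$-Laplacian blocks are of type (M) and satisfy the $(S_+)$-property, the remaining bounded continuous blocks (including $\mathcal{A}_M^k$, which is pseudomonotone because $\condu_M$ is bounded and continuous and the system is otherwise a bounded perturbation) are pseudomonotone, and the sum of a pseudomonotone operator with a maximal monotone one is pseudomonotone (or use the generalized pseudomonotone framework of \cite{Rou05}). (3) Verify coercivity of $\mathcal{G}$ on $\mathbb{X}$: this is where the $\nu$-terms are essential. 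Testing the system by $U$ itself, the key contributions are
\[
\nu\|\nabla\mu^k\|_{L^\varrho}^\varrho,\ \nu\|\mu^k\|_{L^2}^2,\ \nu\|c^k\|_{L^\varrho}^\varrho,\ \|\nabla c^k\|_{L^p}^p,\ \nu\|z^k\|_{L^\varrho}^\varrho,\ \|\nabla z^k\|_{L^p}^p,\ \tilde c_0\|\nabla\teta^k\|_{L^2}^2,\ \nu\|\e(\ub^k-\db^k)\|_{L^\varrho}^\varrho,\ \tfrac1\tau\|\ub^k\|_{L^2}^2,
\]
while all the troublesome terms — in particular the quadratic right-hand sides $|D_k(c)|^2$, $|D_k(z)|^2$, $m|\nabla\mu^k|^2$, $a\,\e(D_k(\ub)){:}\vism\e(D_k(\ub))$ in the heat equation, and the products $D_k(c)\mathcal{T}_M(\teta^k)$, $\rho\mathcal{T}_M(\teta^k)\dive(D_k(\ub))$ — are controlled: the truncation $\mathcal{T}_M$ makes the coupling terms at most linear in $\teta^k$ (absorbable by $\varepsilon\|\nabla\teta^k\|_{L^2}^2 + C_\varepsilon$ after Poincaré), and the quadratic source terms, tested against $1$ in the $\teta$-equation, are bounded by $C(\|\nabla\mu^k\|_{L^\varrho}^\varrho + \|c^k\|_{W^{1,p}}^p + \|z^k\|_{W^{1,p}}^p + \|\ub^k-\db^k\|_{W^{1,\varrho}}^\varrho) + C$ using $\varrho>4>2$ and Young's inequality, hence absorbed by the $\nu$-coercive terms (this is exactly why $\varrho>4$ and the $\nu$-Laplacians were added). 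The $\ub$-equation tested by $\ub^k$ needs the $H^2$ elliptic estimate \eqref{H2reg} only at the very end; for mere existence it suffices to use the $\nu$-term plus $D_k(D_k(\ub))$ tested by $\ub^k$, completing the square in time. (4) Conclude existence of $U\in\mathbb{X}$ solving $\mathcal{G}(U)\ni 0$, which is the asserted weak solution; the stated regularity $\teta^k\in H^1(\Omega)$, $\mu^k\in W^{1,\varrho}(\Omega)$, $\ub^k\in W^{1,\varrho}(\Omega;\R^d)$ is then read off the space $\mathbb{X}$, and $\xi^k\in\partial I_{Z^{k-1}}(z^k)$ from the structure of $\mathcal{G}$.

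\textbf{Constraints \eqref{discre-constraints}.} After existence, I would separately establish $z^k\in[0,1]$, $D_k(z)\leq 0$ a.e., and $\teta^k\geq\ul\teta>0$ a.e. The bounds $0\le z^k\le z^{k-1}\le 1$ follow directly from $\xi^k\in\partial I_{[0,z^{k-1}]}(z^k)$ (so $z^k$ lies in $Z_\tau^{k-1}$ by construction) together with the inductive hypothesis $z^{k-1}\le 1$, whence also $D_k(z)=(z^k-z^{k-1})/\tau\le 0$. Positivity of $\teta^k$ is obtained by a discrete comparison/maximum-principle argument: testing \eqref{discr-syst-appr-teta} by the negative part $(\teta^k-\ul\teta)^-$ (with $\ul\teta\le\teta_*$ small), using $\mathcal{T}_M(\teta^k)=0$ wherever $\teta^k\le 0$, the non-negativity of $g^k,h^k$ and of the quadratic sources, the ellipticity of $\mathcal{A}_M^k$, and the fact that $D_k(c)\mathcal{T}_M(\teta^k)$ and $\rho\mathcal{T}_M(\teta^k)\dive(D_k(\ub))$ vanish on $\{\teta^k\le0\}$; this is precisely the argument announced in Remark \ref{remark:discProbl}(ii) and carried out in Lemma \ref{l:positivityThetaDiscr}, which I would cite.

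\textbf{Main obstacle.} The delicate point is verifying coercivity in the presence of the genuinely quadratic dissipation terms on the right-hand side of the heat equation and of the bidirectional coupling between $\teta^k$ and $(c^k,z^k,\ub^k)$: one must simultaneously test all five equations by the right combination ($\mu^k$ for \eqref{discr-syst-appr-c}, $D_k(c)$ for \eqref{discr-syst-appr-mu}, $D_k(z)$ — or rather a constant-plus-admissible direction respecting $Z_\tau^{k-1}$ — for \eqref{discr-syst-appr-z}, $1$ for \eqref{discr-syst-appr-teta}, $D_k(\ub)$ for \eqref{discr-syst-appr-u}), exploit the cancellations, and then bound the leftover quadratic terms by the $\nu$-regularized higher-order norms using $\varrho>4$. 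Making the pseudomonotonicity of the full coupled operator rigorous (as opposed to the easy individual-block statements) is the second technical hurdle; I would address it by grouping the higher-order monotone blocks together, treating everything else as a compact/continuous lower-order perturbation, and appealing to the stability of pseudomonotonicity under such perturbations and under addition of the maximal monotone subdifferential, as formalized in \cite{Rou05}.
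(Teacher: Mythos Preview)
Your overall framework coincides with the paper's: cast \eqref{discr-syst-appr} as an inclusion $0\in\bA(U)+\partial I_{Z^{k-1}}$ on the product space $\mathbb{X}$, shift $\ub^k$ by $\db^k$ to land in $W_0^{1,\varrho}$, verify pseudomonotonicity and coercivity, and invoke \cite[Thm.\ 5.15]{Rou05}. However, your coercivity argument is internally inconsistent. Coercivity for the pseudomonotone existence theorem means $\langle\bA(U),U\rangle_{\mathbb{X}}/\|U\|_{\mathbb{X}}\to\infty$, and this requires testing each equation by its \emph{own} unknown $(c^k,\mu^k,z^k,\teta^k,\ub^k-\db^k)$ --- as you correctly start out saying in Key steps~(3). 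But your ``Main obstacle'' paragraph then proposes the \emph{energy-estimate} test functions $(\mu^k,D_k(c),D_k(z),1,D_k(\ub))$: those produce the discrete total energy inequality (the paper's Lemma~\ref{l:energy-est}), not a lower bound on $\langle\bA(U),U\rangle$, and give no coercive control of $\|\teta^k\|_{H^1}$ or $\|c^k\|_{W^{1,p}}$. Likewise, the phrase ``tested against $1$ in the $\teta$-equation'' is not the right test. The paper tests \eqref{discr-syst-appr-teta} by $\teta^k$; the quadratic sources then appear as $\int_\Omega|D_k(c)|^2\teta^k$, $\int_\Omega m|\nabla\mu^k|^2\teta^k$, etc., and are estimated by H\"older/Young as $\delta\|\teta^k\|_{H^1}^2+C_\delta\bigl(\|c^k\|_{L^4}^4+\|z^k\|_{L^4}^4+\|\nabla\mu^k\|_{L^4}^4+\|\e(\ub^k-\db^k)\|_{L^4}^4\bigr)+C$, with the $L^4$-terms absorbed by $\nu\|c^k\|_{L^\varrho}^\varrho$, $\nu\|\nabla\mu^k\|_{L^\varrho}^\varrho$, $\nu\|\e(\ub^k-\db^k)\|_{L^\varrho}^\varrho$ because $\varrho>4$. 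That is the actual mechanism; your proposal has the right ingredients ($\varrho>4$, the $\nu$-terms) but the wrong test.

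A second, smaller omission: to make $\bA$ well-defined and bounded on all of $\mathbb{X}$ (not merely on $Z^{k-1}$), the paper inserts an auxiliary truncation $\mathcal{T}(z)=\max\{0,\min\{z,1\}\}$ into every $z$-dependent nonlinearity ($(\conv\sigma)'$, $\convWzp$, $W_{,\e}^\omega$) in the definition of $\bA$. Without this, the growth bounds \eqref{est-quoted-5.3}--\eqref{est-quoted-5.4} and the boundedness of the $z$-dependent coefficients are only valid for $z\in[0,1]$, and both boundedness and coercivity of $\bA$ would fail for general $z\in W^{1,p}(\Omega)$. The truncation is harmless since any solution lies in $\mathrm{dom}(\partial I_{Z^{k-1}})\subset\{0\le z\le 1\}$, so $\mathcal{T}$ disappears a posteriori; but you need it to set up the operator.
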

\begin{proof}
  Our approach for finding a solution to \eqref{discr-syst-appr} for a given $k$
  is to rewrite the system as   
  \begin{align}
  \label{label:inclusion2}
    0\in \bA(c^k,\mu^k,z^k,\teta^k,\ub^k-\db^k)+
    \partial\Psi(c^k,\mu^k,z^k,\teta^k,\ub^k  -\db^k \EEE), 
  \end{align}
  where $\bA$ is a (to be specified) pseudomonotone and coercive operator
  and  $\partial\Psi$  is \EEE the  subdifferential of  a (to be specified) proper, convex and
  l.s.c. potential $\Psi$.  
  Note that both the operator $\bA$ as well as $\Psi$ 
  will depend
  on the discrete functions obtained in previous time step $k-1$, but we choose not to highlight this for notational simplicity. 

  To be more precise, we introduce the space
  $$
     \mathbf{X} \EEE:=W^{1,p}(\Omega)\times W^{1,\varrho}(\Omega)\times W^{1,p}(\Omega)\times H^1(\Omega)\times  W_0^{1,\varrho}(\Omega;\R^d)
  $$
  and the announced operator
  \begin{align*}
    &\bA=
    \begin{bmatrix}
      A_1\\A_2\\A_3\\A_4\\A_5
    \end{bmatrix}
    : \mathbf{X} \EEE\to  \mathbf{X}' \EEE
  \end{align*}
  given component-wise by
  \begin{align*}
    A_1(c,\mu,z,\vartheta,\widetilde\ub)={}&-\mu-\Delta_p(c) +\nu |c|^{\varrho-2}c
  	  +(\conv{\phi}_\omega)'(c) 
      +(\conc{\phi})'(c^{k-1})
      +\convWcp(c,\e(\uu^{k-1}),  z^{k-1})\\
      &+\concWcp(c^{k-1},\e(\uu^{k-1}), z^{k-1})
      -\C T_M(\vartheta)+(c-c^{k-1})\tau^{-1},\\
   	A_2(c,\mu,z,\vartheta,\widetilde\ub)={}&-\dive(m(c^{k-1},z^{k-1})\nabla\mu)-\nu\dive(|\nabla\mu|^{\varrho-2}\nabla\mu)+\nu\mu+(c-c^{k-1})\tau^{-1},\\
  	A_3(c,\mu,z,\vartheta,\widetilde\ub)={}&-\Delta_p(z)+\nu|z|^{\varrho-2}z+(z-z^{k-1})\tau^{-1}+(\conv{\sigma})'(\C T(z))+(\conc{\sigma})'(z^{k-1})\\
    	&\quad
      +\convWzp( c^{k},\e(\ub^{k-1}),\C T(z))+\concWzp( c^{k},\e(\ub^{k-1}),z^{k-1})-\C T_M(\vartheta),\\
  \end{align*}
  \begin{align*}
    A_4(c,\mu,z,\vartheta,\widetilde\ub)={}& \mathcal{A}_M^k(\teta)+(\vartheta-\teta^{k-1})\tau^{-1}+(c-c^{k-1})\tau^{-1}\C T_M(\vartheta)
      +(z-z^{k-1})\tau^{-1}\C T_M(\vartheta)
      \\
      & \quad
      +\rho\C T_M(\vartheta)\dive(\widetilde\uu+\db^k-\ub^{k-1})\tau^{-1}-\gk
      -|(c-c^{k-1})\tau^{-1}|^2-|(z-z^{k-1})\tau^{-1}|^2
      \\
      & \quad
      -a(c^{k-1},z^{k-1})\e((\widetilde\uu+\db^k-\ub^{k-1})\tau^{-1}):\vism\e((\widetilde\ub+\db^k-\ub^{k-1})\tau^{-1})
      -m(c^{k-1},z^{k-1})|\nabla\mu|^2,
      \\
	  A_5(c,\mu,z,\vartheta,\widetilde\ub)={}&(\widetilde\ub+\db^k-2\ub^{k-1}+\ub^{k-2})\tau^{-2}
      -\nu\dive\big(|\e(\widetilde\ub)|^{\varrho-2}\e(\widetilde\ub)\big)\\
      &\quad-\dive\Big(a(c^{k-1},z^{k-1})\vism\e((\widetilde\ub+\db^k-\ub^{k-1})\tau^{-1} ) \EEE+
       W_{,\e}^\omega( c^{k},\e(\widetilde\ub+\db^k),\C T(z)) -\rho\C T_M(\vartheta)\mathds 1\Big)\\
			&\quad-\fk,
  \end{align*}
  where we make use of the truncation  operator $\C T$  
  \begin{align*}
    &\C T(z):=
    \begin{cases}
      0&\text{if }z < 0,\\
      z&\text{if }0 < z < 1,\\
      1&\text{if }z > 1.
    \end{cases}
  \end{align*}
  The potential $\Psi: \mathbf{X} \EEE \to (-\infty,+\infty]$ 
   featuring in \eqref{label:inclusion2} is \EEE
   given by
  \begin{align*}
    &\Psi(c,\mu,z,\vartheta,\widetilde\ub):=I_{Z^{k-1}}(z)=
	    \begin{cases}
        0&\text{if }0\leq z\leq z^{k-1}\text{ a.e. in }\Omega,\\
        \infty&\text{else,}
      \end{cases}
  \end{align*}
 where the set $ Z^{k-1}$ is defined in \eqref{eqn:set_z}. \EEE

 We remark that for solutions of \eqref{label:inclusion2}
  the truncation operator $\C T$  will disappear in the resulting system
  since
  $
	  \mathrm{dom}(\partial\Psi)\subseteq
    \{(c,\mu,z,\teta,\widetilde\uu)\in  \mathbf{X} \EEE \,|\, 0\leq z\leq 1\text{ a.e. in }\Omega\}.
  $
  It is merely used as an auxiliary construction to ensure coercivity of the
  operator $\bA$. 
	 Furthermore,
	the boundary values for the displacement variable are shifted to $0$
	in order to obtain a vector space structure for the domain   $\mathbf{X}$ of $\bA$. \EEE 
	As a result, we have to add $\db^k$ to the displacement $\widetilde\uu$ of the solution afterwards.

  In following we are going to verify coercivity of $\bA$.
  To this end, we will estimate $\langle \bA(\bx),\bx\rangle_{ \mathbf{X}\EEE}$ for every $\bx=(c,\mu,z,\teta,\widetilde\uu)\in  \mathbf{X}\EEE$
  from below:
  \begin{equation}
  \label{to-refer-to-later}
  \begin{aligned}
      \langle \bA(\bx),\bx\rangle_{ \mathbf{X}\EEE}=\big\langle \bA(c,\mu,z,\teta,\widetilde\uu),(c,\mu,z,\teta,\widetilde\uu)\big\rangle_{ \mathbf{X} \EEE}
      ={}&
      \big\langle A_1(c,\mu,z,\teta,\widetilde\uu),c\big\rangle_{W^{1,p}(\Omega)}
      +\big\langle A_2(c,\mu,z,\teta,\widetilde\uu),\mu\big\rangle_{W^{1,\varrho}(\Omega)}\\
      &+\big\langle A_3(c,\mu,z,\teta,\widetilde\uu),z\big\rangle_{W^{1,p}(\Omega)}
      +\big\langle A_4(c,\mu,z,\teta,\widetilde\uu),\teta\big\rangle_{H^1(\Omega)}\\
      &+\big\langle A_5(c,\mu,z,\teta,\widetilde\uu),\widetilde\uu\big\rangle_{W^{1,\varrho}(\Omega;\R^d)}\\
      &=:I_1+I_2+\ldots+I_5.
  \end{aligned}
  \end{equation}
  We now estimate the partial derivatives $\convWcp$ and $\concWcp$ of $\convWc$ and $\concWc$ w.r.t.\ $c$, i.e.\ \EEE 
	\begin{align*}
		&\convWcp(c,\e(\ub),z)= W_{,c}^\omega(c,\e(\ub),z)+\big(\sup_{\widetilde{c}\in \R}| W_{,cc}^\omega(\widetilde c,\e(\ub),z)|\big)\,c,\\
		&\concWcp(c,\e(\ub),z)=-\big(\sup_{\widetilde{c}\in \R}| W_{,cc}^\omega(\widetilde c,\e(\ub),z)|\big)\,c.
	\end{align*}		    
	  Taking into account  \eqref{eqn:convConcSplittingWc}
  and Hypothesis (V) (cf.\ also \eqref{later-ref}),  \EEE we obtain
  \begin{align}
   	&\left| \convWcp(c,\e(\ub),z)\right|
      \leq C(|c| +1 )(1+|\e(\ub)|^2),
		  \label{est-quoted-5.1}\\    		
	  &\left| \concWcp(c,\e(\ub),z)\right|
      \leq C|c|(1+|\e(\ub)|^2)
      \label{est-quoted-5.2}
  \end{align}  
     We can also verify that \EEE
  \begin{align}
  \label{oh-yes-quote}
   	&  \left| W_{,\e}^\omega(c,\e(\ub),z)\right|
      \leq C(1+|\e(\ub)|),
  \end{align}
  and
  \begin{align}
  \label{est-quoted-5.3}
    &\left| \convWzp(c,\e(\ub),z)\right| \leq C(1+|\e(\ub)|^2),\\
     \label{est-quoted-5.4}
    &\left| \concWzp(c,\e(\ub),z) \right| \leq C(1+|\e(\ub)|^2)\,.
  \end{align}
   Estimates
  \eqref{est-quoted-5.1}--\eqref{est-quoted-5.4} are valid \EEE
    for all   $c\in\R$, $z\in[0,1]$ and $\ub\in\R^d$,
  and fixed $C>0$.
	 Taking also the boundedness properties
  \[
  	\C T(z),\;z^{k-1}\in{ }[0,1]\qquad\;\;\text{a.e. in }\Omega,\qquad
  	\C T_M(\teta)\in{} [0,M]\qquad\text{a.e. in }\Omega
  \]
   into account,  we obtain
  \begin{align*}
    \convWcp(c,\e(\uu^{k-1}), z^{k-1})
 	    \geq{}& -C(|c| +1 )(1+|\e(\uu^{k-1})|^2),\\
    \concWcp(c^{k-1},\e(\uu^{k-1}), z^{k-1})
      \geq{}& -C|c^{k-1}|(1+|\e(\uu^{k-1})|^2), \\
     W_{,\e}^\omega(c,\e(\widetilde\uu+\db^k),\C T(z))
     	\geq{}&-C(1+|\e(\widetilde\uu)|^2+|\e(\db^k)|^2),\\
    \convWzp( c,\e(\ub^{k-1}),\C T(z))
      \geq{}& -C(1+|\e(\ub^{k-1})|^2),\\
    \concWzp( c,\e(\ub^{k-1}),z^{k-1})
      \geq{}& -C(1+|\e(\ub^{k-1})|^2).
  \end{align*}
  Together with Young's inequality  and estimates  \eqref{est-quoted-5.1}--\eqref{est-quoted-5.4}, \EEE a calculation reveals for the terms $I_1,\ldots,I_5$ from \eqref{to-refer-to-later} the following bounds
  (hereafter, we will write $\|\cdot\|_{L^p}$ in place of $\|\cdot\|_{L^p(\Omega)}$ for shorter notation  and we will denote by $\delta$ a  positive constant, to be chosen later, and by $C_\delta>0$ a constant depending on $\delta$): \EEE 
  \begin{align*}
      I_1={}&\|\nabla c\|_{L^p}^p+\nu\|c\|_{L^\varrho}^\varrho
        +\tau^{-1}\|c\|_{L^2}^2-\tau^{-1}\int_\Omega c^{k-1} c\dx-\int_\Omega\mu c\dx\\
        &+\int_\Omega\Big(\beta_\omega(c)+\lambda_\gamma c
        +\gamma'(c^{k-1})-\lambda_\gamma c^{k-1}
        +\convWcp(c,\e(\uu^{k-1}), z^{k-1})\Big)c\dx\\
        &+\int_\Omega\Big(\concWcp(c^{k-1},\e(\uu^{k-1}), z^{k-1})-\C T_M(\vartheta)\Big)c\dx\\
      \geq{}&\|\nabla c\|_{L^p}^p+\nu\|c\|_{L^\varrho}^\varrho
        -\delta\|\mu\|_{L^2}^2
        -C_\delta\|c\|_{L^2}^2
        -C_\delta\|\e(\uu^{k-1})\|_{L^4}^4
				-C_\delta,\\
      I_2={}&\int_\Omega m(c^{k-1},z^{k-1})|\nabla\mu|^2\dx
        +\nu\|\nabla\mu\|_{L^\varrho}^\varrho+\nu\|\mu\|_{L^2}^2+\tau^{-1}\int_\Omega (c-c^{k-1})\mu\dx\\
      \geq{}&
        \nu\|\nabla\mu\|_{L^\varrho}^\varrho+\nu\|\mu\|_{L^2}^2-\delta\|\mu\|_{L^2}^2
        -C_\delta\|c\|_{L^2}^2-C_\delta\\
      I_3={}&\|\nabla z\|_{L^p}^p+\nu\|z\|_{L^\varrho}^\varrho+\tau^{-1}\|z\|_{L^2}^2-\tau^{-1}\int_\Omega z^{k-1} z\dx\\
        &+\int_\Omega\Big((\conv{\sigma})'(\C T(z))+(\conc{\sigma})'(z^{k-1})
        +\convWzp( c,\e(\ub^{k-1}),\C T(z))+\concWzp( c,\e(\ub^{k-1}),z^{k-1})-\C T_M(\vartheta)
        \Big)z\dx\\
      \geq{}&\|\nabla z\|_{L^p}^p+\nu\|z\|_{L^\varrho}^\varrho-\delta\|z\|_{L^2}^2
        -C_\delta\|\e(\ub^{k-1})\|_{L^4}^4-C_\delta,\\
      I_4={}&\int_\Omega \condu_M(\teta)|\nabla \teta|^2\dx
        -\int_{\partial \Omega} \htau{k}\teta \dx
        +\tau^{-1}\|\teta\|_{L^2}^2-\tau^{-1}\int_\Omega\teta^{k-1}\teta\dx\\
        &+\tau^{-1}\int_\Omega\Big((c-c^{k-1})+(z-z^{k-1})
        +\rho\dive(\widetilde\uu+\db^k-\ub^{k-1})\Big)\C T_M(\teta)\teta\dx
        -\int_\Omega g^k\teta\dx\\
        &-\int_\Omega\Big(|(c-c^{k-1})\tau^{-1}|^2+|(z-z^{k-1})\tau^{-1}|^2+
        a(c^{k-1},z^{k-1})\e\Big(\frac{\widetilde\uu+\db^k-\ub^{k-1}}{\tau}\Big):\vism\e\Big(\frac{\widetilde\uu+\db^k-\ub^{k-1}}{\tau}\Big)\Big)\teta\dx\\
        &-\int_\Omega m(c^{k-1},z^{k-1})|\nabla\mu|^2\teta\dx\\
      \geq{}&c_0\|\nabla\teta\|_{L^2}^2+\tau^{-1}\|\teta\|_{L^2}^2
        -\delta\|\teta\|_{H^1}^2-C_\delta\|h^k\|_{H^{1/2}(\partial\Omega)}^2
        -C_\delta\|\teta^{k-1}\|_{L^2}^2
        -C_\delta\|c\|_{L^4}^4
        -C_\delta\|z\|_{L^4}^4
        -C_\delta\|\e(\widetilde\uu)\|_{L^4}^4\\
        &-C_\delta\|\e(\db^k)\|_{L^4}^4
				-C_\delta\| c^{k-1} \|_{L^4}^4 - C_\delta \| z^{k-1} \|_{L^4}^4 
        -C_\delta\|\e(\ub^{k-1})\|_{L^4}^4
        -C_\delta\|\nabla\mu\|_{L^4}^4
        -C_\delta\|g^k\|_{L^2}^2
        -C_\delta,\\
  \end{align*}
  \begin{align*}
      I_5={}&\nu\|\e(\widetilde\uu)\|_{L^\varrho}^\varrho
				+\tau^{-2}\|\widetilde\uu\|_{L^2}^2
        +\tau^{-2}\int_\Omega(\db^k-2\ub^{k-1}+\ub^{k-2})\cdot\widetilde\uu\dx
        +\tau^{-1}\int_\Omega a(c^{k-1},z^{k-1})\VV\e(\widetilde\uu):\e(\widetilde\uu)\dx\\
        &+\tau^{-1}\int_\Omega a(c^{k-1},z^{k-1})\VV\e(\db^k-\ub^{k-1}):\e(\widetilde\uu)\dx
        +\int_\Omega W_{,\e}^\omega(c,\e( \widetilde\uu \EEE+\db^k),\C T(z)):\e(\widetilde\ub)\dx\\
        &-\int_\Omega \rho\C T_M(\teta)\dive(\widetilde\uu)\dx
        -\int_\Omega \fb^k\cdot\widetilde\uu\dx\\
      \geq{}&\nu\|\e(\widetilde\uu)\|_{L^\varrho}^\varrho+\tau^{-2}\|\widetilde\uu\|_{L^2}^2
        -\delta\|\widetilde\uu\|_{H^1}^2-C_\delta\|\ub^{k-1}\|_{H^1}^2-C_\delta\|\db^k\|_{H^1}^2-C_\delta\|\ub^{k-2}\|_{L^2}^2
        -C_\delta\|\fb^k\|_{L^2}^2-C_\delta.
  \end{align*}
  In conclusion, choosing $\delta>0$ sufficiently small
   in such a way as to absorb the negative terms multiplied by $\delta$ into suitable positive contributions, \EEE
   we obtain   constants $c',C>0$ such that
  \begin{align*}
    \langle \bA(\bx),\bx\rangle_{ \mathbf{X}\EEE}
    \geq{}&
        c'\Big(\|\nabla c\|_{L^p}^p
        +\|c\|_{L^\varrho}^\varrho
        +\|\nabla\mu\|_{L^\varrho}^\varrho
        +\|\mu\|_{L^2}^2
        +\|\nabla z\|_{L^p}^p
        +\|z\|_{L^\varrho}^\varrho
        +\|\nabla\teta\|_{L^2}^2+\|\teta\|_{L^2}^2\Big)\\
        &+c'\Big(\|\e(\widetilde\uu)\|_{L^\varrho}^\varrho
				+\|\widetilde\uu\|_{L^2}^2\Big)-C
  \end{align*}
  which leads to coercivity of $\bA$ by using Korn's inequality.
  The pseudomonotonicity follows from standard arguments
  in the theory of quasilinear elliptic equations, cf.\  \cite[Chapter 2.4]{Rou05}.

  By virtue of the existence theorem in \cite[Theorem 5.15]{Rou05} together with
  \cite[Lemma 5.17]{Rou05}, 
  we find an $\bx\in  \mathbf{X} \EEE$ solving \eqref{label:inclusion2}.
  Thus a solution of \eqref{label:inclusion2} proves the claim.
\end{proof}
We now derive the incremental energy inequality satisfied  by the solutions to  system \eqref{discr-syst-appr}. This will be the starting point for the derivation of all a priori estimates allowing us to pass to the limit, first  as $M\to\infty$ and then $\nu \to 0$. 
\begin{lemma}[ Incremental energy inequality for the approximate discrete system]
\label{l:energy-est}
  Let $(c^k,\mu^k,z^k,\teta^k,\ub^k)$ be  the\, weak solution to
  system \eqref{discr-syst-appr}  at time step $k$  according to
  Lemma \ref{l:exist-approx-discr}.
  Then, for every $M\in\N$ and $\nu>0$ the following energy inequality holds:
  \begin{equation}
  \label{discr-total-ineq}
    \begin{aligned}
      &\mathscr{E}_\omega(c^k,z^k,\teta^k,\ub^k,\vb^k)
      +\frac\nu\varrho\|c^k\|_{L^\varrho(\Omega)}^\varrho
      +\frac\nu\varrho\|z^k\|_{L^\varrho(\Omega)}^\varrho
      +\frac\nu\varrho\|\e(\ub^k)\|_{L^\varrho(\Omega)}^\varrho
      +\nu\tau\Big(\|\nabla\mu^k\|_{L^\varrho(\Omega;\R^d)}^\varrho
      +\|\mu^k\|_{L^2}^2\Big)\\
    &\leq\mathscr{E}_\omega(c^{k-1},z^{k-1},\teta^{k-1},\ub^{k-1},\vb^{k-1})
      +\frac\nu\varrho\|c^{k-1}\|_{L^\varrho(\Omega)}^\varrho
      +\frac\nu\varrho\|z^{k-1}\|_{L^\varrho(\Omega)}^\varrho
      +\frac\nu\varrho\|\e(\ub^{k-1})\|_{L^\varrho(\Omega)}^\varrho\\
			&\qquad+\tau\Big(\int_\Omega g^k\dx+ \int_{\partial\Omega} h^k \dx +\int_\Omega \bold f^k \cdot\vb^k\dx\Big)\\
			&\qquad+\tau\io D_k(\vb)\cdot D_k(\db)\dx
			+\tau\io a(c^{k-1},z^{k-1})\VV\e(\vb^k):\e(D_k(\db))\dx\\
			&\qquad+\tau\io W_{,\e}^\omega(c^{k},\e(\uu^k),z^k):\e(D_k(\db))\dx
			-\tau\io\rho\C T_M(\teta^k)\dive(D_k(\db))\dx
			-\tau\int_\Omega \bold f^k \cdot D_k(\db)\dx
  \end{aligned}
  \end{equation}
  where we  set \EEE    $\vb^k:=D_k(\ub)$ and
  denote by $\mathscr{E}_\omega$   the approximation of the total energy
  $\mathscr{E}$ from \eqref{total-energy} obtained by replacing $\phi$ with
  $\phi_\omega = \widehat{\beta}_\omega +\gamma$
   and $W$ with $W^\omega$.
\end{lemma}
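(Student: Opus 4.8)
The plan is to derive \eqref{discr-total-ineq} by testing the five equations of the approximate discrete system \eqref{discr-syst-appr} with the discrete analogues of the test functions used in the formal \emph{First estimate}, summing, and carefully tracking the convexity/concavity estimates that were prepared in Section \ref{s:5.1}. Specifically, I would test \eqref{discr-syst-appr-c} by $\tau\mu^k$, \eqref{discr-syst-appr-mu} by $-(c^k-c^{k-1})=-\tau D_k(c)$, \eqref{discr-syst-appr-z} by $-(z^k-z^{k-1})=-\tau D_k(z)$, \eqref{discr-syst-appr-teta} by $\tau$, and \eqref{discr-syst-appr-u} by $(\ub^k-\ub^{k-1})-(\db^k-\db^{k-1})=\tau(\vb^k-D_k(\db))$ (the shift by the Dirichlet increment is what produces the boundary/Dirichlet terms on the right-hand side, exactly as the integration by parts \eqref{sigmaInt} did in the continuous setting). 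Then I would add the five resulting identities; the terms $\pm\tau\int_\Omega D_k(c)\mu^k\dx$ coming from the first two equations cancel, the $\C T_M(\teta^k)$-terms coupling the concentration/damage/displacement equations with the temperature equation cancel against the corresponding terms in \eqref{discr-syst-appr-teta} (this cancellation is the discrete counterpart of the one leading to \eqref{calc1}), and the quadratic dissipation terms $|D_k(c)|^2$, $|D_k(z)|^2$, $a\e(D_k\ub)\colon\vism\e(D_k\ub)$, $m|\nabla\mu^k|^2$ likewise cancel between the mechanical relations and \eqref{discr-syst-appr-teta}.

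Next I would handle the ``telescoping'' terms that are responsible for the energy differences. For the $p$-Laplacian contributions, convexity of $v\mapsto\frac1p|\nabla v|^p$ gives $\langle-\Delta_p(c^k),c^k-c^{k-1}\rangle\ge\frac1p\|\nabla c^k\|_{L^p}^p-\frac1p\|\nabla c^{k-1}\|_{L^p}^p$ and similarly for $z$. For the $\nu$-regularizing terms, convexity of $v\mapsto\frac1\varrho|v|^\varrho$ and of $v\mapsto\frac1\varrho|\nabla v|^\varrho$, $v\mapsto\frac1\varrho|\e(v)|^\varrho$ produces the $\frac\nu\varrho\|\cdot\|_{L^\varrho}^\varrho$ increments (with the displacement written as $\ub^k-\db^k$, which is admissible since that combination vanishes on $\partial\Omega$), while the terms $\nu|\nabla\mu^k|^{\varrho-2}\nabla\mu^k$ and $\nu\mu^k$, tested by $\tau\mu^k$, give exactly $\nu\tau(\|\nabla\mu^k\|_{L^\varrho}^\varrho+\|\mu^k\|_{L^2}^2)$. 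For $\phi_\omega$: the split $\phi_\omega=\conv\phi_\omega+\conc\phi$ with the implicit term tested at $c^k$ and the explicit (concave) term at $c^{k-1}$ yields, by convexity of $\conv\phi_\omega$ and concavity of $\conc\phi$, the increment $\int_\Omega\phi_\omega(c^k)-\phi_\omega(c^{k-1})\dx$; likewise the $\sigma$-split and the $I_{[0,+\infty)}$-term (the latter via $\xi^k\in\partial I_{[0,z^{k-1}]}(z^k)$ and monotonicity, noting $I_{[0,+\infty)}(z^k)-I_{[0,+\infty)}(z^{k-1})=0$ here) produce $\int_\Omega\sigma(z^k)-\sigma(z^{k-1})+I_{[0,+\infty)}(z^k)-I_{[0,+\infty)}(z^{k-1})\dx$. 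The combined elastic-energy increment $\int_\Omega W^\omega(c^k,\e(\ub^k),z^k)-W^\omega(c^{k-1},\e(\ub^{k-1}),z^{k-1})\dx$ is exactly the content of the already-stated chain of inequalities \eqref{eqn:convConcWall}, which I would invoke verbatim. For the kinetic term, the standard discrete identity $D_k(D_k\ub)\cdot(\ub^k-\ub^{k-1})\ge\tfrac12\|\vb^k\|_{L^2}^2-\tfrac12\|\vb^{k-1}\|_{L^2}^2$ (from $(a-b)\cdot a\ge\tfrac12|a|^2-\tfrac12|b|^2$ with $a=\vb^k$, $b=\vb^{k-1}$) supplies the kinetic-energy increment, and $\tau D_k(\teta)$ tested by $1$ gives $\int_\Omega\teta^k-\teta^{k-1}\dx$, which is the temperature part of $\mathscr U_\omega$. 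Finally I would collect the genuinely ``external'' terms: $\tau\int_\Omega g^k\dx$, $\tau\int_{\partial\Omega}h^k\dx$ (from $\C A_M^k$ tested by $1$, the gradient part vanishing), $\tau\int_\Omega\fb^k\cdot\vb^k\dx$, and the Dirichlet-boundary contributions $\tau\int_\Omega D_k(\vb)\cdot D_k(\db)\dx+\tau\int_\Omega a(c^{k-1},z^{k-1})\VV\e(\vb^k)\colon\e(D_k(\db))\dx+\tau\int_\Omega W_{,\e}^\omega(c^k,\e(\ub^k),z^k)\colon\e(D_k(\db))\dx-\tau\int_\Omega\rho\C T_M(\teta^k)\dive(D_k(\db))\dx-\tau\int_\Omega\fb^k\cdot D_k(\db)\dx$, which arise precisely from testing \eqref{discr-syst-appr-u} by the Dirichlet-shifted increment and integrating the divergence by parts.

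I expect the main obstacle to be bookkeeping rather than any single hard estimate: one must make sure that every term in \eqref{discr-syst-appr} is accounted for, that all the intended cancellations (the $D_k(c)\mu^k$ pair, the $\C T_M(\teta^k)$-couplings, the quadratic dissipation terms) actually occur with the right signs, and that the inequalities from convexity/concavity all point the same way so that one obtains ``$\le$'' and not an identity. A subtle point is that $\mathscr E_\omega$ involves $W^\omega$ (not $W$) and $\phi_\omega$ (not $\phi$), and that the $\C T_M$-truncation appears in the implicit temperature terms of \eqref{discr-syst-appr-teta} and in the momentum and damage equations; since the same $\C T_M(\teta^k)$ sits on both sides, the cancellation is exact and no sign is lost, but this has to be checked. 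Another delicate point: in testing \eqref{discr-syst-appr-teta} by $1$ the term $D_k(c)\C T_M(\teta^k)$ cancels against the identical term produced by testing \eqref{discr-syst-appr-mu} by $-\tau D_k(c)$ only after one also uses that $\int_\Omega D_k(c)\C T_M(\teta^k)\dx$ — not $\int_\Omega D_k(c)\teta^k\dx$ — appears in both places, which is guaranteed precisely because the truncation $\C T_M$ was inserted consistently into all quadratic/coupling terms of \eqref{discr-syst-appr}. Once all of this is organized, the inequality \eqref{discr-total-ineq} follows by summing and rearranging, with no further analytical input beyond \eqref{eqn:convConcWall} and elementary convexity.
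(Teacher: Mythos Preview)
Your approach is exactly the paper's: test \eqref{discr-syst-appr} with the discrete analogues of the First-estimate test functions, invoke the convex--concave estimates (the paper lists them as \eqref{eqn:convConcEst} and \eqref{eqn:stdConvEst}, and uses \eqref{eqn:convConcWall} just as you do), and collect the boundary/Dirichlet remainders coming from the shifted test function $(\ub^k-\ub^{k-1})-(\db^k-\db^{k-1})$. One small slip: the sign on the test functions for \eqref{discr-syst-appr-mu} and \eqref{discr-syst-appr-z} should be $+(c^k-c^{k-1})$ and $+(z^k-z^{k-1})$ (as in the paper), not the negatives you wrote---with your signs the $\C T_M(\teta^k)D_k(z)$ contributions from \eqref{discr-syst-appr-z} and \eqref{discr-syst-appr-teta} do not cancel and the convexity inequalities point the wrong way; once that is fixed, every cancellation and inequality you describe goes through verbatim.
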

\begin{proof}
  The convex-concave splitting give rise to the following crucial estimates,
   (cf.\  also \eqref{eqn:convConcWall}):
  \begin{subequations}
  \label{eqn:convConcEst}
  \begin{align}
    &\Big((\conv{\phi}_\omega)'(c^k)+(\conc{\phi})'(c^{k-1})\Big)(c^k-c^{k-1})\geq\phi_\omega(c^k)-\phi_\omega(c^{k-1}),\\
   	&\Big((\conv{\sigma})'(z^k)+(\conc{\sigma})'(z^{k-1})\Big)(z^k-z^{k-1})\geq\sigma(z^k)-\sigma(z^{k-1}),\\
 &\Big(\convWcp(c^k,\e(\ub^{k-1}), z^{k-1})+\concWcp(c^{k-1},\e(\ub^{k-1}), z^{k-1})\Big)(c^k-c^{k-1})\notag\\
      &\quad + 	 W_{,\e}^\omega( c^{k},\e(\ub^k),z^k):\e(\ub^k-\ub^{k-1}) +  \Big(\convWzp( c^{k},\e(\ub^{k-1}),z^k)+\concWzp( c^{k},\e(\ub^{k-1}),z^{k-1})\Big)(z^k-z^{k-1})\notag
      \\
&
      \geq 
 W^\omega(c^{k},\e(\ub^k),z^k)- W^\omega(c^{k-1},\e(\ub^{k-1}),z^{k-1}) \,. \EEE
  \end{align}
  \end{subequations}
   Moreover, we will make use of standard convexity estimates:
  \begin{subequations}
  \label{eqn:stdConvEst}
  \begin{align}
    &|\nabla c^k|^{p-2}\nabla c^k\cdot\nabla (c^k-c^{k-1})
        \geq \frac 1p|\nabla c^k|^p-\frac 1p|\nabla c^{k-1}|^p,\\
    &|c^k|^{\varrho-2}c^k (c^k-c^{k-1})
        \geq \frac 1\varrho|c^k|^\varrho-\frac 1\varrho|c^{k-1}|^\varrho,\\
    &|\nabla z^k|^{p-2}\nabla z^k\cdot\nabla (z^k-z^{k-1})
        \geq \frac 1p|\nabla z^k|^p-\frac 1p|\nabla z^{k-1}|^p,\\
    &|z^k|^{\varrho-2}z^k (z^k-z^{k-1})
        \geq \frac 1\varrho|z^k|^\varrho-\frac 1\varrho|z^{k-1}|^\varrho,\\
    &|\e(\ub^k)|^{\varrho-2}\e(\ub^k):\e(\ub^k-\ub^{k-1})
        \geq \frac 1\varrho|\e(\ub^k)|^\varrho-\frac 1\varrho|\e(\ub^{k-1})|^\varrho,\\
    &\Big(\ub^k-2\ub^{k-1}+\ub^{k-2}\Big)\cdot(\ub^k-\ub^{k-1})\geq  \frac12|\ub^k-\ub^{k-1}|^2-\frac12|\ub^{k-1}-\ub^{k-2}|^2.
  \end{align}
  \end{subequations}
  To obtain the energy estimate, we test the time-discrete system
  \eqref{discr-syst-appr} as follows:
  \begin{align*}
    &
    \text{\eqref{discr-syst-appr-c}}\times(c^{ k}-c^{ k-1})\;+\;
    \text{\eqref{discr-syst-appr-mu}}\times\tau\mu^{ k}\;+\;
    \text{\eqref{discr-syst-appr-z}}\times(z^{k}-z^{ k-1})\;+\;
    \text{\eqref{discr-syst-appr-teta}}\times\tau\;\\
    &+\text{\eqref{discr-syst-appr-u}}\times(\ub^{ k}-\ub^{ k-1}-(\db^k-\db^{k-1}))
  \end{align*}
  and exploit  estimates \eqref{eqn:convConcEst}
  and \eqref{eqn:stdConvEst}.
\end{proof}

\begin{remark}
	We note that in comparison with the calculations in the \textit{First estimate}
	in Section \ref{s:4}, where we assumed spatial $H^2$-regularity for $\uu$, we
	cannot test the weak formulation \eqref{discr-syst-appr-u} with $\ub^k-\ub^{k-1}$ because
	the boundary values of $\ub^k-\ub^{k-1}$ are not necessarily $0$.
\end{remark}

\begin{lemma}[Positivity of $\teta^k$]
\label{l:positivityThetaDiscr}
  There exists a 	constant \EEE $\ul\teta>0$,  independent of $\omega$,  $\tau$, $k$, $M$ and $\nu$,   such that $\teta^k\geq\ul\teta$ a.e. in $\Omega$.
\end{lemma}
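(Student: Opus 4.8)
The plan is to prove the uniform positivity of the discrete temperature $\teta^k$ by a comparison/truncation argument applied to the discrete heat equation \eqref{discr-syst-appr-teta}, exploiting crucially that the temperature appears \emph{implicitly} (through $\C T_M(\teta^k)$) in the terms $D_k(c)\C T_M(\teta^k)$, $D_k(z)\C T_M(\teta^k)$ and $\rho\C T_M(\teta^k)\dive(D_k(\ub))$, as highlighted in Remark \ref{remark:discProbl}(ii). We argue recursively in $k$: assuming $\teta^{k-1}\geq\ul\teta$ a.e.\ in $\Omega$ (with $\teta^0\geq\teta_*>0$ by \eqref{data_teta}), we show $\teta^k\geq\ul\teta$ with the \emph{same} constant $\ul\teta$, to be determined below, independent of $\omega,\tau,k,M,\nu$.

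First I would test \eqref{discr-syst-appr-teta} — which holds in $H^1(\Omega)'$ — by the negative-part test function $-(\teta^k-\ul\teta)^-\in H^1(\Omega)$, where $v^-:=\max\{-v,0\}$. On the set $\{\teta^k<\ul\teta\}$ we have $\C T_M(\teta^k)\in[0,M]$ with $\C T_M(\teta^k)\leq\ul\teta$ once $\ul\teta\leq M$; moreover the ellipticity bound \eqref{ellipticity-retained} handles the diffusion term, yielding (together with the positivity of $h^k$, which lets us discard the boundary contribution with the right sign) a bound of the form $\int_\Omega c_0|\nabla(\teta^k-\ul\teta)^-|^2\dx$ on the left. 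The discrete time-derivative term gives $\int_\Omega D_k(\teta)\cdot(-(\teta^k-\ul\teta)^-)\dx$; writing $D_k(\teta)=\tau^{-1}(\teta^k-\teta^{k-1})$ and using $\teta^{k-1}\geq\ul\teta$ one shows this is $\geq\frac1\tau\int_\Omega|(\teta^k-\ul\teta)^-|^2\dx$ up to a harmless sign. The point is then to estimate the remaining terms: $g^k\geq0$, $|D_k(c)|^2,|D_k(z)|^2\geq0$, $a(c^{k-1},z^{k-1})\e(D_k(\ub)):\vism\e(D_k(\ub))\geq0$ and $m(c^{k-1},z^{k-1})|\nabla\mu^k|^2\geq0$ all have the favourable sign on the right-hand side, hence after moving them across they contribute with a sign that helps (they multiply $-(\teta^k-\ul\teta)^-\leq0$, so they push $\teta^k$ up, not down). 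The genuinely problematic terms are $-D_k(c)\C T_M(\teta^k)$, $-D_k(z)\C T_M(\teta^k)$ and $-\rho\C T_M(\teta^k)\dive(D_k(\ub))$: on $\{\teta^k<\ul\teta\}$ one has $0\leq\C T_M(\teta^k)\leq\ul\teta$, so these are bounded in absolute value by $\ul\teta\,(|D_k(c)|+|D_k(z)|+|\rho|\,c(d)|\e(D_k(\ub))|)$ pointwise, and one absorbs them using Young's inequality against the nonnegative quadratic terms $|D_k(c)|^2+|D_k(z)|^2+c\,|\e(D_k(\ub))|^2$ that appear on the right-hand side with a good sign — this produces a residual of order $C\ul\teta^2$ times the measure of $\{\teta^k<\ul\teta\}$, which is then controlled by the $L^2$ term $\frac1\tau\int|(\teta^k-\ul\teta)^-|^2$ only after also invoking that on this set $\C T_M(\teta^k)^2\leq(\ul\teta)\C T_M(\teta^k)$ or by a Poincaré-type estimate. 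Putting all pieces together one arrives at an inequality of the shape
\begin{equation}
\label{eq:positivity-plan}
\Big(\tfrac1\tau - C\Big)\int_\Omega|(\teta^k-\ul\teta)^-|^2\dx + c_0\int_\Omega|\nabla(\teta^k-\ul\teta)^-|^2\dx \leq 0,
\end{equation}
valid provided $\ul\teta$ is chosen small enough (and $\tau$ below a threshold, which is harmless since we let $\tau\downarrow0$), forcing $(\teta^k-\ul\teta)^-=0$, i.e.\ $\teta^k\geq\ul\teta$.

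The main obstacle I anticipate is making the absorption in \eqref{eq:positivity-plan} genuinely \emph{uniform} in all parameters, especially in $\tau$ and $M$: the terms $D_k(c)\C T_M(\teta^k)$ etc.\ contain a factor $\tau^{-1}$ inside $D_k$, so a crude Young estimate would produce $\tau^{-2}$-type constants. The resolution is precisely the one the authors flag — keeping $\teta^k$ implicit means the competing quadratic terms $|D_k(c)|^2=\tau^{-2}|c^k-c^{k-1}|^2$ also carry $\tau^{-2}$, so the bad terms are quadratically dominated \emph{uniformly in $\tau$}; one must be careful to split $D_k(c)\C T_M(\teta^k)$ as $\tfrac12\varepsilon|D_k(c)|^2 + \tfrac1{2\varepsilon}\C T_M(\teta^k)^2$ with $\varepsilon$ chosen to leave a positive share of the $|D_k(c)|^2$ term, and then note $\C T_M(\teta^k)^2\leq\ul\teta\,\C T_M(\teta^k)\leq\ul\teta|(\teta^k-\ul\teta)^-|$ on the relevant set — but this last bound is only linear, so one instead bounds $\int\C T_M(\teta^k)^2\leq(\ul\teta)^2|\Omega\cap\{\teta^k<\ul\teta\}|$ and then absorbs via the strict inequality $\teta^k<\ul\teta$, which gives $|\{\teta^k<\ul\teta\}|\cdot(\ul\teta)^2 \leq$ something involving $\int|(\teta^k-\ul\teta)^-|^2$ only after a further elementary inequality, or more simply one keeps the full $-D_k(\teta)(\teta^k-\ul\teta)^-$ machinery and observes $\ul\teta$ enters only quadratically so choosing $\ul\teta$ small kills the constant. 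I would present this carefully, possibly reducing it — as the authors do — to citing the analogous computation in \cite[Sec.\ 3]{RocRos14} or \cite{fpr09} where the same implicit-temperature trick is used at the discrete level, and then only spelling out the new terms coming from the Cahn--Hilliard coupling ($D_k(c)\C T_M(\teta^k)$ and $m(c^{k-1},z^{k-1})|\nabla\mu^k|^2$), which are handled exactly as the $D_k(z)\C T_M(\teta^k)$ and $a(\cdot)\e(D_k(\ub)):\vism\e(D_k(\ub))$ terms already appearing there.
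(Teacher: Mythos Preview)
Your single-step plan of testing with $-(\teta^k-\ul\teta)^-$ for a \emph{fixed} $\ul\teta>0$ does not close. After absorbing the linear terms $D_k(c)\C T_M(\teta^k)$, $D_k(z)\C T_M(\teta^k)$, $\rho\C T_M(\teta^k)\dive(D_k(\ub))$ against the quadratics $|D_k(c)|^2$, $|D_k(z)|^2$, $a\,\e(D_k(\ub)){:}\vism\e(D_k(\ub))$ via Young, the residual you are left with is $C\int_{\{\teta^k<\ul\teta\}}\C T_M(\teta^k)^2\,(\teta^k-\ul\teta)^-\dx$, which is only bounded by $C\ul\teta^2\int_\Omega(\teta^k-\ul\teta)^-\dx$. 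This is \emph{first order} in $|w|=(\teta^k-\ul\teta)^-$, not second order, so it cannot be written as $C\int|w|^2$ and your target inequality \eqref{eq:positivity-plan} is simply not obtainable from these manipulations. Choosing $\ul\teta$ small shrinks the constant but never turns the term into a quadratic one; by Young you get at best $\int|w|^2\leq C\tau^2\ul\teta^4|\Omega|$, which bounds $w$ but does not force $w=0$.

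The paper fixes this by splitting the argument in two. First it tests with $-(\teta^k)^-$: on $\{\teta^k<0\}$ one has $\C T_M(\teta^k)=0$, so the problematic coupling terms disappear \emph{identically} and $\teta^k\geq0$ follows cleanly. Second, once $\teta^k\geq0$ is known, Young's inequality on the full equation yields (against positive test functions) $D_k(\teta)+\mathcal A_M^k(\teta^k)\geq -C(\teta^k)^2$, and one compares with the \emph{$k$-dependent} subsolution $(v_k)$ of the implicit recursion $\tfrac{v_k-v_{k-1}}{\tau}=-C v_k^2$, $v_0=\teta_*$. Testing the difference with $(\teta^k-v_k)^-$ now produces the residual $-C\int((\teta^k)^2-v_k^2)(\teta^k-v_k)^-\dx$, which is \emph{nonnegative} on $\{0\leq\teta^k<v_k\}$ because $(\teta^k)^2-v_k^2=(\teta^k-v_k)(\teta^k+v_k)<0$ there; this forces $(\teta^k-v_k)^-=0$ exactly. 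The explicit lower bound $v_k\geq\frac{\teta_*}{1+CT\teta_*}=:\ul\teta$ is then elementary and uniform in all parameters. The missing idea in your plan is precisely this time-dependent comparison level; a fixed threshold cannot capture the algebraic cancellation that makes the comparison argument close.
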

\begin{proof}
	
	The proof is carried out in two steps: At first we show non-negativity
	of $\teta^k$ and then, in the second step, strictly positivity as claimed is shown.
		
	\begin{itemize}
		\item[]\textit{Step 1:}	
			Testing the discrete heat equation \eqref{discr-syst-appr-teta} with
			$-(\teta^k)^-:=\min\{\teta^k,0\}$ shows after integration over $\Omega$:
			\begin{align*}
				&\int_\Omega \frac{1}{\tau}\underbrace{\teta^k(-(\teta^k)^-)}_{=|(\teta^k)^-|^2}\underbrace{-\frac{1}{\tau}\teta^{k-1}(-(\teta^k)^-)}_{\geq 0}
				+\Big(D_k(c)+D_k(z)+\rho\dive(D_k(\ub))\Big)\underbrace{\C T_M(\teta^k)(-(\teta^k)^-)}_{=0}\dx\\
				&=\int_\Omega\underbrace{\Big(g^k+|D_k(c)|^2+|D_k(z)|^2+ a(c^{k-1},z^{k-1})\e(D_k(\ub)):\vism\e(D_k(\ub))
		    	}_{\geq 0}\underbrace{(-(\teta^k)^-)}_{\leq 0}\dx\\
				&\quad+\int_\Omega\underbrace{
		        m(c^{k-1},z^{k-1})|\nabla\muk|^2}_{\geq 0}\underbrace{(-(\teta^k)^-)}_{\leq 0}\dx.
			\end{align*}	
			Here we have merely used the information that $\teta^{k-1}\geq 0$ a.e. in $\Omega$.
			We obtain
			$$
				\int_\Omega|(\teta^k)^-|^2\dx\leq 0
			$$
			and thus $\teta^k\geq 0$ a.e. in $\Omega$.
		\item[]\textit{Step 2:}	
			
			The proof follows the very same lines as the argument developed in \cite[Lemma 4.4 - Step 3]{RocRos14}, hence we will just outline it and  refer to  \cite{RocRos14} for all details.
			Namely, repeating the arguments formally developed in Sec.\ \ref{s:4} (cf.\ \eqref{formal-positivity}),  we deduce from
			\eqref{discr-syst-appr-teta}
			that
			there exists $C>0$ such that
			\[
			\int_\Omega D_k(\teta)w \dd x   + \int_\Omega \mathsf{K}_M(\teta^k) w \dd x \geq - C \int_\Omega (\teta^k)^2 w \dd x \qquad \text{for every }w \in W_{+}^{1,2}(\Omega)\,.
			\]
			Then, we compare the functions $(\teta^k)_{k=1}^{K_\tau}$ with the solutions
			 $(v^k)_{k=1}^{K_\tau}$ of the finite difference equation $\frac{v_k-v_{k-1}}{\tau} = - C v_k^2$, with $v_0 = \teta_*$, and we conclude that
			 $\teta^k \geq v_k $ a.e.\ in $\Omega$. Finally, with a comparison argument we prove that
			 \[
			\teta^k \geq v_k  \geq \frac{\teta_*}{1+CT\teta_*} \doteq \ul\teta \quad \aein\, \Omega \qquad\text{for all } k =1,\ldots,K_\tau.
			\]
	\end{itemize}
\end{proof}
Lemma \ref{l:energy-est} and Lemma \ref{l:positivityThetaDiscr} give rise to
the following uniform estimates:
\begin{lemma}
\label{lemma:firstEstDiscr}
  The following estimates hold uniformly in  $\nu>0$ and $M\in\N$:
  \begin{subequations}
  \label{est-5.7}
  \begin{align}
    &\|c^k\|_{W^{1,p}(\Omega)}+\|z^k\|_{W^{1,p}(\Omega)}+\|\vb^k\|_{L^2(\Omega;\R^d)}+\|\teta^k\|_{L^1(\Omega)}\leq C,
    \label{est-5.7.1}\\
    &\nu^{\frac1\varrho}\|c^k\|_{L^\varrho(\Omega)}+\nu^{\frac1\varrho}\|z^k\|_{L^\varrho(\Omega)}+\nu^{\frac1\varrho}\|\e(\ub^k)\|_{L^\varrho(\Omega;\R^{d\times d})}\leq C,
    \label{est-5.7.2}\\
    &\nu\tau\Big(\|\nabla\mu^k\|_{L^\varrho(\Omega)}^\varrho+\|\mu^k\|_{L^2(\Omega)}^2\Big)\leq C.
    \label{est-5.7.3}
  \end{align}
  \end{subequations}
\end{lemma}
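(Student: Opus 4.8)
The plan is to extract the estimates \eqref{est-5.7.1}--\eqref{est-5.7.3} directly from the incremental energy inequality \eqref{discr-total-ineq} of Lemma \ref{l:energy-est}, combined with the strict positivity $\teta^k\geq\ul\teta$ of Lemma \ref{l:positivityThetaDiscr}, by a discrete Gronwall argument that mimics the \textbf{First estimate} of Section \ref{s:4}. First I would sum \eqref{discr-total-ineq} over the indices $j=1,\dots,k$; the telescoping of the $\mathscr{E}_\omega$-terms and of the $\tfrac\nu\varrho\|\cdot\|_{L^\varrho}^\varrho$-terms on the left- and right-hand sides leaves, on the left, $\mathscr{E}_\omega(c^k,z^k,\teta^k,\ub^k,\vb^k)+\tfrac\nu\varrho\|c^k\|_{L^\varrho}^\varrho+\tfrac\nu\varrho\|z^k\|_{L^\varrho}^\varrho+\tfrac\nu\varrho\|\e(\ub^k)\|_{L^\varrho}^\varrho+\nu\tau\sum_{j=1}^k(\|\nabla\mu^j\|_{L^\varrho}^\varrho+\|\mu^j\|_{L^2}^2)$, against the initial energy $\mathscr{E}_\omega(c^0,z^0,\teta^0,\ub^0,\vb^0)$ plus the $\nu$-regularized norms of the initial data (all finite and controlled uniformly in $\nu,M$ by \eqref{h:initial} and the choice $\ub_\tau^{-1}=\ub^0-\tau\vb^0$), plus the loading terms $\sum_{j=1}^k\tau(\int_\Omega g^j+\int_{\partial\Omega}h^j+\int_\Omega\fb^j\cdot\vb^j)$ and the Dirichlet-boundary terms $\sum_{j=1}^k\tau(\int_\Omega D_j(\vb)\cdot D_j(\db)+\int_\Omega a(c^{j-1},z^{j-1})\VV\e(\vb^j){:}\e(D_j(\db))+\int_\Omega W^\omega_{,\e}(c^j,\e(\ub^j),z^j){:}\e(D_j(\db))-\int_\Omega\rho\C T_M(\teta^j)\dive(D_j(\db))-\int_\Omega\fb^j\cdot D_j(\db))$.

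Next I would bound the right-hand side. The heat-source terms are controlled by $\|g\|_{L^1(0,T;L^1(\Omega))}$ and $\|h\|_{L^1(0,T;L^2(\partial\Omega))}$ via \eqref{heat-source}, \eqref{dato-h} and the trace theorem, uniformly in $\tau$; the term $\sum_j\tau\fb^j\cdot\vb^j$ is handled by Young's inequality as $\leq C+\sum_j\tau\|\vb^j\|_{L^2}^2$ using \eqref{bulk-force}. The genuinely delicate terms are the Dirichlet-boundary ones, exactly as in the time-continuous \textbf{First estimate}. Here I would follow the manipulation performed there: the discrete analogue of integration by parts in time, i.e. Abel summation $\sum_{j=1}^k\tau D_j(\vb)\cdot D_j(\db)$, reorganised using the momentum equation \eqref{discr-syst-appr-u} tested against $D_j(\db)$ — this is precisely why $-\tau\int_\Omega\fb^j\cdot D_j(\db)$ and the stress terms appear separately on the right-hand side of \eqref{discr-total-ineq} — together with the structural bounds \eqref{est-quoted-5.1}--\eqref{oh-yes-quote} on $W^\omega$, the bounds \eqref{data-a} on $a$, the $\VV=\omega\CC$ structure, and the regularity \eqref{dirichlet-data} of $\db$. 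After Young's inequality, every such term is estimated by $\delta\|\vb^k\|_{L^2}^2+C_\delta+C\sum_{j=1}^k\tau\big(\mathscr{E}_\omega(c^j,z^j,\teta^j,\ub^j,\vb^j)+\|c^j\|_{W^{1,p}}^2+\|z^j\|_{W^{1,p}}^2+\|\teta^j\|_{L^1}+\|\vb^j\|_{L^2}^2\big)$, where I use $\teta^j\geq\ul\teta>0$ to write $\int_\Omega|\teta^j|=\int_\Omega\teta^j\leq C\,\mathscr{U}_\omega(\cdots)$ and thus absorb $\int_\Omega|\teta^j|$ into the energy. The $W^\omega$-dependent terms are bounded in terms of $\int_\Omega W^\omega(c^j,\e(\ub^j),z^j)$, hence of $\mathscr{E}_\omega$, after using that $W^\omega,\phi_\omega,I_{[0,+\infty)}+\sigma$ are bounded below (the lower bound for $\widehat\beta_\omega$ and $\gamma$ coming from Hypothesis (I), that for $W^\omega$ from $b\geq0$ and \eqref{ellipticity}).

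Choosing $\delta$ small enough to absorb $\delta\|\vb^k\|_{L^2}^2$ into the kinetic part of $\mathscr{E}_\omega(c^k,z^k,\teta^k,\ub^k,\vb^k)$ on the left, and invoking the boundedness-below of $\phi_\omega$, $W^\omega$, $I_{[0,+\infty)}+\sigma$ to see that $\mathscr{E}_\omega(c^k,z^k,\teta^k,\ub^k,\vb^k)\geq c\big(\tfrac1p\|\nabla c^k\|_{L^p}^p+\tfrac1p\|\nabla z^k\|_{L^p}^p+\tfrac12\|\vb^k\|_{L^2}^2+\int_\Omega\teta^k\big)-C$, I would then be in a position to apply the discrete Gronwall lemma to the quantity $E_k:=\mathscr{E}_\omega(c^k,z^k,\teta^k,\ub^k,\vb^k)+\|c^k\|_{W^{1,p}}^2+\|z^k\|_{W^{1,p}}^2+\|\teta^k\|_{L^1}+\|\vb^k\|_{L^2}^2$ (the $W^{1,p}$-norms of $c^k,z^k$ are recovered from $\|\nabla c^k\|_{L^p},\|\nabla z^k\|_{L^p}$ via the Poincaré inequality, using that $\int_\Omega c^k\dx$ is conserved by \eqref{discr-syst-appr-c} with the no-flux condition and that $0\leq z^k\leq1$). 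Since the Gronwall constant is $C\sum_j\tau\leq CT$, uniform in $\tau,\nu,M$, this yields $E_k\leq C$ uniformly, which is \eqref{est-5.7.1}. The bound \eqref{est-5.7.2} is then read off the remaining left-hand-side terms $\tfrac\nu\varrho\|c^k\|_{L^\varrho}^\varrho+\tfrac\nu\varrho\|z^k\|_{L^\varrho}^\varrho+\tfrac\nu\varrho\|\e(\ub^k)\|_{L^\varrho}^\varrho\leq C$, and \eqref{est-5.7.3} from $\nu\tau(\|\nabla\mu^k\|_{L^\varrho}^\varrho+\|\mu^k\|_{L^2}^2)\leq\nu\tau\sum_{j=1}^k(\|\nabla\mu^j\|_{L^\varrho}^\varrho+\|\mu^j\|_{L^2}^2)\leq C$. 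I expect the main obstacle to be the careful bookkeeping of the Dirichlet-boundary terms — the discrete summation-by-parts and the use of \eqref{discr-syst-appr-u} to eliminate $D_j(\Dt(\ub))\cdot D_j(\db)$ in favour of manageable quantities — since this is where the non-homogeneous, time-dependent datum $\db$ forces a genuine departure from the time-continuous computation and where one must be scrupulous that all constants remain independent of $\tau$, $\nu$ and $M$.
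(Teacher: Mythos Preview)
Your approach overshoots the target. The lemma only asserts uniformity in $M$ and $\nu$, not in $\tau$ or $k$, and the paper exploits this to give a much shorter single-step argument. The crucial structural point you are missing is spelled out just above system \eqref{discr-syst-appr}: in the recursive construction of Section~\ref{ss:5.2}, the data $c^{k-1},z^{k-1},\teta^{k-1},\ub^{k-1},\ub^{k-2}$ entering the incremental inequality \eqref{discr-total-ineq} are the \emph{already limited} solutions from step $k{-}1$ and hence do \emph{not} depend on $M$ or $\nu$. Consequently every $(k{-}1)$-quantity on the right of \eqref{discr-total-ineq} is simply a fixed constant, and no summation or Gronwall is needed. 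The paper just estimates the four $k$-dependent terms on the right: $\tau\int_\Omega\fb^k\cdot\vb^k$ and $\tau\int_\Omega D_k(\vb)\cdot D_k(\db)$ give $\delta\|\vb^k\|_{L^2}^2+C_\delta$; the $a(\cdot)\VV$-term is integrated by parts in space (using $\vb^k=D_k(\db)$ on $\partial\Omega$) to yield $\delta\|\vb^k\|_{L^2}^2+C_\delta(\|\nabla c^{k-1}\|_{L^2}^2+\|\nabla z^{k-1}\|_{L^2}^2+1)$; the $W^\omega_{,\e}$-term is bounded by $\delta\|b\|_{L^\infty}\int_\Omega W^\omega(c^k,\e(\ub^k),z^k)+C_\delta$; and $-\tau\int_\Omega\rho\,\C T_M(\teta^k)\dive(D_k(\db))\leq \tau\rho\|\dive(D_k(\db))\|_{L^\infty}\int_\Omega\teta^k$. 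After choosing $\delta$ small and then $\tau$ small (depending only on $\rho$ and $\db$), all of these are absorbed into $\mathscr{E}_\omega(c^k,z^k,\teta^k,\ub^k,\vb^k)$ on the left, and the estimates \eqref{est-5.7} follow at once.

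Your summation over $j=1,\dots,k$ is in fact ill-posed in this setting: inequality \eqref{discr-total-ineq} is available only at the current step $k$ for the $(M,\nu)$-regularized quantities, whereas at earlier steps $j<k$ the solutions have already been passed to the limit and the $\nu$-terms are gone, so there is nothing to telescope. The Gronwall-type argument you describe is exactly what is needed later, in Proposition~\ref{prop:aprio-discr}, to obtain estimates uniform in $\tau$; there one works with the \emph{limited} discrete solutions and the summed inequality of Proposition~\ref{prop:energyEntropyIneq}(ii). For the present lemma that machinery is unnecessary.
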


\begin{proof}
 In order to deduce estimates \eqref{est-5.7}, \EEE
	 it \EEE suffices to estimate the terms of the $k$-th time step
	on the  right-hand side of the incremental energy inequality
	\eqref{discr-total-ineq} from Lemma \ref{l:energy-est}.
	The following calculations are an adaption of the
	calculations performed First estimate in Section \ref{s:4}.
	\begin{itemize}
		\item[--]
			At first we observe by Young's inequality
			\begin{align*}
				&\tau\io \fb^k\cdot\vb^k\dx
					\leq \delta\|\vb^k\|_{L^2(\Omega;\R^d)}^2+C_\delta\|\fb^k\|_{L^2(\Omega;\R^d)}^2,\\
				&\tau\io D_k(\vb)\cdot D_k(\db)\dx
					\leq \delta\|\vb^k\|_{L^2(\Omega;\R^d)}^2
						+\delta\|\vb^{k-1}\|_{L^2(\Omega;\R^d)}^2+C_\delta\|D_k(\db)\|_{L^2(\Omega;\R^d)}^2,\\
				&-\tau\io \fb^k\cdot D_k(\db)\dx
					\leq C\|\fb^k\|_{L^2(\Omega;\R^d)}^2+C\|D_k(\db)\|_{L^2(\Omega;\R^d)}^2.
			\end{align*}
			By choosing $\delta>0$ sufficiently small, the term
			$\delta\|\vb^k\|_{L^2(\Omega;\R^d)}^2$
			is absorbed by the left-hand side of \eqref{discr-total-ineq}.
			The remaining terms are bounded due to \eqref{hyp:data}.
		\item[--]
			We continue with the next term
			 on the right-hand side of  \eqref{discr-total-ineq} \EEE
			 by using that $\vb^k=D_k(\db)$ a.e. on $\partial\Omega$,
			the trace theorem and Young's inequality
			\begin{align*}
				\qquad&\tau\io a(c^{k-1},z^{k-1})\VV\e(\vb^k):\e(D_k(\db))\dx\\
				&=-\tau\io\vb^k\cdot\dive\big(a(c^{k-1},z^{k-1})\VV\e(D_k(\db))\big)\dx
					+\tau\int_{\partial\Omega} \vb^k\cdot\big(a(c^{k-1},z^{k-1})\VV\e(D_k(\db))n\big)\dd S\\
				&=-\tau\io\vb^k\cdot\Big(\big(a_{,c}(c^{k-1},z^{k-1})\nabla c^{k-1}
						+a_{,z}(c^{k-1},z^{k-1})\nabla z^{k-1}\big)\VV\e(D_k(\db))\Big)\dx\\
					&\quad-\tau\io\vb^k\cdot a(c^{k-1},z^{k-1})\dive\big(\VV\e(D_k(\db))\big)\dx
					+\tau\int_{\partial\Omega} D_k(\db)\EEE\cdot\big(a(c^{k-1},z^{k-1})\VV\e(D_k(\db))n\big)\dd S\\
				&\leq \delta\|\vb^k\|_{L^2(\Omega;\R^d)}^2
					+C_\delta\|\e(D_k(\db))\|_{L^\infty(\Omega;\R^{d\times d})}^2
					\Big(\|a_{,c}(c^{k-1},z^{k-1})\|_{L^\infty(\Omega)}^2\|\nabla c^{k-1}\|_{L^2(\Omega;\R^d)}^2\\
						&\hspace*{20em}+\|a_{,z}(c^{k-1},z^{k-1})\|_{L^\infty(\Omega)}^2\|\nabla z^{k-1}\|_{L^2(\Omega;\R^d)}^2\Big)\\
					&\quad+C_\delta\|a(c^{k-1},z^{k-1})\|_{L^\infty(\Omega)}^2\|\tau\dive(\VV\e(D_k(\db)))\|_{L^2(\Omega;\R^d)}^2\\
					&\quad+C\|D_k(\db^k)\|_{H^1(\Omega;\R^d)}^2\|a(c^{k-1},z^{k-1})\|_{L^\infty(\Omega)}\|\tau\e(D_k(\db))n\big)\|_{H^1(\Omega;\R^{d\times d})}^2.
			\end{align*}
			Taking Hypothesis (IV) and \eqref{dirichlet-data} into account,  we ultimately find that \EEE
			\begin{align*}
				&\tau\io a(c^{k-1},z^{k-1})\VV\e(\vb^k):\e(D_k(\db))\dx\\
					&\qquad\leq\delta\|\vb^k\|_{L^2(\Omega;\R^d)}^2
						+C_\delta\big(\|\nabla c^{k-1}\|_{L^2(\Omega;\R^d)}^2+\|\nabla z^{k-1}\|_{L^2(\Omega;\R^d)}^2+1\big).
			\end{align*}
			For small $\delta>0$ the first term on the right-hand side can be absorbed
			 into \EEE the left-hand side of \eqref{discr-total-ineq}.
		\item[--]
			Moreover, we estimate the $\tau\int W_{,\e}^\omega(\ldots):\ldots$-term on the right-hand side of
			\eqref{discr-total-ineq} as follows
			\begin{align*}
				\qquad&\tau\io W_{,\e}^\omega(c^{k},\e(\uu^k),z^k):\e(D_k(\db))\dx\\
					&  \quad\leq \delta\|b(c^{k},z^{k})\|_{L^\infty(\Omega)}\io \underbrace{\frac 12b(c^{k},z^{k})\CC(\e(\uu^k)-\e^*(c)):(\e(\uu^k)-\e^*(c))}_{=W(c^k,\e(\uu^k),z^k)}\dx
					+C_\delta\|\e(D_k(\db))\|_{L^2(\Omega;\R^{d\times d})}^2,
			\end{align*}
			which can be absorbed by the left-hand side of \eqref{discr-total-ineq}
			for small $\delta>0$.
		\item[--]
			Finally,
			\begin{align*}
				&-\tau\io\rho\C T_M(\teta^k)\dive(D_k(\db))\dx
					\leq\tau\rho\|\dive(D_k(\db))\|_{L^\infty(\Omega)}\int_\Omega|\teta^k|\dx.
			\end{align*}
	\end{itemize}
	 In the end, by  \EEE choosing $\tau>0$ small enough depening only on $\rho$ and the data
			$\db$, the right-hand side can be absorbed by the left-hand side of
			\eqref{discr-total-ineq} (recall that $\teta^k$ is positive). 
\end{proof}
\begin{remark}
	We see that the calculation above takes advantage of the fact
	that the $W_{,\e}(\ldots)$-term in the discrete force balance equation
	is discretized fully implicit.
\end{remark}

\subsubsection{\textbf{Step 2: Limit passage $M\to\infty$.}}
\label{sss:4.2.2.}
\noindent
In the following we focus on the limit passage $M\to\infty$
and keep $M$ as a subscript in $c_M^k$, $\mu_M^k$, $z_M^k$, $\ub_M^k$ and $\teta_M^k$.
By adapting the proof in \cite[Proof of Lemma 4.4 - Step 4]{RocRos14} to our situation, we obtain
enhanced estimates for $(\teta_M^k)_M$.
\begin{lemma}
\label{lemma:theteEstDiscr}
  The following estimate holds uniformly in $M\in\N$:
  \begin{align}
  \label{eqn:thetaEstDiscr}
    \|\tM\|_{H^1(\Omega)}\leq C.
  \end{align}
\end{lemma}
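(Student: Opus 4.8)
The plan is to obtain the $M$-uniform $H^1(\Omega)$-bound for $\teta_M^k$ by testing the discrete heat equation \eqref{discr-syst-appr-teta} with $(\teta_M^k)^{\alpha-1}$-type test functions, exactly mirroring the \textbf{Second estimate} of Section~\ref{s:4} but now at a fixed time step $k$ and with the truncated conductivity $\condu_M$. Since $\teta_M^k \geq \ul\teta>0$ by Lemma~\ref{l:positivityThetaDiscr} (uniformly in $M$), the test function $(\teta_M^k)^{\alpha-1}$ is admissible in $H^1(\Omega)$; the discrete ``time derivative'' term $D_k(\teta)$ against $(\teta_M^k)^{\alpha-1}$ produces, by the elementary convexity inequality $(a-b)a^{\alpha-1} \geq \tfrac1\alpha(a^\alpha-b^\alpha)$ for $a,b>0$, a telescoping-type lower bound $\tfrac1{\alpha\tau}\int_\Omega((\teta_M^k)^\alpha-(\teta^{k-1})^\alpha)\dx$, while the elliptic term yields $\tfrac{4(1-\alpha)}{\alpha^2}\int_\Omega\condu_M(\teta_M^k)|\nabla((\teta_M^k)^{\alpha/2})|^2\dx \geq \tilde c_1\int_\Omega|\nabla((\teta_M^k)^{(\kappa+\alpha)/2})|^2\dx$ using the lower bound $\condu_M(r)\geq c_0(1+\mathcal{T}_M(r)^\kappa)$ (which follows from \eqref{hyp-K} and \eqref{def-k-m}) — here one must be a little careful, as $\condu_M$ is the \emph{un}truncated-below, capped-above version, but the lower growth $\condu_M(r)\geq c_0 r^\kappa$ survives up to $r=M$, which is enough after a standard argument.

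The key steps, in order, would be: (i) test \eqref{discr-syst-appr-teta} by $F'(\teta_M^k)=(\teta_M^k)^{\alpha-1}$ with $\alpha\in(0,1)$ to be fixed, integrate over $\Omega$, and collect the good terms — the $\nabla((\teta_M^k)^{(\kappa+\alpha)/2})$-gradient term and the dissipative terms $|D_k(c)|^2, |D_k(z)|^2, a(\cdot)\e(D_k(\ub))\mathbb{V}\e(D_k(\ub)), m(\cdot)|\nabla\mu_M^k|^2$ weighted by $F'(\teta_M^k)$; (ii) absorb the coupling terms $D_k(c)\mathcal{T}_M(\teta_M^k), D_k(z)\mathcal{T}_M(\teta_M^k), \rho\mathcal{T}_M(\teta_M^k)\dive(D_k(\ub))$ using $\mathcal T_M(\teta_M^k)\le\teta_M^k$, Young's inequality and inequality \eqref{eps-estim}, exactly as $I_2,I_3$ were handled in Section~\ref{s:4}, leaving a remainder controlled by $C\int_\Omega (\teta_M^k)^{\alpha+1}\dx$; (iii) run the Gagliardo–Nirenberg / Young bootstrap from \eqref{calc2.1}--\eqref{added-label2} with $w:=\max\{(\teta_M^k)^{(\kappa+\alpha)/2},1\}$, now with \emph{no time integral} — i.e.\ all the integrals $\int_0^t\!\int_\Omega\cdots\ds$ become plain $\int_\Omega\cdots\dx$ — using that $\|\teta_M^k\|_{L^1(\Omega)}\leq C$ uniformly in $M$ by \eqref{est-5.7.1} to control $\|w\|_{L^r(\Omega)}$ for $r\leq 2/(\kappa+\alpha)$; this yields a uniform bound on $\int_\Omega|\nabla((\teta_M^k)^{(\kappa+\alpha)/2})|^2\dx$ and hence, since $\teta_M^k\geq\ul\teta$ and $\kappa+\alpha-2\geq 0$ for suitable $\alpha\in(0,1)$, a uniform bound on $\int_\Omega|\nabla\teta_M^k|^2\dx$; (iv) combine with the $L^1$-bound and Poincar\'e's inequality to conclude $\|\teta_M^k\|_{H^1(\Omega)}\leq C$ uniformly in $M$. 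Since $k$ and $\tau$ are fixed here, the constant $C$ is allowed to depend on $\tau,k,\omega,\nu$ (and the data at step $k-1$), which is all that is needed for the subsequent limit passage $M\to\infty$.

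The main obstacle, as flagged, will be step~(ii)–(iii): one has to check that the absorption of the quadratic dissipative terms and the coupling terms goes through cleanly with the \emph{truncated} conductivity $\condu_M$ and the truncations $\mathcal T_M$ appearing in the temperature equation, and that the constants produced by the Gagliardo–Nirenberg–Young chain do not depend on $M$. The point is that $\condu_M$ retains the lower bound $\condu_M(r)\geq c_0 r^\kappa$ on $[0,M]$ and $\condu_M(r)=\condu_M(M)\geq c_0 M^\kappa\geq c_0$ for $r>M$, so the coercivity estimate $\int_\Omega\condu_M(\teta_M^k)|\nabla((\teta_M^k)^{\alpha/2})|^2\dx\geq \tilde c_1\int_{\{\teta_M^k\leq M\}}|\nabla((\teta_M^k)^{(\kappa+\alpha)/2})|^2\dx$ holds, and on $\{\teta_M^k> M\}$ the function $(\teta_M^k)^{(\kappa+\alpha)/2}$ has gradient $\tfrac{\kappa+\alpha}{2}(\teta_M^k)^{(\kappa+\alpha-2)/2}\nabla\teta_M^k$, which is still controlled by $\condu_M(M)^{1/2}|\nabla((\teta_M^k)^{\alpha/2})|$ up to constants — so in fact the full gradient of $(\teta_M^k)^{(\kappa+\alpha)/2}$ is controlled, uniformly in $M$. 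Given this, the remainder of the argument is a verbatim transcription of the cited computations in \cite[Sec.~3]{RocRos14} and of the \textbf{Second estimate} above, and I would simply refer to them for the details rather than reproducing the Gagliardo–Nirenberg manipulations in full.
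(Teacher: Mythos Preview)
Your approach contains a genuine gap, and it is precisely at the point you flagged as the ``main obstacle''. On the set $\{\teta_M^k>M\}$ you claim that
\[
|\nabla((\teta_M^k)^{(\kappa+\alpha)/2})|
=\tfrac{\kappa+\alpha}{\alpha}(\teta_M^k)^{\kappa/2}\,|\nabla((\teta_M^k)^{\alpha/2})|
\]
is controlled by $\condu_M(M)^{1/2}|\nabla((\teta_M^k)^{\alpha/2})|$ ``up to constants, uniformly in $M$''. But $\condu_M(M)^{1/2}\leq C M^{\kappa/2}$, while $(\teta_M^k)^{\kappa/2}$ can be arbitrarily larger than $M^{\kappa/2}$ on $\{\teta_M^k>M\}$ --- there is no a priori upper bound on $\teta_M^k$ at this stage. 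So the inequality you need, namely $(\teta_M^k)^{\kappa/2}\leq CM^{\kappa/2}$, simply fails. Consequently the truncated conductivity term only delivers
\[
\int_\Omega \condu_M(\teta_M^k)|\nabla((\teta_M^k)^{\alpha/2})|^2\dx
\;\geq\;
\tilde c_1\int_{\{\teta_M^k\leq M\}}|\nabla((\teta_M^k)^{(\kappa+\alpha)/2})|^2\dx,
\]
and you get no $M$-uniform control of $|\nabla((\teta_M^k)^{(\kappa+\alpha)/2})|$ on $\{\teta_M^k>M\}$. The Gagliardo--Nirenberg bootstrap for $w=\max\{(\teta_M^k)^{(\kappa+\alpha)/2},1\}$ therefore cannot be closed, and the argument stalls before producing the $H^1$-bound.

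The paper takes a different route that sidesteps this obstruction: it tests \eqref{discr-syst-appr-teta} first with $\mathcal T_M(\teta_M^k)$, which yields $M$-uniform bounds $\|\mathcal T_M(\teta_M^k)\|_{H^1(\Omega)}+\|\mathcal T_M(\teta_M^k)\|_{L^{3\kappa+6}(\Omega)}\leq C$, and then tests with $\teta_M^k$ itself to conclude $\|\teta_M^k\|_{H^1(\Omega)}\leq C$. The crucial point is that at this stage $\nu>0$ is \emph{fixed}, so by \eqref{est-5.7.2}--\eqref{est-5.7.3} the quadratic dissipative terms on the right-hand side of \eqref{discr-syst-appr-teta} lie in $L^{\varrho/2}(\Omega)$ with $\varrho>4$, uniformly in $M$; this extra integrability (supplied by the $\nu$-regularization, not by $\condu_M$) is what makes the direct test with $\teta_M^k$ viable. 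Your $(\teta_M^k)^{\alpha-1}$-test is exactly the right tool \emph{later}, in Lemma~\ref{lemma:theteEstDiscr-nu}, once $M\to\infty$ has restored the untruncated $\condu$ and one needs estimates uniform in $\nu$ --- and indeed the paper uses it there.
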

\begin{proof}
    In \cite[Proof of Lemma 4.4 - Step 4]{RocRos14}  estimate \eqref{eqn:thetaEstDiscr}  is obtained in two steps
    which can be both applied in our case since the additional variable $c$
    enjoys the same regularity properties and estimates as $z$.
    At first \eqref{discr-syst-appr-teta} is tested by $\C T_M(\tM)$ leading to
    the estimates
    \begin{align*}
        \|\C T_M(\tM)\|_{H^1(\Omega)}+\|\C  T_M \EEE (\tM)\|_{L^{3\kappa+6}(\Omega)}\leq C.
    \end{align*}
    Secondly, \eqref{discr-syst-appr-teta} is tested by $\tM$ leading to
    the claimed estimate \eqref{eqn:thetaEstDiscr}.
\end{proof}
\begin{lemma}
\label{lemma:discrConvergence}
    For given $\nu>0$\, there exist functions
    \begin{align*}
        &c^k\in W^{1,p}(\Omega), \EEE 
        &&\mu^k\in W^{1,\varrho}(\Omega),
        &&z^k\in W^{1,p}(\Omega)\text{ with }z\in[0,1]\text{ a.e. in }\Omega,
        \\
        &\teta^k\in H^1(\Omega)\text{ with }\teta^k\geq\ul\teta>0\text{ a.e. in }\Omega,
        &&\ub^k\in  W^{1,\varrho}(\Omega;\R^d)
    \end{align*}
    such that for a subsequence $M\to\infty$
    \begin{subequations}
    \label{eqn:discrConv}
    \begin{align}
        &c_M^k \to c^k\text{ strongly in }W^{1,p}(\Omega),\label{eqn:strongConvCDiscr}\\
        &\mu_M^k \to \mu^k\text{ strongly in }W^{1,\varrho}(\Omega),\label{eqn:strongConvMuDiscr}\\
        &z_M^k \to z^k\text{ strongly in }W^{1,p}(\Omega),\label{eqn:strongConvZDiscr}\\
        &\teta_M^k \weaklim \teta^k\text{ weakly in }H^{1}(\Omega), \label{eqn:strongConvTDiscr}\\
        &\ub_M^k \to \ub^k\text{ strongly in }W^{1,\varrho}(\Omega;\R^d).\label{eqn:strongConvUDiscr}
    \end{align}
    \end{subequations}
\end{lemma}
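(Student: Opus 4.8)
The statement to be proved is Lemma \ref{lemma:discrConvergence}, i.e.\ the existence of a convergent subsequence of the $M$-approximating solutions $(c_M^k,\mu_M^k,z_M^k,\teta_M^k,\ub_M^k)$ at the fixed time step $k$. The whole argument is a compactness procedure based on the uniform bounds of Lemma \ref{lemma:firstEstDiscr} (which do not see the $\condu$-truncation) together with the enhanced $H^1$-bound of Lemma \ref{lemma:theteEstDiscr}. The plan is to first extract, by Banach--Alaoglu and the reflexivity of the spaces involved, weakly convergent subsequences in the natural energy spaces, then to \emph{upgrade} the weak convergences of $c_M^k$, $z_M^k$, $\mu_M^k$, $\ub_M^k$ to \emph{strong} ones by a monotonicity/minty-type argument exploiting the $p$-Laplacian and the $\varrho$-Laplace-type regularizing terms, and finally to pass to the limit in each equation of system \eqref{discr-syst-appr}, showing that the limit quintuple solves the (untruncated) discrete system \eqref{PDE-discrete} at step $k$ — which along the way removes the truncations $\C T_M$ and replaces $\condu_M$ by $\condu$.

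\textbf{First steps (weak compactness).} From \eqref{est-5.7.1}--\eqref{est-5.7.2} (at fixed $\nu$) the sequences $(c_M^k)$, $(z_M^k)$ are bounded in $W^{1,p}(\Omega)$ and $(\ub_M^k)$ is bounded in $W^{1,\varrho}(\Omega;\R^d)$; from \eqref{est-5.7.3} (again $\nu$ fixed) $(\mu_M^k)$ is bounded in $W^{1,\varrho}(\Omega)\cap L^2(\Omega)$; and from \eqref{eqn:thetaEstDiscr} $(\teta_M^k)$ is bounded in $H^1(\Omega)$, with the uniform lower bound $\teta_M^k\geq\ul\teta>0$ from Lemma \ref{l:positivityThetaDiscr}. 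Hence, passing to a subsequence, we obtain $c_M^k\weaklim c^k$ in $W^{1,p}(\Omega)$, $z_M^k\weaklim z^k$ in $W^{1,p}(\Omega)$, $\mu_M^k\weaklim\mu^k$ in $W^{1,\varrho}(\Omega)$, $\ub_M^k\weaklim\ub^k$ in $W^{1,\varrho}(\Omega;\R^d)$, $\teta_M^k\weaklim\teta^k$ in $H^1(\Omega)$. By Rellich--Kondrachov (using $p>d$) we also get $c_M^k\to c^k$, $z_M^k\to z^k$ uniformly on $\overline\Omega$, $\ub_M^k\to\ub^k$ and $\teta_M^k\to\teta^k$ strongly in suitable $L^s$-spaces and a.e.\ in $\Omega$; in particular $z^k\in[0,1]$ and $\teta^k\geq\ul\teta$ a.e., and the constraint $0\le z^k\le z_\tau^{k-1}$ passes to the limit, so $\xi_M^k\in\partial I_{Z^{k-1}}(z_M^k)$ converges to some $\xi^k\in\partial I_{Z^{k-1}}(z^k)$ once we have weak $L^2$-compactness of $(\xi_M^k)$ (obtained by comparison in \eqref{discr-syst-appr-z}).

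\textbf{Upgrading to strong convergence.} The key step — and the main obstacle — is to turn the weak convergences of $c_M^k$, $z_M^k$, $\mu_M^k$, $\ub_M^k$ into strong convergences in $W^{1,p}$, resp.\ $W^{1,\varrho}$, because the right-hand sides of the various equations (e.g.\ $m(\cdot)|\nabla\mu_M^k|^2$ in \eqref{discr-syst-appr-teta}, $W_{,\e}^\omega(\cdot,\e(\ub_M^k),\cdot)$ in \eqref{discr-syst-appr-u}, the subdifferential and $\convWzp$ terms in \eqref{discr-syst-appr-z}) are nonlinear in the gradients. The plan is a standard Minty/monotonicity trick: test \eqref{discr-syst-appr-c}--\eqref{discr-syst-appr-u} with $\mu_M^k$, $c_M^k-c^k$, $z_M^k-z^k$, $\ub_M^k-\ub^k$ respectively, use the a.e.\ and strong-$L^s$ convergences already established to kill the lower-order terms and the coupling terms, and conclude that $\limsup_M \langle -\Delta_p(v_M^k)+\nu|\nabla v_M^k|^{\varrho-2}\ldots,\,v_M^k-v^k\rangle\le 0$; by the $(S_+)$-property of the $p$-Laplacian (and of the $\varrho$-Laplacian, and of the $\nu$-regularized elasticity operator) this forces $\nabla v_M^k\to\nabla v^k$ strongly. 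One should handle the Cahn--Hilliard pair \eqref{discr-syst-appr-c}--\eqref{discr-syst-appr-mu} jointly (testing the $c$-equation with $\mu_M^k$ and the $\mu$-equation with $c_M^k-c^k$, adding, and exploiting that the $D_k(c)$-terms cancel) to extract $\nabla\mu_M^k\to\nabla\mu^k$ and $\nabla c_M^k\to\nabla c^k$. Care is needed with the boundary term $\uu_M^k=\db^k$ on $\partial\Omega$, so one works with $\ub_M^k-\ub^k\in W_0^{1,\varrho}$ as the test function in \eqref{discr-syst-appr-u}, exactly as in the coercivity computation of Lemma \ref{l:exist-approx-discr}.

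\textbf{Passing to the limit in the equations.} Once the strong gradient convergences are in hand, the limit passage is routine: continuity of $m,\sigma,a,b,W^\omega$ and its derivatives, the Yosida-regular $\beta_\omega\in\rmC^0$, and the smoothed truncations $\C R_\omega$ let us pass to the limit in every nonlinear term; the truncations $\C T_M(\teta_M^k)\to\teta^k$ a.e.\ and in $L^s$ because $\teta_M^k\to\teta^k$ a.e.\ and $\teta_M^k$ is bounded in $H^1\subset L^{2^*}$, and the Nemytskii estimates $|\C T_M(\teta_M^k)|\le|\teta_M^k|$ give equi-integrability; likewise $\condu_M(\teta_M^k)\nabla\teta_M^k\weaklim\condu(\teta^k)\nabla\teta^k$ in $L^1$ (or a better space, using the growth \eqref{hyp-K} and the $L^{3\kappa+6}$-bound from Lemma \ref{lemma:theteEstDiscr}) so the operator $\mathcal A_M^k$ converges to $\mathcal A^k$ of \eqref{A-operator}. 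Thus the limit quintuple satisfies \eqref{eqn:discr1}--\eqref{eqn:discr3b} in the senses stated in Proposition \ref{prop:exist-discr}, with the truncations gone, which is exactly the assertion of Lemma \ref{lemma:discrConvergence} (strong $W^{1,p}$/$W^{1,\varrho}$ convergences \eqref{eqn:strongConvCDiscr}--\eqref{eqn:strongConvUDiscr} and weak $H^1$ convergence \eqref{eqn:strongConvTDiscr}). The main obstacle, to reiterate, is the strong-convergence upgrade of the gradients in the presence of the quadratic source terms in the temperature equation; everything else is a matter of bookkeeping with the uniform bounds.
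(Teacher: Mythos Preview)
Your approach is essentially the paper's: extract weak limits from the uniform bounds of Lemmas~\ref{lemma:firstEstDiscr}--\ref{lemma:theteEstDiscr}, then upgrade to strong convergence via the monotonicity of the $p$-Laplacian and the $\varrho$-regularizers. The paper phrases the upgrade slightly differently --- for $z$ it writes \eqref{discr-syst-appr-z} as a variational inequality and tests with a recovery sequence $\widetilde z_M^k\to z^k$ strongly in $W^{1,p}(\Omega)$ (your direct choice $\zeta=z^k$ works as well here, because the constraint set $Z^{k-1}$ does not depend on $M$), and for $\mu$, $c$, $\uu$ it runs a ``$\limsup$'' argument (testing the equation with $\mu_M^k$, resp.\ $c_M^k$, $\uu_M^k$, themselves and comparing with the weak limit equation) rather than with the differences --- but the underlying mechanism is the same $(S_+)$/Minty idea you describe.

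Two small slips to fix. First, by comparison in \eqref{discr-syst-appr-z} the subgradients $\xi_M^k$ are only bounded in $W^{1,p}(\Omega)'$ (because of the $-\Delta_p(z_M^k)$ term), not in $L^2(\Omega)$; do not claim weak $L^2$-compactness. It is also unnecessary: for the strong convergence of $z_M^k$ you only use the sign $\langle\xi_M^k,z_M^k-z^k\rangle\geq 0$, which follows directly from $\xi_M^k\in\partial I_{Z^{k-1}}(z_M^k)$ and $z^k\in Z^{k-1}$. Second, in your joint testing of \eqref{discr-syst-appr-c}--\eqref{discr-syst-appr-mu} the $D_k(c)$-terms do not literally cancel; they are merely lower order (since $c_M^k\to c^k$ strongly in $L^s$) and vanish in the limit. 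Finally, your last paragraph (removing $\C T_M$, identifying $\condu_M\nabla\teta_M^k$ with $\condu\nabla\teta^k$, obtaining the untruncated system) is the content of the \emph{next} lemma (Lemma~\ref{lemma:5.7}), not of Lemma~\ref{lemma:discrConvergence}, which only asserts the convergences \eqref{eqn:discrConv}.
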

\begin{proof}
   First of all, observe that  \EEE the a priori estimates in Lemma \ref{lemma:firstEstDiscr} and Lemma \ref{lemma:theteEstDiscr}
  imply \eqref{eqn:discrConv} with weak instead of strong topologies.

  The strong convergence \eqref{eqn:strongConvZDiscr} may be shown by rewriting
  \eqref{discr-syst-appr-z} as a variational inequality
	\begin{equation}
  \begin{aligned}
	  &-\int_\Omega|\nabla \zM|^{p-2}\nabla \zM\cdot \nabla (\zeta-\zM)\dx
	  \\
	  &\qquad\leq\int_\Omega\Big(\frac{\zM- z^{k-1} }{\tau}+(\conv{\sigma})'(z_M^k) +
	  (\conc{\sigma})'(z^{k-1})+\nu|z_M^k|^{\varrho-2}z_M^k\Big)(\zeta-z_M^k)\dx\\
	  &\qquad\quad+\int_\Omega\Big(\convWzp( c^{k}, \e(\ub^{k-1}),  \zM)
	  +\concWzp( c^{k},\e(\ub^{k-1}),z^{k-1})-
	  \C T_M(\teta_M^k)\Big)(\zeta-\zM)\dx
	\label{eqn:varIneqZDiscr}
  \end{aligned}
  \end{equation}
  holding for all $\zeta\in W^{1,p}(\Omega)$ with $0\leq \zeta\leq z^{k-1}$ a.e. in $\Omega$.

  To proceed we can argue by recovery sequences:
  By now we know the following:
  \begin{align*}
    &0\leq z_M^k\leq z^{k-1}\\
    &\qquad\downarrow\qquad\downarrow\qquad\text{ weakly in }W^{1,p}(\Omega)\text{ as }M\to\infty.\\
    &0\leq z^k\;\leq z^{k-1}
  \end{align*}
  Due to the compact embedding $W^{1,p}(\Omega)\hookrightarrow \rmC^0(\ol\Omega)$, we find another
  sequence denoted by $\widetilde z_M^k$ such that
  \begin{align*}
    &\widetilde z_M^k\to z^k\text{ strongly in }W^{1,p}(\Omega)\text{ as }M\to\infty\quad\text{ and }\quad
    0\leq\widetilde z_M^k\leq z^{k-1}. 
  \end{align*}
  We may take, for instance, $\widetilde z_M^k:=\max\{z^k-\delta_M,0\}$ for suitable values $\delta_M>0$ with $\delta_M\to 0$
  as $M\to\infty$.

  We test \eqref{eqn:varIneqZDiscr} with the admissible function $\zeta=\widetilde z_M^k$. Taking into account the
  already proved weak convergences \eqref{eqn:discrConv} as well
  as the growth properties of the functions $(\conv{\sigma})'$ and
  $\convWzp$ (cf.\ \eqref{est-quoted-5.3}), we manage to pass to
  the limit on the right-hand side of \eqref{eqn:varIneqZDiscr} and conclude that 
  \begin{align}
  \label{label-2-added}
    &\limsup_{M\to\infty}\int_\Omega-|\nabla z_M^k|^{p-2}\nabla z_M^k\cdot \nabla (\widetilde z_M^k-z_M^k)\dx\leq 0.
  \end{align}
  By exploiting the  uniform $p$-convexity of
  the $\|\cdot\|_{L^p(\Omega)}^p$-function
  and strong $W^{1,p}(\Omega)$-convergence of the recovery sequence, 
   from \eqref{label-2-added} we deduce that \EEE
   $\|\nabla(\widetilde z_M^k-z_M^k)\|_{L^p(\Omega)}\to 0$ as $M\to\infty$.
  Together with $\|\nabla(\widetilde z_M^k-z^k)\|_{L^p(\Omega)}\to 0$, 
  property \eqref{eqn:strongConvZDiscr} is shown.

  To prove the strong convergences \eqref{eqn:strongConvCDiscr},
  \eqref{eqn:strongConvMuDiscr} and \eqref{eqn:strongConvUDiscr},
  we use a $\limsup$--argument.
  We adapt the proof from \cite[Proof of Lemma 4.4 - Step 4]{RocRos14} to our situation:
  \begin{itemize}
    \item[--]
      Let $\Lambda\in L^{\varrho/(\varrho-1)}(\Omega;\R^d)$ be a weak cluster point of $|\nabla\mu_M^k|^{\varrho-2}\nabla\mu_M^k$.
      Testing \eqref{discr-syst-appr-c} 
      with $\mu_M^k$ yields by exploiting a lower   semicontinuity \EEE argument
      \begin{align*}
        \limsup_{M\to\infty}\nu\int_\Omega|\nabla\mu_M^k|^{\varrho}\dx
	        ={}&\limsup_{M\to\infty}\int_\Omega-\frac{c_M^k-c^{k-1}}{\tau}\mu_M^k-m(c^{k-1},z^{k-1})|\nabla\mu_M^k|^2-\nu|\mu_M^k|^2\dx\\
          \leq{}&\int_\Omega-\frac{ c^k-c^{k-1}}{\tau}\mu^k-\liminf_{M\to\infty}\int_\Omega m(c^{k-1},z^{k-1})|\nabla\mu_M^k|^2\dx-\int_\Omega\nu|\mu|^2\dx\\
          \leq{}&\int_\Omega-\frac{ c^k-c^{k-1}}{\tau}\mu^k-m(c^{k-1},z^{k-1})|\nabla\mu|^2-\nu|\mu|^2\dx.
      \end{align*}
      However, the right-hand side equals $\nu\int_\Omega\Lambda\cdot\nabla\mu\dx$
      by passing to the limit $M\to\infty$ in \eqref{discr-syst-appr-mu}
      and testing the limit equation with $\mu$.
      In conclusion, taking into account the previously
      proved convergences we have that 
      \begin{align*}
        \limsup_{M\to\infty}\int_\Omega|\nabla\mu_M^k|^{\varrho}\dx
          \leq \int_\Omega\Lambda\cdot\nabla\mu\dx,
      \end{align*}
      which results in \eqref{eqn:strongConvMuDiscr}.
    \item[--]
      Convergence \eqref{eqn:strongConvCDiscr} can be gained with a similar argument as above, whereas 
      \eqref{eqn:strongConvUDiscr} can be shown as in \cite[Proof of Lemma 4.4 - Step 4]{RocRos14}.
  \end{itemize}
\end{proof}
We are now in the position to carry out the limit passage as $M\to\infty$  and conclude the existence of a solution to 
an intermediate approximate version of the time-discrete system \eqref{PDE-discrete}, only featuring the higher regularizing terms and the $\omega$-regularizations,
i.e.\ \eqref{discr-syst-appr2}
 below.   \EEE
\begin{lemma}[Existence of the time-discrete system for $\nu>0$ and $M\to\infty$]
\label{lemma:5.7}
	
	Let the assumption from Lemma \ref{l:exist-approx-discr} be fulfilled.
  Then\, for every  $\nu>0$\, there exists a weak solution
  \begin{align*}
    \{(c^k, \mu^k,z^k,\teta^k,\ub^k)\}_{k=1}^{K_\tau}\subseteq W^{1,p}(\Omega)\times W^{1,\varrho}(\Omega)\times W^{1,p}(\Omega)\times H^1(\Omega)\times  W^{1,\varrho}(\Omega;\R^d)
  \end{align*}
  to the following time-discrete PDE system:
  \begin{subequations}
  \label{discr-syst-appr2}
  \begin{align}
    &D_k(c)=\dive\Big(m(c^{k-1},z^{k-1})\nabla\mu^k\Big)+\nu\dive\Big(|\nabla\mu^k|^{\varrho-2}\nabla\mu^k\Big)-\nu\mu^k
      &&\text{in }W^{1,\varrho}(\Omega)',
      \label{discr-syst-appr-c2}\\
    &\mu^k=-\Delta_p(c^k)+(\conv{\phi}_\omega)'(c^k)+(\conc{\phi})'(c^{k-1})
    	+\convWcp(c^k,\e(\ub^{k-1}), z^{k-1})\notag\\
      &\qquad +\concWcp(c^{k-1},\e(\ub^{k-1}), z^{k-1})-\teta^k+D_k(c)+\nu|c^k|^{\varrho-2}c^k
      &&\text{in }W^{1,p}(\Omega)',
      \label{discr-syst-appr-mu2}\\
    &D_k(z)-\Delta_p(z^k)+\xi^k+(\conv{\sigma})'(z^k) + (\conc{\sigma})'(z^{k-1})+\nu|z^k|^{\varrho-2}z^k\notag\\
      &\quad=-\convWzp( c^{k},\e(\ub^{k-1}),z^k)-\concWzp( c^{k},\e(\ub^{k-1}),z^{k-1})+\teta^k\notag\\
      &\quad\text{with }\xi^k\in \partial I_{Z^{k-1}}(z^k)
      &&\text{in }W^{1,p}(\Omega)',
      \label{discr-syst-appr-z2}\\
    &D_k(\teta) + \mathcal{A}^k(\teta^k)+D_k(c)\teta^k+D_k(z)\teta^k+\rho\teta^k\dive(D_k(\ub))\notag\\
      &\quad=g^k+|D_k(c)|^2+|D_k(z)|^2+ a(c^{k-1},z^{k-1})\e(D_k(\ub)):\vism\e(D_k(\ub))\notag\\
      &\qquad+m(c^{k-1},z^{k-1})|\nabla\mu^k|^2
      &&\text{in }H^{1}(\Omega)',
      \label{discr-syst-appr-teta2}\\
    &D_k(D_k(\ub))-\dive\Big(a(c^{k-1},z^{k-1})\vism\e(D_k(\ub))
      + W_{,\e}^\omega( c^{k},\e(\ub^k),z^k)\big) -\rho\teta^k\mathds 1 \Big)\notag\\
      &\qquad-\nu\dive\Big(|\e(\ub^k-\db^k)|^{\varrho-2}\e(\ub^k-\db^k)\Big)=\fb^k
      &&\text{in }W_0^{1,\varrho}(\Omega;\R^d)'
      \label{discr-syst-appr-u2}
  \end{align}
  \end{subequations}
  satisfying the initial conditions \eqref{discre-initial-cond},
   the boundary condition $\uu^k=\db^k$ a.e. on $\partial\Omega$ 
  and the  constraints \EEE  \eqref{discre-constraints}.
\end{lemma}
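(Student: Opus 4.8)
The plan is to argue recursively over $k\in\{1,\ldots,K_\tau\}$: assuming that the previous step has produced data $(c^{k-1},z^{k-1},\teta^{k-1},\ub^{k-1})$ and $\ub^{k-2}$ with the regularity demanded by Lemma~\ref{l:exist-approx-discr} (for $k=1$ this is ensured by \eqref{h:initial} via the assignments \eqref{discre-initial-cond}), I would invoke Lemma~\ref{l:exist-approx-discr} to get, for every $M\in\N$, a weak solution $(c_M^k,\mu_M^k,z_M^k,\teta_M^k,\ub_M^k)$ of the $(\nu,M)$-regularized system \eqref{discr-syst-appr} at time step~$k$. By Lemmas~\ref{lemma:firstEstDiscr} and~\ref{lemma:theteEstDiscr} this family is bounded, uniformly in $M$, in $W^{1,p}(\Omega)\times W^{1,\varrho}(\Omega)\times W^{1,p}(\Omega)\times H^1(\Omega)\times W^{1,\varrho}(\Omega;\R^d)$; hence Lemma~\ref{lemma:discrConvergence} provides, along a (not relabelled) subsequence, the convergences \eqref{eqn:discrConv} to a limit quintuple $(c^k,\mu^k,z^k,\teta^k,\ub^k)$. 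The pointwise bounds of Lemma~\ref{l:positivityThetaDiscr} and the constraints $z_M^k\in[0,1]$, $D_k(z)\leq0$ survive the a.e.\ limit, so that $z^k\in[0,1]$, $D_k(z)\leq0$, $\teta^k\geq\ul\teta>0$ a.e.\ in $\Omega$. What then remains is to pass to the limit $M\to\infty$ in each of \eqref{discr-syst-appr-c}--\eqref{discr-syst-appr-u}.

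For the Cahn--Hilliard equations \eqref{discr-syst-appr-c}--\eqref{discr-syst-appr-mu} and the momentum balance \eqref{discr-syst-appr-u} I would use the strong convergences \eqref{eqn:strongConvCDiscr}, \eqref{eqn:strongConvMuDiscr}, \eqref{eqn:strongConvUDiscr}, the continuity and the growth bounds \eqref{est-quoted-5.1}--\eqref{oh-yes-quote} of $m$, $\convWcp$, $\concWcp$, $W_{,\e}^\omega$, and the fact that $c^{k-1},z^{k-1},\ub^{k-1},\ub^{k-2}$ are frozen; the $p$- and $\varrho$-Laplacian-type terms pass to the limit by strong $W^{1,p}$- and $W^{1,\varrho}$-convergence. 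The only subtle point here is to replace $\C T_M(\teta_M^k)$ by $\teta^k$: since $\teta_M^k\to\teta^k$ a.e.\ in $\Omega$ (compact embedding $H^1(\Omega)\embedds L^2(\Omega)$) and $\{\C T_M(\teta_M^k)\}$ is bounded in $L^{3\kappa+6}(\Omega)$ by the proof of Lemma~\ref{lemma:theteEstDiscr}, the Vitali theorem gives $\C T_M(\teta_M^k)\to\teta^k$ strongly in $L^2(\Omega)$. For the damage relation \eqref{discr-syst-appr-z} I would proceed exactly as in Lemma~\ref{lemma:discrConvergence}, passing to the limit in the variational inequality \eqref{eqn:varIneqZDiscr} through recovery sequences $\widetilde z_M^k\to z^k$ in $W^{1,p}(\Omega)$ with $0\leq\widetilde z_M^k\leq z^{k-1}$; the limiting inequality, combined with the other limit relations, produces a multiplier $\xi^k\in\partial I_{Z^{k-1}}(z^k)$ for which \eqref{discr-syst-appr-z2} holds in $W^{1,p}(\Omega)'$.

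The hard part is the limit passage in the heat equation \eqref{discr-syst-appr-teta}. On the right-hand side, $g^k$, $|D_k(c)|^2$, $|D_k(z)|^2$, $a(c^{k-1},z^{k-1})\e(D_k(\ub)):\vism\e(D_k(\ub))$ and $m(c^{k-1},z^{k-1})|\nabla\mu_M^k|^2$ converge strongly in $L^{\varrho/2}(\Omega)\hookrightarrow L^2(\Omega)$ — here $\varrho>4$ is crucial — while $D_k(c)\teta_M^k$, $D_k(z)\teta_M^k$, $\rho\teta_M^k\dive(D_k(\ub))$ converge in $L^q(\Omega)$, $q<6$, by the strong convergence of $c_M^k$, $z_M^k$, $\ub_M^k$ and the weak $H^1$-convergence of $\teta_M^k$. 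The crux is the quasilinear elliptic term $\condu_M(\teta_M^k)\nabla\teta_M^k$ hidden in $\mathcal{A}_M^k(\teta_M^k)$: the key observation is that, since $\teta_M^k\geq0$ a.e., $\condu_M(\teta_M^k)=\condu(\C T_M(\teta_M^k))$, so that \eqref{hyp-K} and the $L^{3\kappa+6}(\Omega)$-bound on $\C T_M(\teta_M^k)$ make $\{\condu_M(\teta_M^k)\}$ bounded in $L^{3+6/\kappa}(\Omega)$ with $3+6/\kappa>2$, and $\condu_M(\teta_M^k)\to\condu(\teta^k)$ a.e.\ in $\Omega$ (using $\condu_M\to\condu$ locally uniformly and $\teta_M^k\to\teta^k$ a.e.); hence $\condu_M(\teta_M^k)\to\condu(\teta^k)$ strongly in $L^s(\Omega)$ for $s<3+6/\kappa$, which together with $\nabla\teta_M^k\weaklim\nabla\teta^k$ weakly in $L^2(\Omega;\R^d)$ and with $\teta^k\in L^{3\kappa+6}(\Omega)$ (by Fatou) lets one pass to the limit in $\int_\Omega\condu_M(\teta_M^k)\nabla\teta_M^k\cdot\nabla\varphi\dx$ and identify $\teta^k$ as an element of the space $X$ from \eqref{A-operator} and the limit of $\mathcal{A}_M^k(\teta_M^k)$ as $\mathcal{A}^k(\teta^k)$, so that \eqref{discr-syst-appr-teta2} holds in $H^1(\Omega)'$; this argument is carried out in full following \cite[Proof of Lemma~4.4 - Step~4]{RocRos14}. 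Finally the initial conditions \eqref{discre-initial-cond}, the Dirichlet condition $\uu^k=\db^k$ on $\partial\Omega$, and the constraints \eqref{discre-constraints} are inherited in the limit, which completes the recursion. I expect the identification of the limit of the heat-flux term $\condu_M(\teta_M^k)\nabla\teta_M^k$ — proving $\teta^k\in X$ and that this product converges weakly to $\condu(\teta^k)\nabla\teta^k$ in a topology strong enough to test against all of $H^1(\Omega)$ — to be the main obstacle.
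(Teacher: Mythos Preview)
Your overall strategy matches the paper's: fix $\nu>0$, invoke Lemmas~\ref{l:exist-approx-discr}--\ref{lemma:discrConvergence} to obtain the solutions $(c_M^k,\ldots,\ub_M^k)$ and their compactness, and pass to the limit in each equation of \eqref{discr-syst-appr}. The treatment of \eqref{discr-syst-appr-c}, \eqref{discr-syst-appr-mu}, \eqref{discr-syst-appr-u} (strong convergences plus growth of $\convWcp$, $\concWcp$, $W_{,\e}^\omega$ and the convergence $\C T_M(\teta_M^k)\to\teta^k$) and of \eqref{discr-syst-appr-z} (recovery sequences in the variational inequality \eqref{eqn:varIneqZDiscr}) is essentially the paper's. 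One small remark: for \eqref{discr-syst-appr-mu} the paper explicitly notes that $(\conv{\phi}_\omega)'(c_M^k)=\beta_\omega(c_M^k)+\lambda_\gamma c_M^k$ is bounded in $L^\infty(\Omega)$ uniformly in $M$, by the Lipschitz continuity of the Yosida approximation $\beta_\omega$; you are using this implicitly.

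There is, however, a gap in your sketch for the heat-flux term. Your direct product argument --- $\condu_M(\teta_M^k)\to\condu(\teta^k)$ strongly in $L^s(\Omega)$ for $s<3+6/\kappa$, combined with $\nabla\teta_M^k\weaklim\nabla\teta^k$ weakly in $L^2$ --- only yields weak convergence of $\condu_M(\teta_M^k)\nabla\teta_M^k$ in $L^r(\Omega;\R^d)$ with $\tfrac1r=\tfrac1s+\tfrac12>\tfrac12$, hence $r<2$. This lets you test against $\nabla\varphi\in L^{r'}(\Omega;\R^d)$ with $r'>2$, but \emph{not} against all $\varphi\in H^1(\Omega)$, so it does not by itself give $\mathcal A_M^k(\teta_M^k)\weaklim\mathcal A^k(\teta^k)$ in $H^1(\Omega)'$. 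The paper closes this by a \emph{comparison argument} (citing \cite[Lemma~4.4, Step~3]{RocRos14}): from \eqref{discr-syst-appr-teta} one isolates $\mathcal A_M^k(\teta_M^k)$ as the difference of all remaining terms, each of which --- thanks to the strong convergences in Lemma~\ref{lemma:discrConvergence}, to \eqref{eqn:TtetaConv}, and to the $\nu$-regularization making the quadratic right-hand side terms bounded in $L^{\varrho/2}\hookrightarrow L^2$ --- converges in $H^1(\Omega)'$; hence $\mathcal A_M^k(\teta_M^k)\weaklim\mathcal B$ weakly in $H^1(\Omega)'$ for some $\mathcal B$. Your strong--weak product argument then serves the secondary role of identifying $\mathcal B=\mathcal A^k(\teta^k)$ on the dense subspace $W^{1,r'}(\Omega)\subset H^1(\Omega)$. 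You correctly flag this identification as the main obstacle and defer to \cite{RocRos14}, but the sketch as written omits the comparison step that actually delivers the $H^1(\Omega)'$-convergence.
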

\begin{proof}
  At the beginning we notice that
  \begin{align}
  \label{eqn:TtetaConv}
    \C T_M(\teta_M^k)\to\teta^k\text{ strongly in }L^{p^*-\epsilon}(\Omega)\text{ for all }
      \epsilon\in  (0,p^*-1] \EEE \text{ as }M\to\infty,
  \end{align}
  which follows from the pointwise convergence $\C T_M(\teta_M^k)\to\teta^k$ as $M\to\infty$
  a.e. in $\Omega$ and  the \EEE uniform boundedness of $\|\C T_M(\teta_M^k)\|_{L^{p^*}(\Omega)}$ with respect to $M$.

  We see that with the help of Lemma \ref{lemma:discrConvergence} and
  \eqref{eqn:TtetaConv}, also taking into account the growth
  properties of 
   $W_{,\e}^\omega$ (cf.\ \eqref{oh-yes-quote}), \EEE
  we can pass to $M\to\infty$   along \EEE  a subsequence in \eqref{discr-syst-appr-c}  for $c$ 
  and \eqref{discr-syst-appr-u} for $\uu$ \EEE and obtain
  \eqref{discr-syst-appr-c2} and \eqref{discr-syst-appr-u2}, respectively.
  The limit passages for the remaining equations are carried out  as follows: 
  \begin{itemize}
    \item[--]
      It follows from
      $\| c_M^k \|_{W^{1,p}(\Omega)} \leq C$ and from the Lipschitz continuity of $\beta_\omega$  that
      $(\conv{\phi}_\omega)'(c_M^k) = \beta_\omega(c_M^k) + \lambda c_M^k$ is  bounded, uniformly in $M$ and $k=1,\ldots,K_\tau$, in
      $L^\infty(\Omega)$. This  and the growth properties of $\convWcp$ and $\concWcp$  (cf.\ \eqref{est-quoted-5.1}--\eqref{est-quoted-5.2}), \EEE
      together with Lemma \ref{lemma:discrConvergence} and  convergence \EEE
      \eqref{eqn:TtetaConv},
      enable us to pass to $M\to\infty$
      in  equation  \eqref{discr-syst-appr-mu} for $\mu$. \EEE  We find \eqref{discr-syst-appr-mu2}.
    \item[--]
      The limit passage  in equation   \eqref{discr-syst-appr-z}   for $z$ \EEE  is managed via the variational
      formulation \eqref{eqn:varIneqZDiscr}.
      To this end we pick an arbitrary test-function $\zeta\in W^{1,p}(\Omega)$ with
      $0\leq \zeta\leq z^{k-1}$ and construct the recovery sequence
      \begin{align*}
        \zeta_M:=\max\{z^{k-1}-\delta_M,0\}
      \end{align*}
      for suitable values $\delta_M>0$ with $\delta_M\to 0$ such that
      $0\leq \zeta_M\leq  z^{k-1} $ is fulfilled for all $M\in\N$.
      Now, testing \eqref{eqn:varIneqZDiscr} with $\zeta_M$ and
      passing to $M\to\infty$ with the help of Lemma \ref{lemma:discrConvergence}
      and \eqref{eqn:TtetaConv} yields \eqref{discr-syst-appr-z2}.
    \item[--]
      By exploiting Lemma \ref{lemma:discrConvergence}, property \eqref{eqn:TtetaConv}
      and a comparison argument as done in \cite[Lemma 4.4, Step 3]{RocRos14}
      we find
      \begin{align*}
        \C A_M^k(\teta_M^k)\weaklim \C A^k(\teta^k) \ \ \text{  weakly \EEE  in }H^1(\Omega)'
 	       \text{ as }M\to\infty.
      \end{align*}
      This allows us to pass to the limit $M\to\infty$ in  equation
      \eqref{discr-syst-appr-teta} for $\teta$ \EEE  in order to obtain \eqref{discr-syst-appr-teta2}.
  \end{itemize}
\end{proof}

\subsubsection{\textbf{Step 3: Limit passage ${ \boldsymbol \nu } {
      \boldsymbol \downarrow  }{\bf 0}$. \EEE}}
\label{sss:4.2.3.}
\noindent

We now address the limit passage $\nu \down 0$ and denote by $(c_\nu^k,\mu_\nu^k,z_\nu^k,\uu_\nu^k)_\nu$ the family of solutions to system
\eqref{discr-syst-appr2}
found in Lemma \ref{lemma:discrConvergence}. By lower semicontinuity, estimates \eqref{est-5.7} from
Lemma \ref{lemma:firstEstDiscr} are thus inherited  by the functions $(c_\nu^k,\mu_\nu^k,z_\nu^k,\uu_\nu^k)_\nu$. \EEE
 Furthermore, we obtain a uniform $H^1(\Omega)$-estimate for
$(\teta_\nu^k)_\nu$. Indeed, since the higher order terms
$$
	\nu\dive\big(|\nabla  \mu_\nu^k|^{\varrho-2}\nabla\mu_\nu^k\big)-\nu\mu_\nu^k,
		\ldots \ldots, -\nu\dive\big(|\e(\uu_\nu^k-\db^k)|^{\varrho-2}\e(\uu_\nu^k-\db^k))\EEE
$$
vanish  as $\nu \down 0$, \EEE  we  loose \EEE the $L^2(\Omega)$-estimate for  the right-hand side of the discrete temperature equation \eqref{discr-syst-appr-teta2}.
Therefore, to prove this $H^1$-bound for $\teta_\nu^k$ we have to resort to the arguments from the proof of the \emph{Second a priori estimate} in Sec.\ \ref{s:4},
 and in particular fully exploit the coercivity properties of the function $\condu$. \EEE
\begin{lemma}
\label{lemma:theteEstDiscr-nu}
  The following estimates holds uniformly in $\nu>0$:
  \begin{align}
  \label{eqn:thetaEstDiscr-nu}
    \|\teta_\nu^k \|_{H^1(\Omega)}\leq C,\qquad
    \|(\teta_\nu^k)^{(\kappa+\alpha)/2} \|_{H^1(\Omega)} \leq C_\alpha
    	\quad\text{for all }\alpha\in(0,1).
  \end{align}
\end{lemma}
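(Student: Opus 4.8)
The goal is to establish the uniform-in-$\nu$ bounds \eqref{eqn:thetaEstDiscr-nu} for the discrete temperature $\teta_\nu^k$, mimicking on the time-discrete level the \emph{Second a priori estimate} of Section~\ref{s:4}. The first bound in \eqref{eqn:thetaEstDiscr-nu} ($H^1(\Omega)$-bound) and the second one ($H^1$-bound for $(\teta_\nu^k)^{(\kappa+\alpha)/2}$) will both come out of a single chain of inequalities started by testing \eqref{discr-syst-appr-teta2} with a suitable negative power of $\teta_\nu^k$, so the plan is to carry out that test, collect the consequences, and then iterate/interpolate exactly as in Section~\ref{s:4}.

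\textbf{Step 1: the basic test.} Fix $\alpha\in(0,1)$, set $F(\teta)=\teta^\alpha/\alpha$ and test \eqref{discr-syst-appr-teta2} (which for $\nu>0$ reads with $\C A^k$, not $\C A_M^k$, and without truncations of $\teta$) by $F'(\teta_\nu^k)=(\teta_\nu^k)^{\alpha-1}$, which is admissible since $\teta_\nu^k\geq\underline\teta>0$ by Lemma~\ref{l:positivityThetaDiscr}. Using the elementary discrete convexity inequality $D_k(\teta)\,F'(\teta^k)\geq D_k(F(\teta))$, one reproduces — now summing over $j=1,\dots,k$ instead of integrating in time — the discrete analogue of \eqref{eqn:secondEstPre}. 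Here one uses: the ellipticity \eqref{ellipticity} of $\vism=\omega\CC$ together with Hypothesis~(IV) on $a$ to keep the good term $\tau\sum_j\int_\Omega\condu(\teta_\nu^j)|\nabla((\teta_\nu^j)^{\alpha/2})|^2\dd x$ and the (non-negative) terms involving $|D_j(c)|^2$, $|D_j(z)|^2$, $m|\nabla\mu_\nu^j|^2$; the positivity of $g^j$, $h^j$ to drop those contributions; Young's inequality to absorb $(D_j(c)+D_j(z))\teta_\nu^j F'(\teta_\nu^j)$ and $\rho\teta_\nu^j\dive(D_j(\ub_\nu))F'(\teta_\nu^j)$ (via \eqref{eps-estim}) into the dissipation terms plus a remainder $C\tau\sum_j\int_\Omega(\teta_\nu^j)^{\alpha+1}\dd x$. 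Crucially this step fully exploits the coercivity \eqref{hyp-K} of $\condu$, which — unlike in Step~2 with $\condu_M$ — is available here because $\nu>0$ is fixed and $M$ has already been sent to $\infty$.

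\textbf{Step 2: absorbing the remainder.} Using $\condu(\teta)\geq c_0(1+\teta^\kappa)$, the left-hand side controls $\tilde c_1\,\tau\sum_j\int_\Omega|\nabla((\teta_\nu^j)^{(\kappa+\alpha)/2})|^2\dd x$, and the discrete analogue of \eqref{est-5.7.1} (uniform $L^1(\Omega)$-bound on $\teta_\nu^k$, which follows from Lemma~\ref{lemma:firstEstDiscr} and lower semicontinuity) gives the $L^r$-control needed for the Gagliardo–Nirenberg/Young argument \eqref{gagliardo}--\eqref{r-estimate}. This is exactly the computation in Section~\ref{s:4} culminating in $\int\int|\nabla w|^2\leq C$ with $w=\max\{(\teta_\nu^k)^{(\kappa+\alpha)/2},1\}$; the only change is that time-integrals become $\tau$-weighted finite sums, and Gronwall in time is replaced by a trivial discrete summation since all terms with index $j<k$ are already bounded by induction on $k$ (the bound \eqref{eqn:thetaEstDiscr-nu} is for the single time step $k$, but the right-hand remainder involves $\teta_\nu^j$ for $j\le k$, all previously estimated). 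One thereby obtains $\|(\teta_\nu^k)^{(\kappa+\alpha)/2}\|_{L^2(\Omega)}$ bounded and $\nabla((\teta_\nu^k)^{(\kappa+\alpha)/2})\in L^2(\Omega)$ uniformly, i.e. the second estimate in \eqref{eqn:thetaEstDiscr-nu}. Since $\kappa>1$, choosing $\alpha\in(0,1)$ with $\kappa+\alpha\ge 2$ and using $\teta_\nu^k\geq\underline\teta$ yields $\int_\Omega|\nabla\teta_\nu^k|^2\dd x\leq C$, which combined with the $L^1$-bound and the Poincaré inequality gives the first estimate $\|\teta_\nu^k\|_{H^1(\Omega)}\leq C$.

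\textbf{Main obstacle.} The delicate point is not any single inequality but making the \emph{discrete} summation-by-parts / Gronwall bookkeeping consistent: one must verify that testing \eqref{discr-syst-appr-teta2} by $(\teta_\nu^k)^{\alpha-1}$ and then telescoping in $k$ genuinely reproduces the continuous estimate without picking up $\tau$-dependent constants, and that the remainder $C\tau\sum_j\int_\Omega(\teta_\nu^j)^{\alpha+1}$ can be closed \emph{uniformly in $\nu$} even though the $\nu$-regularizing terms no longer provide the $L^2(Q)$-type control of the right-hand side. The resolution — as in \cite[Sec.~3, Second estimate]{RocRos14} — is that the Gagliardo–Nirenberg exponent bookkeeping \eqref{1st-restr-alpha}--\eqref{r-estimate} only uses the $L^\infty(0,T;L^1(\Omega))$-bound on $\teta$, which survives the limits $M\to\infty$, $\nu\downarrow0$; hence the argument is robust. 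Everything else is a routine adaptation of Section~\ref{s:4}, so we shall indicate the modifications and refer to \cite{RocRos14} for the remaining details.
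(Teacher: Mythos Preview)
Your approach---testing \eqref{discr-syst-appr-teta2} by $(\teta_\nu^k)^{\alpha-1}$ and reproducing the Second a priori estimate, closing via Gagliardo--Nirenberg using only the uniform $L^1(\Omega)$-bound on $\teta_\nu^k$---is exactly the paper's. Two corrections, however. First, $F(\teta)=\teta^\alpha/\alpha$ with $\alpha\in(0,1)$ is \emph{concave}, so the discrete inequality reads $D_k(\teta)\,F'(\teta^k)\le D_k(F(\teta))$, not $\ge$; it is this direction that lets you bound the time-derivative contribution from above. Second, your summation over $j=1,\dots,k$ and the ``induction on $k$'' misread the recursive construction: in Step~3 the datum $\teta^{k-1}$ entering \eqref{discr-syst-appr-teta2} is already the $\nu$-independent limit $\teta_\tau^{k-1}$ (the passages $M\to\infty$ and $\nu\downarrow 0$ are performed \emph{at each step} before moving to the next), so there is no $\teta_\nu^j$ for $j<k$ to worry about. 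The estimate is therefore purely single-step: one obtains directly
\[
c\int_\Omega\condu(\teta_\nu^k)\,\big|\nabla(\teta_\nu^k)^{\alpha/2}\big|^2\,\dd x
\;\le\; C+C\int_\Omega(\teta_\nu^k)^{\alpha+1}\,\dd x
\]
with $C$ independent of $\nu$, and the Gagliardo--Nirenberg/Young argument \eqref{gagliardo}--\eqref{r-estimate} (applied at fixed $k$, with no time summation or Gronwall) absorbs the right-hand side and yields both bounds in \eqref{eqn:thetaEstDiscr-nu}.
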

\begin{proof}
	We test \eqref{discr-syst-appr-teta2} by $(\teta_\nu^k)^{\alpha-1}$, with $\alpha \in (0,1)$. With the very same calculations as for the \emph{Second a priori estimate} (cf.\ 
	  \eqref{all-in-all} and \EEE
	also the proof of Prop.\ \ref{prop:aprio-discr} later on), we conclude
	\[
		\begin{aligned}
			&c\int_\Omega \condu(\teta_\nu^k) |\nabla( \teta_\nu^k)^{\alpha/2}|^2 \dd x + c\int_\Omega 
			\left( 
			\left| \eps\left( D_k ( u_\nu^k \EEE)\right) \right|^2  + |\nabla \mu_\nu^K|^2 \right) \EEE
				(\teta_\nu^k)^{\alpha-1}\dd x + c \int_\Omega \left(  \left| D_k( z_\nu^k \EEE )\right|^2 + \left| D_k( c_\nu^k \EEE)\right|^2 \right)
				(\teta_\nu^k)^{\alpha-1}\dd x\\
				&\leq C + C\int_\Omega (\teta_\nu^k)^{\alpha+1} \dd x\,.
		\end{aligned}
	\]
	Then,
	with the same arguments as  in Sec.\ \ref{s:4},
	we arrive at
	$\int_\Omega |\nabla (\teta_\nu^k)^{(\kappa+\alpha)/2}|^2 \dd x \leq C$
	for a constant independent of $\nu$. Ultimately, we conclude
	\eqref{eqn:thetaEstDiscr-nu}.
\end{proof}

By comparison arguments based on the \textit{Third estimate}
we   then \EEE obtain uniform estimates for $(\ub_\nu^k)_\nu$ and for $(\mu_\nu^k)_\nu$
with respect to $\nu$.
\begin{lemma}
	The following estimates hold uniformly in $\nu>0$:
    \begin{align}
    \label{eqn:uMuEstDiscr-nu}
        \|\ub_\nu^k \|_{H^1(\Omega;\R^d)} + \|\mu_\nu^k\|_{H^1(\Omega)}\leq C.
    \end{align}
\end{lemma}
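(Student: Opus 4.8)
The goal is to derive $\nu$-uniform bounds on $\ub_\nu^k$ in $H^1(\Omega;\R^d)$ and on $\mu_\nu^k$ in $H^1(\Omega)$, exactly as in the \emph{Third estimate} of Section~\ref{s:4}. The plan is to subtract the discrete temperature equation \eqref{discr-syst-appr-teta2} (tested by $\tau$, i.e.\ integrated over $\Omega$) from the discrete energy inequality \eqref{discr-total-ineq}, and then sum over the time steps $j=1,\dots,k$. Since $\teta_\nu^j\to$ stuff cancels as in the time-continuous case (the terms $\Dt(c)\teta$, $\Dt(z)\teta$, $\rho\teta\dive(\Dt(\ub))$ reappear with opposite sign), this telescoping produces a discrete analogue of the mechanical energy inequality \eqref{mech-energ-est}. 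Crucially, because $\phi_\omega$, $I_{[0,\infty)}+\sigma$ and $W^\omega$ are bounded below (using $\beta_\omega(0)=0$, $\widehat\beta_\omega\ge0$, $b\ge0$ and \eqref{potential-phi}), and because $\VV=\omega\CC$ is positive definite by \eqref{ellipticity} and $a\ge a_0>0$ by \eqref{data-a}, the left-hand side controls
\[
\sum_{j=1}^k\tau\int_\Omega\Big(|\Dt(c)|^2+|\Dt(z)|^2+|\nabla\mu_\nu^j|^2+|\e(\Dt(\ub))|^2\Big)\dx
\]
(using also $m\ge m_0$ from \eqref{hyp-m}), plus the nonnegative $\nu$-terms, minus the residual $\sum_j\tau\int_\Omega\teta_\nu^j(\rho\dive(\Dt(\ub))+\Dt(c)+\Dt(z))\dx$ and the boundary-datum terms already handled in Lemma~\ref{lemma:firstEstDiscr}.

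The residual term is absorbed exactly as in the \emph{Third estimate}: by Young's inequality it is bounded by $\varrho\sum_j\tau\int_\Omega(|\e(\Dt(\ub))|^2+|\Dt(c)|^2+|\Dt(z)|^2)\dx+C_\varrho\sum_j\tau\int_\Omega|\teta_\nu^j|^2\dx$, where inequality \eqref{eps-estim} is used to pass from $\dive(\Dt(\ub))$ to $\e(\Dt(\ub))$, and the first group is absorbed into the left-hand side for $\varrho$ small. The remaining $\sum_j\tau\|\teta_\nu^j\|_{L^2(\Omega)}^2$ is bounded by the $\nu$-uniform $H^1(\Omega)$-estimate on $\teta_\nu^j$ from Lemma~\ref{lemma:theteEstDiscr-nu} (which is precisely why that lemma had to be established first, after losing the $L^2$-bound on the right-hand side of \eqref{discr-syst-appr-teta2}). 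This, together with the already-available bound $\|\vb_\nu^k\|_{L^2}\le C$ from \eqref{est-5.7.1} and the hypotheses \eqref{hyp:data} on the data, yields $\sum_{j=1}^{K_\tau}\tau\|\e(\Dt(\ub))\|_{L^2}^2\le C$ and $\sum_{j=1}^{K_\tau}\tau\|\nabla\mu_\nu^j\|_{L^2}^2\le C$ uniformly in $\nu$. By Korn's inequality and a discrete summation (writing $\e(\ub_\nu^k)=\e(\ub^0)+\sum_{j=1}^k\tau\,\e(\Dt(\ub))$), this gives $\|\ub_\nu^k\|_{H^1(\Omega;\R^d)}\le C$.

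For the $H^1(\Omega)$-bound on $\mu_\nu^k$ one proceeds by comparison in \eqref{discr-syst-appr-mu2}, mimicking the \emph{Fifth} and \emph{Sixth estimates}. First, integrating \eqref{discr-syst-appr-c2} over $\Omega$ with the no-flux boundary condition shows $\dashint_\Omega c_\nu^k\dx=\dashint_\Omega c^0\dx=\mean_0$ for all $k$; then averaging \eqref{discr-syst-appr-mu2} over $\Omega$ expresses $\dashint_\Omega\mu_\nu^k\dx$ in terms of $\dashint_\Omega\phi_\omega'(c_\nu^k)\dx$, $\dashint_\Omega W_{,c}^\omega\dx$, $\dashint_\Omega\teta_\nu^k\dx$ and $\dashint_\Omega\nu|c_\nu^k|^{\varrho-2}c_\nu^k\dx$. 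The $W^\omega$-term is bounded by \eqref{eqn:WtauEst} once $\|\ub_\nu^k\|_{H^1}\le C$ is known (via $H^1\subset L^{d^\star}$ and the structure \eqref{later-ref}), the $\teta$-term by the $L^1$-bound in \eqref{est-5.7.1}, and the $\nu$-term by \eqref{est-5.7.2}. To bound $\dashint_\Omega\phi_\omega'(c_\nu^k)\dx$ one tests \eqref{discr-syst-appr-mu2} by $c_\nu^k-\mean_0$, using the $\beta_\omega$-analogue of \eqref{mir-zelik} (available on the Yosida level, as remarked in Section~\ref{s:4}), the uniform bound $\|\gamma'(c_\nu^k)\|_{L^\infty}\le C$ coming from $\|c_\nu^k\|_{W^{1,p}}\le C$ with $p>d$, and the previously obtained bounds on $\nabla\mu_\nu^k$, $\Dt(c)$, and the $W^\omega$-term; this yields $\|\beta_\omega(c_\nu^k)\|_{L^1(\Omega)}\le C$ uniformly. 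Combining all of this with $\|\nabla\mu_\nu^k\|_{L^2}\le C$ and the Poincaré inequality gives $\|\mu_\nu^k\|_{H^1(\Omega)}\le C$ uniformly in $\nu$, which is \eqref{eqn:uMuEstDiscr-nu}.

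The main obstacle is bookkeeping rather than conceptual: one must verify that the telescoping of the discrete energy inequality genuinely reproduces the cancellation structure of \eqref{mech-energ-est} despite the fully implicit, $\omega$-regularized, boundary-shifted form of \eqref{discr-syst-appr2}, and that the extra $\nu$-terms all carry the favorable sign so they can simply be dropped from the left-hand side. The one place requiring care is that — as noted in the Remark after Lemma~\ref{l:energy-est} — one cannot test \eqref{discr-syst-appr-u2} by $\ub_\nu^k-\ub_\nu^{k-1}$ but only by $\ub_\nu^k-\ub_\nu^{k-1}-(\db^k-\db^{k-1})$, so the Dirichlet-datum correction terms appearing on the right-hand side of \eqref{discr-total-ineq} must be re-estimated exactly as in the proof of Lemma~\ref{lemma:firstEstDiscr}, absorbing $\delta\|\vb_\nu^k\|_{L^2}^2$-type contributions and using \eqref{dirichlet-data}; but these are already controlled there and pose no new difficulty.
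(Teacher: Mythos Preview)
Your overall strategy—subtracting the discrete heat equation from the incremental energy inequality to control $\e(D_k(\ub_\nu))$ and $\nabla\mu_\nu^k$, then bounding the mean of $\mu_\nu^k$ to invoke Poincar\'e—is the same as the paper's. The execution, however, differs in three respects, one of which contains an actual error.

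\textbf{No summation over time steps is needed.} In the recursive construction of Section~\ref{ss:5.2}, the data $(c^{k-1},z^{k-1},\teta^{k-1},\ub^{k-1},\ub^{k-2})$ from the previous step are \emph{fixed} and independent of $\nu$; only the unknowns at step $k$ carry the subscript $\nu$. Hence the incremental inequality \eqref{discr-total-ineq} (in its $M\to\infty$ limit) already has a right-hand side bounded uniformly in $\nu$, and subtracting \eqref{discr-syst-appr-teta2} tested by $\tau$ at the \emph{single} step $k$ directly yields the bound on $\int_\Omega a(c^{k-1},z^{k-1})|\e(D_k(\ub_\nu))|^2\dd x+\int_\Omega m(c^{k-1},z^{k-1})|\nabla\mu_\nu^k|^2\dd x$ (recall $\tau$ is fixed). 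This is exactly what the paper does. Your sum over $j=1,\dots,k$ is harmless—the $j<k$ terms are $\nu$-independent constants—but it obscures the structure and leads you to write $\e(\ub_\nu^k)=\e(\ub^0)+\sum_j\tau\,\e(D_j(\ub))$ as if all steps depended on $\nu$.

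\textbf{The paper's route to $\dashint_\Omega\mu_\nu^k$ is simpler.} You invoke the $\beta_\omega$-analogue of \eqref{mir-zelik} and test \eqref{discr-syst-appr-mu2} by $c_\nu^k-\mean_0$. The paper instead observes that since $\omega>0$ is \emph{fixed} here, the Yosida regularization $\beta_\omega$ is globally Lipschitz, so $(\conv{\phi}_\omega)'(c_\nu^k)=\beta_\omega(c_\nu^k)+\lambda_\gamma c_\nu^k$ is bounded in $L^\infty(\Omega)$ directly from $\|c_\nu^k\|_{W^{1,p}(\Omega)}\le C$ (cf.~\eqref{est-5.7.1}) and $p>d$. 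This bypasses any mass-conservation argument. Note also that the $\convWcp,\concWcp$ terms in \eqref{discr-syst-appr-mu2} involve $\e(\ub^{k-1})$, not $\e(\ub_\nu^k)$, so you do not need the $H^1$-bound on $\ub_\nu^k$ first, contrary to what you state.

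\textbf{Your mass-conservation claim is false.} Integrating \eqref{discr-syst-appr-c2} over $\Omega$ does \emph{not} give $\dashint_\Omega c_\nu^k=\mean_0$: the term $-\nu\mu_\nu^k$ survives, so $\dashint_\Omega c_\nu^k=\dashint_\Omega c^{k-1}-\tau\nu\dashint_\Omega\mu_\nu^k$. In your testing argument this defect produces the extra contribution $(\dashint_\Omega\mu_\nu^k)\int_\Omega(c_\nu^k-\mean_0)\dd x=-\tau\nu|\Omega|\big(\dashint_\Omega\mu_\nu^k\big)^2\le 0$, which happens to have the favorable sign and so does no harm; but the claim as written is incorrect, and the paper's Lipschitz argument avoids the issue altogether.
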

\begin{proof}
	We proceed as in the \textit{Third estimate} in Section \ref{s:4}: Testing the
	time-discrete heat equation \eqref{discr-syst-appr-teta2} with $\tau$,
	and subtracting the resulting equation from the incremental energy inequality \eqref{discr-total-ineq}
	(the limit version $M\to\infty$).
	In particular we obtain boundedness with respect to $\nu$ of
	\begin{align*}
		\int_\Omega a(c^{k-1},z^{k-1})\frac{\e(\ub_\nu^k-\ub^{k-1})}{\tau}:\VV\frac{\e(\ub_\nu^k-\ub^{k-1})}{\tau}\dx
		+
		\int_\Omega m(c^{k-1},z^{k-1})|\nabla\mu_\nu^k|^2\dx\leq C.
	\end{align*}
	Hence $\|\e(\ub_\nu^k)\|_{L^2(\Omega;\R^{d\times d})}$ and $\|\nabla\mu_\nu^k\|_{L^2(\Omega)}$ are bounded in $\nu$.
	Korn's inequality applied to $\uu_\nu^k-\db^k$
	shows the first part of the claim, namely boundedness
	of $\|\ub_\nu^k \|_{H^1(\Omega;\R^d)}$.
	
	The proof of the second part makes use of the Poincar\'e inequality. To this end
	boundedness of the spatial mean of $\mu_\nu^k$ has to be shown.
	Testing the time-discrete equation \eqref{discr-syst-appr-mu2} with $1/|\Omega|$ shows
	\begin{align*}
    \dashint_\Omega\mu_\nu^k\dx={}&\dashint_\Omega(\conv{\phi}_\omega)'(c_\nu^k)+(\conc{\phi})'(c^{k-1})
    	+\convWcp(c_\nu^k,\e(\ub^{k-1}),z^{k-1})+\concWcp(c^{k-1},\e(\ub^{k-1}),z^{k-1})\dx\\
	    &+\dashint_\Omega-\teta_\nu^k+\frac{c_\nu^k-c^{k-1}}{\tau}+\nu|c_\nu^k|^{\varrho-2}c_\nu^k\dx.
	\end{align*}
	By the known boundedness properties of $(c_\nu^k)_\nu$, $(\ub_\nu^k)_\nu$, $(z_\nu^k)_\nu$ and $(\teta_\nu^k)_\nu$,
	and the growth of $\convWcp$, $\concWcp$ 
	 (cf.\ \eqref{est-quoted-5.1}--\eqref{est-quoted-5.2}), \EEE
	and of $(\conv{\phi}_\omega)'$ (affine-linear growth
	in $c$ due to Yosida approximation with parameter $\tau$),
	we then infer boundedness of $\dashint_\Omega\mu_\nu^k$.
	Together with boundedness of $\|\nabla\mu_\nu^k\|_{L^2(\Omega)}$ we  conclude \EEE the second
	part of the claim by the Poincar\'e inequality.
\end{proof}

We then have the following counterpart to Lemma \ref{lemma:discrConvergence}, which reflects the lesser regularity of
the solution components $\mu^k$ and $\uu^k$ as a result of the limit passage  as $\nu \down 0$.
Its proof is a straightforward adaptation of the argument developed
for Lemma \ref{lemma:discrConvergence}.
\begin{lemma}
\label{lemma:discrConvergence-nu}
    There exist $(c^k,\mu^k, z^k,\teta^k,\ub^k) \in W^{1,p}(\Omega) \times H^1(\Omega)\times W^{1,p}(\Omega) \times H^1(\Omega)\times  H^1(\Omega;\R^d)$\EEE
    and a (not relabeled) subsequence $\nu \down 0$
    such that convergences \eqref{eqn:strongConvCDiscr}, \eqref{eqn:strongConvZDiscr}--\eqref{eqn:strongConvTDiscr} hold, as well as
    \begin{subequations}
    \label{eqn:discrConv-nu}
    \begin{align}
      &\mu_\nu^k \to \mu^k\text{ strongly in }H^1(\Omega),\label{eqn:strongConvMuDiscr-nu}\\
			&\nu|\nabla\mu_\nu^k|^{\varrho-2}\nabla\mu_\nu^k \to 0 \text{ strongly in } L^{\varrho/(\varrho-1)} (\Omega;\R^{d}),\label{conv-nu-muk-zero}\\
			&\ub_\nu^k \to \ub^k\text{ strongly in }H^1(\Omega;\R^d),\label{eqn:strongConvUDiscr-nu}\\
			& \nu |\eps(\ub_\nu^k-\db^k)|^{\varrho-2}\eps(\ub_\nu^k-\db^k) \to 0 \text{ strongly in } L^{\varrho/(\varrho-1)} (\Omega;\R^{d\times d}). \label{conv-nu-uk-zero}
    \end{align}
    \end{subequations}
\end{lemma}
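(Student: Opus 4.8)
The plan is to establish \eqref{lemma:discrConvergence-nu} in three stages that parallel the proof of Lemma~\ref{lemma:discrConvergence}, with the concentration $c$ playing, for the compactness, the same role that $z$ played there.

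\textbf{Step 1 (weak limits, constraints and the $\nu$-fluxes).} First I would gather the bounds that are uniform in $\nu$: the estimates \eqref{est-5.7} of Lemma~\ref{lemma:firstEstDiscr}, which are inherited by $(c_\nu^k,\mu_\nu^k,z_\nu^k,\teta_\nu^k,\uu_\nu^k)$ by lower semicontinuity, the $H^1$-bound \eqref{eqn:thetaEstDiscr-nu} for $\teta_\nu^k$, and the $H^1$-bounds \eqref{eqn:uMuEstDiscr-nu} for $\mu_\nu^k$ and $\uu_\nu^k$. Since $p>d$, the embedding $W^{1,p}(\Omega)\Subset\rmC^0(\overline\Omega)$ yields, along a successively extracted (not relabelled) subsequence, $c_\nu^k\weaklim c^k$ and $z_\nu^k\weaklim z^k$ in $W^{1,p}(\Omega)$ and strongly in $\rmC^0(\overline\Omega)$, $\mu_\nu^k\weaklim\mu^k$ in $H^1(\Omega)$, $\uu_\nu^k\weaklim\uu^k$ in $H^1(\Omega;\R^d)$ and $\teta_\nu^k\weaklim\teta^k$ in $H^1(\Omega)$ (this last being \eqref{eqn:strongConvTDiscr}); the compact embeddings $H^1(\Omega)\Subset L^q(\Omega)$ ($q<6$ if $d=3$) upgrade the convergence of $\mu_\nu^k,\uu_\nu^k,\teta_\nu^k$ to strong in such $L^q$. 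Passing to the limit in \eqref{discre-constraints} gives $0\le z^k\le z^{k-1}$ and $\teta^k\ge\ul\teta$ a.e.\ in $\Omega$, and $\uu^k=\db^k$ a.e.\ on $\partial\Omega$ persists in the trace. As for the regularising fluxes: from \eqref{est-5.7.3}, $\nu\|\nabla\mu_\nu^k\|_{L^\varrho(\Omega)}^\varrho\le C\tau^{-1}$, hence
\[\big\|\,\nu|\nabla\mu_\nu^k|^{\varrho-2}\nabla\mu_\nu^k\,\big\|_{L^{\varrho/(\varrho-1)}(\Omega;\R^d)}=\nu\|\nabla\mu_\nu^k\|_{L^\varrho(\Omega)}^{\varrho-1}=\nu^{1/\varrho}\big(\nu^{1/\varrho}\|\nabla\mu_\nu^k\|_{L^\varrho(\Omega)}\big)^{\varrho-1}\le C\,\nu^{1/\varrho}\longrightarrow 0\quad(\nu\down 0),\]
which is \eqref{conv-nu-muk-zero}; using \eqref{est-5.7.2}, the uniform $L^\infty$-bound on $\e(\db^k)$ from \eqref{dirichlet-data} and the same computation gives \eqref{conv-nu-uk-zero}. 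Likewise $\nu\|\mu_\nu^k\|_{L^2(\Omega)}^2\to0$ and, by the $\rmC^0$-bound on $c_\nu^k,z_\nu^k$, $\nu|c_\nu^k|^{\varrho-2}c_\nu^k\to0$ and $\nu|z_\nu^k|^{\varrho-2}z_\nu^k\to0$ in $L^1(\Omega)$; thus all $\nu$-perturbations in \eqref{discr-syst-appr2} disappear in the limit.

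\textbf{Step 2 (strong convergence of $z$, $c$, $\mu$).} For \eqref{eqn:strongConvZDiscr} I would rewrite \eqref{discr-syst-appr-z2} as the variational inequality \eqref{eqn:varIneqZDiscr} with $\teta_\nu^k$ in place of $\C T_M(\teta_M^k)$, test it with a recovery sequence $\widetilde z_\nu^k:=\max\{z^k-\delta_\nu,0\}$, $\delta_\nu\down0$, chosen so that $0\le\widetilde z_\nu^k\le z^{k-1}$ and $\widetilde z_\nu^k\to z^k$ in $W^{1,p}(\Omega)$, pass to the limit using the Step~1 convergences, the growth bounds \eqref{est-quoted-5.3}--\eqref{est-quoted-5.4} and the Lipschitz character of $(\conv{\sigma})'$, reach $\limsup_\nu\int_\Omega(-|\nabla z_\nu^k|^{p-2}\nabla z_\nu^k)\cdot\nabla(\widetilde z_\nu^k-z_\nu^k)\,\mathrm dx\le0$, and conclude by the uniform $p$-convexity of $v\mapsto\|\nabla v\|_{L^p(\Omega)}^p$. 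For \eqref{eqn:strongConvCDiscr} I would test \eqref{discr-syst-appr-mu2} with $c_\nu^k-c^k$: every coefficient on the right-hand side is bounded in $L^1(\Omega)$ uniformly in $\nu$ while $c_\nu^k-c^k\to0$ in $\rmC^0(\overline\Omega)$, so that side vanishes; subtracting $\int_\Omega|\nabla c^k|^{p-2}\nabla c^k\cdot\nabla(c_\nu^k-c^k)\,\mathrm dx\to0$ and invoking strict monotonicity of $-\Delta_p$ gives $\nabla c_\nu^k\to\nabla c^k$ in $L^p$. For \eqref{eqn:strongConvMuDiscr-nu} I would test \eqref{discr-syst-appr-c2} with $\mu_\nu^k$, drop the nonnegative $\nu$-terms and use $\tfrac1\tau(c_\nu^k-c^{k-1})\to\tfrac1\tau(c^k-c^{k-1})$ strongly to obtain $\limsup_\nu\int_\Omega m(c^{k-1},z^{k-1})|\nabla\mu_\nu^k|^2\,\mathrm dx\le-\int_\Omega\tfrac1\tau(c^k-c^{k-1})\mu^k\,\mathrm dx$; since the limit of \eqref{discr-syst-appr-c2} tested with $\mu^k$ gives $-\int_\Omega\tfrac1\tau(c^k-c^{k-1})\mu^k\,\mathrm dx=\int_\Omega m(c^{k-1},z^{k-1})|\nabla\mu^k|^2\,\mathrm dx$, the weighted gradient energy converges, and since $m(c^{k-1},z^{k-1})$ induces an equivalent $L^2$-norm, $\nabla\mu_\nu^k\to\nabla\mu^k$ in $L^2$; together with $\mu_\nu^k\to\mu^k$ in $L^2$ this is \eqref{eqn:strongConvMuDiscr-nu}.

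\textbf{Step 3 (strong convergence of $\uu$), and the main obstacle.} For \eqref{eqn:strongConvUDiscr-nu} I would follow the $\limsup$-scheme of \cite[Proof of Lemma 4.4 - Step 4]{RocRos14}: test \eqref{discr-syst-appr-u2} with $\uu_\nu^k-\db^k\in W_0^{1,\varrho}(\Omega;\R^d)$, isolate the coercive viscous quadratic form $\tfrac1\tau\int_\Omega a(c^{k-1},z^{k-1})\vism\e(\uu_\nu^k-\db^k):\e(\uu_\nu^k-\db^k)\,\mathrm dx$, observe that the inertial, boundary and source contributions and the part of $W_{,\e}^\omega(c_\nu^k,\e(\uu_\nu^k),z_\nu^k)$ that is affine in $\e(\uu_\nu^k)$ pass to the limit (their $c$-dependent coefficients converging uniformly, the inertial term moreover converging since $\uu_\nu^k\to\uu^k$ in $L^2$), that the residual elastic contribution $\int_\Omega b(\C R_\omega(c_\nu^k))\CC\e(\uu_\nu^k-\db^k):\e(\uu_\nu^k-\db^k)\,\mathrm dx$ is weakly lower semicontinuous (because $b\ge0$ and $b(\C R_\omega(c_\nu^k))\to b(\C R_\omega(c^k))$ uniformly), and that the $\nu$-term is nonnegative; comparing the resulting $\limsup$-inequality with the limiting momentum balance tested with $\uu^k-\db^k$ forces convergence of the viscous quadratic form, hence $\e(\uu_\nu^k)\to\e(\uu^k)$ in $L^2$, and Korn's inequality on $\uu_\nu^k-\db^k$ yields \eqref{eqn:strongConvUDiscr-nu}. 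I expect this displacement step to be the main obstacle: the coercive viscous form, the non-fixed-coefficient elastic quadratic form $b(\C R_\omega(c_\nu^k))\CC\e(\uu_\nu^k):\e(\uu_\nu^k)$ and the degenerate regularisation $\nu\int_\Omega|\e(\uu_\nu^k-\db^k)|^\varrho\,\mathrm dx$ coexist, and the bookkeeping closes only because $b\ge0$ keeps the elastic term weakly lower semicontinuous while the uniform convergence $c_\nu^k\to c^k$ keeps this stable under the coefficient limit; a secondary care point is to keep the damage recovery sequence simultaneously admissible and $W^{1,p}$-convergent. Everything else is a routine repetition of the arguments in \cite{RocRos14} and of Lemma~\ref{lemma:discrConvergence}.
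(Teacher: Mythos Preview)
Your proposal is correct and follows essentially the same approach as the paper, which simply states that the proof is ``a straightforward adaptation of the argument developed for Lemma~\ref{lemma:discrConvergence}.'' Your treatment of the strong convergence of $c_\nu^k$ (testing \eqref{discr-syst-appr-mu2} with $c_\nu^k-c^k$ and using monotonicity of $-\Delta_p$) is a slight variant of the $\limsup$-argument the paper hints at, but equally valid; note only the cosmetic slip that $b$ depends on both variables, so the elastic coefficient in Step~3 should read $b(\C R_\omega(c_\nu^k),z_\nu^k)$, which still converges uniformly since $z_\nu^k\to z^k$ in $\rmC^0(\overline\Omega)$ as well.
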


We are  now  in the position to carry out the
\underline{\textbf{limit passage as  ${\boldsymbol \nu }{\boldsymbol
      \down} {\bf 0}$ \EEE in system
\eqref{discr-syst-appr2}}}.  The aguments for taking the limits in
\eqref{discr-syst-appr-c2}, \eqref{discr-syst-appr-mu2},
\eqref{discr-syst-appr-z2}, and \eqref{discr-syst-appr-u2} are
completely analogous to those developed in the proof of Lemma
\ref{lemma:5.7}.

Hence we only comment on the limit passage in the discrete heat
equation \eqref{discr-syst-appr-teta2}.
 Estimate \eqref{eqn:thetaEstDiscr-nu} allows us to conclude that, up to a subsequence,
$(\teta_\nu^k)^{(\kappa+\alpha)/2} \weakto
(\teta^k)^{(\kappa+\alpha)/2}$ in $H^1(\Omega)$, hence
$(\teta_\nu^k)^{(\kappa+\alpha)/2} \to
(\teta^k)^{(\kappa+\alpha)/2}$ in $L^{6-\epsilon}(\Omega)$ for all
$\epsilon>0$, whence, taking into account the growth condition on
$\condu$, that
\[
\condu(\teta_\nu) \to \condu(\teta) \qquad \text{in } 
 L^\gamma(\Omega)  \quad \text{with } \gamma =
 \frac{(6-\epsilon)(\kappa+\alpha)}{2\kappa}  \quad \text{for all }
\epsilon>0.
\]
This allows us to pass to the limit in the term $\condu(\teta_\nu)
\nabla \teta_\nu$, tested  against   $v \in W^{1,s}(\Omega)$ for
some sufficiently  large \EEE $s>0$.
 All in all, we infer that
$(c,\mu,z,\teta,\uu,\chi)$  solves \EEE system \eqref{PDE-discrete}, with
\eqref{eqn:discr2} and \eqref{eqn:discr3} in $W^{1,p}(\Omega)'$, and
with the discrete heat equation \eqref{eqn:discr4} understood in
$W^{1,s}(\Omega)'$.

In the next step, we will address    enhancements of the \EEE regularities of
$\uu$ and $\mu$.

As a by-product we will obtain the discrete heat equation \eqref{eqn:discr4}
understood in the $H^1(\Omega)'$-sense.


\subsubsection{\textbf{Step 4:  ${\bf H^2}$-regularity of ${\bf \ub^k}$ and 
${ \boldsymbol \mu^{\bf k}}$ \EEE and conclusion of the proof of Prop.\ \ref{prop:exist-discr}}}
\noindent

To complete the \underline{\textbf{proof of Proposition \ref{prop:exist-discr}}}
we have to improve the regularity of $\ub^k$ and $\mu^k$.
This is achieved by transforming the corresponding equations
in a way that enables us to apply standard elliptic regularity results.
\begin{lemma}
\label{lemma:4.16}
	We get $\mu^k\in \Hn(\Omega)$ and $\ub^k\in H^2(\Omega; \RR^d)\EEE$
	for the functions obtained in Lemma \ref{lemma:discrConvergence-nu}.
\end{lemma}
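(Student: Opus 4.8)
The plan is to bootstrap the regularity of $\ub^k$ and $\mu^k$ from the weak formulation obtained after the limit $\nu\downarrow0$, by rewriting the relevant equations so that the leading elliptic operator is the constant-coefficient operator $\dive(\CC\eps(\cdot))$ (respectively $-\Delta$) and the remaining terms are placed on the right-hand side as an $L^2$-datum. The crucial structural input is Hypothesis (V), specifically the condition $\VV=\omega\CC$ with $\omega>0$: this is exactly what makes the viscous and elastic parts of the discrete momentum balance combine into a single constant-coefficient operator applied to an appropriate combination of $\uk$ and $\ukk$.

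First I would treat $\ub^k$. Collecting terms in the discrete momentum balance \eqref{eqn:discr5} (now without the $\nu$-term), and using $\VV=\omega\CC$, one can write
\begin{equation*}
-\dive\!\Big(\big(\tfrac{\omega}{\tau}a(\ckk,\zkk)+b(\ck,\zk)\big)\CC\eps(\uk)\Big)
= \bfhilf^k \quad\aein\ \Omega,
\end{equation*}
where $\bfhilf^k$ gathers $\fk$, $-\Dt(\Dt(\ub))$ (which lies in $L^2$ by the discrete analogue of \eqref{utt-comparison}, or simply by the already-established regularity of the $\uu^j$), $\dive(\tfrac{\omega}{\tau}a(\ckk,\zkk)\CC\eps(\ukk))$, the eigenstrain contributions $\dive(b(\ck,\zk)\CC\eps^*(\ck))$, and $-\rho\nabla\tk$; all of these are in $L^2(\Omega;\R^d)$ thanks to \eqref{eqn:thetaEstDiscr-nu}, Hypotheses (IV)--(V), and the fact that $\ckk,\zkk,\ck,\zk\in W^{1,p}(\Omega)\subset \rmC^0(\ol\Omega)$ with $p>d$. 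Since $a$ is bounded below away from $0$ and $b\geq0$, the coefficient $\tfrac{\omega}{\tau}a+b$ is bounded below by $\tfrac{\omega a_0}{\tau}>0$ and above by a constant, so the operator is uniformly elliptic; however its coefficients are only $W^{1,p}$, hence continuous but not Lipschitz, so one cannot directly invoke \eqref{cigamma}. The standard remedy is a perturbation/localisation argument: write the coefficient as its value at a point plus a small continuous perturbation on a small ball, use \eqref{H2reg} (applied after subtracting $\db^k$ to handle the inhomogeneous Dirichlet datum, which is admissible by \eqref{dirichlet-data}) on the frozen-coefficient operator, and absorb the perturbation; a finite covering of $\ol\Omega$ then yields $\uk\in H^2(\Omega;\R^d)$. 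This is the step I expect to be the main obstacle — making the variable-coefficient elliptic regularity rigorous with merely continuous (not Lipschitz) coefficients, and tracking that the argument is compatible with the $\rmC^2$-boundary and the non-homogeneous Dirichlet condition.

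Next, with $\uk\in H^2(\Omega;\R^d)\hookrightarrow W^{1,d^\star}$ in hand, I would turn to $\mu^k$. Testing \eqref{discr-syst-appr-c2} (at $\nu=0$, i.e. \eqref{eqn:discr1}) and rewriting it as an elliptic equation for $\mu^k$ with homogeneous Neumann condition,
\begin{equation*}
-\dive\!\big(m(\ckk,\zkk)\nabla\muk\big) = -\Dt(c)\quad\aein\ \Omega,\qquad m(\ckk,\zkk)\nabla\muk\cdot\bn=0 \text{ on }\partial\Omega,
\end{equation*}
one again has a uniformly elliptic operator ($m\geq m_0>0$) with $W^{1,p}$, hence continuous, coefficient, and right-hand side $\Dt(c)\in L^2(\Omega)$ by \eqref{eqn:discr1} and the regularity of the $c^j$'s. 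The same perturbation argument as above (now for the Neumann problem, using the $H^2$-elliptic regularity estimate for homogeneous Neumann problems quoted in the \emph{Ninth estimate}) upgrades $\muk$ from $H^1(\Omega)$ to $\Hn(\Omega)$. Alternatively, since $c$ has the enhanced regularity noted in Remark~\ref{rmk:weak-sol} and $m(\ckk,\zkk)\in W^{1,p}(\Omega)$, one may expand $\dive(m\nabla\muk)=m\Delta\muk+\nabla m\cdot\nabla\muk$, move $\nabla m\cdot\nabla\muk$ (which is in $L^2$ once $\nabla\muk\in L^{d^\star-\eta}$, obtained by the interpolation estimate) to the right-hand side, and conclude by $\Delta\muk\in L^2$ plus $H^2$-Neumann regularity. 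Once $\uk\in H^2$ and $\muk\in\Hn(\Omega)$ are established, re-reading the discrete heat equation \eqref{discr-syst-appr-teta2} shows all its terms are in $H^1(\Omega)'$, so \eqref{eqn:discr4} holds in $H^1(\Omega)'$, completing the proof of Proposition~\ref{prop:exist-discr}.
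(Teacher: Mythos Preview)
Your main proposal (perturbation/localisation with frozen coefficients) would work, but it is a genuinely different route from the paper's. The paper proceeds instead by an explicit \emph{bootstrap}: for $\mu^k$, it substitutes the test function $w=\zeta/m(\ckk,\zkk)$ in the weak formulation of \eqref{eqn:discr1}, which converts the variable-coefficient divergence-form operator into the constant-coefficient Laplacian, $\int_\Omega\nabla\muk\cdot\nabla\zeta\,dx = \int_\Omega F\,\zeta\,dx$, with a right-hand side $F$ that is initially only in $L^{2p/(2+p)}(\Omega)$ (because $\nabla m\in L^p$ meets $\nabla\muk\in L^2$). Standard Neumann elliptic regularity then gives $\muk\in W^{2,2p/(2+p)}$, hence $\nabla\muk\in L^{6p/(6+p)}$, and since $p>d$ this improves the integrability of $F$; after finitely many iterations one reaches $F\in L^2$, hence $\muk\in H_N^2(\Omega)$. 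For $\uk$ the paper refers to the analogous iteration in \cite{RocRos14}, again relying on $\VV=\omega\CC$. What your perturbation approach buys is a one-shot argument using deeper Calder\'on--Zygmund/Schauder machinery for merely continuous coefficients; the paper's iteration is more elementary, avoids any localisation/covering, and makes the role of $p>d$ (which drives the bootstrap to terminate) completely transparent.

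One caveat on your ``alternative'' for $\muk$: as written it is circular. You move $\nabla m\cdot\nabla\muk$ to the right-hand side and claim it is in $L^2$ ``once $\nabla\muk\in L^{d^\star-\eta}$, obtained by the interpolation estimate'' --- but at this stage you only have $\muk\in H^1$, so $\nabla\muk\in L^2$ and nothing better; there is no higher norm to interpolate against. To make this alternative rigorous you would have to iterate exactly as the paper does, starting with $\nabla m\cdot\nabla\muk\in L^{2p/(2+p)}$ and bootstrapping. In other words, your ``alternative'' \emph{is} the paper's argument, once the missing iteration is inserted.
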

\begin{proof}
	We will use an iteration argument as in \cite{RocRos14,hr} (see also \cite{bm})
	and sketch the proof for the case $d=3$,  since the calculations for $d=2$ are completely
	analogous. \EEE
	
	We already know that $\mu^k\in H^1(\Omega)$ satisfies the elliptic equation
	$$
		\int_\Omega m(c^{k-1},z^{k-1})\nabla\mu^k\cdot\nabla w\dx=\int_\Omega -D_k(c)w\dx
		\qquad\text{for all }w\in H^1(\Omega).
  $$
  Substituting $w=\frac{\zeta}{m(c^{k-1},z^{k-1})}\in H^1(\Omega)$ for an arbitrarily
  chosen  test-function $\zeta\in H^1(\Omega)$ yields
  \begin{align*}
		\int_\Omega \nabla\mu^k\cdot\nabla\zeta\dx
		=\int_\Omega\Big(\frac{-D_k(c)}{m(c^{k-1},z^{k-1})}+\frac{m_{,c}(c^{k-1},z^{k-1})\nabla c^{k-1}+m_{,z}(c^{k-1},z^{k-1})\nabla z^{k-1}}{ m(c^{k-1},z^{k-1})\EEE}\cdot\nabla\mu^k\Big)\zeta\dx
  \end{align*}
  valid for all $\zeta\in H^1(\Omega)$.
  Note that, due to Hypothesis (II) and
   the fact that \EEE
   $c^{k-1},z^{k-1}\in W^{1,p}(\Omega)$
  and $\nabla \mu^k\in L^2(\Omega;\R^d)$, the function in the bracket on the right-hand
  side is in $L^{2p/(2+p)}(\Omega)$.
  Applying  a higher elliptic regularity result for homogeneous Neumann problems
  with $L^{2p/(2+p)}(\Omega)$-right-hand side proves $\mu^k\in W^{2,2p/(2+p)}(\Omega)$
  and thus
  $\nabla\mu^k\in L^{6p/(6+p)}(\Omega;\R^d)$.
  Due to $p>3$ we end up with $\mu^k\in \Hn(\Omega)$ after repeating this procedure
  finitely many times (cf. \cite[Proof of Lemma 4.1]{hr}). 
  
  The proof for obtaining $\ub^k\in H^2(\Omega;\R^d)$
  from the elliptic equation \eqref{discr-syst-appr-u} in $H_0^1(\Omega;\R^n)'$
  works as in \cite[Proof of Lemma 4.4 - Step 6]{RocRos14}  (cf.\ also \cite{hr}), \EEE
  with the exception that one needs to take the Dirichlet data
  $\db^k\in H^2(\Omega;\R^d)$ into account.   This is the very point where   we need to assume that $\VV=\omega \CC$ for some $\omega>0$ (cf.~\eqref{eqn:assbV}).\EEE
\end{proof}

The enhanced regularity for $\ub^k$ yields, by a comparison
argument in \eqref{eqn:discr4}, that \eqref{eqn:discr4}
not only holds in $W^{1,s}(\Omega)'$ for large $s>1$ but even in
$H^1(\Omega)'$.

Finally, we end up with a quintuple
$\{(\ck, \muk,\zk,\tk,\uk)\}_{k=1}^{K_\tau}\subseteq W^{1,p}(\Omega)\times \Hn(\Omega)\times W^{1,p}(\Omega)\times H^1(\Omega)\times  H^2(\Omega;\R^d)$
satisfying the assertion stated in Proposition \ref{prop:exist-discr}.
\par
 This concludes the proof. \EEE
\QED

\subsection{Discrete energy and entropy inequalities}
\label{ss:5.3}

We introduce the left-continuous and  right-continuous piecewise constant,
and the piecewise linear interpolants  for a given sequence 
$\{\mathfrak{h}_\tau^k\}_{k={ 0}}^{K_\tau}$
 on the nodes $\{t_\tau^k\}_{k=0}^{K_\tau}$ (see \ref{time-nodes}) 
by
\begin{align*}
	\left.
	\begin{array}{llll}
		& \pwc  {\mathfrak{h}}{\tau}: (0,T) \to B  & \text{defined by}  &
		\pwc {\mathfrak{h}}{\tau}(t): = \mathfrak{h}_\tau^k
		\\
		& \upwc  {\mathfrak{h}}{\tau}: (0,T) \to B  & \text{defined by}  &
		\upwc {\mathfrak{h}}{\tau}(t) := \mathfrak{h}_\tau^{k-1}
		\\
		 &
		\pwl  {\mathfrak{h}}{\tau}: (0,T) \to B  & \text{defined by} &
		 \pwl {\mathfrak{h}}{\tau}(t):
		=\frac{t-t_\tau^{k-1}}{\tau} \mathfrak{h}_\tau^k +
		\frac{t_\tau^k-t}{\tau}\mathfrak{h}_\tau^{k-1}
	\end{array}
	\right\}
 \qquad \text{for $t \in (t_\tau^{k-1}, t_\tau^k]$.}
\end{align*}

Furthermore, we   denote by  $\pwc{\mathsf{t}}{\tau}$ and by
$\upwc{\mathsf{t}}{\tau}$ the left-continuous and right-continuous
piecewise constant interpolants associated with the partition, i.e.
 $\pwc{\mathsf{t}}{\tau}(t) := t_\tau^k$ if $t_\tau^{k-1}<t \leq t_\tau^k $
and $\upwc{\mathsf{t}}{\tau}(t):= t_\tau^{k-1}$ if $t_\tau^{k-1}
\leq t < t_\tau^k $. Clearly, for every $t \in [0,T]$ we have
$\pwc{\mathsf{t}}{\tau}(t) \downarrow t$ and
$\upwc{\mathsf{t}}{\tau}(t) \uparrow t$ as $\tau\downarrow 0$.

\begin{proposition}
\label{prop:energyEntropyIneq}
	Let the assumptions of Proposition \ref{prop:exist-discr} be satisfied. 
	Then the
	time-discrete solutions $\{(\ck,\muk,\zk,\tk,\uk)\}_{k=1}^{K_\tau}$
	to Problem \ref{def:time-discrete} fulfill for all $0\leq s\leq t\leq T$
	\begin{itemize}
		\item[(i)]
			the discrete entropy inequality
	    \begin{equation}
        \label{discr-entropy-ineq}
        \begin{aligned}
          &\begin{aligned}
          	\int_{\ttau(s)}^{\ttau(t)} \int_\Omega (\log(\ul\teta_\tau) + \ul c_\tau+\ul z_\tau)\partial_t\varphi_\tau  \dd x \dd r
          		&-\rho \int_{\ttau(s)}^{\ttau(t)} \int_\Omega \dive(\partial_t\uu_\tau)\ol\varphi_\tau  \dd x \dd r\\
          		&-\int_{\ttau(s)}^{\ttau(t)} \int_\Omega  \condu(\ol\teta_\tau) \nabla \log(\ol\teta_\tau) \cdot \nabla\ol\varphi_\tau  \dd x \dd r
  	      \end{aligned}\\
          &\begin{aligned}
            \leq
              \int_\Omega (\log(\ol\teta_\tau(t))+\ol c_\tau(t)+\ol z_\tau(t)){\ol\varphi_\tau(t)} \dd x
              &-\int_\Omega (\log(\ol\teta_\tau(s))+\ol c_\tau (s)+\ol z_\tau(s)){\ol\varphi_\tau(s)} \dd x\\
              &-\int_{\ttau(s)}^{\ttau(t)} \int_\Omega \condu(\ol \teta_\tau)|\nabla\log(\ol\teta_\tau)|^2\ol\varphi_\tau\dd x \dd r
          \end{aligned}\\
  	      &\quad-\int_{\ttau(s)}^{\ttau(t)}  \int_\Omega \left(\ol g_\tau +|\partial_t c_\tau|^2+ |\partial_t z_\tau|^2
         	+a(\ul c_\tau,\ul z_\tau) \eps(\partial_t\uu_\tau):\vism \eps(\partial_t\uu_\tau)
          +m(\ul c_\tau,\ul z_\tau)|\nabla \ol \mu_\tau|^2\right)\frac{\ol\varphi_\tau}{\ol\teta_\tau} \dd x \dd r\\
	        &\quad-\int_{\ttau(s)}^{\ttau(t)} \int_{\partial\Omega}\ol h_\tau \frac{\ol\varphi_\tau}{\ol\teta_\tau}  \dd S \dd r,
        \end{aligned}
      \end{equation}
      for all $\varphi \in \mathrm{C}^0 ([0,T]; W^{1,d+\epsilon}(\Omega))  \cap H^1 (0,T; L^{({d^\star})'}(\Omega))$
      for some $\epsilon>0$,  with $\varphi \geq 0$;
		\item[(ii)]
			the discrete total energy inequality
	    \begin{align}
        \label{discr-energy-ineq}
        \begin{aligned}
	        &\mathscr{E}_\omega(\ol c_\tau(t),\ol z_\tau(t),\ol\teta_\tau(t),\ol\ub_\tau(t),\ol \vb_\tau(t))\\
					&\qquad\leq\mathscr{E}_\omega (\ol c_\tau(s),\ol z_\tau(s),\ol\teta_\tau(s),\ol\ub_\tau(s),\ol \vb_\tau(s))\\
		      	&\qquad\quad+\int_{\ttau(s)}^{\ttau(t)}\int_\Omega \ol g_\tau\dx\dr +\int_{\ttau(s)}^{\ttau(t)}\int_\Omega\ol{\bold f}_\tau \cdot\ol \vb_\tau\dx\dr
		      	+\int_{\ttau(s)}^{\ttau(t)}\int_{\partial\Omega}\ol h_\tau\dS\dr\\
		      	&\qquad\quad+\int_{\ttau(s)}^{\ttau(t)}\int_{\partial\Omega}(\sigmab_\tau { \bf  n \EEE})\cdot\partial_t\db_\tau\dd S\dr
        \end{aligned}
	    \end{align}
	    with the discrete stress tensor
	    \begin{align*}
	    	\sigmab_\tau:=a(\upwc c{\tau},\upwc z{\tau})\VV\e(\partial_t \uu_\tau)
	    		+W_{,\e}^\omega(\pwc c{\tau},\e(\pwc \uu{\tau}),\pwc z{\tau})
	    		-\rho\pwc\teta{\tau}\mathds 1.
	    \end{align*}
	\end{itemize}
\end{proposition}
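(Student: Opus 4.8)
The plan is to prove Proposition~\ref{prop:energyEntropyIneq} by deriving both discrete inequalities directly from the time-discrete system \eqref{PDE-discrete} at each step $k$, and then summing over consecutive steps and rewriting the resulting telescoping sums in terms of the interpolants $\ol{(\cdot)}_\tau$, $\ul{(\cdot)}_\tau$, $\partial_t(\cdot)_\tau$ introduced just before the statement. The two inequalities are independent of one another (as in \cite{RocRos14}): (i) comes from a clever test of the discrete temperature equation \eqref{eqn:discr4} by $\ol\varphi_\tau/\ol\teta_\tau$ combined with the concavity of $\log$, while (ii) is essentially a restatement of the incremental energy inequality of Lemma~\ref{l:energy-est}, passed to the limit $\nu\downarrow0$, $M\to\infty$ and summed.

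\textbf{The total energy inequality (ii).} First I would take the limit $M\to\infty$, $\nu\downarrow0$ in the incremental energy inequality \eqref{discr-total-ineq}: by lower semicontinuity of $\mathscr{E}_\omega$ in its natural topologies and the strong convergences of Lemmas~\ref{lemma:discrConvergence} and \ref{lemma:discrConvergence-nu}, the $\nu$-regularizing terms $\tfrac\nu\varrho\|\cdot\|_{L^\varrho}^\varrho$ and $\nu\tau(\|\nabla\mu^k\|_{L^\varrho}^\varrho+\|\mu^k\|_{L^2}^2)$ drop out (they are nonnegative on the left and bounded by \eqref{est-5.7.2}--\eqref{est-5.7.3} on the right, hence vanish). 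This yields, for each single step $k$, the bound
\[
\mathscr{E}_\omega(\ck,\zk,\tk,\uk,\vb^k)\leq \mathscr{E}_\omega(\ckk,\zkk,\tkk,\ukk,\vb^{k-1})+\tau\Big(\textstyle\int_\Omega \gk\dx+\int_{\partial\Omega}\hk\dS+\int_\Omega\fk\cdot\vb^k\dx\Big)+\tau\,R^k,
\]
where $R^k$ collects the boundary-Dirichlet terms. Then I would summing this from $k=k_s+1$ to $k=k_t$ (with $t_\tau^{k_s}=\ttau(s)$, $t_\tau^{k_t}=\ttau(t)$): the left side telescopes to $\mathscr{E}_\omega(\ol c_\tau(t),\dots)-\mathscr{E}_\omega(\ol c_\tau(s),\dots)$, the source terms become $\int_{\ttau(s)}^{\ttau(t)}\int_\Omega\ol g_\tau+\int\int\ol{\bold f}_\tau\cdot\ol\vb_\tau+\int\int_{\partial\Omega}\ol h_\tau$, and the Dirichlet-boundary terms reassemble, using $\partial_t\db_\tau=D_k(\db)$ on $(t_\tau^{k-1},t_\tau^k]$ and the definition of $\sigmab_\tau$, into $\int_{\ttau(s)}^{\ttau(t)}\int_{\partial\Omega}(\sigmab_\tau\mathbf n)\cdot\partial_t\db_\tau\dS\dr$. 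That is precisely \eqref{discr-energy-ineq}.

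\textbf{The entropy inequality (i).} The key point is to test the discrete heat equation \eqref{eqn:discr4} at step $k$ by $\wk/\tk$ for a nonnegative test function $\wk$. Using the elementary convexity inequality $\tfrac{\tk-\tkk}{\tk}\geq \log(\tk)-\log(\tkk)$ (from concavity of $\log$) to handle $D_k(\teta)\cdot\wk/\tk$, and rewriting $\mathrm{div}(\condu(\tk)\nabla\tk)/\tk = \mathrm{div}(\condu(\tk)\nabla\log\tk)$ plus the term $\condu(\tk)|\nabla\log\tk|^2$ which goes to the right side with the correct sign, one arrives at a discrete analogue of \eqref{later-4-comparison}. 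Choosing $\wk$ as the value at $t_\tau^k$ of a smooth nonnegative $\varphi\in \mathrm C^0([0,T];W^{1,d+\epsilon}(\Omega))\cap H^1(0,T;L^{(d^\star)'}(\Omega))$, integrating the $\partial_t\log(\teta)$-type term by discrete parts in time over $k=k_s+1,\dots,k_t$ (which produces the boundary terms $\int_\Omega(\log\ol\teta_\tau(t)+\dots)\ol\varphi_\tau(t)\dx-\int_\Omega(\log\ol\teta_\tau(s)+\dots)\ol\varphi_\tau(s)\dx$ and the interior term $\int\int(\log\ul\teta_\tau+\ul c_\tau+\ul z_\tau)\partial_t\varphi_\tau$), and collecting the quadratic dissipation terms $|\partial_t c_\tau|^2+|\partial_t z_\tau|^2+a(\ul c_\tau,\ul z_\tau)\eps(\partial_t\uu_\tau):\vism\eps(\partial_t\uu_\tau)+m(\ul c_\tau,\ul z_\tau)|\nabla\ol\mu_\tau|^2$ together with $\ol g_\tau$ and the boundary source $\ol h_\tau$, each divided by $\ol\teta_\tau$, one obtains \eqref{discr-entropy-ineq}. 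Here one uses crucially that $\tk\in X$ (so $\int_\Omega\condu(\tk)\nabla\tk\cdot\nabla v$ is well-defined), which holds because $\tk\in H^1(\Omega)$ and the enhanced estimates on $\tk$ ensure $\condu(\tk)\nabla\tk\in L^1$.

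\textbf{Main obstacle.} The delicate step is the discrete-time-integration-by-parts for the entropy term combined with the low spatial regularity of $\log(\teta)$ and $\varphi$: one must justify that $\int_\Omega \tfrac{\tk-\tkk}{\tk}\wk\dx$ can be replaced by $\int_\Omega(\log\tk-\log\tkk)\wk\dx$ (losing information as an inequality, which is fine since only $\leq$ is claimed), and then that the resulting discrete sum genuinely reassembles into the interpolant expression in \eqref{discr-entropy-ineq} with the correct placement of $\ol{(\cdot)}_\tau$ versus $\ul{(\cdot)}_\tau$ versus $\partial_t(\cdot)_\tau$. This is where one must be careful that the test function $\varphi/\teta$ is admissible — $\varphi\in W^{1,d+\epsilon}(\Omega)\hookrightarrow L^\infty(\Omega)$ and $\teta\geq\ul\teta>0$ by Lemma~\ref{l:positivityThetaDiscr} guarantee $\varphi/\teta\in H^1(\Omega)$, so the test against the $H^1(\Omega)'$-valued equation \eqref{eqn:discr4} makes sense. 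The rest is bookkeeping that parallels \cite[Sec.~4]{RocRos14} and the \emph{Seventh estimate} of Section~\ref{s:4}; I would refer to those for the routine manipulations and spell out only the points where the Cahn--Hilliard coupling (the $c$-contributions to the entropy and the $m(\ul c_\tau,\ul z_\tau)|\nabla\ol\mu_\tau|^2$ dissipation term) modifies the computation.
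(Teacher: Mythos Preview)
Your approach is essentially the paper's own, but two points need fixing.

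First, the concavity inequality has the wrong sign: from concavity of $\log$ one gets $\log(\tkk)\le\log(\tk)+\tfrac{\tkk-\tk}{\tk}$, i.e.\ $\tfrac{\tk-\tkk}{\tk}\le\log(\tk)-\log(\tkk)$, not $\ge$. This is precisely the direction required: after testing \eqref{eqn:discr4} by $\varphi_\tau^k/\tk$ the term $D_k(\teta)\,\varphi_\tau^k/\tk$ sits on the same side as the diffusion and the $D_k(c)+D_k(z)+\rho\,\dive(D_k(\ub))$ contributions; replacing it by the \emph{larger} quantity $\tfrac1\tau(\log\tk-\log\tkk)\varphi_\tau^k$ on that side yields the inequality in the claimed direction. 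With your sign the argument would not close.

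Second, for (ii) the ``Dirichlet-boundary terms'' $R^k$ left over from \eqref{discr-total-ineq} are \emph{volume} integrals ($\int_\Omega D_k(\vb)\cdot D_k(\db)$, $\int_\Omega a(\ckk,\zkk)\VV\e(\vb^k){:}\e(D_k(\db))$, $\int_\Omega W_{,\e}^\omega(\ck,\e(\uk),\zk){:}\e(D_k(\db))$, $-\int_\Omega\rho\tk\dive(D_k(\db))$, $-\int_\Omega\fk\cdot D_k(\db)$), not boundary integrals. Merely invoking the definition of $\sigmab_\tau$ is not enough: you must integrate by parts in space and notice that the resulting bulk term is exactly the left-hand side of the discrete momentum balance \eqref{eqn:discr5} paired with $D_k(\db)$, hence zero. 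Only then do the five terms collapse to $\int_{\partial\Omega}(\sigmab_\tau\mathbf n)\cdot\partial_t\db_\tau\,\dd S$. This use of \eqref{eqn:discr5} is the substance of the step and should be made explicit.
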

\begin{proof}
	$\,$
	\begin{itemize}
		\item[To (i):]
			The proof is based on \cite[Proof of Proposition 4.8]{RocRos14}.
			Testing the time-discrete heat equation \eqref{eqn:discr4} for time step $k$ 
			with $\frac{\varphi_\tau^k}{\teta_\tau^k}\in H^1(\Omega)$ shows
      \begin{align*}
      	&\int_\Omega\bigg(g_\tau^k+|\Dt(c)|^2+|\Dt(z)|^2+m(\ckk,\zkk)|\nabla\muk|^2\bigg)\frac{\varphi_\tau^k}{\tk}\dx\\
      		&+\int_\Omega a(\ckk,\zkk)\e(\Dt(\ub)):\vism\e(\Dt(\ub))\frac{\varphi_\tau^k}{\tk}\dx
      		+\int_{\partial\Omega}\hk\frac{\varphi_\tau^k}{\tk}\dS\\
        &\leq\int_\Omega\bigg(\condu(\tk)\nabla\tk\cdot\nabla\frac{\varphi_\tau^k}{\tk}
        	+\bigg(\frac{1}{\tau}\big(\log(\tk)-\log(\tkk)\big)+\Dt(c)+\Dt(z)+\rho\dive(\Dt(\ub))\bigg)\varphi_\tau^k\dx
      \end{align*}
      by using the concavity estimate
      $$
      	\frac{\tk-\tkk}{\tk}
      	\leq\log(\tk)-\log(\tkk).
      $$
      Summing over   $k=\frac{\ttau(s)}\tau+1,\ldots,\frac{\ttau(t)}\tau$  \EEE and using discrete by-part-integration
      proves \eqref{discr-entropy-ineq}.
		\item[To (ii):]
			The total energy inequality
			is inherited by the incremental energy inequality \eqref{discr-total-ineq}
			of the $(M,\nu)$-regularized system in Lemma \ref{l:energy-est}.
			Indeed, let $0\leq s\leq t\leq T$.
			Passing to the limits $M\to\infty$ and $\nu\downarrow 0$
			in \eqref{discr-total-ineq} by means of lower   semicontinuity \EEE arguments
			and then summing over $j=\frac{\ttau(s)}{\tau}+1,\ldots,\frac{\ttau(t)}{\tau}$ yields
	    \begin{align*}
        &\mathscr{E}_\omega(\pwc c{\tau}(t),\pwc z{\tau}(t),\pwc \teta{\tau}(t),\pwc \uu{\tau}(t),\pwc \vb{\tau}(t))\\
	      &\qquad\leq\mathscr{E}_\omega (\pwc c{\tau}(s),\pwc z{\tau}(s),\pwc \teta{\tau}(s),\pwc \uu{\tau}(s),\pwc \vb{\tau}(s))
	      	+\int_{\ttau(s)}^{\ttau(t)}\bigg(\int_\Omega \pwc g{\tau}\dx + \int_{\partial\Omega} \pwc h{\tau} \dS +\int_\Omega \pwc{\bold f}{\tau} \cdot\pwc\vb{\tau}\dx\bigg)\dr\\
					&\left.
					\begin{aligned}
						&\qquad\quad+\int_{\ttau(s)}^{\ttau(t)}\io \partial_t \vb_\tau\cdot \partial_t \db_\tau\dx\dr
						+\int_{\ttau(s)}^{\ttau(t)}\io a(\upwc c{\tau},\upwc z{\tau})\VV\e(\pwc \vb{\tau}):\e(\partial_t \db_\tau)\dx\dr\\
						&\qquad\quad+\int_{\ttau(s)}^{\ttau(t)}\io W_{,\e}^\omega(\pwc c{\tau},\e(\pwc\uu{\tau}),\pwc z{\tau}):\e(\partial_t\db_\tau)\dx\dr
						-\int_{\ttau(s)}^{\ttau(t)}\io\rho\pwc\teta{\tau}\dive(\partial_t\db_\tau)\dx\dr\\
						&\qquad\quad-\int_{\ttau(s)}^{\ttau(t)}\int_\Omega \pwc{\bold f}{\tau} \cdot \partial_t\db_\tau\dx\dr.
					\end{aligned}
					\right\}
					=:I_1
	    \end{align*}
	    Finally, integration by parts in space and using \eqref{eqn:discr5} shows
	    \begin{align*}
					I_1={}&\int_{\ttau(s)}^{\ttau(t)}\io\Big(\underbrace{\partial_t \vb_\tau
						-\dive\Big(a(\upwc c{\tau},\upwc z{\tau})\VV\e(\pwc \vb{\tau})
						+W_{,\e}^\omega(\pwc c{\tau},\e(\pwc\uu{\tau}),\pwc z{\tau})
						-\rho\pwc\teta{\tau}\mathds 1\Big)-\pwc{\bold f}{\tau}}_{=0}\Big)\cdot \partial_t \db_\tau\dx\dr\\
						&+\int_{\ttau(s)}^{\ttau(t)}\int_{\partial\Omega}
						\Big(a(\upwc c{\tau},\upwc z{\tau})\VV\e(\pwc \vb{\tau})
						+W_{,\e}^\omega(\pwc c{\tau},\e(\pwc\uu{\tau}),\pwc z{\tau})
						-\rho\pwc\teta{\tau}\mathds
                                                1\Big){ \bf  n \EEE} \cdot\partial_t \db_\tau\dd S\dr\\
					={}&\int_{\ttau(s)}^{\ttau(t)}\int_{\partial\Omega}(\sigmab_\tau 
{ \bf n } \EEE)\cdot\partial_t\db_\tau\dd S\dr.
	    \end{align*}
	\end{itemize}
\end{proof}

\subsection{A priori estimates}
\label{ss:5.4}
The aim of this section is to customize the a priori  estimates which we have
 developed \EEE in Section \ref{s:4} \EEE to the time-discrete setting described in
Problem \ref{def:time-discrete},  for a time-discrete solution  
$(\ol c_\tau,\ul c_\tau, c_\tau,\ol\mu_\tau,\ol z_\tau,\ul z_\tau, z_\tau,
\ol\teta_\tau, \ul\teta_\tau, \teta_\tau,
\ol\ub_\tau,\ul\ub_\tau,\ol\vb_\tau,\vb_\tau)$ (recall that $\mathbf{v}^k = D_k (\mathbf{u})$ for all $k \in \{1, \ldots, K_\tau\}$. 
 Let us mention in advance that, in  this  \EEE time-discrete setting we
are only able to estimate  (cf.\ \eqref{thetaBound1}  below) the \EEE
supremum of the total variation 
	  	$\langle\log(\ol\teta_\tau),\varphi\rangle_{W^{1,d+\epsilon}}$
	  	over all test-functions $\varphi\in W^{1,d+\epsilon}(\Omega)$
	  	with $\|\varphi\|_{W^{1,d+\epsilon}(\Omega)}\leq 1$,
			which is a slightly weaker result than the \textbf{Seventh a priori estimate}
			in Section \ref{s:4} however strong enough to apply  the compactness
			result proved  in  \EEE \cite[Theorem A.5]{RocRos14}.
\begin{proposition}
\label{prop:aprio-discr}
	Let the assumptions of Proposition \ref{prop:exist-discr} be satisfied. 
	Then the
	time-discrete solutions $\{(\ck,\muk,\zk,\tk,\uk)\}_{k=1}^{K_\tau}$
	to Problem \ref{def:time-discrete} fulfill the following a priori estimates
	uniformly in $\omega>0$ and $\tau>0$:
	\begin{align}
		&\|\ol c_\tau\|_{L^\infty(0,T;W^{1,p}(\Omega))}+\|\ul c_\tau\|_{L^\infty(0,T;W^{1,p}(\Omega))}\leq C,
			\label{cBound1}\\
		&\|c_\tau\|_{H^1(0,T;L^2(\Omega))\cap L^\infty(0,T;W^{1,p}(\Omega))}\leq C,
			\label{cBound2}\\
		&\|\Delta_p(\ol c_\tau)\|_{L^2(0,T;L^{2}(\Omega))}\leq C,
			\label{cBound3}\\
		&\|\ol\eta_\tau\|_{L^2(0,T;L^{2}(\Omega))}\leq C\qquad
			\text{with }\ol\eta_\tau:=\beta_\omega(\ol c_\tau),
			\label{etaBound}\\
		&\|\ol\mu_\tau\|_{L^2(0,T;H^2(\Omega))}\leq C,
			\label{muBound}\\
		&\|\ol z_\tau\|_{L^\infty(0,T;W^{1,p}(\Omega))}+\|\ul z_\tau\|_{L^\infty(0,T;W^{1,p}(\Omega))}\leq C,
			\label{zBound1}\\
		&\|z_\tau\|_{H^1(0,T;L^2(\Omega))\cap L^\infty(0,T;W^{1,p}(\Omega))}\leq C,
			\label{zBound2}\\
		&\|\ol\teta_\tau\|_{L^2(0,T;H^1(\Omega))\cap L^\infty(0,T;L^1(\Omega))}\leq C,
			\label{thetaBound1}\\
		&\big\|(\ol\teta_\tau)^{\frac{\kappa+\alpha}{2}}\big\|_{L^2(0,T;H^1(\Omega))}\leq C_\alpha
			\text{ for all }\alpha\in(0,1),
			\label{thetaBound2}\\
		&\|\log(\ol\teta_\tau)\|_{L^2(0,T;H^1(\Omega))}\leq C,
			\label{thetaBound3}\\
		&\|\ol\ub_\tau\|_{L^\infty(0,T;H^2(\Omega;\R^d))}+\|\ul\ub_\tau\|_{L^\infty(0,T;H^2(\Omega;\R^d))}\leq C,
			\label{uBound1}\\
		&\|\ub_\tau\|_{H^1(0,T;H^2(\Omega;\R^d))\cap W^{1,\infty}(0,T;H^1(\Omega;\R^d))}\leq C,
			\label{uBound2}\\
		&\|\vb_\tau\|_{L^2(0,T;H^2(\Omega;\R^d))\cap H^1(0,T;L^2(\Omega;\R^d))}\leq C
			\label{vBound}
	\end{align}
	as well as
	\begin{align}
	\label{thetaBound4}
		&\sup_{\varphi\in W^{1,d+\epsilon}(\Omega),  \|\varphi\|_{W^{1,d+\epsilon}(\Omega)} \EEE \leq 1}
			\mathrm{Var}\big(\langle\log(\ol\teta_\tau),\varphi\rangle_{W^{1,d+\epsilon}};[0,T]\big)
			\leq C_\epsilon
			\quad\text{for all }\epsilon>0.
	\end{align}
	Under the additional assumption \eqref{range-k-admissible}
	we also have
	\begin{align}
	\label{thetaBoundAdd}
		\|\teta_\tau\|_{ \mathrm{BV}\EEE([0,T];W^{2,d+\epsilon}(\Omega)')}\leq C_\epsilon
			\quad\text{for all }\epsilon>0.
	\end{align}
\end{proposition}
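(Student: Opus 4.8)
The idea is to mimic, on the time-discrete level, the nine formal a priori estimates of Section~\ref{s:4}, exploiting the discrete energy inequality \eqref{discr-energy-ineq} and the discrete entropy inequality \eqref{discr-entropy-ineq} from Proposition~\ref{prop:energyEntropyIneq} as the counterparts of the total energy identity \eqref{calc1} and of the tested heat equation. All the constants must be tracked to be independent of $\omega$ and $\tau$, which is possible because the $\omega$-regularizations ($\beta_\omega$, $\mathcal R_\omega$, $W^\omega$, the convex--concave splittings) have been designed to preserve exactly the structural properties (monotonicity of $\beta_\omega$, the bound $0\le b\le b_0$, positive definiteness of $\mathbb V=\omega\mathbb C$, the chain-rule inequalities \eqref{eqn:convConcEst}) on which the formal estimates rest. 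The organisation follows the recursion: \textbf{First estimate} (yielding \eqref{cBound1}, \eqref{zBound1}, the $L^\infty(0,T;L^1)$-bound for $\ol\teta_\tau$ in \eqref{thetaBound1}, and the $L^\infty$-in-time bounds on $\ol\ub_\tau$, $\ol\vb_\tau$ in $L^2$); \textbf{Second estimate} (the $H^1$-bound for $\ol\teta_\tau$ in \eqref{thetaBound1} and \eqref{thetaBound2}); \textbf{Third estimate} (\eqref{cBound2}, \eqref{zBound2}, part of \eqref{vBound}); \textbf{Fourth estimate} ($H^2$-regularity of $\ol\ub_\tau$, i.e.\ \eqref{uBound1}, \eqref{uBound2}, the rest of \eqref{vBound}, and the $L^2$-bound on $W_{,c}^\omega$); \textbf{Fifth} and \textbf{Sixth estimates} (\eqref{cBound3}, \eqref{etaBound}, and the $L^2(0,T;H^1)$-bound for $\ol\mu_\tau$); \textbf{Seventh/Eighth estimates} (\eqref{thetaBound3}, \eqref{thetaBound4}, and \eqref{thetaBoundAdd} under \eqref{range-k-admissible}); \textbf{Ninth estimate} (\eqref{muBound}).

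Concretely, I would first establish \eqref{thetaBound4}: testing \eqref{eqn:discr4} by $\varphi_\tau^k/\teta_\tau^k$ and using the concavity estimate $(\tk-\tkk)/\tk\le\log\tk-\log\tkk$ produces, after summation, the bound
\[
\Big|\langle \log(\ol\teta_\tau(t))-\log(\ol\teta_\tau(s)),\varphi\rangle_{W^{1,d+\epsilon}}\Big|\le \int_s^t \mathcal C_\tau(r)\,\|\varphi\|_{W^{1,d+\epsilon}(\Omega)}\dd r
\]
with $\mathcal C_\tau$ bounded in $L^1(0,T)$ uniformly in $\tau,\omega$, exactly as in the \textbf{Seventh estimate}; summing absolute increments over the partition gives the uniform total-variation bound. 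For \eqref{thetaBoundAdd} one repeats the \textbf{Eighth estimate}, testing \eqref{eqn:discr4} by $w\in W^{2,d+\epsilon}(\Omega)\subset W^{1,\infty}(\Omega)$ and using \eqref{hyp-K}, \eqref{thetaBound1}, \eqref{thetaBound2}, \eqref{estetainterp} (whose discrete analogue follows by interpolation between \eqref{thetaBound1} and \eqref{thetaBound2}) together with the restriction \eqref{range-k-admissible}; summation yields $\|\teta_\tau\|_{\mathrm{BV}([0,T];W^{2,d+\epsilon}(\Omega)')}\le C_\epsilon$. The strict positivity $\ol\teta_\tau\ge\ul\teta$ from Lemma~\ref{l:positivityThetaDiscr} is used throughout to make $\log\teta$, $1/\teta$ and negative powers of $\teta$ meaningful and bounded.

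\textbf{Main obstacles.} Two points require genuine care beyond transcription. First, the interpolants mix left-continuous, right-continuous and piecewise-linear values (e.g.\ $D_k(c)$ is $\partial_t c_\tau$, while $W^\omega_{,\e}$ is evaluated at $\ol c_\tau,\ol\ub_\tau,\ol z_\tau$, and the $a(\cdot,\cdot)$-coefficients at $\ul c_\tau,\ul z_\tau$): one must verify that the discrete chain-rule inequalities \eqref{eqn:convConcEst}, \eqref{eqn:stdConvEst} are applicable with precisely the time-levels appearing in \eqref{PDE-discrete}, and that the ``telescoping'' in the energy and entropy sums produces the correct boundary-in-time terms — this is what the careful implicit/explicit design of Problem~\ref{def:time-discrete} buys us. Second, the \textbf{Fifth estimate} (bound on $\dashint_\Omega\ol\mu_\tau$) needs the analogue of \eqref{mir-zelik} at the level of the Yosida regularization $\beta_\omega$: here one invokes that, under \eqref{mir-zelik} on $\widehat\beta$, the estimate $|\beta_\omega(c+\mean)|\le C_\mean\beta_\omega(c+\mean)c+C_\mean'$ holds uniformly in $\omega$ (a standard property of the Yosida approximation preserving such coercivity inequalities), so that testing \eqref{eqn:discr2} by $\ol c_\tau-\mean_0$ and arguing exactly as in \eqref{clever-c}--\eqref{danke_zelik} — now using the already-established bounds \eqref{thetaBound1}, \eqref{cBound2}, \eqref{uBound1}, and the $L^2$-bound on $W_{,c}^\omega$ — gives $\|\beta_\omega(\ol c_\tau)\|_{L^2(0,T;L^1(\Omega))}\le C$, whence by comparison $\|\dashint_\Omega\ol\mu_\tau\|_{L^2(0,T)}\le C$ and then \eqref{muBound}$'$ via Poincaré. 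Everything else (the \textbf{Ninth estimate} giving \eqref{muBound}, via testing \eqref{eqn:discr1} by $\Delta\ol\mu_\tau$, $H^2$-elliptic regularity for the homogeneous Neumann Laplacian, and \eqref{interp2}) transfers verbatim, since $\ol\mu_\tau\in\Hn(\Omega)$ and $\ol\ub_\tau\in H^2(\Omega;\R^d)$ are already known from Proposition~\ref{prop:exist-discr} and Lemma~\ref{lemma:4.16}. I expect the bookkeeping of the interpolants in the discrete energy/entropy inequalities, rather than any single estimate, to be the true source of difficulty.
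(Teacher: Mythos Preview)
Your proposal is correct and follows essentially the same approach as the paper's proof: both transcribe the nine formal estimates of Section~\ref{s:4} to the time-discrete level, using the discrete energy and entropy inequalities of Proposition~\ref{prop:energyEntropyIneq}, the concavity inequality $(\tk-\tkk)/\tk\le\log\tk-\log\tkk$, and the positivity from Lemma~\ref{l:positivityThetaDiscr}, with the Seventh estimate yielding only the weaker total-variation bound \eqref{thetaBound4} rather than a genuine $\mathrm{BV}$ estimate on $\log(\ol\teta_\tau)$. Your identification of the two delicate points (interpolant bookkeeping, and the need for the Yosida analogue of \eqref{mir-zelik} in the Fifth estimate) is accurate and in fact more explicit than the paper, which largely refers back to Section~\ref{s:4} and to \cite{RocRos14}.
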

\begin{proof}
	The proof mainly follows the lines in Section \ref{s:4}.
	Besides this, the estimates for the time-discrete variables $z_\tau$, $\teta_\tau$ and $\ub_\tau$ are based
	on \cite[Proof of Proposition 4.10]{RocRos14}.
	To avoid repetition we will refer to the estimates in Section \ref{s:4}
	when necessary.
	\begin{itemize}
		\item[(i)]
			The time-discrete total energy inequality from
			Proposition \ref{prop:energyEntropyIneq} (ii)
			implies the following estimates
			(see \textbf{First a priori estimate}):
			\begin{align*}
				\|\ol c_\tau\|_{L^\infty(0,T;W^{1,p}(\Omega))}
				+\|\nabla\ol z_\tau\|_{L^\infty(0,T;W^{1,p}(\Omega;\R^d))}
				+\|\ol\teta_\tau\|_{L^\infty(0,T;L^1(\Omega))}
				  +\|\vb_\tau\|_{L^\infty(0,T;L^2(\Omega;\R^d))} \EEE
				\leq C.
			\end{align*}
		\item[(ii)]
			The \textbf{Second a priori estimate} is performed by
			testing the time-discrete heat equation \eqref{eqn:discr4} with $F'(\tk)=(\tk)^{\alpha-1}$
			with $\alpha\in(0,1)$ and the concave function $F(\teta):=\teta^\alpha/\alpha$,
			we obtain
			\begin{align*}
				&\int_\Omega\Big(\gk+|\Dt(c)|^2+|\Dt(z)|^2+m(\ckk,\zkk)|\nabla\muk|^2\Big)F'(\tk)\dx\\
				&+\int_\Omega a(\ckk,\zkk)\e(\Dt(\ub)):\vism\e(\Dt(\ub))F'(\tk)\dx
					+\int_{\partial\Omega}\hk F'(\tk)\dS\\
				&\leq\int_\Omega\frac{F(\tk)-F(\tkk)}{\tau}+\condu(\tk)\nabla\tk\cdot\nabla(F'(\tk))\dx\\
				&\qquad+\int_\Omega\Big(\Dt(c)+\Dt(z)+\rho\dive(\Dt(\ub))\Big)\tk F'(\tk)\dx
			\end{align*}
			by using the  concavity \EEE estimate $(\tk-\tkk)F'(\tk)\leq F(\tk)-F(\tkk)$.
			Multplication by $\tau$ and summing over $k=1,\ldots,\ttau(t)/\tau$ shows
			for every $t\in(0,T]$ the precise time-discrete analogon to
			\eqref{eqn:secondEstPre}.
			With the same calculations as in Section \ref{s:4} we end up with
			\begin{align*}
				\|\ol\teta_\tau\|_{L^2(0,T;H^1(\Omega))}\leq C,\qquad
				\|(\ol\teta_\tau)^{(\kappa+\alpha)/2}\|_{L^2(0,T;H^1(\Omega))}\leq C_\alpha.
			\end{align*}
		\item[(iii)]
			By testing the time-discrete heat equation \eqref{eqn:discr4} with $\tau$, integrating over $\Omega$, summing over $k$ and
			subtracting the result from the total energy inequality \eqref{discr-energy-ineq}
			we obtain the \textbf{Third a priori estimate}:
			\begin{align*}
				\|\partial_t c_\tau\|_{L^2(Q)} + \|\nabla \ol\mu_\tau \|_{L^2(Q;\R^d)} + \|\partial_t z_\tau\|_{L^2(Q)}+\|\partial_t \uu_\tau\|_{L^2(0,T; H^1(\Omega;\RR^d))} \leq C
			\end{align*}
			as well as 
			\begin{align*}
				\|\ol z_\tau \|_{L^\infty (0,T;W^{1,p}(\Omega))}
				+\|\ol \ub_\tau \|_{L^\infty (0,T;H^{1}(\Omega;\R^d))}
				\leq C.
			\end{align*}
		\item[(iv)]
			The \textbf{Fourth a priori estimate} is obtained by
			testing the time-discrete force balance equation \eqref{eqn:discr5}
			by $-\tau\dive(\VV\e(\vk))$.
			The calculation in Section \ref{s:4} carry over to the time-discrete setting.
			However, 
			 let us point out that
			the discrete analogue of  \eqref{est-added-0}  is given by \EEE the convexity estimate
			\begin{align*}
				-\int_0^{\ttau(t)} \io  \partial_t \vb_\tau\cdot \mathbf{\dive} (\vism\eps(\ol\vb_\tau)) \dd x \dd s
				\geq{}&
					-\int_0^{\ttau(t)}
                                        \int_{\partial\Omega}\partial_t
                                        \vb_\tau\cdot (\vism\eps(\ol\vb_\tau) { \bf  n \EEE}) \dd S \dd s\\
					&+\io \frac12 \eps(\ol\vb_\tau(t)):\vism \eps(\ol\vb_\tau(t)) \dd x
					-\io \frac12 \eps(\vb^0):\vism \eps(\vb^0) \dd x.
			\end{align*}
	    With analogous calculations we arrive at
	    \begin{align*}
	    	\qquad&\|\ub_\tau\|_{H^1(0,T;H^2(\Omega;\R^d))}
	    		+\|\ol\ub_\tau\|_{L^\infty(0,T;H^2(\Omega;\R^d))}\leq
                        C, \EEE\\
	    	&\|\vb_\tau\|_{H^1(0,T;L^2(\Omega;\R^d))\cap L^\infty(0,T;H^1(\Omega;\R^d))\cap L^2(0,T;H^2(\Omega;\R^d))}
	    		+\|\ol\vb_\tau\|_{L^\infty(0,T;H^1(\Omega;\R^d))\cap L^2(0,T;H^2(\Omega;\R^d))}\leq C.
	    \end{align*}
	  \item[(v)]
	  	For the \textbf{Fifth a priori estimate} we test \eqref{eqn:discr2}
	  	with $\ck-\mathfrak{m}_0$ where $\mathfrak{m}_0:=\dashint_\Omega c^0\dx$.
	  	With exactly the same calculations as in Section \ref{s:4} we find
	    \begin{align*}
	    	\|\ol\mu_\tau\|_{L^2(0,T;H^1(\Omega))}\leq C.
	    \end{align*}
	  \item[(vi)]
	   	A comparison in \eqref{eqn:discr2} as done in
	   	the \textbf{Sixth a priori estimate} gives
	    \begin{align*}
		   	\| \Delta_p(\ol c_\tau)\|_{L^2(0,T; L^2(\Omega))}
		   	+ \|\ol\eta_\tau \|_{L^2(0,T; L^2(\Omega))} \leq C.
	    \end{align*}
	  \item[(vii)]
			 Estimate \eqref{thetaBound1} \EEE can be shown by utilizing the calculations
	  	in \cite[Proof of Proposition 4.10 - Sixth estimate]{RocRos14}
	  	and additionally noticing that $\{\ol c_\tau\}_{\tau>0}$
	  	is bounded in $ \mathrm{BV}\EEE([0,T];L^2(\Omega))$ due to the \textit{Third estimate}.
	  	We thus  obtain \eqref{thetaBound4}.
	  \item[(viii)]
	  	The \textbf{Eighth a priori estimate} works as in Section \ref{s:4}
	  	and yields \eqref{thetaBoundAdd}.
	  \item[(ix)]
	  	The \textbf{Ninth a priori estimate} works as in Section \ref{s:4}
	  	and yields \eqref{muBound}.
	\end{itemize}
\end{proof}
\begin{remark}
	We observe that \eqref{thetaBound4} implies the uniform bound
	\begin{align*}
		\|\log(\ol\teta_\tau)\|_{L^\infty(0,T;W^{1,d+\epsilon}(\Omega))}\leq C_\epsilon.
	\end{align*}
	Moreover, by interpolation we infer from \eqref{thetaBound1} that (see \eqref{estetainterp})
	\begin{align*}
		\|\ol\teta_\tau\|_{L^h(Q)}\leq C
	\end{align*}
	with $h=8/3$ for $d=3$ and $h=3$ for $d=2$.
\end{remark}

\section{\bf Proof of Theorem \ref{thm:1}}
\label{s:6}
In this last section we are going to perform the limit passages as 
$\tau\downarrow 0$ and $\omega\downarrow 0$ in the time-discrete system   \eqref{PDE-discrete}, for which the existence of solutions was
proved in Proposition \ref{prop:exist-discr}.  \EEE
 This will  lead  us \EEE to prove Theorem \ref{thm:1}.
\subsection{Compactness}
\label{ss:6.1}
We  shall adopt the notation from the previous section.
In particular for fixed $\omega>0$ we let
$(\ol c_\tau,\ul c_\tau, c_\tau,\ol\mu_\tau,\ol z_\tau,\ul z_\tau, z_\tau,
\ol\teta_\tau, \ul\teta_\tau, \teta_\tau,
\ol\ub_\tau,\ul\ub_\tau,\ol\vb_\tau,\vb_\tau)$
be a time-discrete solution on an equi-distant partition of $[0,T]$ with fineness $\tau>0$
according to Proposition \ref{prop:exist-discr}.
\begin{lemma}
\label{lemma:discr-conv}
	Let the assumptions from Proposition \ref{prop:exist-discr} be satisfied
	and $\omega>0$ be fixed.
	Then there exists a quintuple $(c,\mu,z,\teta,\ub)$ satisfying
	\eqref{reg-c}--\eqref{reg-u} such that   along a  (not relabeled) subsequence, as $\tau \down 0$, the following convergences hold: \EEE
	\begin{align}
		&\pwc c{\tau},\,\upwc c{\tau} \weakstarlim c
			&&\text{ weakly-star in }L^\infty(0,T;W^{1,p}(\Omega)),
			\label{cConv1}\\
		&c_\tau \weakstarlim c
			&&\text{ weakly-star in }L^\infty(0,T;W^{1,p}(\Omega))\cap H^{1}(0,T;L^2(\Omega)),
			\label{cConv2}\\
		&\Delta_p(\pwc c{\tau}) \weaklim \Delta_p(c)
			&&\text{ weakly in }L^2(0,T;L^2(\Omega)),
			\label{cConv3}\\
		&\pwc c{\tau},\,\upwc c{\tau}\to c
			&&\text{ strongly in }L^s (0,T; W^{1,p}(\Omega))\text{ for all }s\in[1,\infty),
			\label{cConv4}\\
		&\pwc \eta{\tau} \weaklim \eta
			&&\text{ weakly in }L^2(0,T;L^2(\Omega))\text{ with }\eta=\beta_\omega(c)\text{ a.e. in }Q,
			\label{etaConv}\\
		&\pwc \mu{\tau} \weaklim \mu
			&&\text{ weakly in }L^2(0,T;\Hn(\Omega)),
			\label{muConv}\\
		&\pwc z{\tau},\,\upwc z{\tau} \weakstarlim z
			&&\text{ weakly-star in }L^\infty (0,T; W^{1,p}(\Omega)),
			\label{zConv1}\\
		&z_\tau \weakstarlim z
			&&\text{ weakly-star in }L^\infty (0,T; W^{1,p}(\Omega)) \cap H^1 (0,T;L^2(\Omega)),
			\label{zConv2}\\
		&\pwc z{\tau},\,\upwc z{\tau}\to z
			&&\text{ strongly in }L^\infty (0,T; X)\text{ for all $X$ such that }W^{1,p}(\Omega) \Subset X\subseteq L^2(\Omega),
			\label{zConv3}\\
		&z_\tau \to z
			&&\text{ strongly in }\mathrm{C}^0([0,T]; X)\text{ for all $X$ such that }W^{1,p}(\Omega) \Subset X\subseteq L^2(\Omega),
			\label{zConv4}\\
	 	&\pwc \teta{\tau}\weakto \teta
	 		&&\text{ weakly in }L^2 (0,T; H^1(\Omega)),
			\label{thetaConv1}\\
		&\log(\pwc\teta{\tau})  \weakstarlim \log(\teta)
			&&\text{ weakly-star in } L^2 (0,T; H^1(\Omega))  \cap  L^\infty (0,T; W^{1,d+\epsilon}(\Omega)')   \quad \text{for every } \epsilon>0,
			\label{thetaConv2}\\
		&\log(\pwc\teta{\tau})  \to  \log(\teta)
			&&\text{ strongly in	}L^2(0,T;L^s(\Omega)) \text{ for all $ s \in [1,6)$ if $d=3$, and all $s\in [1,\infty) $ if $d=2$,}
			\label{thetaConv3}\\
		&\log(\pwc\teta{\tau}(t)) \weakto \log(\teta(t))
			&&\text{ weakly in $H^1(\Omega)$ for almost all $t \in (0,T)$}
			\label{thetaConv4}\\
			&&&\text{ (the chosen subsequence for $\tau\downarrow 0$ does not depend on $t$)},\notag\\
		&\pwc\teta{\tau}\to \teta
			&&\text{ strongly in } L^q(\Omega\times (0,T))
			\text{ for all }q\in [1,8/3)\text{ for }d=3\text{ and all }q\in [1, 3)\text{ if }d=2,\EEE
			\label{thetaConv5}
	\end{align}
	\begin{align}
		&\pwc \uu{\tau},\upwc \uu{\tau} \weakstarlim \uu  
			&&\text{ weakly-star in }L^\infty(0,T;H^2(\Omega;\R^d)),
			\label{uConv1}\\
		&\pwl \uu{\tau} \weakstarlim \uu  
			&&\text{ weakly-star in }H^1(0,T;H^2(\Omega;\R^d))\cap	W^{1,\infty}(0,T;H^1(\Omega;\R^d)),
			\label{uConv2}\\
		&\pwc \uu{\tau},\, \upwc \uu{\tau} \to \uu
			&&\text{ strongly in }L^\infty(0,T;X)\text{ for all $X$ such that }H^{2}(\Omega;\R^d) \Subset X\subseteq L^2(\Omega;\R^d),
			\label{uConv3}\\
		&\pwl \uu{\tau} \to \uu
			&&\text{ strongly in }\mathrm{C}^0([0,T]; X \EEE)\text{ for all $X$ such that }H^{2}(\Omega;\R^d) \Subset X\subseteq L^2(\Omega;\R^d),
			\label{uConv4}\\
		&\pwc \vb{\tau} \weakto \uu_{t}
			&&\text{ weakly in }L^2(0,T;H^2(\Omega;\R^d)),
			\label{vConv1}\\
		&\vb_\tau \weaklim \uu_t
			&&\text{ weakly in }H^1(0,T;L^2(\Omega;\R^d))\cap L^2(0,T;H^2(\Omega;\R^d)).
			\label{vConv2}
	\end{align}
	Under the additional assumption \eqref{range-k-admissible}
	we also have for all $\epsilon>0$ that $\teta\in  \mathrm{BV} \EEE([0,T]; W^{2,d+\epsilon}(\Omega)')$ and
	\begin{align}
		&\pwc \teta{\tau}\to \teta
			&&\text{ strongly in }L^2 (0,T; Y)\text{ for all $Y$ such that }H^{1}(\Omega) \Subset Y\subset W^{2,d+\epsilon}(\Omega)',
			\label{tetaConvAdd1}\\
		&\pwc \teta{\tau}(t)\to \teta(t)
			&&\text{ strongly in }W^{2,d+\epsilon}(\Omega)'\text{ for all }t\in[0,T].
			\label{tetaConvAdd2}
	\end{align}
\end{lemma}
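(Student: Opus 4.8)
\textbf{Plan of the proof of Lemma \ref{lemma:discr-conv}.}
The strategy is the standard compactness argument for passing to the limit in the time-discrete scheme: we extract weakly convergent subsequences from the a priori bounds of Proposition \ref{prop:aprio-discr}, upgrade some of them to strong convergences via Aubin--Lions--Simon type arguments, and identify the nonlinear limits. First I would observe that, since all the interpolants $\ol c_\tau,\ul c_\tau, c_\tau$ (and likewise for $z$, $\teta$, $\ub$, $\vb$) differ by terms that vanish in the appropriate norm as $\tau\downarrow0$ — for instance $\|\ol c_\tau-c_\tau\|_{L^2(Q)}\leq\tau^{1/2}\|\partial_t c_\tau\|_{L^2(Q)}$ and analogously $\|\ol c_\tau-\ul c_\tau\|$ is controlled by $\tau$ times a time-derivative bound — the three interpolants share the same limit. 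Then the weak and weak-star convergences \eqref{cConv1}--\eqref{cConv2}, \eqref{muConv}, \eqref{zConv1}--\eqref{zConv2}, \eqref{thetaConv1}, \eqref{uConv1}--\eqref{uConv2}, \eqref{vConv1}--\eqref{vConv2} are immediate from \eqref{cBound1}--\eqref{vBound} by Banach--Alaoglu and a diagonal extraction.

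Next I would obtain the strong convergences. For $c$: the bound \eqref{cBound2} gives $\|c_\tau\|_{H^1(0,T;L^2(\Omega))\cap L^\infty(0,T;W^{1,p}(\Omega))}\leq C$, so by the Aubin--Lions--Simon lemma (in the version for piecewise-constant interpolants, cf.\ the discrete compactness results used in \cite{RocRos14}) we get \eqref{cConv4}; the limit of $\Delta_p(\ol c_\tau)$ is identified as $\Delta_p(c)$ using \eqref{cConv4} and the strong--weak closedness of the maximal monotone $p$-Laplacian, giving \eqref{cConv3}. For $z$ and $\ub$ the argument is the same, exploiting \eqref{zBound1}--\eqref{zBound2} and \eqref{uBound1}--\eqref{uBound2} respectively, and noting that the uniform-in-time strong convergences \eqref{zConv4}, \eqref{uConv4} follow because $z_\tau$, $\pwl\ub_\tau$ are equi-continuous with values in a space compactly embedding $W^{1,p}(\Omega)$, resp.\ $H^2(\Omega;\R^d)$, into $L^2$ (an Ascoli-type argument). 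The subgradient identifications $\eta=\beta_\omega(c)$ in \eqref{etaConv} follow from the Lipschitz continuity of $\beta_\omega$ (here $\omega>0$ is \emph{fixed}) combined with the pointwise a.e.\ convergence extracted from \eqref{cConv4}.

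The temperature convergences require the specific machinery developed in \cite[Sec.\ 4, App.\ A]{RocRos14} for the entropic formulation. From \eqref{thetaBound1}, \eqref{thetaBound3} we get \eqref{thetaConv1} and the weak-star part of \eqref{thetaConv2}; the $L^\infty(0,T;W^{1,d+\epsilon}(\Omega)')$-part of \eqref{thetaConv2} uses \eqref{thetaBound4}, which bounds the total variation of $t\mapsto\langle\log(\ol\teta_\tau(t)),\varphi\rangle$, together with Helly's selection principle applied in the weak sense, exactly as in \cite[Thm.\ A.5]{RocRos14}. Strong convergence \eqref{thetaConv3} then follows by interpolating \eqref{thetaConv2} with the $L^2(0,T;H^1)$-bound and an Aubin--Lions argument with the time-compactness coming from the $\mathrm{BV}$-type estimate; \eqref{thetaConv4} is the ``for a.a.\ $t$'' pointwise consequence (the limiting subsequence being $t$-independent because convergence holds in an $L^2$-in-time space). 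The strong $L^q(Q)$-convergence \eqref{thetaConv5} comes from \eqref{thetaConv1}, the $L^\infty(0,T;L^1)$-bound, the Gagliardo--Nirenberg interpolation \eqref{estetainterp} and Vitali's theorem once a.e.\ convergence of $\teta_\tau$ is available (which is obtained from \eqref{thetaConv3} via $\teta=\exp(\log\teta)$ and continuity of $\exp$). Under the extra restriction \eqref{range-k-admissible}, estimate \eqref{thetaBoundAdd} upgrades the time-regularity to $\mathrm{BV}([0,T];W^{2,d+\epsilon}(\Omega)')$, and \eqref{tetaConvAdd1}--\eqref{tetaConvAdd2} follow from a discrete Aubin--Lions lemma combined with Helly's theorem in $W^{2,d+\epsilon}(\Omega)'$, again following \cite{RocRos14}.

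\textbf{Main obstacle.} The delicate point is the treatment of $\log(\teta)$: one does not have a uniform bound on $\log(\ol\teta_\tau)$ in any Bochner space with genuine time-regularity — only the total-variation estimate \eqref{thetaBound4} tested against $W^{1,d+\epsilon}$-functions — so the passage to the limit must go through the generalized Helly/compactness result \cite[Thm.\ A.5]{RocRos14} rather than a standard Aubin--Lions lemma, and one must carefully check that the resulting limit $\log(\teta)$ is consistent with the weak-$H^1$ limit of $\log(\ol\teta_\tau)$ coming from \eqref{thetaBound3}, i.e.\ that the two notions of limit coincide. A secondary subtlety is ensuring that the \emph{same} subsequence works simultaneously for all the convergences \eqref{cConv1}--\eqref{vConv2} (and \eqref{tetaConvAdd1}--\eqref{tetaConvAdd2} in the restricted regime), which is handled by a countable diagonal extraction; and that the pointwise-in-$t$ statements \eqref{zConv4}, \eqref{uConv4}, \eqref{tetaConvAdd2} genuinely hold for \emph{all} $t\in[0,T]$, which requires the equi-continuity of the piecewise-linear interpolants together with the fact that the time-discrete and time-interpolated values agree at the nodes.
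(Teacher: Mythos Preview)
Your overall strategy matches the paper's, and most of the individual steps are correctly identified. There is, however, one genuine gap.

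\textbf{The gap in \eqref{cConv4}.} You claim that the Aubin--Lions--Simon lemma applied to the bound $\|c_\tau\|_{H^1(0,T;L^2(\Omega))\cap L^\infty(0,T;W^{1,p}(\Omega))}\leq C$ yields strong convergence in $L^s(0,T;W^{1,p}(\Omega))$. It does not: Aubin--Lions requires a \emph{compact} embedding of the top space into the target space, and $W^{1,p}(\Omega)$ is not compactly embedded in itself. With only these bounds you would obtain strong convergence in $L^s(0,T;X)$ for any $X$ with $W^{1,p}(\Omega)\Subset X$, which is weaker than \eqref{cConv4}. The paper closes this gap by invoking the additional estimate \eqref{cBound3} on $\Delta_p(\ol c_\tau)$ in $L^2(Q)$: via the elliptic regularity result of Savar\'e \cite[Thm.\ 2, Rmk.\ 3.5]{savare98}, this yields the enhanced bound
\[
\|\ol c_\tau\|_{L^2(0,T;W^{1+\delta,p}(\Omega))}\leq C_\delta\qquad\text{for every }0\leq\delta<\tfrac1p,
\]
and since $W^{1+\delta,p}(\Omega)\Subset W^{1,p}(\Omega)$, Aubin--Lions now gives \eqref{cConv4}. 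Without this step, the identification \eqref{cConv3} of the limit of $\Delta_p(\ol c_\tau)$ via strong--weak closedness is also in jeopardy, because you need strong convergence of $\ol c_\tau$ in a topology in which the $p$-Laplacian is continuous (or at least closed) against the weak $L^2$-limit of $\Delta_p(\ol c_\tau)$; strong $L^2(Q)$-convergence of $\ol c_\tau$ suffices for this, so \eqref{cConv3} survives, but \eqref{cConv4} itself does not.

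\textbf{A minor point on the temperature.} Your derivation of \eqref{thetaConv4} as a ``pointwise consequence'' of \eqref{thetaConv3} is not quite in the right order. Strong $L^2(0,T;L^s(\Omega))$-convergence gives pointwise-in-$t$ strong $L^s$-convergence along a \emph{further} $t$-dependent subsequence, and upgrading to weak $H^1(\Omega)$ requires a uniform-in-$\tau$ $H^1$-bound at that specific $t$; controlling the subsequence so that it is $t$-independent is exactly what the Helly-type result \cite[Thm.\ A.5]{RocRos14} delivers directly. In the paper the logic is reversed: \eqref{thetaConv4} (and the weak-star part of \eqref{thetaConv2}) come first from \cite[Thm.\ A.5]{RocRos14}, then pointwise strong $L^s$-convergence, then uniform integrability in $L^2(0,T;L^s)$ via the interpolation inequality \eqref{interpolationIneq}, and finally \eqref{thetaConv3} by Vitali. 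Since you do invoke the correct tool, this is a matter of ordering rather than a missing idea.
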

\begin{proof}
	We immediately obtain \eqref{cConv1}, \eqref{cConv2},
	\eqref{muConv}, \eqref{zConv1}, \eqref{zConv2}, \eqref{thetaConv1}, \eqref{uConv1}, \eqref{uConv2}, \eqref{vConv1} and \eqref{vConv2}
	from the estimates \eqref{cBound1}--\eqref{vBound} in Proposition \ref{prop:aprio-discr}
	by standard  weak \EEE compactness arguments.
	
	From the regularity result \cite[Thm.\ 2, Rmk.\ 3.5]{savare98},
 	we infer for every $1 \leq \delta\EEE< \frac1p$ the enhanced regularity
  $\pwc c{\tau},\upwc c{\tau} \in L^2 (0,T; W^{1+\delta\EEE,p}(\Omega))$
  together with the estimate
  \begin{align*}
  	\|\pwc c{\tau}\|_{L^2(0,T;W^{1+\delta,p}(\Omega))}+\|\upwc c{\tau}\|_{L^2(0,T;W^{1+\delta,p}(\Omega))}\leq C_{\delta\EEE}.
  \end{align*}
  In combination with \eqref{cBound1} and   \eqref{cBound2}, \EEE the application of the Aubin-Lions compactness theorem
  yields \eqref{cConv4}.
  Now we choose a subsequence $\tau\downarrow 0$ such that
  $\Delta_p(\pwc c{\tau})\weaklim S$ in $L^2(Q)$ for an element $S\in L^2(Q)$
  possible due to \eqref{cBound3}.
  Taking $\pwc c{\tau}\to c$ in $L^2(Q)$ into account, we may identify
  $S=\Delta_p(c)$ by the strong-weak closedness of the maximal monotone graph of    
  $\Delta_p:L^2(Q)\to L^2(Q)$.  We then conclude 
  \eqref{cConv3}. 
  Analogously,  \eqref{etaConv} ensues from the strong-weak closedness of the graph of the maximal monotone operator (induced by $\beta_\omega$)
  $\beta_\omega : L^2(Q) \to  L^2(Q) $. \EEE
    In addition, \EEE \eqref{zConv3}, \eqref{zConv4}, \eqref{uConv3} and \eqref{uConv4}
  follow from \eqref{zBound1}, \eqref{zBound2},   \eqref{uBound1} \EEE and \eqref{uBound2} via
  Aubin-Lions compactness results (see \cite{simon}).
	
	It remains to show the convergences for $\pwc\teta{\tau}$ and $\log(\pwc\teta{\tau})$.
	Here we proceed as in \cite[Proof of Lemma 5.1]{RocRos14}. We use the boundedness 
	properties \eqref{thetaBound3} and \eqref{thetaBound4},
	and apply the compactness result \cite[Theorem A.5]{RocRos14}
	which is based on Helly's selection principle.
	We obtain a function $\lambda\in L^2(0,T;H^1(\Omega))\cap L^\infty(0,T;W^{1,d+\epsilon}(\Omega)')$ for all $\epsilon>0$
	and a further,  (again not relabeled),  subsequence \EEE such that
	\begin{align*}
		&\log(\pwc\teta{\tau})\weakstarlim \lambda\text{ weakly-star in }L^2(0,T;H^1(\Omega))\cap L^\infty(0,T;W^{1,d+\epsilon}(\Omega)'),\\
		&\log(\pwc\teta{\tau}(t))\weaklim \lambda(t)\text{ weakly in }H^1(\Omega)
			\quad\text{for  a.a. \EEE }t\in(0,T).
	\end{align*}
	Here the chosen subsequence for $\tau\downarrow 0$ does not depend on $t$ in the latter convergence.
	We also infer from above that
	$$
		\log(\pwc\teta{\tau}(t))\to \lambda(t)\text{ strongly in }L^s(\Omega)
		\quad\text{for a.a. $t\in(0,T)$ and all $s$ from \eqref{thetaConv3}.}
	$$
	By also exploiting the boundedness of $\|\log( \pwc\teta{\tau})\|_{L^2(0,T;H^1(\Omega))\cap L^\infty(0,T;W^{1,d+\epsilon}(\Omega)')}$
	and the interpolation inequality \eqref{interpolationIneq} with $X=H^1(\Omega)$,
	$Y=L^s(\Omega)$ and $Z=W^{1,d+\epsilon}(\Omega)$ we infer
	that the sequence $\{\log(\pwc\teta{\tau})\}_{\tau}$ is uniformly integrable
	in $L^2(0,T;L^s(\Omega))$.
	Application  of Vitali \EEE convergence theorem proves
	$$
		\log(\pwc\teta{\tau})\to \lambda\text{ strongly in }L^2(0,T;L^s(\Omega))
		\quad\text{for all $s$ from \eqref{thetaConv3}.}
	$$
	Comparison with \eqref{thetaConv1} yields $\lambda=\log(\teta)$ and hence
	\eqref{thetaConv2}, \eqref{thetaConv3} and \eqref{thetaConv4}.
	The uniform bound \eqref{thetaBound1} shows uniform integrability of
	$\{\pwc\teta{\tau}\}_{\tau}$ in  $L^q(Q)$ with $q\in[1,8/3)$ for $d=3$ and
	$q\in[1,3)$ for $d=2$  (cf. \eqref{estetainterp}). \EEE
	Vitali's convergence theorem proves the strong convergence \eqref{thetaConv5}.
	
	In particular we find $\pwc\teta{\tau}(t)\to \teta(t)$ strongly in $L^1(\Omega)$
	for a.e. $t\in(0,T)$ (where the subsequence of $\tau\downarrow 0$ is independent of $t$).
	By the boundedness $\|\pwc\teta{\tau}(t)\|_{L^1(\Omega)}\leq C$ uniformly in $t$ and $\tau$
	(see \eqref{thetaBound1})
	we infer by lower  semicontinuity \EEE that $\teta\in L^\infty(0,T;L^1(\Omega))$.
	Furthermore, by considering a weak cluster point
	$(\ol\teta_\tau)^{\frac{\kappa+\alpha}{2}}\weaklim S$ in
	$L^2(0,T;H^1(\Omega))$ and identifying $S=(\ol\teta_\tau)^{\frac{\kappa+\alpha}{2}}$
	via a.e. limits from above we also obtain
	$\teta^{\frac{\kappa+\alpha}{2}}\in L^2(0,T;H^1(\Omega))$.
	
	Finally, under the additional assumption \eqref{range-k-admissible}
	 convergences \EEE \eqref{tetaConvAdd1} and \eqref{tetaConvAdd2} follow from
	an Aubin-Lions type compactness result for $\mathrm{BV}$-functions  (cf.\ e.g.\ \cite[Chap.\ 7, Cor.\ 4.9]{Rou05}), 
	combining estimate  \EEE \eqref{thetaBound1} together with the
	$\BV$-bound \eqref{thetaBoundAdd}. For  further details \EEE we refer to
	\cite[Proof of Lemma 5.1]{RocRos14}.
\end{proof}
\subsection{Conclusion of the proof of Theorem \ref{thm:1}}
\label{ss:6.2}
 Here is the outline of the proof: 
\begin{enumerate}
\item
First,  for fixed $\omega>0$  we will pass to the limit as $\tau\downarrow 0$, (along the same  subsequence for which the convergences in Lemma \ref{lemma:discr-conv} hold), 
in the time-discrete system \eqref{PDE-discrete}. 
We will thus obtain an \emph{entropic weak solution} (in the sense of Definition \ref{def-entropic}), to the (initial-boundary value problem for the) PDE
system \eqref{eqn:PDEsystem}, where the maximal monotone operator $\beta$ and the elastic energy density $W$ are replaced by their regularized versions 
$\beta_\omega$ and $W^\omega$. 
\item 
Secondly, we will tackle the limit passage as $\omega \down 0$. 
\end{enumerate}
Observe that  the  limit passages $\tau \down 0 $ and  $\omega\downarrow 0$ cannot be performed
simultaneously, \EEE because
in the time-discrete system from Problem \ref{def:time-discrete}
the (partial)   derivatives of the convex- and the concave-decompositions  \EEE  \eqref{eqn:convConcSplittingWc}
may ``explode'' as $\omega\downarrow 0$.
However,  the convex-concave splitting  shall \EEE disappear
in the limit $\tau\downarrow 0$ for fixed $\omega>0$ in the corresponding PDE system.
\paragraph{\bf Limit passage $\tau\downarrow 0$}
 First of all, \EEE we mention that from the time-discrete damage equation
\eqref{eqn:discr3} we derive the following inequalities
(for details we refer to \cite[Section 5.2]{hk1};
see also \cite[Proof of Theorem 1]{RocRos14} and \cite[Proof of Theorem 4]{RocRos12}):
\begin{align}
	&\textit{-- damage energy-dissipation inequality:}\text{ for all $t \in (0,T]$, for $s=0$, and for almost all $0< s\leq t$:}
		\notag\\
	&\qquad
    \begin{aligned}
    \label{discr-energy-diss-ineq}
      &\int_{\ttau(s)}^{\ttau(t)}\io |\partial_t z_\tau|^2 \dd x \dd r
      	+\io\left(\frac1p  |\nabla\pwc z{\tau}(t)|^p +  (\conv{\sigma})'(\pwc z{\tau}(t))+ (\conc{\sigma})'(\upwc z{\tau}(t))\right)\dd x\\
      &\qquad\leq\io\left(\frac1p |\nabla\pwc z{\tau}(s)|^p+ (\conv{\sigma})'(\pwc z{\tau}(s))+ (\conc{\sigma})'(\upwc z{\tau}(s))\right)\dd x\\
	      &\qquad\quad+\int_{\ttau(s)}^{\ttau(t)}\io \partial_t z_\tau
	      \left(-\convWzp(\pwc c{\tau},\e(\upwc\uu{\tau}),\pwc z{\tau}) - \concWzp(\pwc c{\tau},\e(\upwc\uu{\tau}),\upwc z{\tau})+\pwc \teta{\tau}\right)\dd x \dd r;
    \end{aligned}\\
	&\textit{-- damage variational inequality:}\text{ for all $\zeta\in L^\infty(0,T;W^{1,p}(\Omega))$ with $0\leq \zeta\leq\upwc z{\tau}$:}
		\notag\\
  &\qquad
    \begin{aligned}
		  \label{var-ineq-z-bis}
  	  &\int_0^T\int_\Omega  \Big(|\nabla \pwc z{\tau}|^{p-2} \nabla \pwc z{\tau} \cdot \nabla (\zeta-\pwc z{\tau})
  	  	\big((\partial_t z_\tau) + (\conv{\sigma})'(\pwc z{\tau})+ (\conc{\sigma})'(\upwc z{\tau})\big)(\zeta-\pwc z{\tau})\Big)\dx\dr\\
  	  &+\int_0^T\int_\Omega\Big(\convWzp(\pwc c{\tau},\e(\upwc\uu{\tau}),\pwc z{\tau})
  	  	+ \concWzp(\pwc c{\tau},\e(\upwc\uu{\tau}),\upwc z{\tau})
  	  	- \pwc \teta{\tau}\Big)(\zeta-\pwc z{\tau})\dx\dr\geq 0.
    \end{aligned}
\end{align}

The limit passage $\tau\downarrow 0$ in the damage energy-dissipation inequality 
\eqref{discr-energy-diss-ineq}, in the damage variational inequality
\eqref{var-ineq-z-bis},
in the entropy inequality \eqref{discr-entropy-ineq},
in the total energy inequality \eqref{discr-energy-ineq}
and in the equation for the balance of forces \eqref{eqn:discr5}
works exactly as outlined in \cite[Proof of Theorem 1]{RocRos14}
by taking the growth  properties \EEE \eqref{est-quoted-5.1}--\eqref{est-quoted-5.4}
into account (for \textbf{fixed}  $\omega>0$\EEE)
and needs no repetition here.

We end up with properties (ii), (iii), (iv) and (v) of Definition \ref{def-entropic},
 keeping in mind that 
 $W(c,\e(\uu),z)$, $W_{,z}(c,\e(\uu),z)$ and $W_{,\e}(c,\e(\uu),z)$
are replaced by their \EEE $\omega$-regularized versions
$W^\omega(c,\e(\uu),z)$, $W_{,z}^\omega(c,\e(\uu),z)$ and $W_{,\e}^\omega(c,\e(\uu),z)$,
respectively.
Let us comment that in the limit $\tau\downarrow 0$ of \eqref{var-ineq-z-bis}
we are only able to obtain a ``one-sided variational inequality''
which still suffices to obtain a weak solution in the sense of
Definition \ref{def-entropic} (see \eqref{var-ineq-z}).
Furthermore,  following \EEE the approach from \cite[Proof of Theorem 4.4]{hk1},
the subgradient $\xi\in L^2(0,T;L^2(\Omega))$
 fulfilling $\xi \in \partial I_{[0,+\infty)}(z)$ a.e.\ in $Q$ \EEE
 can be specified precisely as
\begin{align*}
	\xi=-\1_{\{z=0\}}\Big(\sigma'(z) + \pd{z}(c,\e(\ub), z)-\teta\Big)^+ \qquad   \aein\, Q, \EEE
\end{align*}
where $\1_{\{z=0\}}:Q\to\{0,1\}$ denotes the characteristic function
of the set $\{z=0\}\subseteq Q$ and $(\cdot)^+:=\max\{0,\cdot\}$.

It remains to show the limit passage as  $\tau\downarrow 0$ in the Cahn-Hilliard system \eqref{eqn:discr1}--\eqref{eqn:discr2}.
This can be achieved via standard convergence methods by exploiting
the convergences shown in Lemma \ref{lemma:discr-conv}
and noticing the growth properties \eqref{est-quoted-5.1}--\eqref{est-quoted-5.4}.
This leads to property (i) from Definition \ref{def-entropic}
where $W_{,c}(c,\e(\uu),z)$ and $\beta$ should be replaced by
$W_{,c}^\omega(c,\e(\uu),z)$ and $\beta_\omega$, respectively.
\\
\paragraph{\bf Limit passage  $ {\boldsymbol \omega} {\boldsymbol \downarrow}
  {\bf  0}$ \EEE}
In the subsequent argumentation we let
$S_\omega=(c_\omega,\mu_\omega,z_\omega,\teta_\omega,\uu_\omega)$
be an $\omega$-regularized weak solution, i.e. an  entropic \EEE weak solution
in the sense of Definition \ref{def-entropic} where
the $W$, $W_{,c}$, $W_{,\e}$, $W_{,z}$ and $\beta$-terms
are replaced by $W^\omega$, $W_{,c}^\omega$, $W_{,\e}^\omega$, $W_{,z}^\omega$ and $\beta_\omega$,
respectively.
We observe that the a priori estimates
in Proposition \ref{prop:aprio-discr} are inherited
by the weak solutions $S_\omega$ via lower  semicontinuity \EEE arguments.
Hence we obtain the same convergence properties for $\omega\downarrow 0$
as for $\tau\downarrow 0$ in Lemma \ref{lemma:discr-conv} where \eqref{etaConv}
should be replaced by
\begin{align}
	\pwc \eta{\omega} \weaklim \eta
		\quad\text{ weakly in }L^2(0,T;L^2(\Omega))\text{ as }\omega\downarrow 0\text{ with }\eta\in\beta(c)\text{ a.e. in }Q.
		\label{etaConv2}
\end{align}
 Indeed, to prove \EEE \eqref{etaConv2}, let $\eta_{\omega}=\beta_\omega(c_\omega)\weaklim S$ in $L^2(Q)$
for $\omega\downarrow 0$  for some \EEE element $S\in L^2(Q)$.
By convexity of the operator $\widehat\beta_\omega:L^2(Q)\to\R$ we find
\begin{align}
\label{convBeta}
	\forall w\in L^2(Q):\quad \widehat\beta_\omega(c_\omega)+\langle \beta_\omega(c_\omega), w-c_\omega\rangle_{L^2(Q)}\leq \widehat\beta_\omega(w).
\end{align}
Since $\{\beta_\omega\}$ is the Yosida-approximation of $\beta$ we conclude that (cf. \cite[Lemma 5.17]{Rou05})
\begin{align}
	\forall w\in L^2(Q):\quad \widehat\beta_\omega(w)\to \widehat\beta(w)\text{ strongly in $L^2(Q)$ as }\omega\downarrow0
	 	\qquad\text{ and }\qquad\liminf_{\omega\downarrow0}\widehat\beta_\omega(c_\omega)\geq \widehat\beta(c).
		\label{auxConv}
\end{align}
Thus by \eqref{etaConv2} and \eqref{auxConv} we can pass to the limit $\omega\downarrow 0$ for a subsequence in \eqref{convBeta}
and obtain $\eta\in\partial\beta(c)$.

The main feature for the passage $\omega\downarrow 0$ in the PDE system is the following observation:
From \eqref{cBound1} and \eqref{cBound2}
we infer via the compact embedding $W^{1,p}(\Omega)\Subset L^\infty(\Omega)$
that for all $\omega>0$
\begin{align*}
	\|c_\omega\|_{L^\infty(Q)}\leq C.
\end{align*}
An important consequence is that in combination with the definition of $\C R_\omega$ in \eqref{Rtrunc} we find
for all sufficiently small $\omega>0$ that $\C R_\omega(c_\omega)=c_\omega$ a.e. in $Q$ and thus
\begin{align*}
	\left.
	\begin{aligned}
		&W(c_\omega,\e(\uu_\omega),z_\omega)=W^\omega(c_\omega,\e(\uu_\omega),z_\omega),
		&W_{,c}(c_\omega,\e(\uu_\omega),z_\omega)=W_{,c}^\omega(c_\omega,\e(\uu_\omega),z_\omega),\\
		&W_{,\e}(c_\omega,\e(\uu_\omega),z_\omega)=W_{,\e}^\omega(c_\omega,\e(\uu_\omega),z_\omega),
		&W_{,z}(c_\omega,\e(\uu_\omega),z_\omega)=W_{,z}^\omega(c_\omega,\e(\uu_\omega),z_\omega).
	\end{aligned}
	\quad\right\}\quad
	\text{a.e. in }Q.
\end{align*}
 Then, the  limit \EEE passage $\omega\downarrow 0$ in the $\omega$-regularized versions
of (i)-(v) in Definition \ref{def-entropic} works as for $\tau\downarrow 0$. 

 This concludes the proof of 
Theorem \ref{thm:1}. \EEE
\QED

\noindent
{\bf \large Acknowledgments.} 
  Christian Heinemann and Christiane Kraus have been partially supported
by ECMath SE 4 and SFB 1114. \EEE
The work of Elisabetta Rocca was
supported by the FP7-IDEAS-ERC-StG Grant \#256872
(EntroPhase), by GNAMPA (Gruppo Nazionale per l'Analisi Matematica, la Probabilit\`a e le loro Applicazioni) of INdAM (Istituto Nazionale di Alta Matematica), and by IMATI -- C.N.R. Pavia. Riccarda Rossi was
partially supported by a MIUR-PRIN'10-'11 grant for the project
``Calculus of Variations'',  and by GNAMPA (Gruppo Nazionale per l'Analisi Matematica, la Probabilit\`a e le loro Applicazioni)  of  INdAM (Istituto Nazionale di Alta Matematica).  

\bibliographystyle{alpha}

\end{document}